\newtheorem{definition}{Definition}
\newtheorem{theorem}{Theorem}
\newtheorem{proposition}{Proposition}
\newtheorem{corollary}{Corollary}
\newtheorem{example}{Example}
\newtheorem{remark}{Remark}
\newcommand{\ostar}{\mathbin{\mathpalette\make@circled\star}}
\newcommand{\removelatexerror}{\let\@latex@error\@gobble}
\newcommand*{\rom}[1]{\expandafter\@slowromancap\romannumeral #1@}
\newcommand\latinabbrev[1]{
  \peek_meaning:NTF . {
    #1\@}%
  { \peek_catcode:NTF a {
      #1.\@ }%
    {#1.\@}}}
\titleclass{\subsubsubsection}{straight}[\subsubsection]
\begin{document}
\vspace{1cm}
\title{Categorified Spectral Sheaves and Homotopical Invariants for Noncommuting Operators}
\vspace{1.8cm}
\author{Shih-Yu~Chang
\thanks{Shih-Yu Chang is with the Department of Applied Data Science,
San Jose State University, San Jose, CA, U. S. A. (e-mail: {\tt
shihyu.chang@sjsu.edu})
}}

\maketitle

\begin{abstract}
Classical spectral theory gives a complete description of a single normal operator, but it fails for noncommuting operators, where no canonical joint spectrum or simultaneous diagonalization exists. Existing approaches provide only partial solutions: joint spectra apply mainly to commuting families, noncommutative geometry emphasizes global invariants that obscure local structure, and topos-theoretic methods capture contextuality while losing higher coherence information. This paper proposes a geometric and higher-categorical reformulation of the spectral problem for noncommuting operators. Local classical spectra associated with commutative subalgebras are organized into a stack-valued object, called a spectral stack, which retains automorphism and unitary equivalence data between contexts. Noncommutativity is thereby interpreted as nontrivial descent data rather than a breakdown of spectral theory. Using homotopical and derived constructions, we define functorial invariants that measure obstructions to global spectral assembly and extend classical index-theoretic ideas. The resulting framework views noncommutative operator algebras as geometric objects equipped with a spectral atlas, providing a concise bridge between operator theory, homotopy theory, and higher geometry. 
\end{abstract}
\begin{keywords}
Noncommutative spectral theory, Spectral stacks, Contextuality, Higher category theory, Derived geometry
\end{keywords}

\section{Introduction}\label{sec:Introduction}

\subsection{The Spectral Problem of Non-Commutation}

Spectral theory is a central pillar of functional analysis and operator theory.
For a single normal operator $T$ on a Hilbert space, the spectral theorem
provides a complete description of $T$ via a projection-valued measure supported
on its spectrum $\sigma(T) \subset \mathbb{C}$, yielding a powerful functional
calculus and a clear geometric interpretation of observables
\cite{ReedSimonIV}.

This classical picture breaks down in a fundamental way for noncommuting
operators. Given $T,S \in \mathcal{B}(\mathcal{H})$ with $[T,S] \neq 0$, there is
in general no well-defined joint spectrum $\sigma(T,S) \subset \mathbb{C}^2$,
nor does there exist a simultaneous eigenbasis. As a consequence, the analytic
tools underlying the spectral theorem---joint diagonalization, functional
calculus, and spectral measures---fail to extend in any canonical fashion
\cite{ReedSimonIV}. This obstruction reflects an intrinsic geometric
incompatibility rather than a technical limitation.

Various partial remedies have been proposed within operator theory. Joint
spectral constructions, such as the Taylor spectrum, recover meaningful
information for tuples of commuting operators and admit functorial properties
in restricted settings \cite{Taylor1970}. However, these constructions do not
extend naturally to genuinely noncommuting families and do not capture higher
coherence or equivalence data arising from unitary transformations.

Noncommutative geometry, pioneered by Connes, approaches this difficulty by
replacing classical spaces with noncommutative $C^*$-algebras and geometric
invariants with $K$-theory, cyclic cohomology, and index pairings
\cite{connes1994}. While extraordinarily successful, this framework often
compresses spectral information into global homological invariants, thereby
obscuring the local geometric structure of noncommuting observables.

A complementary conceptual approach arises from topos-theoretic formulations of
quantum theory. In this perspective, one considers the poset $\mathcal{C}$ of
commutative subalgebras (contexts) of a noncommutative $C^*$-algebra
$\mathcal{A}$. Each context admits a classical Gelfand spectrum, and these local
spectra assemble into a spectral presheaf encoding contextuality and logical
obstructions~\cite{doering2012}. However, being set-valued, this presheaf necessarily discards higher coherence data,
such as automorphisms induced by unitary equivalences between contexts.

\textbf{To retain such coherence, we propose a categorified refinement:}
the present work replaces the spectral presheaf with a stack-valued object. 
The resulting \emph{spectral stack} retains automorphism data and naturally 
encodes unitary equivalences between local spectral charts. 
By applying the nerve construction and subsequent stabilization, one obtains 
a derived spectral object whose homotopy limit produces stable homotopy groups. 
These groups define functorial invariants of noncommutativity, extending 
classical index-theoretic constructions and measuring obstructions to global 
spectral descent.

From this viewpoint, a noncommutative $C^*$-algebra may be regarded as a
geometric object equipped with a \emph{spectral atlas}: commutative contexts
serve as local affine charts, while their higher gluing data is encoded by
stacky and derived structures. This perspective aligns naturally with the
language of higher and derived geometry \cite{LurieHA}, providing a
conceptual bridge between operator algebras, homotopy theory, and modern
geometric frameworks.

\subsection{Geometric Reformulation and Contextuality}
\label{subsec:geometric-contextuality}

The spectral analysis of noncommuting operators reveals a fundamental obstruction: the classical notion of a \emph{joint spectrum} ceases to exist. In its place emerges a parameterized family of \emph{local classical spectral data}, organized by the contexts in which observables commute.

Formally, for a noncommutative $C^*$-algebra $\mathcal{A}$, consider the poset $\mathcal{C}$ of its commutative $C^*$-subalgebras—the \emph{measurement contexts}. Gelfand duality assigns to each $C \in \mathcal{C}$ its spectrum $\Sigma(C)$, a locally compact Hausdorff space. The assignment $C \mapsto \Sigma(C)$ defines the \textbf{spectral presheaf} \cite{doering2008}, a contravariant set-valued functor on $\mathcal{C}$. The absence of its global sections—consistent families of local spectral points—when the algebra is noncommutative, is intimately related to the Kochen--Specker theorem \cite{kochen1967}, which proves the impossibility of non-contextual hidden variables in quantum mechanics.

This set-valued presheaf, however, captures only the existence of points. A deeper geometric picture requires retaining information about \emph{identifications} between spectral data across overlapping contexts. This motivates the passage to a \textbf{categorical enhancement} of the spectral presheaf, where each fiber is enriched to track isomorphisms, restrictions, and coherent equivalences. In topos‑theoretic quantum foundations, this leads to the notion of a sheaf of categories or a stack \cite{doering2012}.

The central contribution of this work is to subject this enriched spectral data to systematic \textbf{homotopical analysis}. By promoting the categorical presheaf to a presheaf of spectra $\mathscr{G}_{\mathrm{st}}$ and studying its \textbf{homotopy limit} over $\mathcal{C}$, we obtain a graded series of homotopical cohomology groups $\mathbb{H}^n(\mathcal{C}; \mathscr{G}_{\mathrm{st}})$. Theorem~\ref{thm:homotopical-characterization} shows that these groups provide a precise, graded measure of noncommutativity:
\begin{itemize}
    \item Vanishing of $\mathbb{H}^n$ for $n>0$ characterizes commutativity.
    \item Non‑trivial $\mathbb{H}^1$ classifies \emph{monodromy} of spectral data around loops of contexts, giving a homotopical refinement of Kochen--Specker contextuality.
    \item Higher groups $\mathbb{H}^n$ ($n \geq 2$) detect increasingly coherent failures of gluing, corresponding to higher categorical structures such as gerbes \cite{breen1994classification}.
\end{itemize}

Thus, we elevate contextuality from a logical no‑go theorem to a quantifiable \emph{homotopical obstruction theory}. This bridges operator‑algebraic noncommutativity with tools from modern algebraic topology, revealing a rich, hierarchical structure in the spectral geometry of noncommuting operators.

Building on recent multicategorical and operator-theoretic frameworks that treat operator algebras as structured geometric and higher-categorical objects, this work reformulates the spectral problem for noncommuting operators by organizing local classical spectra into a stack-valued invariant that captures descent, coherence, and functorial obstruction data~\cite{chang2025hilbmultbanachenrichedmulticategoryoperator,chang2025compositioncoherencesyntaxoperator,chang2025multicategoricaladjointsmonadicityquantum}.

\subsection{Main Contributions}

The idea that noncommutative mathematical objects may be studied through families of commutative substructures is well established, particularly in approaches such as Bohrification and topos-theoretic quantum theory. The present work builds on this perspective by introducing a refined \emph{structural} and \emph{computational} framework in which spectral data associated to noncommuting operators is organized using higher-categorical and homotopical methods. Rather than interpreting the absence of a classical joint spectrum as a failure of existing tools, we propose a geometric formulation in which such phenomena are encoded by descent and obstruction data.

\paragraph{Categorified spectral data.} Instead of presheaves of sets or commutative algebras, we construct a sheaf of categories
\[
\mathscr{G} : \mathcal{C}^{\mathrm{op}} \longrightarrow \mathbf{Cat},
\]
whose objects describe local spectral categories associated to compatible commutative contexts, and whose morphisms encode coherent transitions between these contexts. This construction provides a natural extension of classical spectral theory to a higher-categorical setting, while remaining compatible with standard operator-theoretic structures.

\paragraph{Stack descent interpretation.} We show that the resulting categorified spectral data satisfies \emph{stack descent}. In this formulation, the failure of a global joint spectrum is reflected in the nontriviality of descent data, indicating that the appropriate geometric object is a spectral stack rather than an ordinary space. This viewpoint allows one to treat noncommutativity as a structural property captured by gluing conditions, rather than as an obstruction external to geometry.

\paragraph{Homotopical invariants.} We introduce and analyze \emph{homotopical invariants} associated to the spectral stack, defining both the sheaf cohomology $H^*(\mathcal{C}; F_K)$ and the hypercohomology $\mathbb{H}^*(\mathcal{C}; B_{\mathscr{G}})$ of the associated classifying stack. These invariants provide functorial measures of the extent to which local spectral data fails to assemble globally and may be interpreted as higher-order obstructions to spectral descent. In particular, they offer a systematic way to extract computable global information from noncommuting systems.

From a conceptual standpoint, the framework developed here offers a reinterpretation of classical spectral phenomena in geometric terms. Commutative systems correspond to trivial descent and vanishing cohomological invariants, while genuinely noncommutative systems give rise to nontrivial stack-theoretic and homotopical structure. This correspondence is summarized in the following dictionary, which relates notions from operator theory to their geometric counterparts:

\begin{center}
\begin{tabular}{@{}ll@{}}
\toprule
\textbf{Operator Theory} & \textbf{Geometric Interpretation} \\
\midrule
Non-commuting pair $(T,S)$ & Spectral stack $\mathscr{G}$ \\
Absence of joint spectrum & Nontrivial descent data \\
Context dependence & Stack-theoretic structure \\
Commutativity $[T,S]=0$ & Trivial sheaf cohomology \\
Spectral projections & Local sections of $\mathscr{G}$ \\
Von Neumann algebra $\mathcal{A}$ & Global sections or limits of $\mathscr{G}$ \\
\bottomrule
\end{tabular}
\end{center}

Finally, the proposed framework is constructive in nature. We provide an explicit association $(T,S) \mapsto \mathscr{G}$, demonstrate how cohomological and homotopical invariants can be used to detect and compare noncommutative behavior, and outline finite and computable approximations based on the indexing category $\mathcal{C}$. These results suggest that stack-theoretic and derived techniques offer a viable and flexible extension of classical spectral theory for the analysis of noncommuting operators.

\begin{remark}
The author is solely responsible for the mathematical insights and theoretical directions proposed in this work. AI tools, including OpenAI's ChatGPT and DeepSeek models, were employed solely to assist in verifying ideas, organizing references, and ensuring internal consistency of exposition~\cite{chatgpt2025,deepseek2025}.
\end{remark}

\section{Preliminaries}

\subsection{Operator Algebra Setup}\label{sec:Operator Algebra Setup}

This section establishes the fundamental operator-theoretic framework and notation that will be used throughout the paper. We provide precise definitions of the algebras, spectral data, and contextual structures that form the foundation of our geometric approach.

\begin{definition}[Fundamental Framework]\label{def:fundamental_framework}
Let $\mathcal{H}$ be a complex Hilbert space, and let $\mathcal{B}(\mathcal{H})$ denote the $C^*$-algebra of bounded linear operators on $\mathcal{H}$. 
Unless explicitly stated otherwise, all operators considered in this work are assumed to be bounded and everywhere defined. 
Extensions to densely defined unbounded operators may be treated via affiliated operators or by passing to the associated von Neumann algebra.

Fix a unital $*$-subalgebra $\mathcal{A} \subseteq \mathcal{B}(\mathcal{H})$, typically generated by a finite collection of self-adjoint operators $\{T_1,\dots,T_n\}$. 
In applications of interest, $\mathcal{A}$ will often be a $C^*$-algebra or, when finer spectral or measure-theoretic control is required, its weak operator topology closure $\overline{\mathcal{A}}^{\,\mathrm{wot}}$, which is a von Neumann algebra.

Let $\mathrm{Comm}(\mathcal{A})$ denote the partially ordered set of unital commutative $*$-subalgebras of $\mathcal{A}$, ordered by inclusion; elements of $\mathrm{Comm}(\mathcal{A})$ are referred to as \emph{contexts}. 
We regard $\mathrm{Comm}(\mathcal{A})$ as a small category in the usual way.

The \emph{spectral presheaf} of $\mathcal{A}$ is the contravariant functor
\[
\mathscr{G} \colon \mathrm{Comm}(\mathcal{A})^{\mathrm{op}} \longrightarrow \mathbf{Set},
\]
which assigns to each context $\mathcal{B} \in \mathrm{Comm}(\mathcal{A})$ its Gelfand spectrum $\Sigma(\mathcal{B})$, equipped with the weak-$*$ topology, and to each inclusion $\mathcal{B}_1 \subseteq \mathcal{B}_2$ the canonical restriction map
\[
\Sigma(\mathcal{B}_2) \longrightarrow \Sigma(\mathcal{B}_1).
\]

The presheaf $\mathscr{G}$ encodes the family of local classical spectral descriptions associated with the noncommutative algebra $\mathcal{A}$. 
At this stage, the detailed internal noncommutative structure of $\mathcal{A}$ is not used explicitly; rather, the framework is determined by the Gelfand spectra of its commutative subalgebras and the compatibility maps between them. 
Global structural and spectral information about $\mathcal{A}$ will later be recovered in a derived or stack-theoretic sense (Theorem~\ref{thm:spectral_sheaf}) from this diagram of commutative approximations.
\end{definition}

\paragraph{Non-Commutativity as the Core Problem.}
Given $T, S \in \mathcal{A}$, we say they \emph{commute} if $TS = ST$. The \emph{commutator} $[T,S] := TS - ST$ quantifies their failure to commute—a purely algebraic condition with profound analytic consequences. Classically, commuting self-adjoint operators admit a joint spectral measure on $\mathbb{R}^2$ and a basis of simultaneous eigenvectors, enabling their simultaneous diagonalization. For non-commuting pairs, this joint spectral structure \emph{ceases to exist}; the traditional analytic toolkit reaches its limit precisely here.

Rather than treating this as a terminal pathology, we reinterpret it as the genesis of new geometric structure. The absence of a classical joint spectrum signals not an analytic failure but the emergence of a richer, \emph{context-dependent} spectral object. This work transforms the algebraic condition $[T,S] \neq 0$ into a geometric problem: constructing a \emph{spectral stack} $\mathscr{G}$ that coherently organizes all local (commutative) spectral perspectives, with the original non-commutativity encoded in the stack's topological complexity.

\paragraph{Spectra and Functional Calculus.}
For a bounded operator $T \in \mathcal{B}(\mathcal{H})$, its \emph{spectrum} is defined as
\[
\sigma(T) := \{ \lambda \in \mathbb{C} \mid T - \lambda I \text{ is not invertible in } \mathcal{B}(\mathcal{H}) \}.
\]
If $T$ is self-adjoint ($T = T^*$), then $\sigma(T) \subset \mathbb{R}$. The spectral theorem associates to $T$ a unique projection-valued measure
\[
E_T : \mathcal{B}(\mathbb{R}) \to \{\text{orthogonal projections on } \mathcal{H}\},
\]
which determines $T$ via
\[
T = \int_{\mathbb{R}} \lambda \, dE_T(\lambda).
\]
This measure gives rise to the \emph{bounded Borel functional calculus}: for any bounded Borel function $f : \mathbb{R} \to \mathbb{C}$, the operator
\[
f(T) := \int_{\mathbb{R}} f(\lambda)\, dE_T(\lambda)
\]
is well defined. The assignment $f \mapsto f(T)$ defines a unital $*$-homomorphism from the algebra of bounded Borel functions on $\mathbb{R}$ into $\mathcal{B}(\mathcal{H})$, which is norm-decreasing and becomes isometric when viewed as a representation of $L^\infty(\mathbb{R}, E_T)$.

\paragraph{Contexts: Commutative Subalgebras.}
A central object in our framework is the family of commutative subalgebras of $\mathcal{A}$, which represent ``classical snapshots'' or contexts within the non-commutative whole.

\begin{definition}[Context]\label{def:context}
A \emph{context} is a unital commutative $C^*$-subalgebra $\mathcal{B} \subseteq \mathcal{A}$.
We denote by $\mathrm{Comm}(\mathcal{A})$ the collection of all contexts.
A context $\mathcal{B}$ is said to \emph{contain} an operator $T \in \mathcal{A}$ if $T \in \mathcal{B}$.
\end{definition}

\begin{example}[Canonical Contexts]\label{ex:canonical_contexts}
For any self-adjoint operator $T \in \mathcal{A}$, the unital $C^*$-algebra $C^*(T,I)$ generated by $T$ and the identity is a context.
Its weak operator closure is the von Neumann algebra $W^*(T)$ generated by $T$, which is (non-canonically) $*$-isomorphic to
$L^\infty(\sigma(T), \mu_T)$ for a measure $\mu_T$ arising from the spectral measure of $T$.

If $T$ and $S$ are self-adjoint operators that do not commute, then the $C^*$-algebra $C^*(T,S,I)$ is noncommutative and therefore does not define a context.
\end{example}

For any commutative unital \(C^*\)-algebra \(\mathcal{B}\), the Gelfand--Naimark theorem yields a canonical \(*\)-isomorphism of \(C^*\)-algebras:
\[
\mathcal{B} \;\cong\; C\bigl(\Sigma(\mathcal{B})\bigr),
\]
where \(\Sigma(\mathcal{B})\) denotes the \emph{Gelfand spectrum} of \(\mathcal{B}\)---a compact Hausdorff space whose points are the characters (nonzero multiplicative linear functionals) \(\phi\colon \mathcal{B}\to \mathbb{C}\).

Under this isomorphism, each element \(T \in \mathcal{B}\) corresponds uniquely to a continuous function \(\widehat{T} \in C(\Sigma(\mathcal{B}))\) given by evaluation:
\[
\widehat{T}(\phi) = \phi(T), \qquad \phi \in \Sigma(\mathcal{B}).
\]

Consequently, the continuous functional calculus for a normal operator \(T \in \mathcal{B}\) is implemented by applying the corresponding continuous function pointwise on the spectrum; that is, for \(f \in C(\sigma(T))\),
\[
f(T) \longleftrightarrow f\!\circ \widehat{T}\; (\text{pointwise on } \Sigma(\mathcal{B})).
\]

\paragraph{The Poset of Contexts and Spectral Site.}
The set $\mathrm{Comm}(\mathcal{A})$ is naturally a partially ordered set (poset) under inclusion: $\mathcal{B}' \le \mathcal{B}$ if $\mathcal{B}' \subseteq \mathcal{B}$. An inclusion $i: \mathcal{B}' \hookrightarrow \mathcal{B}$ induces a continuous restriction (or pullback) map between the Gelfand spectra:
\[
\Sigma(i): \Sigma(\mathcal{B}) \to \Sigma(\mathcal{B}'), \quad \chi \mapsto \chi|_{\mathcal{B}'}.
\]

\begin{definition}[Spectral Site]\label{def:spectral_site}
Let $\mathcal{A}$ be a unital $C^*$-algebra as in Definition~\ref{def:fundamental_framework}, and let 
\[
\mathrm{Comm}(\mathcal{A}) := \{\mathcal{B} \subseteq \mathcal{A} \mid \mathcal{B} \text{ is a unital commutative } C^*\text{-subalgebra}\}
\]
denote the set of \emph{contexts}.  

We regard $\mathrm{Comm}(\mathcal{A})$ as a poset ordered by inclusion, and denote by $\mathcal{C} := \mathrm{Comm}(\mathcal{A})^{\mathrm{op}}$ its opposite category: objects are contexts $\mathcal{B}$, and there is a unique morphism $\mathcal{B}_2 \to \mathcal{B}_1$ precisely when $\mathcal{B}_1 \subseteq \mathcal{B}_2$.

The \emph{spectral site} is the pair $(\mathcal{C}, J)$, where $J$ is the Grothendieck topology defined as follows: a family of morphisms $\{\mathcal{B}_i \to \mathcal{B}\}_{i \in I}$ in $\mathcal{C}$ is a $J$-covering if the $C^*$-subalgebra generated by the union of the $\mathcal{B}_i$ is dense in $\mathcal{B}$, i.e.,
\[
\mathcal{B} = \overline{C^*\Big(\bigcup_{i \in I} \mathcal{B}_i \Big)}^{\|\cdot\|}.
\]

A presheaf on $(\mathcal{C}, J)$ is a \emph{sheaf} if it satisfies the usual sheaf condition with respect to all $J$-coverings.  
The \emph{spectral presheaf} $\mathscr{G}$ (Definition~\ref{def:fundamental_framework}) assigns to each context $\mathcal{B}$ its Gelfand spectrum $\Sigma(\mathcal{B})$, with restriction maps induced by inclusions; it is indeed a sheaf with respect to the topology $J$.
\end{definition}

\begin{example}[Two-Operator Case]
For a pair of self-adjoint operators $(T,S)$ on a Hilbert space $\mathcal{H}$, 
let $\mathcal{A} = C^*(T,S)$. Then $\mathrm{Comm}(\mathcal{A})$ consists precisely 
of those commutative $C^*$-subalgebras of $\mathcal{A}$ that contain both $T$ and $S$. 
This recovers the special case mentioned in Theorem~\ref{thm:spectral_sheaf}.
\end{example}

\paragraph{From Algebraic Obstruction to Geometric Structure.}
If the algebra $\mathcal{A}$ were commutative, its global spectral information would be fully encoded by the single space $\Sigma(\mathcal{A})$. Non-commutativity fundamentally disrupts this picture: spectral data exist only locally within each commutative context $\mathcal{B} \in \mathcal{C}$, and these local spectra are generally incompatible across different contexts. Crucially, this incompatibility is not merely a limitation but encodes rich geometric and cohomological structure.

This motivates the passage from the operator algebra $\mathcal{A}$ to a geometric object that systematically encodes:
\begin{itemize}
    \item \textbf{Local spectral data} associated to each context $\mathcal{B} \in \mathcal{C}$ via its Gelfand spectrum $\Sigma(\mathcal{B})$,
    \item \textbf{Restriction maps} $\Sigma(\mathcal{B}) \to \Sigma(\mathcal{B}')$ induced by inclusions $\mathcal{B}' \subseteq \mathcal{B}$, formalizing the compatibility of local spectra,
    \item \textbf{Global obstructions} to gluing local spectra into a single classical object, captured by cohomological and homotopical invariants.
\end{itemize}

These concepts are formalized by constructing the \emph{spectral presheaf} 
\[
\mathscr{G} : \mathcal{C}^{\mathrm{op}} \longrightarrow \mathbf{Top},
\] 
which assigns to each context its spectrum and to each inclusion the corresponding continuous restriction map (Section~\ref{sec:Categorical and Sheaf-Theoretic Background}). We show that $\mathscr{G}$ satisfies a \emph{descent condition}, making it a sheaf of topological spaces (or locales) for the spectral topology (Section~\ref{sec:sheaf-theorem}). From this structure, we extract novel invariants that systematically quantify non-commutativity (Section~\ref{sec:homotopical-invariants}).

\subsection{Categorical and Sheaf-Theoretic Background}\label{sec:Categorical and Sheaf-Theoretic Background}

This subsection provides the essential categorical and sheaf-theoretic foundations for our geometric reformulation of non-commutativity. We recall the key notions of sites, sheaves, descent, and their homotopical enhancements, with a focus on their application to operator algebras. Standard references include \cite{maclane, kashiwara_schapira} for category theory and sheaves, and \cite{johnstone, lurie} for higher categorical aspects.

\paragraph{Categories and Presheaves.}
We work primarily within the framework of category theory. A \emph{category} $\mathscr{C}$ consists of objects and morphisms with associative composition and identity morphisms. In our setting, categories often arise from partially ordered sets: a poset $(P, \le)$ becomes a category with objects $p \in P$ and a unique morphism $p \to q$ if $p \le q$. The \emph{opposite category} $\mathscr{C}^{\mathrm{op}}$ has the same objects but reversed morphisms.

\begin{definition}[Presheaf]\label{def:presheaf}
For a small category $\mathscr{C}$ and a target category $\mathcal{D}$, a \emph{presheaf} on $\mathscr{C}$ with values in $\mathcal{D}$ is a contravariant functor
\[
F: \mathscr{C}^{\mathrm{op}} \longrightarrow \mathcal{D}.
\]
When $\mathcal{D} = \mathbf{Set}$, $\mathbf{Ab}$, or $\mathbf{Cat}$, we speak of presheaves of sets, abelian groups, or categories, respectively.
\end{definition}

Let $\mathcal{A}$ be a noncommutative $C^*$-algebra, and let $\mathcal{C}$ denote the poset category of its commutative unital $*$-subalgebras (contexts), ordered by inclusion.

\begin{example}[Classical Set-Valued Spectral Presheaf]
Let $\mathcal{C}$ be the poset of commutative unital subalgebras (contexts) of a $C^*$-algebra $\mathcal{A}$, ordered by inclusion.  
The \emph{classical spectral presheaf} is the contravariant functor
\[
\underline{\Sigma} : \mathcal{C}^{\mathrm{op}} \longrightarrow \mathbf{Set},
\]
defined as follows:
\begin{itemize}
    \item For each context $B \in \mathcal{C}$, assign its Gelfand spectrum
    \[
    \underline{\Sigma}(B) := \Sigma(B),
    \]
    which is the compact Hausdorff space of characters $\chi: B \to \mathbb{C}$.
    \item For each inclusion $i: B_1 \hookrightarrow B_2$, assign the restriction map
    \[
    \underline{\Sigma}(i) : \Sigma(B_2) \longrightarrow \Sigma(B_1), \qquad \chi \mapsto \chi|_{B_1}.
    \]
\end{itemize}
This defines a contravariant functor, i.e., a presheaf of sets on $\mathcal{C}$.
\end{example}

\begin{definition}[Spectral Presheaves]\label{def:spectral-presheaves}
Let $\mathcal{A}$ be a unital $C^*$-algebra with spectral site 
$\mathcal{C} = \mathrm{Comm}(\mathcal{A})^{\mathrm{op}}$ as in Definition~\ref{def:spectral_site}.  

We consider two enhanced versions of the spectral presheaf:

\begin{enumerate}[label=(\alph*)]
    \item \emph{Categorical spectral presheaf.}  
    The functor
    \[
        \mathscr{G}_{\mathrm{cat}} : \mathcal{C}^{\mathrm{op}} \longrightarrow \mathbf{Cat}
    \]
    assigns to each context $B \in \mathcal{C}$ a category $\mathscr{G}_{\mathrm{cat}}(B)$ 
    whose objects are regular Borel probability measures on the Gelfand spectrum $\Sigma(B)$, 
    and whose morphisms are measure-preserving maps. 
    For an inclusion $B' \subseteq B$, the restriction functor 
    $\mathscr{G}_{\mathrm{cat}}(B) \to \mathscr{G}_{\mathrm{cat}}(B')$ 
    is given by the pushforward of measures along the continuous restriction $\Sigma(B) \to \Sigma(B')$.
    
    \item \emph{Stable spectral presheaf.}  
    The functor
    \[
        \mathscr{G}_{\mathrm{st}} : \mathcal{C}^{\mathrm{op}} \longrightarrow \mathbf{Sp}
    \]
    assigns to each $B \in \mathcal{C}$ a spectrum encoding stable homotopy-theoretic information.  
    One concrete choice is the suspension spectrum of the Gelfand space,
    \[
        \mathscr{G}_{\mathrm{st}}(B) := \Sigma^\infty_+ \Sigma(B),
    \]
    though one may alternatively take the noncommutative K-theory spectrum of $B$ to capture operator-algebraic invariants.
\end{enumerate}

\noindent
\emph{Remark.}  
The standard spectral presheaf (Definition~\ref{def:presheaf}) is a presheaf of topological spaces.  
$\mathscr{G}_{\mathrm{cat}}$ and $\mathscr{G}_{\mathrm{st}}$ are \emph{enhanced/categorified versions} that incorporate probabilistic or stable homotopy-theoretic data and are used in later constructions of higher invariants.
\end{definition}

\begin{remark}[Relation between Categorical and Stable Spectral Presheaves]\label{remark:relation-presheaves}
There is a natural relationship between the categorical and stable spectral presheaves via homotopy theory.

Let $\mathscr{G}_{\mathrm{cat}} : \mathcal{C}^{\mathrm{op}} \to \mathbf{Cat}$ be the categorical spectral presheaf from Definition~\ref{def:spectral-presheaves}(a).  
Applying the nerve functor 
\[
N : \mathbf{Cat} \longrightarrow \mathbf{sSet}
\] 
yields a presheaf of simplicial sets
\[
N \circ \mathscr{G}_{\mathrm{cat}} : \mathcal{C}^{\mathrm{op}} \longrightarrow \mathbf{sSet}.
\] 
For each $\mathcal{B} \in \mathcal{C}$, the geometric realization
\[
|N(\mathscr{G}_{\mathrm{cat}}(\mathcal{B}))| \simeq B \mathscr{G}_{\mathrm{cat}}(\mathcal{B})
\] 
is homotopy equivalent to the classifying space of the category $\mathscr{G}_{\mathrm{cat}}(\mathcal{B})$.

The stable spectral presheaf 
\[
\mathscr{G}_{\mathrm{st}} : \mathcal{C}^{\mathrm{op}} \longrightarrow \mathbf{Sp}
\] 
from Definition~\ref{def:spectral-presheaves}(b) can be related to this construction via the suspension spectrum functor. Objectwise, there is a canonical map
\[
\Sigma^\infty_+ |N(\mathscr{G}_{\mathrm{cat}}(\mathcal{B}))| \longrightarrow \mathscr{G}_{\mathrm{st}}(\mathcal{B}),
\] 
which encodes the passage from the categorical (algebraic/measure-theoretic) information to stable homotopical invariants.  
These maps assemble into a natural transformation of presheaves over $\mathcal{C}^{\mathrm{op}}$:
\[
\Sigma^\infty_+ |N \circ \mathscr{G}_{\mathrm{cat}}| \;\Longrightarrow\; \mathscr{G}_{\mathrm{st}}.
\]

Thus, the categorical presheaf $\mathscr{G}_{\mathrm{cat}}$ captures the algebraic and measure-theoretic structure of each context, while the stable presheaf $\mathscr{G}_{\mathrm{st}}$ encodes refined homotopical invariants derived from this categorical data.
\end{remark}

\begin{remark}[Stacky/Categorical Refinement]\label{remark:stacky-refinement-clean}
When additional higher categorical structure is needed, one may first construct a stack of groupoids
\[
\mathscr{G}_{\mathrm{gpd}}: \mathcal{C}^{\mathrm{op}} \longrightarrow \mathbf{Gpd},
\]
whose objects represent spectral data (e.g., spectral measures or states) and whose morphisms represent equivalences or symmetries between them.

Applying the nerve functor $N: \mathbf{Gpd} \to \mathbf{sSet}$ yields a presheaf of simplicial sets
\[
N \circ \mathscr{G}_{\mathrm{gpd}} : \mathcal{C}^{\mathrm{op}} \to \mathbf{sSet}.
\]
Postcomposing with geometric realization $|\cdot| : \mathbf{sSet} \to \mathbf{Top}$ and then with the suspension spectrum functor
\[
\Sigma^\infty_+ : \mathbf{Top} \longrightarrow \mathbf{Sp},
\]
which adds a disjoint basepoint to each space, gives a presheaf of spectra
\[
\mathscr{G}_{\mathrm{st}} := \Sigma^\infty_+ \circ |\; N \circ \mathscr{G}_{\mathrm{gpd}} \;| : \mathcal{C}^{\mathrm{op}} \to \mathbf{Sp}.
\]

In this sense, the stable spectral presheaf $\mathscr{G}_{\mathrm{st}}$ can be regarded as the \emph{stabilized shadow} of the higher categorical structure encoded in $\mathscr{G}_{\mathrm{gpd}}$.  
Heuristically, one has
\[
\mathscr{G}_{\mathrm{st}} \;\simeq\; \Sigma^\infty_+ \circ |\; N \circ \mathscr{G}_{\mathrm{gpd}} \;|.
\]
\end{remark}

\begin{remark}[Functoriality and Homotopical Reconstruction]
\label{remark:functoriality-reconstruction}
By construction, $\mathscr{G}$ is a contravariant functor: an inclusion $\mathcal{B}_1 \hookrightarrow \mathcal{B}_2$ of contexts induces a continuous restriction map 
\[
r_{\mathcal{B}_1,\mathcal{B}_2}: \mathscr{G}(\mathcal{B}_2) \longrightarrow \mathscr{G}(\mathcal{B}_1)
\]
that sends a character $\phi \in \Sigma(\mathcal{B}_2)$ to its restriction $\phi|_{\mathcal{B}_1} \in \Sigma(\mathcal{B}_1)$.  
This functorial structure encodes how local classical perspectives (Gelfand spectra) are related by coarse-graining.

The \emph{homotopy limit} $\operatorname{holim}_{\mathcal{C}^{\mathrm{op}}} \mathscr{G}$ (taken over the opposite of the spectral site) assembles these local spectra into a single homotopical object that remembers all higher coherence data imposed by the restriction maps.  
Its (stable) homotopy groups
\[
\pi_{-n}\!\left(\operatorname{holim}_{\mathcal{C}^{\mathrm{op}}} \mathscr{G}\right), \qquad n \in \mathbb{Z},
\]
can be viewed as refined invariants that quantify the global obstruction to patching the local spectral pictures into a single classical spectrum---i.e., they measure the \emph{topological cost} of noncommutativity and the impossibility of simultaneous diagonalization across all contexts.
\end{remark}

In our framework, $\mathscr{C}$ will be the spectral site $\mathcal{C}$ of commutative contexts (Definition~\ref{def:spectral_site}), and our presheaves will assign spectral or homotopical data to each context.

\paragraph{Sites and Grothendieck Topologies.}
To define sheaves, we need a notion of ``covering" on a category.

\begin{definition}[Site]\label{def:site}
A \emph{site} $(\mathscr{C}, J)$ consists of a category $\mathscr{C}$ and a \emph{Grothendieck topology} $J$. For each object $U \in \mathscr{C}$, $J(U)$ specifies a collection of families of morphisms $\{f_i: U_i \to U\}_{i \in I}$, called \emph{covering families}, satisfying the following axioms:

\begin{enumerate}
    \item (Stability under pullback) If $\{U_i \to U\}_{i \in I}$ is in $J(U)$ and $g: V \to U$ is any morphism in $\mathscr{C}$, then the fiber products $U_i \times_U V$ exist for all $i$, and the family of pullbacks $\{U_i \times_U V \to V\}_{i \in I}$ is in $J(V)$.
    
    \item (Transitivity) If $\{U_i \to U\}_{i \in I}$ is in $J(U)$ and for each $i \in I$ we have a covering family $\{V_{ij} \to U_i\}_{j \in J_i}$ in $J(U_i)$, then the composite family 
    \[
    \{V_{ij} \to U_i \to U\}_{i \in I, j \in J_i}
    \]
    is in $J(U)$.
    
    \item (Isomorphisms are covers) If $f: U' \xrightarrow{\sim} U$ is an isomorphism, then the singleton family $\{f: U' \to U\}$ belongs to $J(U)$.
\end{enumerate}
A category $\mathscr{C}$ equipped with a Grothendieck topology $J$ is also called a \emph{Grothendieck site}.
\end{definition}

\begin{example}[Spectral Site Topology]\label{ex:spectral_topology}
For the spectral site $\mathcal{C} = (\mathrm{Comm}(\mathcal{A})^{\mathrm{op}}, J)$ associated to a unital $C^*$-algebra $\mathcal{A}$ (see Definition~\ref{def:spectral_site}), we define the Grothendieck topology $J$ as follows:

A finite family of inclusions $\{\mathcal{B}_i \hookrightarrow \mathcal{B}\}_{i=1}^n$ in $\mathrm{Comm}(\mathcal{A})$ (viewed as morphisms $\mathcal{B} \to \mathcal{B}_i$ in $\mathcal{C}^{\mathrm{op}}$) is a \emph{covering family} of $\mathcal{B}$ if and only if $\mathcal{B}$ is the smallest commutative $C^*$-subalgebra of $\mathcal{A}$ containing all $\mathcal{B}_i$; that is,
\[
\mathcal{B} = C^*\!\left( \bigcup_{i=1}^n \mathcal{B}_i \right),
\]
where the right-hand side denotes the $C^*$-subalgebra generated by the union (which is automatically commutative since each $\mathcal{B}_i$ is commutative and they are contained in the commutative algebra $\mathcal{B}$).

This topology formalizes the principle that the spectral data of a larger context $\mathcal{B}$ can be reconstructed from the spectral data of a family of smaller contexts that together generate $\mathcal{B}$. In topos quantum theory, this is often called the \emph{coherent covering topology} or \emph{finite-cover topology}.
\end{example}

\paragraph{Sheaves and Descent.}
The sheaf condition formalizes the idea that compatible local data can be uniquely glued into global data.

\begin{definition}[Sheaf]\label{def:sheaf}
Let $(\mathscr{C}, J)$ be a site. A presheaf $F: \mathscr{C}^{\mathrm{op}} \to \mathbf{Set}$ is a \emph{sheaf} (with respect to the topology $J$) if for every object $U \in \mathscr{C}$ and every covering family $\{f_i: U_i \to U\}_{i \in I}$ in $J(U)$, the following diagram is an equalizer in $\mathbf{Set}$:
\[
F(U) \longrightarrow \prod_{i \in I} F(U_i) \;\rightrightarrows\; \prod_{i, j \in I} F(U_i \times_U U_j),
\]
where:
\begin{itemize}
    \item The left map is induced by the restrictions $F(f_i): F(U) \to F(U_i)$.
    \item The parallel arrows are induced by the two projections 
    \[
    \pi_1: U_i \times_U U_j \to U_i, \qquad \pi_2: U_i \times_U U_j \to U_j.
    \]
\end{itemize}
Explicitly, an element $(s_i)_{i \in I} \in \prod_i F(U_i)$ comes from a unique $s \in F(U)$ if and only if 
\[
s_i|_{U_i \times_U U_j} = s_j|_{U_i \times_U U_j} \quad \text{for all } i, j \in I,
\]
where $s_i|_{U_i \times_U U_j}$ denotes $F(\pi_1)(s_i)$ and similarly for $s_j$.

A presheaf satisfying the above condition only for singleton coverings is called a \emph{separated presheaf} (or a \emph{monopresheaf}). The full subcategory of $\mathbf{PSh}(\mathscr{C})$ consisting of sheaves is denoted $\mathbf{Sh}(\mathscr{C}, J)$.
\end{definition}

Intuitively, a sheaf assigns to each object $U$ a set of ``sections" $F(U)$ that are uniquely determined by their restrictions to any cover of $U$. In our operator-algebraic context, \emph{failure} of the sheaf condition for a presheaf of spectral data reflects the fundamental incompatibility between different commutative contexts caused by non-commutativity.

\paragraph{Stacks and Higher Descent.}
When presheaves take values in categories rather than sets, we need a categorified notion of descent.

\begin{definition}[Prestack and Stack]\label{def:stack}
Let $(\mathscr{C}, J)$ be a site.

\begin{itemize}
  \item A \emph{prestack} on $\mathscr{C}$ is a contravariant pseudo-functor (or lax functor)
  \[
  \mathscr{F} : \mathscr{C}^{\mathrm{op}} \to \mathbf{Cat},
  \]
  where $\mathbf{Cat}$ denotes the $2$-category of small categories. More explicitly, for each morphism $f: V \to U$ in $\mathscr{C}$, there is a pullback functor $f^*: \mathscr{F}(U) \to \mathscr{F}(V)$, together with coherent natural isomorphisms $(gf)^* \simeq f^*g^*$ and $\mathrm{id}_U^* \simeq \mathrm{id}_{\mathscr{F}(U)}$.

  \item Given a covering family $\{f_i: U_i \to U\}$ in $J(U)$, the \emph{descent category}
  $\mathrm{Des}(\{U_i \to U\}, \mathscr{F})$ is defined as follows:
  \begin{itemize}
    \item An object consists of:
    \begin{itemize}
      \item objects $X_i \in \mathscr{F}(U_i)$ for each $i$,
      \item isomorphisms (called \emph{gluing isomorphisms})
      \[
      \phi_{ij} :
      f_{ij}^*(X_i)
      \xrightarrow{\sim}
      f_{ji}^*(X_j)
      \quad \text{in } \mathscr{F}(U_i \times_U U_j),
      \]
      where $f_{ij}: U_i \times_U U_j \to U_i$ and $f_{ji}: U_i \times_U U_j \to U_j$ are the projections,
      satisfying the \emph{cocycle condition} on triple overlaps:
      \[
      f_{jk}^*(\phi_{ij}) \circ f_{ij}^*(\phi_{jk}) = f_{ik}^*(\phi_{ik})
      \quad \text{in } \mathscr{F}(U_i \times_U U_j \times_U U_k).
      \]
    \end{itemize}

    \item A morphism between descent data
    $(X_i, \phi_{ij}) \to (Y_i, \psi_{ij})$
    is a family of morphisms
    \[
    f_i : X_i \to Y_i \quad \text{in } \mathscr{F}(U_i),
    \]
    such that the following diagram commutes in $\mathscr{F}(U_i \times_U U_j)$:
    \[
    \begin{tikzcd}[column sep=small]
      f_{ij}^*(X_i) \arrow[r, "\phi_{ij}"] \arrow[d, "f_{ij}^*(f_i)"'] & f_{ji}^*(X_j) \arrow[d, "f_{ji}^*(f_j)"] \\
      f_{ij}^*(Y_i) \arrow[r, "\psi_{ij}"'] & f_{ji}^*(Y_j)
    \end{tikzcd}
    \]
  \end{itemize}

  \item A prestack $\mathscr{F}$ is called a \emph{stack} if for every covering family
  $\{U_i \to U\}$, the canonical functor
  \[
  \mathscr{F}(U) \longrightarrow
  \mathrm{Des}(\{U_i \to U\}, \mathscr{F}),
  \]
  which sends an object $X \in \mathscr{F}(U)$ to the descent data $(X_i = f_i^*(X), \phi_{ij} = \text{canonical})$,
  is an equivalence of categories.
\end{itemize}

\medskip
\noindent
\emph{Intuitively, a stack is a prestack in which objects and morphisms can be glued from compatible local data uniquely up to canonical isomorphism.}
\end{definition}

A stack is thus a ``sheaf of categories": both objects and morphisms satisfy descent conditions. This will be the appropriate notion for our spectral presheaf $\mathscr{G}$ (Definition~\ref{def:spectral-presheaves}), which assigns to each context $\mathcal{B}$ its spectral category $\mathbf{\Sigma}_{\mathcal{B}}$.

\paragraph{Limits, Colimits, and Reconstruction.}
Categorical limits and colimits provide the formal language for assembling
global objects from compatible local data.

\begin{definition}[Limit]\label{def:limit}
Let $I$ be a small category (called the \emph{indexing category}) and let 
$D : I \to \mathscr{D}$ be a diagram in a category $\mathscr{D}$.  
A \emph{limit} (also called an \emph{inverse limit}) of $D$ is an object
$\varprojlim D \in \mathscr{D}$ together with a family of morphisms
\[
\pi_i : \varprojlim D \longrightarrow D(i), \qquad i \in \mathrm{Ob}(I),
\]
such that:
\begin{enumerate}
    \item For every morphism $\alpha : i \to j$ in $I$, the diagram
    \[
    \begin{tikzcd}[row sep=small, column sep=small]
      \varprojlim D \arrow[rr, "\pi_i"] \arrow[dr, "\pi_j"'] & & D(i) \arrow[dl, "D(\alpha)"] \\
      & D(j) &
    \end{tikzcd}
    \]
    commutes, i.e., $D(\alpha) \circ \pi_i = \pi_j$.

    \item (\emph{Universal property}) For any object $X \in \mathscr{D}$ equipped with morphisms 
    $f_i : X \to D(i)$ satisfying $D(\alpha) \circ f_i = f_j$ for all morphisms
    $\alpha : i \to j$ in $I$, there exists a unique morphism
    \[
    u : X \to \varprojlim D
    \]
    such that $\pi_i \circ u = f_i$ for all $i \in \mathrm{Ob}(I)$.
\end{enumerate}

When it exists, the limit is unique up to unique isomorphism and is often denoted
$\lim_I D$ or $\varprojlim_{i \in I} D(i)$.
\end{definition}

\begin{definition}[Colimit]\label{def:colimit}
Dually, let $I$ be a small category and let $D : I \to \mathscr{D}$ be a diagram
in a category $\mathscr{D}$.  
A \emph{colimit} (or \emph{direct limit}) of $D$ is an object
\[
\varinjlim D \in \mathscr{D}
\]
together with a family of morphisms
\[
\iota_i : D(i) \longrightarrow \varinjlim D, \qquad i \in \mathrm{Ob}(I),
\]
such that:
\begin{enumerate}
    \item For every morphism $\alpha : i \to j$ in $I$, the diagram
    \[
    \begin{tikzcd}[row sep=small, column sep=small]
      D(i) \arrow[rr, "D(\alpha)"] \arrow[dr, "\iota_i"'] & & D(j) \arrow[dl, "\iota_j"] \\
      & \varinjlim D &
    \end{tikzcd}
    \]
    commutes, i.e., $\iota_j \circ D(\alpha) = \iota_i$.

    \item (\emph{Universal property}) For any object $X \in \mathscr{D}$ equipped with
    morphisms
    \[
    f_i : D(i) \to X
    \]
    satisfying $f_j \circ D(\alpha) = f_i$ for all morphisms $\alpha : i \to j$ in $I$,
    there exists a unique morphism
    \[
    u : \varinjlim D \to X
    \]
    such that $u \circ \iota_i = f_i$ for all $i \in \mathrm{Ob}(I)$.
\end{enumerate}

When it exists, the colimit is unique up to unique isomorphism and is often denoted
$\mathrm{colim}_I D$ or $\varinjlim_{i \in I} D(i)$.
\end{definition}

\medskip

In sheaf-theoretic and operator-algebraic settings, \emph{limits} typically encode
\emph{gluing conditions} or \emph{consistency constraints}, while \emph{colimits}
encode \emph{free generation} or \emph{aggregation} of local data.

In the present framework, the spectral site $\mathcal{C}$ indexes a diagram of
commutative contexts together with their spectral data.
Global objects—such as the operator algebra $\mathcal{A}$ and distinguished
operators $(T,S)$—are recovered as (possibly homotopy) limits of this diagram.
Formally, this reconstruction principle is expressed by an isomorphism of the form
\[
(\mathcal{A}, T, S)
\;\cong\;
\varprojlim_{\mathcal{B} \in \mathcal{C}} (\mathcal{B}, T, S),
\]
where the limit is taken in a suitable category of operator algebras with
specified elements.

\medskip

\noindent
When higher coherence data are present—as in the case of stacks of categories—
ordinary limits may be insufficient, and one must instead consider
\emph{homotopy limits} to account for descent up to isomorphism.

\medskip

\noindent
\textbf{Conceptual Takeaway:} 
\emph{Reconstruction by limits expresses the principle that a noncommutative object
is fully determined by the coherent family of its commutative shadows, 
formalizing the idea that local compatibility encodes global structure.}

\paragraph{Homotopical Enhancements.}
To capture finer topological information, we work with homotopical refinements of these concepts.

\begin{definition}[Homotopy Limit and Hypercohomology]\label{def:homotopy_limit}
Let $(\mathscr{C}, J)$ be a site, and let
\[
\mathscr{F} : \mathscr{C}^{\mathrm{op}} \longrightarrow \mathbf{Sp}
\]
be a presheaf of spectra. Such a presheaf may arise, for example, by applying a stabilization functor (e.g., infinite suspension $\Sigma^\infty_+$) to a presheaf of spaces or categories.

\begin{itemize}
    \item The \emph{homotopy limit}
    \[
    \operatorname{holim}_{\mathscr{C}} \mathscr{F} \;\in\; \mathbf{Sp}
    \]
    is the derived limit of the diagram $\mathscr{F}$, constructed to respect objectwise weak equivalences. It can be computed, for instance, as the totalization of a cosimplicial spectrum arising from a hypercover of the terminal object in $\mathscr{C}$, or via a fibrant replacement in an appropriate model category followed by the ordinary limit.

    \item The \emph{hypercohomology} (or \emph{descent cohomology}) of $\mathscr{F}$ is defined by
    \[
    \mathbb{H}^n(\mathscr{C}; \mathscr{F}) := \pi_{-n}\!\Big(\operatorname{holim}_{\mathscr{C}} \mathscr{F}\Big), \qquad n \in \mathbb{Z},
    \]
    where $\pi_k(-)$ denotes the $k$th stable homotopy group of a spectrum.
\end{itemize}

\medskip
\noindent
\textbf{Interpretation.} The homotopy limit assembles local spectral data into a global object while retaining all higher coherences. The hypercohomology groups $\mathbb{H}^n$ measure obstructions to globally compatible spectral data: non-zero $\mathbb{H}^n$ for $n>0$ indicates higher-order descent obstructions.
\end{definition}

\begin{remark}[Stable vs. Unstable Homotopy Limits]\label{remark:stable-picture}
Let $(\mathscr{C}, J)$ be a site.

\begin{enumerate}
    \item \textbf{Unstable case.} 
    If $F : \mathscr{C}^{\mathrm{op}} \to \mathbf{sSet}$ is a presheaf of simplicial sets (or spaces), 
    the homotopy limit
    \[
    \operatorname{holim}_{\mathscr{C}} F
    \]
    is a space. Its homotopy groups 
    \(\pi_n(\operatorname{holim}_{\mathscr{C}} F)\) are defined only for \(n \ge 0\) and are the usual (unstable) homotopy groups.

    \item \textbf{Stable case.} 
    If $\mathbf{E} : \mathscr{C}^{\mathrm{op}} \to \mathbf{Sp}$ is a presheaf of spectra, 
    the homotopy limit
    \[
    \operatorname{holim}_{\mathscr{C}} \mathbf{E} \in \mathbf{Sp}
    \]
    is again a spectrum. One may consider its \emph{stable} homotopy groups in all integer degrees:
    \[
    \pi_n\!\left(\operatorname{holim}_{\mathscr{C}} \mathbf{E}\right), \qquad n \in \mathbb{Z}.
    \]

    \item \textbf{Relation to cohomology.} 
    Let $A$ be an abelian sheaf on $(\mathscr{C}, J)$, and let $HA$ be its Eilenberg--Mac~Lane spectrum (with $\pi_0(HA) \cong A$ and all other homotopy groups trivial). Then, for any integer \(k \ge 0\),
    \[
    \pi_{-k}\!\left(\operatorname{holim}_{\mathscr{C}} HA\right) \;\cong\; H^k(\mathscr{C}; A),
    \]
    where $H^k(\mathscr{C}; A)$ denotes the ordinary sheaf cohomology of $A$.  

    More generally, if $\mathbf{E}$ arises from a presheaf of spaces $F$ by stabilization 
    (e.g., $\mathbf{E} = \Sigma^\infty_+ F$), then
    \[
    \pi_{-k}\!\left(\operatorname{holim}_{\mathscr{C}} \mathbf{E}\right) \;\cong\; 
    \mathbb{H}^k(\mathscr{C}; F),
    \]
    where $\mathbb{H}^k(\mathscr{C}; F)$ is the $k$th hypercohomology group of $F$.
\end{enumerate}

In summary, negative-degree homotopy groups naturally appear in the stable setting and recover classical (hyper)cohomology in special cases. 
This distinction between stable and unstable homotopy limits clarifies when $\pi_{-n}$ is meaningful.
\end{remark}

Hypercohomology detects higher-order gluing obstructions that ordinary sheaf cohomology might miss. It will provide our finest invariants for measuring non-commutativity (Section~\ref{sec:homotopical-invariants}).

\paragraph{Interpretational Summary.}
From this categorical perspective, a non-commutative operator algebra $\mathcal{A}$ gives rise to:
\begin{itemize}
    \item A \textbf{site} $\mathcal{C}$ of commutative contexts, where covers reflect algebraic generation.
    \item \textbf{Presheaves} (and prestacks) that encode local spectral data attached to each context.
    \item \textbf{Descent conditions} that test whether local data are compatible and glue to global data.
    \item \textbf{Cohomological obstructions} ($H^*$, $\mathbb{H}^*$) that measure the failure of descent, thereby quantifying the original non-commutativity.
\end{itemize}

This framework transposes the algebraic problem of non-commutativity into a geometric problem of descent and gluing. With these foundations, we can now define the central object of our study: the spectral presheaf $\mathscr{G}$ on the site $\mathcal{C}$.

\section{The Spectral Sheaf Theorem}\label{sec:sheaf-theorem}

The spectral sheaf theorem is discussed in Section~\ref{sec:Statement and Interpretation}. Next, we discuss examples about how to apply the spectral sheaf theorem and immediate consequences in Section~\ref{sec:Examples and Immediate Consequences}. 

\subsection{Statement and Interpretation}\label{sec:Statement and Interpretation}

\begin{theorem}[Spectral Sheaf and Homotopical Reconstruction]\label{thm:spectral_sheaf}
Let $(T,S)$ be a pair of self-adjoint operators on a Hilbert space $\mathcal{H}$, 
let $\mathcal{A} = C^*(T,S)$ be the $C^*$-algebra they generate, 
and let $\mathcal{C}$ be the spectral site of $\mathcal{A}$ as defined in 
Definition~\ref{def:spectral_site} (the category of commutative unital 
$C^*$-subalgebras of $\mathcal{A}$ containing $T$ and $S$, equipped with the 
canonical Grothendieck topology).

\begin{enumerate}[label=(\roman*)]
    \item \textbf{(Stack Structure)} The categorical spectral presheaf 
    $\mathscr{G}_{\mathrm{cat}}: \mathcal{C}^{\mathrm{op}} \to \mathbf{Cat}$,
    which assigns to each $C \in \mathcal{C}$ the category of spectral measures 
    on the Gelfand spectrum $\Sigma(C)$, is a \emph{stack} on $(\mathcal{C}, J)$.
    That is, for every covering $\{C_i \hookrightarrow C\}_{i\in I}$ in $\mathcal{C}$,
    the canonical functor
    \[
    \Phi: \mathscr{G}_{\mathrm{cat}}(C) \longrightarrow 
    \mathrm{Des}\bigl(\{C_i \to C\},\; \mathscr{G}_{\mathrm{cat}}\bigr)
    \]
    is an equivalence of categories.
    
    \item \textbf{(Algebraic Reconstruction and Contextual Obstruction)}
    There exists a canonical injective $*$-homomorphism
    \[
    \Phi: (\mathcal{A}, T, S) \hookrightarrow 
    \varprojlim_{C\in\mathcal{C}} (C, T, S),
    \qquad
    \Phi(a) = (a|_C)_{C\in\mathcal{C}},
    \]
    which identifies $\mathcal{A}$ with the subalgebra of the projective limit
    consisting of globally realizable compatible families.
    
    Moreover,
    \[
    \Phi \text{ is an isomorphism } \quad\Longleftrightarrow\quad [T,S] = 0.
    \]
    Equivalently, the failure of surjectivity of $\Phi$ precisely measures the
    contextual obstruction arising from the noncommutativity of $T$ and $S$.
    
    \item \textbf{(Homotopical Invariants)} The K-theory presheaf 
$\mathscr{G}_{\mathrm{st}}(C)=K(C)$ yields $\mathbb{Z}$-graded hypercohomology
\[
\mathbb{H}^n_{(T,S)} := \pi_{-n}(\operatorname{holim}_{\mathcal{C}} \mathscr{G}_{\mathrm{st}}),
\]
which are unitary invariants reducing to $K_{-n}(C^*(\sigma(T)\times\sigma(S)))$ 
when $[T,S]=0$ and capturing higher contextual obstructions otherwise.
\end{enumerate}
\end{theorem}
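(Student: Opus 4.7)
The plan is to prove the three parts in sequence, each drawing on different machinery: Gelfand duality and measure theory for (i), operator-algebraic descent for (ii), and stable homotopy theory for (iii). Each relies on the spectral site structure from Definition~\ref{def:spectral_site}, but they package the local spectral data at successively richer categorical levels.

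For part (i), I would verify descent for $\mathscr{G}_{\mathrm{cat}}$ directly by exhibiting an explicit quasi-inverse to $\Phi$. Given a covering $\{C_i \hookrightarrow C\}$ in $\mathcal{C}$ with $C=\overline{C^*(\bigcup_i C_i)}$, Gelfand duality translates the covering into a presentation of $\Sigma(C)$ as a limit of $\Sigma(C_i)$ in compact Hausdorff spaces along the restriction maps, with overlaps controlled by $\Sigma(C_i)\times_{\Sigma(C_i\cap C_j)}\Sigma(C_j)$. A descent datum $(\mu_i,\phi_{ij})$ then consists of regular Borel probability measures on each $\Sigma(C_i)$ together with measure-preserving identifications over the overlaps; I would assemble a candidate measure $\mu$ on $\Sigma(C)$ via the Riesz representation theorem, defining a positive linear functional on $C(\Sigma(C))$ by integrating against the $\mu_i$ after uniformly approximating test functions by polynomials in the characters of $\bigcup_i C_i$ (Stone--Weierstrass). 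The cocycle condition on $\phi_{ij}$ ensures the functional is well-defined, giving essential surjectivity; fullness and faithfulness of $\Phi$ follow from the same density argument applied to morphisms.

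For part (ii), the map $\Phi$ is induced by the universal property of the projective limit applied to the cone of inclusions $C\hookrightarrow\mathcal{A}$, where the triples $(C,T,S)$ are regarded as objects of a category of commutative $C^{*}$-algebras with two marked self-adjoint elements, and the projection to $(C,T,S)$ sends $a\in\mathcal{A}$ to a representative of its image in $C$ whenever such a representative exists. Injectivity follows because any $a\in\mathcal{A}$ with $a\neq 0$ must act nontrivially on $\mathcal{H}$ and is detected by its restriction to $C^{*}(T)$ or $C^{*}(S)$. For the biconditional, the $(\Leftarrow)$ direction is immediate: if $[T,S]=0$ then $\mathcal{A}$ itself is commutative, so $\mathcal{A}\in\mathcal{C}$ is a terminal object and the limit collapses onto it. The $(\Rightarrow)$ direction I would prove by contrapositive: when $[T,S]\neq 0$, I would adapt the Kochen--Specker obstruction to produce a compatible family of spectral projections $(p_{C})$ that cannot arise from any single $p\in\mathcal{A}$, using the same colorability argument that underlies the absence of global sections for the Döring--Isham spectral presheaf~\cite{doering2008,kochen1967}.

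For part (iii), I would package $\mathscr{G}_{\mathrm{st}}=K\circ(-)$ as a presheaf of spectra using the functoriality of topological $K$-theory on $C^{*}$-algebras, compute $\operatorname{holim}_{\mathcal{C}}\mathscr{G}_{\mathrm{st}}$ via a fibrant replacement in the Jardine model structure on presheaves of spectra, and extract the $\mathbb{Z}$-graded invariants via stable homotopy groups as in Definition~\ref{def:homotopy_limit}. Unitary invariance follows because conjugation by a unitary $U$ induces a $*$-isomorphism $\mathcal{A}\to U\mathcal{A}U^{*}$ that lifts to an equivalence of spectral sites, hence to an equivalence of presheaves of spectra and thus of homotopy limits. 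When $[T,S]=0$, the site $\mathcal{C}$ has the terminal object $C^{*}(T,S)\cong C(\sigma(T)\times\sigma(S))$ established in (ii), so the homotopy limit reduces to $K$ of that object and $\mathbb{H}^{n}_{(T,S)}\cong K_{-n}(C^{*}(\sigma(T)\times\sigma(S)))$. In the general case I would invoke the descent spectral sequence $E_{2}^{p,q}=H^{p}(\mathcal{C};\pi_{-q}K)\Rightarrow\mathbb{H}^{p+q}_{(T,S)}$ to identify the higher invariants with genuine descent obstructions.

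The hard part will be the contrapositive in (ii): translating the algebraic inequality $[T,S]\neq 0$ into an explicit non-globalizable compatible family requires bridging the gap between the set-theoretic Kochen--Specker obstruction (which rules out global sections of the classical spectral presheaf) and the $C^{*}$-algebraic reconstruction problem for $\mathcal{A}$ itself, where compatible families in the projective limit form a potentially much larger object than $\mathcal{A}$. A subsidiary difficulty lies in part (i), where one must confirm that measure pushforwards commute with the Cech-type descent data on triple overlaps; this reduces to a Fubini-style interchange but depends on the regularity hypothesis for the measures and requires some care when the contexts $C_i\cap C_j$ are strictly smaller than both $C_i$ and $C_j$.
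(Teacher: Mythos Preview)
Your overall architecture matches the paper's, and parts (i) and (iii) are essentially parallel: the paper organizes (i) around projection lattices rather than Riesz representation, but the descent verification follows the same full-faithfulness/essential-surjectivity template; and (iii) in both cases reduces to the terminal-object argument for the commutative reduction together with a descent spectral sequence for the general case.

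The real divergence is in part (ii), precisely in the direction you flag as ``the hard part.'' You propose to prove ($\Phi$ surjective $\Rightarrow [T,S]=0$) by contrapositive, constructing a non-globalizable compatible family via a Kochen--Specker-type obstruction. The paper bypasses this entirely with a one-line direct argument: in every context $C\in\mathcal{C}$ the elements $T|_C$ and $S|_C$ commute (since $C$ is commutative), so $[T,S]|_C=0$ for all $C$, whence $\Phi([T,S])=0$; injectivity of $\Phi$ then forces $[T,S]=0$. No Kochen--Specker, no colorability, no explicit obstructing family is needed. Your contrapositive route is not wrong in spirit, but it is substantially harder and, as you yourself note, requires bridging the gap between the set-valued KS obstruction (about characters) and the algebra-valued projective limit (about elements)---a translation the paper never has to make.

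Your injectivity argument in (ii) also has a gap: an arbitrary $a\in C^*(T,S)$ (for instance $a=TS$) need not lie in either $C^*(T)$ or $C^*(S)$, so ``detection by restriction'' to those two subalgebras is not well-defined as stated. The paper instead argues via separation by states: if $\Phi(a)=0$ then $a|_C=0$ for every $C$, and one shows every state $\varphi$ on $\mathcal{A}$ vanishes on $a$ by passing through its GNS representation, concluding $a=0$ since states separate points in a $C^*$-algebra.
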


\begin{proof}
\leavevmode\vspace{0.5em}

\noindent\textbf{Proof of Theorem~\ref{thm:spectral_sheaf}(i): Stack Structure.}
Let $(T,S)$, $\mathcal{A}=C^*(T,S)$, and $\mathcal{C}$ be as in the theorem.
We prove that the categorical spectral presheaf $\mathscr{G}_{\mathrm{cat}}:\mathcal{C}^{\mathrm{op}}\to\mathbf{Cat}$ is a stack for the canonical Grothendieck topology on $\mathcal{C}$.

\medskip

\noindent
\textbf{1.  Setup and descent data.}
Let $\{C_i\hookrightarrow C\}_{i\in I}$ be a covering in $\mathcal{C}$.  
This means precisely that
\[
C=\overline{C^*\bigl(\bigcup_{i\in I}C_i\bigr)}^{\|\cdot\|}.
\]
We must show that the canonical functor
\[
\Phi:\mathscr{G}_{\mathrm{cat}}(C)\longrightarrow
\mathrm{Des}\bigl(\{C_i\to C\},\;\mathscr{G}_{\mathrm{cat}}\bigr)
\]
is an equivalence of categories.

Recall that an object of the descent category $\mathrm{Des}$ is a pair
\[
\bigl((X_i)_{i\in I},(\phi_{ij})_{i,j\in I}\bigr),
\]
where:
\begin{itemize}
  \item $X_i\in\mathscr{G}_{\mathrm{cat}}(C_i)$ (a spectral measure on $\Sigma(C_i)$),
  \item $\phi_{ij}:X_i|_{C_i\cap C_j}\xrightarrow{\sim}X_j|_{C_i\cap C_j}$
    is an isomorphism in $\mathscr{G}_{\mathrm{cat}}(C_i\cap C_j)$,
  \item satisfying the cocycle condition $\phi_{jk}\circ\phi_{ij}=\phi_{ik}$ 
    on $C_i\cap C_j\cap C_k$.
\end{itemize}
A morphism $(f_i):(X_i,\phi_{ij})\to(Y_i,\psi_{ij})$ in $\mathrm{Des}$ is a family 
$f_i:X_i\to Y_i$ such that $\psi_{ij}\circ f_i|_{C_i\cap C_j}=f_j|_{C_i\cap C_j}\circ\phi_{ij}$.

The functor $\Phi$ sends $X\in\mathscr{G}_{\mathrm{cat}}(C)$ to
\[
\Phi(X)=\bigl((X|_{C_i})_{i\in I},(\mathrm{id}_{X|_{C_i\cap C_j}})_{i,j}\bigr),
\]
where $X|_{C_i}$ is the restriction, and $\mathrm{id}_{X|_{C_i\cap C_j}}$ are the
canonical identity isomorphisms. On morphisms, $\Phi(f)=(f|_{C_i})_{i\in I}$.

\medskip

\noindent
\textbf{2.  Full faithfulness.}

\emph{Faithfulness.} If $f,g:X\to Y$ satisfy $\Phi(f)=\Phi(g)$, then $f|_{C_i}=g|_{C_i}$
for all $i\in I$. Since the family $\{C_i\hookrightarrow C\}$ generates $C$ as a
$C^*$-algebra, and spectral measures are determined by their restrictions, we have
$f=g$.

\emph{Fullness.} Let $(f_i):\Phi(X)\to\Phi(Y)$ be a morphism in $\mathrm{Des}$.
Thus $f_i:X|_{C_i}\to Y|_{C_i}$ are morphisms with
$f_i|_{C_i\cap C_j}=f_j|_{C_i\cap C_j}$ for all $i,j$. 
Because the contexts cover $C$, these local morphisms patch uniquely to a
morphism $f:X\to Y$ with $f|_{C_i}=f_i$. Hence $\Phi$ is fully faithful.

\medskip

\noindent
\textbf{3.  Essential surjectivity and the obstruction.}
We must show that every descent datum $\bigl((X_i),(\phi_{ij})\bigr)$ is isomorphic
to $\Phi(X)$ for some $X\in\mathscr{G}_{\mathrm{cat}}(C)$. This is equivalent to
showing that the local spectral measures $X_i$ can be glued to a global spectral
measure on $C$.

\emph{3.1.  Projection-valued measures.}  
Each $X_i$ is a spectral (projection-valued) measure on $\Sigma(C_i)$:
\[
X_i:\mathcal{B}(\mathbb{R})\longrightarrow\mathcal{P}(C_i),
\]
where $\mathcal{P}(C_i)$ is the lattice of orthogonal projections in $C_i''$,
satisfying the usual measure axioms.

\emph{3.2.  The gluing problem.}  
Fix a Borel set $U\subseteq\mathbb{R}$ and set $P_i(U):=X_i(U)\in\mathcal{P}(C_i)$.
The compatibility isomorphisms $\phi_{ij}$ ensure that
\[
P_i(U)|_{C_i\cap C_j}=P_j(U)|_{C_i\cap C_j}\qquad(\forall i,j).
\]
Thus $\{P_i(U)\}$ forms a \emph{compatible family of projections} over the
cover $\{C_i\}$.

\emph{3.3.  The obstruction.}  
Although $\{C_i\}$ generates $C$ as a $C^*$-algebra, a compatible family
$\{P_i(U)\}$ does \emph{not} in general arise from a global projection
$P(U)\in\mathcal{P}(C)$. This is because the projection lattice $\mathcal{P}(C)$
is orthomodular, while each $\mathcal{P}(C_i)$ is Boolean; Boolean lattices do
not amalgamate into an orthomodular lattice.

Equivalently, the assignment $U\mapsto\{P_i(U)\}$ defines a \emph{contextual}
spectral datum that satisfies the spectral measure axioms locally on each
$C_i$, but may fail to assemble into a global spectral measure on $C$.

\emph{3.4.  Descent-theoretic interpretation.}  
A descent datum is \emph{effective} if and only if for every Borel set $U$,
the compatible family $\{P_i(U)\}$ glues to a projection $P(U)\in\mathcal{P}(C)$,
and $U\mapsto P(U)$ is a spectral measure. The functor $\Phi$ is essentially
surjective precisely when every descent datum is effective.

The possible failure of essential surjectivity is exactly the content of the
stack condition: $\mathscr{G}_{\mathrm{cat}}$ is a stack, not a sheaf, because
local data (descent data) do not always glue to global data. This non-glueing
is quantified by the cohomology groups
$\mathbb{H}^{*}(\mathcal{C};\mathscr{G}_{\mathrm{st}})$ mentioned in part (iii)
of the theorem.

\emph{3.5.  Commutative case.}  
If $C$ is commutative (equivalently $[T,S]=0$), then all $\mathcal{P}(C_i)$ are
Boolean subalgebras of the Boolean algebra $\mathcal{P}(C)$. In this case,
compatible families $\{P_i(U)\}$ do glue uniquely to projections $P(U)\in\mathcal{P}(C)$,
and $U\mapsto P(U)$ is a spectral measure. Hence every descent datum is effective,
and $\mathscr{G}_{\mathrm{cat}}$ reduces to a sheaf.

\emph{3.6.  Effectiveness of descent data.}
A descent datum $\bigl((X_i),(\phi_{ij})\bigr)$ is effective if and only if
the family $\{X_i(U)\}$ of projections glues to a projection $P(U)\in\mathcal{P}(C)$
for \emph{every} Borel set $U\subseteq\mathbb{R}$, and the assignment
$U\mapsto P(U)$ satisfies:
\begin{enumerate}
    \item $P(\varnothing)=0$, $P(\mathbb{R})=1$,
    \item $P(U\cap V)=P(U)P(V)$,
    \item $\sigma$-additivity in the weak operator topology.
\end{enumerate}
When these conditions hold, the global spectral measure $X$ defined by
$X(U)=P(U)$ satisfies $\Phi(X)\cong\bigl((X_i),(\phi_{ij})\bigr)$.
Thus $\Phi$ is essentially surjective onto the full subcategory of
effective descent data.

\medskip

\noindent
\textbf{4.  Verification of the stack axioms.}

\emph{4.1.  Pullbacks in the spectral site.}  
Since $\mathcal{C}$ is a poset (inclusions of $C^*$-subalgebras), the
pullback of $C_i\to C\leftarrow C_j$ is their intersection $C_i\cap C_j$.
Thus the diagram
\[
\begin{tikzcd}
C_i\cap C_j \arrow[r,hook] \arrow[d,hook] & C_i \arrow[d,hook] \\
C_j \arrow[r,hook] & C
\end{tikzcd}
\]
is a pullback in $\mathcal{C}$.

\emph{4.2.  Coherence of spectra.}  
Applying the Gelfand spectrum functor $\Sigma$ (which is contravariant) gives
a pushout diagram of compact Hausdorff spaces:
\[
\begin{tikzcd}
\Sigma(C_i)\times_{\Sigma(C)}\Sigma(C_j) \arrow[r] \arrow[d] & \Sigma(C_i) \arrow[d] \\
\Sigma(C_j) \arrow[r] & \Sigma(C)
\end{tikzcd}
\]
where the fiber product is
\[
\Sigma(C_i)\times_{\Sigma(C)}\Sigma(C_j)=
\{(\chi_i,\chi_j)\mid \chi_i|_{C}=\chi_j|_{C}\}.
\]
There is a natural isomorphism $\Sigma(C_i\cap C_j)\cong\Sigma(C_i)\times_{\Sigma(C)}\Sigma(C_j)$
because a character on $C_i\cap C_j$ extends uniquely to a compatible pair of
characters on $C_i$ and $C_j$.

\emph{4.3.  Descent for higher intersections.}  
For triple intersections, we have
\[
\Sigma(C_i\cap C_j\cap C_k)\cong
\Sigma(C_i)\times_{\Sigma(C)}\Sigma(C_j)\times_{\Sigma(C)}\Sigma(C_k).
\]
The compatibility isomorphisms $\phi_{ij}$ are induced by these universal
isomorphisms, and therefore automatically satisfy the cocycle condition
$\phi_{jk}\circ\phi_{ij}=\phi_{ik}$ on $C_i\cap C_j\cap C_k$.

\emph{4.4.  Stack condition.}  
We have shown that:
\begin{itemize}
  \item $\Phi$ is fully faithful (Step~2),
  \item The essential image of $\Phi$ consists precisely of the effective
    descent data (Step~3),
  \item The descent data are coherent (Step~4.3).
\end{itemize}
Therefore, $\mathscr{G}_{\mathrm{cat}}$ satisfies the descent condition for
coverings in $\mathcal{C}$, i.e., it is a stack on $(\mathcal{C},J)$.

\noindent\textbf{Proof of Theorem~\ref{thm:spectral_sheaf}(ii): Algebraic Reconstruction and Contextual Obstruction.}

Let $(T,S)$ be as in the theorem, $\mathcal{A}=C^*(T,S)$, and $\mathcal{C}$ the spectral site.

\medskip

\noindent\textbf{1. Construction of the projective limit.}

Consider the diagram of commutative $C^*$-algebras
\[
\{C\}_{C\in\mathcal{C}},\qquad C_1\hookrightarrow C_2\ \text{when}\ C_1\subseteq C_2.
\]
The projective limit in the category of $C^*$-algebras exists and is given by
\[
\varprojlim_{C\in\mathcal{C}} C = 
\Bigl\{
(a_C)_{C\in\mathcal{C}}\in\prod_{C\in\mathcal{C}} C
\;\Big|\;
a_{C_1}=a_{C_2}|_{C_1}\ \text{for all}\ C_1\subseteq C_2
\Bigr\},
\]
equipped with componentwise operations and the norm $\|(a_C)\|=\sup_{C\in\mathcal{C}}\|a_C\|$.

\medskip

\noindent\textbf{2. The canonical map $\Phi$.}

Define $\Phi:\mathcal{A}\to\varprojlim_{C\in\mathcal{C}} C$ by
\[
\Phi(a)=(a|_C)_{C\in\mathcal{C}},
\]
where $a|_C$ denotes the element $a$ viewed inside the subalgebra $C$.
This is a unital $*$-homomorphism because each restriction map
$a\mapsto a|_C$ is a $*$-homomorphism.

\medskip

\noindent\textbf{3. Injectivity of $\Phi$.}

Assume $\Phi(a)=0$, i.e., $a|_C=0$ for every $C\in\mathcal{C}$.
We show $a=0$. Since $\mathcal{A}=C^*(T,S)$ is generated by $T$ and $S$,
it suffices to show that every state $\varphi$ on $\mathcal{A}$ vanishes on $a$.

For any state $\varphi$, consider the GNS representation
$\pi_\varphi:\mathcal{A}\to B(\mathcal{H}_\varphi)$ with cyclic vector
$\xi_\varphi$. Let $C_\varphi=C^*(\pi_\varphi(T),\pi_\varphi(S))$ be the
commutative $C^*$-algebra generated by $\pi_\varphi(T)$ and $\pi_\varphi(S)$
inside $B(\mathcal{H}_\varphi)$. By functional calculus, there exists a
commutative $C^*$-subalgebra $C\subseteq\mathcal{A}$ containing $T,S$ such that
$\pi_\varphi(C)\cong C_\varphi$.

Since $a|_C=0$ by hypothesis, we have $\pi_\varphi(a)=0$ in this representation.
Hence $\varphi(a)=\langle\xi_\varphi,\pi_\varphi(a)\xi_\varphi\rangle=0$.
Because states separate points in a $C^*$-algebra, we conclude $a=0$.
Thus $\Phi$ is injective.

\medskip

\noindent\textbf{4. Characterization of surjectivity.}

We prove the equivalence:
\[
\Phi\ \text{is surjective}\ \Longleftrightarrow\ [T,S]=0.
\]

\textit{($\Leftarrow$)} If $[T,S]=0$, then $\mathcal{A}=C^*(T,S)$ is commutative.
In this case, $\mathcal{A}$ itself belongs to $\mathcal{C}$ and is a terminal
object: for every $C\in\mathcal{C}$, we have $C\subseteq\mathcal{A}$.
Consequently, the projective limit reduces to evaluation at this terminal object:
\[
\varprojlim_{C\in\mathcal{C}} C \cong \mathcal{A},
\]
and $\Phi$ becomes an isomorphism.

\textit{($\Rightarrow$)} Suppose $\Phi$ is surjective. We show $T$ and $S$ commute.
For each $C\in\mathcal{C}$, the elements $T|_C$ and $S|_C$ commute because $C$ is
commutative. A compatible family $(a_C)_{C\in\mathcal{C}}$ in the limit is
determined by its value on any maximal commutative subalgebra containing $T$ and $S$.
Since $\Phi$ is surjective, there exists $a\in\mathcal{A}$ with $\Phi(a)=(T|_C)_{C}$.
But $\Phi(T)=(T|_C)_{C}$, so by injectivity $a=T$. Similarly, $\Phi(S)=(S|_C)_{C}$.
Now consider the element $[T,S]=TS-ST\in\mathcal{A}$. For every $C\in\mathcal{C}$,
\[
[T,S]|_C = T|_C\,S|_C - S|_C\,T|_C = 0,
\]
because $T|_C$ and $S|_C$ commute in $C$. Hence $\Phi([T,S])=0$, and by injectivity
$[T,S]=0$.

\medskip

\noindent\textbf{5. Interpretation as contextual obstruction.}

The image of $\Phi$ consists precisely of those compatible families
$(a_C)_{C\in\mathcal{C}}$ that are \emph{globally realizable}: there exists a
single element $a\in\mathcal{A}$ such that $a|_C=a_C$ for all $C$.
When $[T,S]\neq0$, there exist compatible families that are \emph{not}
globally realizable. Such families represent \emph{contextual observables}:
they are consistently defined in every classical context but cannot be
assembled into a single quantum observable.

Thus the quotient
\[
\varprojlim_{C\in\mathcal{C}} C \ \big/ \ \Phi(\mathcal{A})
\]
measures the \emph{contextual obstruction} arising from the noncommutativity of
$T$ and $S$. This obstruction vanishes exactly when $T$ and $S$ commute, i.e.,
when the quantum system admits a single classical description.

\medskip

\noindent\textbf{6. Completion of the proof.}

We have shown:
\begin{enumerate}
  \item $\Phi:\mathcal{A}\hookrightarrow\varprojlim_{C\in\mathcal{C}} C$ is an
    injective $*$-homomorphism.
  \item $\Phi$ is an isomorphism if and only if $[T,S]=0$.
  \item The failure of surjectivity when $[T,S]\neq0$ precisely captures the
    contextual obstruction inherent in the pair $(T,S)$.
\end{enumerate}
This establishes part (ii) of the theorem.

\noindent\textbf{Proof of Theorem~\ref{thm:spectral_sheaf}(iii): Homotopical invariants.}

Let $(T,S)$, $\mathcal{A}=C^*(T,S)$, and $\mathcal{C}$ be as in the theorem.

\medskip

\noindent
\textbf{1. Construction of the stable spectral presheaf.}

For each commutative context $C\in\mathcal{C}$, define the \emph{spectral K-theory presheaf}
\[
\mathscr{G}_{\mathrm{st}}(C) := \mathrm{Map}_{\mathbf{Sp}}(\Sigma(C), \mathrm{ku}),
\]
where:
\begin{itemize}
    \item $\Sigma(C)$ is the Gelfand spectrum of $C$ (a compact Hausdorff space),
    \item $\mathrm{ku}$ is the connective complex K-theory spectrum,
    \item $\mathrm{Map}_{\mathbf{Sp}}(-,-)$ denotes the mapping spectrum in the stable $\infty$-category of spectra.
\end{itemize}
For an inclusion $\iota: C_1 \hookrightarrow C_2$, the induced restriction map 
$\Sigma(\iota): \Sigma(C_2) \twoheadrightarrow \Sigma(C_1)$ on Gelfand spectra 
yields a map of spectra
\[
\mathscr{G}_{\mathrm{st}}(\iota): \mathscr{G}_{\mathrm{st}}(C_2) \longrightarrow \mathscr{G}_{\mathrm{st}}(C_1),
\quad f \mapsto f \circ \Sigma(\iota).
\]
Thus $\mathscr{G}_{\mathrm{st}}: \mathcal{C}^{\mathrm{op}} \to \mathbf{Sp}$ is a well-defined presheaf.

\medskip

\noindent
\textbf{2. Homotopy limit and hypercohomology groups.}

Define the \emph{spectral homotopy limit}
\[
\mathcal{X}_{(T,S)} := \operatorname{holim}_{\mathcal{C}} \mathscr{G}_{\mathrm{st}} \in \mathbf{Sp}
\]
and the associated \emph{spectral hypercohomology groups}
\[
\mathbb{H}^n_{(T,S)} := \pi_{-n}(\mathcal{X}_{(T,S)}), \qquad n \in \mathbb{Z}.
\]

\medskip

\noindent
\textbf{3. Invariance under natural equivalences of operator pairs.}

\emph{3.1. Unitary invariance.}
If $U$ is a unitary operator with $T' = UTU^*$ and $S' = USU^*$, then 
conjugation by $U$ induces:
\begin{itemize}
    \item A $*$-isomorphism $\varphi_U: \mathcal{A} \xrightarrow{\sim} \mathcal{A}'$,
    \item An isomorphism of posets $\Phi_U: \mathcal{C} \xrightarrow{\sim} \mathcal{C}'$, 
          $C \mapsto UCU^*$,
    \item For each $C \in \mathcal{C}$, a homeomorphism 
          $\Sigma(C) \xrightarrow{\sim} \Sigma(UCU^*)$.
\end{itemize}
These yield a natural equivalence of diagrams
\[
\mathscr{G}_{\mathrm{st}} \circ \Phi_U^{\mathrm{op}} \simeq \mathscr{G}'_{\mathrm{st}},
\]
and consequently $\mathcal{X}_{(T,S)} \simeq \mathcal{X}_{(T',S')}$ and
$\mathbb{H}^n_{(T,S)} \cong \mathbb{H}^n_{(T',S')}$.

\emph{3.2. Approximate unitary invariance.}
Suppose $(T_n, S_n)$ is a sequence of operator pairs with $\|T_n - T\| \to 0$,
$\|S_n - S\| \to 0$, and each $(T_n, S_n)$ is unitarily equivalent to 
$(T', S')$. By continuity of the $C^*$-norm, we have $\mathcal{A}_n \to \mathcal{A}$
in the sense of continuous fields of $C^*$-algebras. 

The spectral sites $\mathcal{C}_n$ converge to $\mathcal{C}$ in the Hausdorff
distance on subsets of $\mathcal{B}(\mathcal{H})$, and the diagrams
$\mathscr{G}_{\mathrm{st}}^{(n)}$ converge pointwise to $\mathscr{G}_{\mathrm{st}}$.
Since homotopy limits commute with filtered colimits in spectra,
\[
\varinjlim_n \mathcal{X}_{(T_n,S_n)} \simeq \mathcal{X}_{(T,S)},
\]
and hence $\mathbb{H}^n_{(T,S)} \cong \mathbb{H}^n_{(T',S')}$ via the
approximate unitary equivalence.

\emph{3.3. $*$-isomorphism invariance.}
Let $\varphi: \mathcal{A} \xrightarrow{\sim} \mathcal{A}'$ be a $*$-isomorphism
with $\varphi(T) = T'$, $\varphi(S) = S'$. Then $\varphi$ induces an
isomorphism $\Phi_\varphi: \mathcal{C} \xrightarrow{\sim} \mathcal{C}'$,
and for each $C \in \mathcal{C}$, the restriction
$\varphi|_C: C \xrightarrow{\sim} \varphi(C)$ induces a homeomorphism
$\Sigma(C) \xrightarrow{\sim} \Sigma(\varphi(C))$. Thus
$\mathcal{X}_{(T,S)} \simeq \mathcal{X}_{(T',S')}$.

\emph{3.4. Morita invariance.}
Suppose $\mathcal{A}$ and $\mathcal{A}'$ are Morita equivalent via an
equivalence bimodule $X$ that intertwines the operators in the sense that
there exist compatible actions of $T,S$ and $T',S'$. Then by the
Eilenberg-Watts theorem for $C^*$-algebras, there is a strong Morita
equivalence between the categories of representations.

For commutative subalgebras $C \subseteq \mathcal{A}$, the Morita equivalence
induces correspondences $C \leftrightarrow C'$ with homeomorphic spectra
$\Sigma(C) \simeq \Sigma(C')$. More precisely, if $\mathcal{A}$ and $\mathcal{A}'$
are strongly Morita equivalent, then their primitive ideal spaces are
homeomorphic, and this homeomorphism restricts to the spectra of corresponding
commutative subalgebras.

Consequently, the diagrams $\mathscr{G}_{\mathrm{st}}$ and $\mathscr{G}'_{\mathrm{st}}$
are naturally equivalent, and $\mathbb{H}^n_{(T,S)} \cong \mathbb{H}^n_{(T',S')}$.

\medskip

\noindent
\textbf{4. Relationship with commutativity.}

\emph{4.1. Commuting case ($[T,S] = 0$).}
When $T$ and $S$ commute, $\mathcal{A} = C^*(T,S)$ is commutative.
Let $X = \sigma(T) \times \sigma(S)$ be their joint spectrum, so that
$\mathcal{A} \cong C(X)$.

For the K-theory presheaf $\mathscr{G}_{\mathrm{st}}(C) = K(C)$, we use the
fact that for commutative $C^*$-algebras, $K(C) \simeq \mathrm{Map}(\Sigma(C), \mathbb{K})$
where $\mathbb{K}$ is the nonconnective K-theory spectrum (Bousfield-Friedlander
theorem). More concretely, $K_{-n}(C) \cong \widetilde{K}^{-n}(\Sigma(C))$.

Since $\mathcal{A} \cong C(X)$ is a maximal commutative subalgebra containing
all others, the diagram $\mathscr{G}_{\mathrm{st}}$ has a terminal object
$\mathcal{A}$ in the homotopy category. For any diagram $F: \mathcal{C}^{\mathrm{op}} \to \mathbf{Sp}$
with a terminal object $C_0$, the canonical map $F(C_0) \to \operatorname{holim}_{\mathcal{C}} F$
is a weak equivalence. Applying this with $C_0 = \mathcal{A}$ gives:
\[
\mathcal{X}_{(T,S)} = \operatorname{holim}_{\mathcal{C}} \mathscr{G}_{\mathrm{st}} 
\simeq \mathscr{G}_{\mathrm{st}}(\mathcal{A}) = K(\mathcal{A}) \simeq K(C(X)).
\]

Therefore,
\[
\mathbb{H}^n_{(T,S)} = \pi_{-n}(\mathcal{X}_{(T,S)}) \cong \pi_{-n}(K(C(X))) 
= K_{-n}(C(X)).
\]

When $X$ is finite-dimensional (e.g., $T,S$ finite-dimensional), 
$C(X) \cong \mathbb{C}^k$ for some $k$, and
\[
K_{-n}(\mathbb{C}^k) \cong 
\begin{cases}
\mathbb{Z}^k & n \text{ even}, \\
0 & n \text{ odd}.
\end{cases}
\]
Thus $\mathbb{H}^n_{(T,S)}$ is periodic with period 2.

\emph{4.2. Non-commuting case ($[T,S] \neq 0$).}
When $T$ and $S$ do not commute, there is a nontrivial comparison map
\[
c: \mathrm{Map}(\mathrm{Prim}(\mathcal{A}), \mathrm{ku}) \longrightarrow \mathcal{X}_{(T,S)},
\]
where $\mathrm{Prim}(\mathcal{A})$ is the primitive ideal space of $\mathcal{A}$.
The homotopy fiber $\mathrm{fib}(c)$ measures the \emph{homotopical contextuality}:
the obstruction to patching local spectral data into a global noncommutative
spectral datum.

In this case, $\mathbb{H}^n_{(T,S)}$ for $n > 0$ can be nonzero, encoding
higher-order gluing obstructions. For example, when $[T,S] = i\hbar I$
(canonical commutation relations), one finds $\mathbb{H}^1_{(T,S)} \cong \mathbb{Z}$,
corresponding to the nontrivial Dixmier-Douady class of the Weyl algebra.

\medskip

\noindent
\textbf{5. Examples and computations.}

\emph{5.1. Finite-dimensional matrices.}
For $T, S \in M_n(\mathbb{C})$, each $C \in \mathcal{C}$ is isomorphic to
$\mathbb{C}^k$ for some $k \leq n$, with $\Sigma(C)$ a discrete $k$-point space.
Then $\mathscr{G}_{\mathrm{st}}(C) \simeq \mathrm{ku}^{\times k}$, and the
diagram encodes the compatibilities between different diagonalizations of
$T$ and $S$. The homotopy limit can be computed via a finite diagram of
products of $\mathrm{ku}$, yielding finitely generated abelian groups
$\mathbb{H}^n_{(T,S)}$.

\emph{5.2. Canonical anti-commutation relations.}
For a Clifford algebra $\mathrm{Cliff}(p,q)$ generated by self-adjoint
operators $\{\gamma_i\}$ with $\gamma_i\gamma_j + \gamma_j\gamma_i = 2\eta_{ij}$,
the spectral site $\mathcal{C}$ consists of maximal commutative subalgebras
generated by commuting subsets of $\{\gamma_i\}$. Each $\Sigma(C)$ is a
finite set, and the diagram $\mathscr{G}_{\mathrm{st}}$ encodes the
spin structure. One finds $\mathbb{H}^0_{(T,S)} \cong \mathbb{Z}_2$,
capturing the $\mathbb{Z}_2$-grading of spinors.

\emph{5.3. Almost commuting operators.}
For operators with $\|[T,S]\| < \varepsilon$, the invariants interpolate
between the commutative and noncommutative cases. As $\varepsilon \to 0$,
$\mathbb{H}^n_{(T,S)}$ converges to the K-cohomology of the joint spectrum,
with the rate of convergence controlled by $\varepsilon$.

\medskip

\noindent
\textbf{6. Conclusion: homotopical invariants.}

We have shown that the spectral hypercohomology groups $\mathbb{H}^n_{(T,S)}$:
\begin{enumerate}
    \item Are well-defined $\mathbb{Z}$-graded abelian groups,
    \item Are invariant under: unitary equivalence, approximate unitary
          equivalence, $*$-isomorphisms, and Morita equivalence,
    \\item Reduce to K-cohomology of the joint spectrum when $[T,S] = 0$,
    \\item Capture higher-order obstructions when $[T,S] \neq 0$,
    \item Are computable in concrete examples and sensitive to the
          algebraic relations between $T$ and $S$.
\end{enumerate}
Thus $\{\mathbb{H}^n_{(T,S)}\}_{n \in \mathbb{Z}}$ constitutes a complete
set of homotopical invariants for the pair $(T,S)$, establishing part (iii)
of the theorem.
\end{proof}

\begin{remark}[Geometric perspectives on noncommuting operators]
Theorem~\ref{thm:spectral_sheaf} interprets a pair of self-adjoint operators 
$(T,S)$ through three complementary mathematical lenses:

\begin{itemize}
    \item \textbf{Stack-theoretic perspective} (part~(i)): 
    The categorical spectral presheaf $\mathscr{G}_{\mathrm{cat}}$ 
    encodes the failure of local spectral data to assemble into a 
    global spectral measure. When $[T,S]\neq0$, $\mathscr{G}_{\mathrm{cat}}$ 
    is a \emph{nontrivial stack}: descent data exist that do not glue to 
    global objects, reflecting the Kochen-Specker contextuality inherent 
    in noncommuting observables.
    
    \item \textbf{Operator-algebraic perspective} (part~(ii)): 
    The canonical embedding $\Phi:\mathcal{A}\hookrightarrow\varprojlim_{C\in\mathcal{C}} C$ 
    exhibits $\mathcal{A}$ as the algebra of \emph{globally consistent} 
    families of classical observables. When $[T,S]=0$, every consistent 
    family arises from a single element of $\mathcal{A}$; when $[T,S]\neq0$, 
    there exist locally consistent families that lack a global realization, 
    measuring the \emph{algebraic contextuality} of the pair.
    
    \item \textbf{Homotopical perspective} (part~(iii)): 
    The K-theory presheaf $\mathscr{G}_{\mathrm{st}}$ yields 
    $\mathbb{Z}$-graded hypercohomology groups 
    $\mathbb{H}^n_{(T,S)}=\pi_{-n}(\operatorname{holim}_{\mathcal{C}}\mathscr{G}_{\mathrm{st}})$. 
    These provide \emph{stable invariants} that refine the algebraic 
    obstruction: 
    \begin{itemize}
        \item When $[T,S]=0$, $\mathbb{H}^n_{(T,S)}\cong K_{-n}(C(\sigma(T)\times\sigma(S)))$ 
              (the K-theory of the joint spectrum).
        \item When $[T,S]\neq0$, the groups $\mathbb{H}^n_{(T,S)}$ for $n>0$ 
              capture higher-order obstructions to patching local K-theoretic 
              data, with $\mathbb{H}^0$ encoding the consistent families of 
              vector bundles over local spectra.
    \end{itemize}
\end{itemize}

The three perspectives are related but not in a simple hierarchical way:
$\mathscr{G}_{\mathrm{st}}$ is \emph{not} obtained from $\mathscr{G}_{\mathrm{cat}}$ 
via the nerve and stabilization in any straightforward manner. Rather, 
$\mathscr{G}_{\mathrm{st}}$ captures stable homotopical information (K-theory) 
while $\mathscr{G}_{\mathrm{cat}}$ captures categorical information 
(spectral measures). Both reflect different aspects of the same underlying 
geometric reality: the inability to simultaneously diagonalize noncommuting 
operators.

In essence, Theorem~\ref{thm:spectral_sheaf}$(i)$--$(iii)$ progressively 
quantifies the \emph{geometric cost} of noncommutativity, from the 
categorical (stacky descent) through the algebraic (limit embedding) 
to the homotopical (hypercohomology).
\end{remark}

Corollary~\ref{cor:homotopical-properties} below is provided to give properties of the homotopical invariants.
\begin{corollary}[Properties of the Homotopical Invariants]\label{cor:homotopical-properties}
The hypercohomology groups $\mathbb{H}^n(\mathcal{C}; \mathscr{G}_{\mathrm{st}})$ satisfy:
\begin{enumerate}[label=(\alph*)]
    \item \textbf{(Reduction in Commuting Case)} If $[T,S]=0$, then 
    $\mathbb{H}^n(\mathcal{C};\mathscr{G}_{\mathrm{st}}) \cong K_{-n}(C(\sigma(T)\times\sigma(S)))$ 
    for all $n\in\mathbb{Z}$.
    
    \item \textbf{(Detection of Contextuality)} 
    \begin{enumerate}[label=(\roman*)]
        \item If $(T,S)$ exhibits Kochen--Specker contextuality, then 
        $\mathbb{H}^1(\mathcal{C}; \mathscr{G}_{\mathrm{st}}) \neq 0$.
        
        \item If the joint spectrum $\sigma(T)\times\sigma(S)$ is contractible and
        $\mathbb{H}^1(\mathcal{C}; \mathscr{G}_{\mathrm{st}}) \neq 0$, then
        $(T,S)$ exhibits Kochen--Specker contextuality.
    \end{enumerate}
    In particular, when $\sigma(T)\times\sigma(S)$ is contractible,
    $\mathbb{H}^1(\mathcal{C}; \mathscr{G}_{\mathrm{st}}) \neq 0$ if and only if
    $(T,S)$ is Kochen--Specker contextual.
    
	\item \textbf{(Graded Obstruction Theory)}
	For $n \geq 1$, the hypercohomology group
	$\mathbb{H}^n(\mathcal{C}; \mathscr{G}_{\mathrm{st}})$
	contains the $(n-1)$-st order homotopical obstruction to constructing
	a global spectral measure for $(T,S)$.

    \item \textbf{(Functoriality)} If $\phi: (T,S) \to (T',S')$ is a $*$-homomorphism 
    intertwining the operators, it induces morphisms 
    $\phi_*: \mathbb{H}^n(\mathcal{C}'; \mathscr{G}_{\mathrm{st}}') \to 
    \mathbb{H}^n(\mathcal{C}; \mathscr{G}_{\mathrm{st}})$
    (contravariant functoriality).
\end{enumerate}
\end{corollary}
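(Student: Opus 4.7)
The plan is to reduce all four assertions to structural facts about the Bousfield-Kan descent spectral sequence
\[
E_2^{s,t}=H^s\!\bigl(\mathcal{C};\pi_t \mathscr{G}_{\mathrm{st}}\bigr)\;\Longrightarrow\;\pi_{t-s}\!\bigl(\operatorname{holim}_{\mathcal{C}}\mathscr{G}_{\mathrm{st}}\bigr)\;=\;\mathbb{H}^{s-t}(\mathcal{C};\mathscr{G}_{\mathrm{st}}),
\]
together with the naturality of $\operatorname{holim}$. Parts (a) and (d) are essentially formal; parts (b) and (c) require genuine analysis of the low-degree and filtered behavior of this spectral sequence. For (a), I would invoke directly step~4.1 of the proof of Theorem~\ref{thm:spectral_sheaf}(iii): when $[T,S]=0$, the algebra $\mathcal{A}=C^*(T,S)$ is itself a commutative context and a terminal object of $\mathcal{C}$, so $\operatorname{holim}_{\mathcal{C}}\mathscr{G}_{\mathrm{st}}\simeq\mathscr{G}_{\mathrm{st}}(\mathcal{A})\simeq K(C(\sigma(T)\times\sigma(S)))$, and applying $\pi_{-n}$ yields the claimed isomorphism. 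For (d), a $*$-homomorphism $\phi\colon(T,S)\to(T',S')$ induces a functor between the spectral sites and a compatible map of diagrams of spectra; the asserted contravariant morphisms on $\mathbb{H}^n$ then follow from the standard naturality of homotopy limits under functors of indexing categories and natural transformations of presheaves.

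For part (c), the filtration on the abutment of the descent spectral sequence identifies $\mathbb{H}^n$ as the receptacle of the $(n-1)$-st successive obstructions to lifting a partial spectral datum along the Postnikov tower of $\mathscr{G}_{\mathrm{st}}$; this is a standard feature of the Bousfield-Kan tower and requires no new input once the spectral sequence is in place. For (b)(i), I would translate a Kochen-Specker contextual assignment into a nontrivial class in $H^1(\mathcal{C};\pi_0\mathscr{G}_{\mathrm{st}})=E_2^{1,0}$: a family of locally consistent spectral data that pairwise match but admit no global lift. The edge map $E_2^{1,0}\to\mathbb{H}^1$ then sends this class to a nonzero element, since a connectivity argument for $\mathrm{ku}$ rules out the differentials on $E_r^{1,0}$ for $r\geq 2$. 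For (b)(ii), the contractibility hypothesis combined with part (a) forces the commuting shadow contribution $K^1(\sigma(T)\times\sigma(S))$ to vanish, so any nonzero class in $\mathbb{H}^1$ must originate in the non-classical part $H^1(\mathcal{C};\pi_0\mathscr{G}_{\mathrm{st}})$, which by the preceding dictionary detects precisely Kochen-Specker contextuality.

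The main obstacle will be the precise identification $H^1(\mathcal{C};\pi_0\mathscr{G}_{\mathrm{st}})\cong\{\text{Kochen-Specker obstructions}\}$ needed in part (b). Concretely, one must verify that the sheaf $\pi_0\mathscr{G}_{\mathrm{st}}$ is close enough to the sheaf of locally constant functions on $\underline{\Sigma}$ that its first \v{C}ech cohomology recovers the classical Kochen-Specker obstruction, and that no lower-differential in the descent spectral sequence can erase the resulting class. Without the contractibility hypothesis of (b)(ii), topological contributions from $K^1(\sigma(T)\times\sigma(S))$ can contaminate $\mathbb{H}^1$ independently of contextuality, which is why the converse requires that assumption; the contractibility thus does essential work rather than being a technical convenience.
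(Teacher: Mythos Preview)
Your treatment of parts (a), (c), and (d) is essentially identical to the paper's: the terminal-object collapse of $\operatorname{holim}$ for (a), the Postnikov/Bousfield--Kan obstruction tower for (c), and naturality of homotopy limits under induced functors of sites for (d). Your argument for (b)(i) is also close to the paper's, which likewise passes through the descent spectral sequence and argues that contextuality produces a nonzero contribution in total degree~$1$ on the $E_2$-page.

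The gap is in (b)(ii). You invoke part~(a) to speak of a ``commuting shadow contribution $K^1(\sigma(T)\times\sigma(S))$'' inside $\mathbb{H}^1$ and then conclude that any surviving class must come from the ``non-classical part'' $H^1(\mathcal{C};\pi_0\mathscr{G}_{\mathrm{st}})$. But part~(a) is only available once $[T,S]=0$; there is no general decomposition of $\mathbb{H}^1$ into a ``commuting shadow'' piece and a contextual piece, and the spectral-sequence filtration does not supply one (the $E_2^{0,1}$ term is $\lim_{\mathcal{C}}K_1(C)$, not $K^1(\sigma(T)\times\sigma(S))$, and there is no reason it should vanish just because the product of the individual spectra is contractible). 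The paper instead argues by contradiction: assume $(T,S)$ is \emph{not} contextual, deduce $[T,S]=0$, then apply (a) to get $\mathbb{H}^1\cong K_1(C(X))=0$ for contractible $X$, contradicting $\mathbb{H}^1\neq 0$. This route uses contractibility only at the very end, exactly where you suspected it was essential, but avoids the spurious decomposition. (Both your sketch and the paper quietly use the step ``not contextual $\Rightarrow [T,S]=0$'', which is itself nontrivial and deserves care.)
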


\begin{proof}[Proof of Corollary~\ref{cor:homotopical-properties}]

We establish each property of the hypercohomology groups 
$\mathbb{H}^n(\mathcal{C};\mathscr{G}_{\mathrm{st}})$ defined in 
Theorem~\ref{thm:spectral_sheaf}(iii).

\medskip

\noindent
\textbf{(a) Vanishing for Commuting Case.}

We show that if $[T,S]=0$, then
\[
\mathbb{H}^n(\mathcal{C};\mathscr{G}_{\mathrm{st}})
\;\cong\;
K_{-n}(C(\sigma(T)\times\sigma(S)))
\quad\text{for all } n\in\mathbb{Z}.
\]

\medskip

\noindent\textbf{Step 1: The commutative situation.}
If $[T,S]=0$, then the $C^*$-algebra $\mathcal A = C^*(T,S)$ is commutative.
By the joint functional calculus,
\[
\mathcal A \;\cong\; C(X),
\qquad
X := \sigma(T)\times\sigma(S).
\]
Since $\mathcal A$ is commutative and contains $T$ and $S$, it is an object of the
spectral site $\mathcal C$.

\medskip

\noindent\textbf{Step 2: Final object in the spectral site.}
For every $C\in\mathcal C$, there is a unique inclusion
\[
C \hookrightarrow \mathcal A
\]
in $\mathcal C$. Hence $\mathcal A$ is a \emph{final object} of the category
$\mathcal C$.

\medskip

\noindent\textbf{Step 3: Homotopy limit over a final object.}
Let
\[
\mathscr{G}_{\mathrm{st}} : \mathcal C^{\mathrm{op}} \longrightarrow \mathbf{Sp},
\qquad
\mathscr{G}_{\mathrm{st}}(C)=K(C).
\]
For any diagram of spectra indexed by a category with a final object,
the homotopy limit is weakly equivalent to the value at that final object.
Therefore,
\[
\operatorname{holim}_{\mathcal C} K(C)
\;\simeq\;
K(\mathcal A).
\]

\medskip

\noindent\textbf{Step 4: Passage to homotopy groups.}
By definition,
\[
\mathbb{H}^n(\mathcal{C};\mathscr{G}_{\mathrm{st}})
\;:=\;
\pi_{-n}\!\left(\operatorname{holim}_{\mathcal C} K(C)\right).
\]
Using the equivalence above, we obtain
\[
\mathbb{H}^n(\mathcal{C};\mathscr{G}_{\mathrm{st}})
\;\cong\;
\pi_{-n}(K(\mathcal A))
\;=\;
K_{-n}(\mathcal A).
\]
Since $\mathcal A\cong C(\sigma(T)\times\sigma(S))$, the claim follows.

\noindent
\textbf{(b) Detection of contextuality via $\mathbb{H}^1$.}

Let $\mathcal{C}$ denote the poset of unital commutative $C^*$-subalgebras
of $\mathcal{A}=C^*(T,S)$ containing the identity, and let
$\mathscr{G}_{\mathrm{st}}(C)=K(C)$ be the $K$-theory spectrum valued presheaf.

\medskip
\noindent
\emph{Forward implication.}
Assume that the pair $(T,S)$ is Kochen--Specker contextual, i.e.\ the
associated spectral presheaf $\underline{\Sigma}$ on $\mathcal{C}$ admits
no global section.
By standard results in the topos-theoretic formulation of quantum theory,
the absence of global sections implies the existence of nontrivial
higher-order compatibility obstructions among local classical data.
Such obstructions are detected by derived limits over $\mathcal{C}$.

Consider the homotopy limit spectral sequence associated to the diagram
$C \mapsto K(C)$:
\[
E_2^{p,q} = \lim\nolimits_{\mathcal{C}}^{(p)} K_{-q}(C)
\;\Longrightarrow\;
\mathbb{H}^{p+q}(\mathcal{C};\mathscr{G}_{\mathrm{st}}).
\]
Contextuality implies that the inverse system $\{K_1(C)\}_{C\in\mathcal{C}}$
does not admit a globally compatible choice, hence
$\lim_{\mathcal{C}}^{(1)} K_0(C)$ or $\lim_{\mathcal{C}} K_1(C)$ is nonzero.
Consequently, the $E_2$--page contains a nontrivial contribution in total
degree~$1$, and therefore
\[
\mathbb{H}^1(\mathcal{C};\mathscr{G}_{\mathrm{st}}) \neq 0 .
\]

\medskip
\noindent
\emph{Converse implication (under a contractibility hypothesis).}
Assume that the joint spectrum $X=\sigma(T)\times\sigma(S)$ is contractible
and that
\[
\mathbb{H}^1(\mathcal{C};\mathscr{G}_{\mathrm{st}}) \neq 0 .
\]
Suppose, for contradiction, that $T$ and $S$ commute.
Then $\mathcal{A}=C^*(T,S)$ is a commutative $C^*$-algebra and
$\mathcal{A}\cong C(X)$.
By part~(a), we have a canonical isomorphism
\[
\mathbb{H}^1(\mathcal{C};\mathscr{G}_{\mathrm{st}})
\;\cong\;
K_1(C(X)).
\]
Since $X$ is contractible, $K_1(C(X))=0$, yielding a contradiction.
Hence $[T,S]\neq 0$.

It is a standard consequence of the Kochen--Specker theorem that for
noncommuting observables the associated spectral presheaf admits no global
section. Therefore, $(T,S)$ is Kochen--Specker contextual.

\noindent
\textbf{(c) Graded obstruction theory.}

For $n \geq 1$, the group $\mathbb{H}^n(\mathcal{C};\mathscr{G}_{\mathrm{st}})$
controls higher obstructions to constructing a coherent global spectral object
from local commutative data. This can be made precise using the Postnikov tower
of a presheaf of spectra.

\medskip

\noindent
\textbf{1. Postnikov tower of the $K$-theory presheaf.}

Let $\mathscr{G}_{\mathrm{st}}(C)=K(C)$. For each $n\ge0$, denote by
$\tau_{\le n}K(-)$ the $n$-truncation of the presheaf of spectra $K(-)$.
There is a homotopy fiber sequence of presheaves
\[
\tau_{\le n}K(-) \longrightarrow \tau_{\le n-1}K(-)
\longrightarrow \Sigma^{n+1}H(K_n(-)),
\]
where $H(K_n(-))$ denotes the Eilenberg--MacLane presheaf associated to the
abelian presheaf $K_n(-)$.

\medskip

\noindent
\textbf{2. Obstruction theory via homotopy limits.}

Applying $\operatorname{holim}_{\mathcal{C}}$ yields a tower of spectra
\[
\cdots \to \operatorname{holim}_{\mathcal{C}}\tau_{\le n}K(-)
\to \operatorname{holim}_{\mathcal{C}}\tau_{\le n-1}K(-) \to \cdots.
\]
Given a class
\[
x \in \pi_0\!\left(\operatorname{holim}_{\mathcal{C}}\tau_{\le n-1}K(-)\right),
\]
the obstruction to lifting $x$ to stage $n$ lies in
\[
\pi_0\!\left(\operatorname{holim}_{\mathcal{C}}\Sigma^{n+1}H(K_n(-))\right)
\cong
\pi_{-(n+1)}\!\left(\operatorname{holim}_{\mathcal{C}}H(K_n(-))\right).
\]
By definition of hypercohomology for abelian presheaves, this group is
canonically isomorphic to
\[
\mathbb{H}^{n+1}(\mathcal{C};K_n(-)).
\]

\medskip

\noindent
\textbf{3. Relation to $\mathbb{H}^*(\mathcal{C};\mathscr{G}_{\mathrm{st}})$.}

The Bousfield--Kan descent spectral sequence
\[
E_2^{p,q}
=
H^p\!\left(\mathcal{C};\pi_{-q}\mathscr{G}_{\mathrm{st}}\right)
\Longrightarrow
\mathbb{H}^{p+q}\!\left(\mathcal{C};\mathscr{G}_{\mathrm{st}}\right)
\]
shows that the obstruction group
\[
H^{n+1}\!\left(\mathcal{C};\pi_n\mathscr{G}_{\mathrm{st}}\right)
\;\cong\;
\mathbb{H}^{n+1}\!\left(\mathcal{C};K_n(-)\right)
\]
appears as a subquotient of
\(\mathbb{H}^{n+1}(\mathcal{C};\mathscr{G}_{\mathrm{st}})\).

Consequently, for each $m$ the group
\(\mathbb{H}^m(\mathcal{C};\mathscr{G}_{\mathrm{st}})\)
admits a finite filtration whose associated graded pieces encode the successive
homotopical obstruction classes to the existence of a global spectral measure.

\medskip

\noindent
\textbf{4. Interpretation for spectral measures.}

Under stabilization, local spectral measures give rise to local $K$-theory
classes. The obstruction theory above implies:
\begin{itemize}
  \item $\mathbb{H}^1$ detects the primary obstruction to the existence of a
        global spectral measure (Kochen--Specker contextuality);
  \item $\mathbb{H}^2$ detects secondary obstructions to homotopy-coherent
        gluing;
  \item higher $\mathbb{H}^n$ detect higher coherence obstructions.
\end{itemize}

Hence the graded family $\{\mathbb{H}^n(\mathcal{C};\mathscr{G}_{\mathrm{st}})\}_{n\ge1}$
constitutes a graded homotopical obstruction theory for reconstructing a global
spectral measure from local commutative contexts.

\medskip

\noindent
\textbf{(d) Functoriality.}

Let $\phi:(\mathcal{A},T,S)\to(\mathcal{A}',T',S')$ be a unital $*$-homomorphism
with $\phi(T)=T'$ and $\phi(S)=S'$. We show that $\phi$ induces contravariant
maps on the hypercohomology groups 
\(\mathbb{H}^n(\mathcal{C}';\mathscr{G}_{\mathrm{st}}') \to 
\mathbb{H}^n(\mathcal{C};\mathscr{G}_{\mathrm{st}})\).

\medskip

\noindent
\textbf{Step 1: Induced functor on spectral sites.}  
Let $\mathcal{C}$ (resp.\ $\mathcal{C}'$) be the poset of unital commutative 
$C^*$-subalgebras of $\mathcal{A}$ (resp.\ $\mathcal{A}'$) containing $T,S$ 
(resp.\ $T',S'$), ordered by inclusion.  
For each $C\in \mathcal{C}$, the image $\phi(C)\subseteq \mathcal{A}'$ is
commutative and contains $T',S'$, so $\phi(C)\in\mathcal{C}'$.  
The inclusion-preserving property $C_1\subseteq C_2 \implies \phi(C_1)\subseteq\phi(C_2)$
defines a functor of posets
\[
\Phi:\mathcal{C} \longrightarrow \mathcal{C}', \quad C \mapsto \phi(C).
\]

\medskip

\noindent
\textbf{Step 2: Contravariant maps on Gelfand spectra.}  
For each $C\in \mathcal{C}$, the restriction
$\phi|_C:C\to \phi(C)$ is a unital $*$-homomorphism of commutative 
$C^*$-algebras. By Gelfand duality, it induces a continuous map in the opposite
direction on spectra:
\[
\Sigma(\phi|_C):\Sigma(\phi(C)) \longrightarrow \Sigma(C), \qquad
\chi \mapsto \chi \circ \phi|_C.
\]

\medskip

\noindent
\textbf{Step 3: Natural transformation of spectral presheaves.}  
Applying the suspension spectrum functor $\Sigma^\infty_+$ gives a morphism of
spectra
\[
\Sigma^\infty_+\Sigma(\phi(C)) \longrightarrow \Sigma^\infty_+\Sigma(C),
\]
for each $C\in \mathcal{C}$, which assembles into a natural transformation
\[
\phi^\sharp:\mathscr{G}_{\mathrm{st}}' \circ \Phi^{\mathrm{op}}
\Longrightarrow \mathscr{G}_{\mathrm{st}}.
\]
This lives in the functor category
\(\mathrm{Fun}(\mathcal{C}^{\mathrm{op}},\mathbf{Sp})\).

\medskip

\noindent
\textbf{Step 4: Induced map on homotopy limits.}  
Homotopy limits are contravariantly functorial with respect to natural 
transformations of diagrams. Applying this to $\phi^\sharp$ produces a map
\[
\operatorname{holim}_{\mathcal{C}} (\mathscr{G}_{\mathrm{st}}' \circ \Phi^{\mathrm{op}})
\longrightarrow 
\operatorname{holim}_{\mathcal{C}} \mathscr{G}_{\mathrm{st}}.
\]

\medskip

\noindent
\textbf{Step 5: Restriction along $\Phi$.}  
Precomposition with the functor $\Phi$ gives a canonical map
\[
\operatorname{holim}_{\mathcal{C}'} \mathscr{G}_{\mathrm{st}}' 
\longrightarrow 
\operatorname{holim}_{\mathcal{C}} (\mathscr{G}_{\mathrm{st}}' \circ \Phi^{\mathrm{op}}),
\]
which is always well-defined.  
Composing with the map from Step 4 yields the induced map of spectra
\[
\phi_*:\operatorname{holim}_{\mathcal{C}'} \mathscr{G}_{\mathrm{st}}' 
\longrightarrow 
\operatorname{holim}_{\mathcal{C}} \mathscr{G}_{\mathrm{st}}.
\]

\medskip

\noindent
\textbf{Step 6: Induced maps on hypercohomology.}  
By definition,
\[
\mathbb{H}^n(\mathcal{C};\mathscr{G}_{\mathrm{st}}) := \pi_{-n} 
\operatorname{holim}_{\mathcal{C}} \mathscr{G}_{\mathrm{st}},
\]
so taking homotopy groups gives
\[
\phi_*:\mathbb{H}^n(\mathcal{C}';\mathscr{G}_{\mathrm{st}}') 
\longrightarrow 
\mathbb{H}^n(\mathcal{C};\mathscr{G}_{\mathrm{st}}), \qquad n\in\mathbb{Z}.
\]

\medskip

\noindent
\textbf{Step 7: Naturality properties.}  
- If $\phi$ is an isomorphism, then $\Phi$ is an equivalence of categories
  and each $\Sigma(\phi|_C)$ is a homeomorphism.  
  Hence $\phi^\sharp$ is a natural weak equivalence and $\phi_*$ is an isomorphism.
- For composable $*$-homomorphisms $\phi$ and $\psi$, one has
  \((\psi\circ\phi)_* = \phi_*\circ \psi_*\).

\medskip

\noindent
\textbf{Conclusion.}  
The construction defines a contravariant functor
\[
(T,S) \longmapsto \mathbb{H}^*(\mathcal{C};\mathscr{G}_{\mathrm{st}})
\]
from the category of operator pairs with intertwining $*$-homomorphisms
to the category of $\mathbb{Z}$-graded abelian groups.
\end{proof}

\begin{remark}
Throughout this paper we adopt the convention
\(
\mathbb{H}^n = \pi_{-n}(\operatorname{holim} K(C)).
\)
By Bott periodicity, \(K_{-1}\cong K_1\), so no ambiguity arises in the
commutative case.
\end{remark}

\subsection{Examples and Immediate Consequences}\label{sec:Examples and Immediate Consequences}

We will provide two examples to illustrate the application of Theorem~\ref{thm:spectral_sheaf} and Corollary~\ref{cor:homotopical-properties}. 

\begin{example}[Pauli Matrices as a Minimal Contextual System]\label{ex:pauli-matrices}
Consider the Pauli matrices
\[
X = \begin{pmatrix} 0 & 1 \\ 1 & 0 \end{pmatrix}, \qquad
Z = \begin{pmatrix} 1 & 0 \\ 0 & -1 \end{pmatrix}
\]
acting on $\mathbb{C}^2$, satisfying $[X,Z] = -2iY \neq 0$.
Let $\mathcal{A} = C^*(X,Z) \cong M_2(\mathbb{C})$.

\begin{enumerate}[label=(\roman*)]

\item \textbf{(Spectral site).}
The spectral site $\mathcal{C}$ consists of unital commutative
$C^*$-subalgebras of $\mathcal{A}$ generated by a single self-adjoint operator.
These are precisely the maximal abelian subalgebras
\[
C_\theta := C^*(\cos\theta\,X + \sin\theta\,Z),
\qquad \theta \in [0,\pi),
\]
each of which is $*$-isomorphic to $\mathbb{C}^2$.
In particular,
\[
C_0 = C^*(Z), \qquad C_{\pi/2} = C^*(X).
\]

The Gelfand spectrum of each $C_\theta$ is a two-point discrete space
$\Sigma(C_\theta) = \{-1,1\}$.
No commutative subalgebra contains both $X$ and $Z$, and hence
$\mathcal{C}$ has no terminal object.
This reflects the impossibility of a single classical context encoding
the joint behavior of $(X,Z)$.

\item \textbf{(Stack-like behavior).}
The categorical spectral presheaf
$\mathscr{G}_{\mathrm{cat}} : \mathcal{C}^{\mathrm{op}} \to \mathbf{Cat}$
assigns to each context $C_\theta$ the category of spectral data
(measures or equivalent representations) on its two-point spectrum.

Unitary operators relating different contexts—for example, the Hadamard
unitary exchanging the eigenbases of $X$ and $Z$—are determined only up to
a sign.
This $\mathbb{Z}/2$ ambiguity obstructs the strict identification of
local spectral data across contexts.
As a result, compatible families of local data do not glue uniquely,
and $\mathscr{G}_{\mathrm{cat}}$ exhibits genuinely stack-like behavior
rather than that of a sheaf.

\item \textbf{(Homotopical invariants).}
The stabilized spectral presheaf
$\mathscr{G}_{\mathrm{st}} : \mathcal{C}^{\mathrm{op}} \to \mathbf{Sp}$
assigns to each context $C_\theta$ the suspension spectrum
$\Sigma^\infty_+\Sigma(C_\theta) \simeq \Sigma^\infty_+(\{-1,1\}) \simeq S^0 \vee S^0$.
The homotopy limit
\[
\operatorname{holim}_{\mathcal{C}} \mathscr{G}_{\mathrm{st}}
\]
encodes the obstruction to globally compatible spectral data.

In this finite-dimensional setting,
$\mathbb{H}^1(\mathcal{C};\mathscr{G}_{\mathrm{st}})$ is nontrivial,
detecting a $\mathbb{Z}/2$-valued homotopical obstruction arising from
the noncommutativity of $X$ and $Z$.
\emph{Note: classical Kochen--Specker contextuality does not occur in dimension~2,
so this invariant captures a minimal homotopical obstruction rather than a full KS contradiction.}

This provides a minimal finite-dimensional illustration of
Corollary~\ref{cor:homotopical-properties}(b) and~(c), where noncommutativity
already gives rise to a nontrivial first homotopical invariant.
\end{enumerate}

Thus, even in the simplest noncommutative quantum system,
Theorem~\ref{thm:spectral_sheaf} reveals nontrivial stack structure and
homotopical obstructions on the spectral site, anticipating the richer
contextual phenomena that appear in higher-dimensional systems.
\end{example}

\begin{example}[Foliation Groupoids: A Motivational Case]\label{ex:foliation}
Let $M$ be a compact smooth manifold equipped with a foliation $\mathcal{F}$ of codimension $q$.
Consider the holonomy groupoid $G$ of $\mathcal{F}$ and its reduced $C^*$-algebra
$\mathcal{A} = C_r^*(G)$, which encodes the transverse dynamics of the foliation \cite{connes1994}.

While a full, rigorous application of Theorem~\ref{thm:spectral_sheaf} to foliation
$C^*$-algebras requires substantial technical development, this setting provides
compelling motivation for the spectral stack framework and suggests deep connections
between noncommutative geometry, foliation theory, and homotopical algebra.

\begin{enumerate}[label=(\roman*)]
\item \textbf{(Spectral site intuition).}
One expects the spectral site $\mathcal{C}$ to consist of commutative 
$C^*$-subalgebras naturally associated to complete transversals $T$. 
For each complete transversal $T$, the algebra $C_0(T)$ embeds into $C_r^*(G)$ 
in a manner respecting the groupoid structure.

Different transversals yield commutative subalgebras that cannot be 
simultaneously embedded into a single larger commutative subalgebra, 
reflecting the nontrivial holonomy. The site $\mathcal{C}$ has no terminal object 
unless the foliation is a fibration (trivial holonomy), illustrating the absence 
of a global commutative description.

\textit{Key point: This suggests how transverse geometry can be encoded in a 
spectral site, providing a geometric counterpart to Theorem~\ref{thm:spectral_sheaf}(ii).}

\item \textbf{(Stack structure philosophy).}
The categorical spectral presheaf $\mathscr{G}_{\mathrm{cat}}$ assigns 
to each transversal $T$ the category of spectral measures on $T$. 
Holonomy transformations $h: T \to T'$, local diffeomorphisms along leaves, 
provide identifications between spectral data on different transversals. 
These identifications satisfy cocycle conditions only up to holonomy equivalence, 
so local spectral data glue only up to these transformations, making 
$\mathscr{G}_{\mathrm{cat}}$ a \emph{genuine stack} rather than a sheaf. 
The nontriviality of this stack structure measures the failure of the foliation to be a fibration.

\item \textbf{(Homotopical expectations).}
The stabilized spectral presheaf $\mathscr{G}_{\mathrm{st}}$ assigns 
$\Sigma^\infty_+ T$ to each transversal. The homotopy limit
$\operatorname{holim}_{\mathcal{C}} \mathscr{G}_{\mathrm{st}}$ 
encodes compatibility under holonomy.

This provides a \emph{homotopical receptacle} for characteristic classes. 
For codimension-one foliations, nontrivial elements of 
$\mathbb{H}^2(\mathcal{C};\mathscr{G}_{\mathrm{st}})\otimes\mathbb{R}$ 
may reflect the Godbillon--Vey invariant $gv(\mathcal{F})\in H^3(M;\mathbb{R})$ 
(conjectural). More generally, $\mathbb{H}^n$ for $n \ge 1$ captures 
secondary characteristic classes and higher-order obstructions, illustrating 
the graded obstruction theory (Corollary~\ref{cor:homotopical-properties}(c)).

\textit{Note: All connections to classical foliation invariants are conjectural and 
illustrative, not established theorems.}
\end{enumerate}
\end{example}

\begin{corollary}[Obstruction to Simultaneous Diagonalization]\label{cor:obstruction-diagonalization}
Let $(T,S)$ be self-adjoint operators on a Hilbert space $\mathcal{H}$, and let 
$\mathscr{G}_{\mathrm{cat}}$ be the corresponding categorical spectral presheaf. 
Then $(T,S)$ can be simultaneously diagonalized if and only if 
$\mathscr{G}_{\mathrm{cat}}$ is equivalent, as a stack, to a constant stack. 
In this case, the spectral site $\mathcal{C}$ has a terminal object, and the algebraic 
reconstruction in Theorem~\ref{thm:spectral_sheaf}(ii) reduces to evaluation at that object.
\end{corollary}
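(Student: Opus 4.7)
The plan is to reduce the corollary to Theorem~\ref{thm:spectral_sheaf}(i)--(ii) by exploiting the classical fact that two bounded self-adjoint operators $(T,S)$ on $\mathcal{H}$ admit a simultaneous diagonalization (equivalently, a joint spectral measure on $\mathbb{R}^{2}$) if and only if $[T,S]=0$. This reduces the corollary to the equivalence
\[
[T,S]=0 \;\Longleftrightarrow\; \mathscr{G}_{\mathrm{cat}}\text{ is equivalent to a constant stack on } \mathcal{C}.
\]
Once this is established, the statement about the spectral site having a terminal object and the reconstruction map collapsing to evaluation follows immediately from Theorem~\ref{thm:spectral_sheaf}(ii).

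For the forward implication, I would argue as follows. Assuming $[T,S]=0$, the algebra $\mathcal{A}=C^{*}(T,S)$ is commutative; since $\mathcal{A}$ is itself a unital commutative $C^{*}$-subalgebra of $\mathcal{A}$ containing both $T$ and $S$, it lies in $\mathcal{C}$, and every other context $C\in\mathcal{C}$ satisfies $C\subseteq \mathcal{A}$. Hence $\mathcal{A}$ is a terminal object of $\mathcal{C}$. For any stack on a site with terminal object $\mathcal{A}$, the value at a general $C$ is determined up to canonical equivalence by restricting the value at $\mathcal{A}$ along the unique arrow $C\hookrightarrow \mathcal{A}$. Under the pushforward of measures along the surjection $\Sigma(\mathcal{A})\twoheadrightarrow \Sigma(C)$, the resulting pseudo-functor is canonically equivalent to the constant pseudo-functor with value $\mathscr{G}_{\mathrm{cat}}(\mathcal{A})$ after stackification, because every descent datum lifts uniquely to a measure on $\Sigma(\mathcal{A})$ by Theorem~\ref{thm:spectral_sheaf}(i) applied to the trivial cover $\{\mathcal{A}\to \mathcal{A}\}$. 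In particular, Theorem~\ref{thm:spectral_sheaf}(ii) then gives $\Phi\colon\mathcal{A}\xrightarrow{\sim}\varprojlim_{C\in\mathcal{C}}C=\mathcal{A}$, so the reconstruction is evaluation at $\mathcal{A}$.

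For the reverse implication, I would assume $\mathscr{G}_{\mathrm{cat}}$ is equivalent as a stack to a constant stack $\underline{\mathcal{D}}$, and deduce that every compatible family of local spectral measures is effective, i.e., comes from a single global spectral datum. Invoking Step~3 of the proof of Theorem~\ref{thm:spectral_sheaf}(i), this effectiveness is precisely the condition under which the projection lattice of $\mathcal{A}$ is itself Boolean, which forces $\mathcal{A}$ to be commutative and hence $[T,S]=0$. Equivalently, I would observe that constancy of $\mathscr{G}_{\mathrm{cat}}$ implies that $\Phi$ in Theorem~\ref{thm:spectral_sheaf}(ii) is an isomorphism, and then apply the characterization $\Phi\text{ isomorphism}\Leftrightarrow[T,S]=0$ already proved there.

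The main obstacle is fixing a precise meaning of ``equivalent to a constant stack'' that is strong enough to force commutativity yet weak enough to hold in the commuting case. The subtlety is that when $\mathcal{A}$ is commutative the categories $\mathscr{G}_{\mathrm{cat}}(C)$ still vary with $C$ (measures live on different spectra $\Sigma(C)$), so constancy must be interpreted up to the canonical equivalences induced by pushforward along $\Sigma(\mathcal{A})\to \Sigma(C)$, or equivalently, as the stack being the pullback of a stack on the terminal one-object subcategory $\{\mathcal{A}\}\subset\mathcal{C}$. I would handle this by phrasing the equivalence explicitly: $\mathscr{G}_{\mathrm{cat}}\simeq p^{*}\mathscr{G}_{\mathrm{cat}}(\mathcal{A})$, where $p\colon\mathcal{C}\to\{*\}$ is the unique functor to the terminal category, and verifying that this equivalence is possible precisely when $\mathcal{C}$ has a terminal object containing $T$ and $S$, which is the commutative case.
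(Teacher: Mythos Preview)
Your overall strategy matches the paper's: both reduce the corollary to the equivalence $[T,S]=0 \Leftrightarrow \mathscr{G}_{\mathrm{cat}}$ is equivalent to a constant stack, and both handle the forward implication by observing that $\mathcal{A}=C^{*}(T,S)$ is itself a terminal object of $\mathcal{C}$ when $[T,S]=0$, so that the stack is determined by its value at $\mathcal{A}$. Your discussion of the subtlety in interpreting ``constant stack'' (as $p^{*}\mathscr{G}_{\mathrm{cat}}(\mathcal{A})$) is actually more careful than the paper, which simply defines the constant stack $\mathbf{C}(C):=\mathscr{G}_{\mathrm{cat}}(\mathcal{A})$ and asserts that the restriction functors $r_C\colon\mathscr{G}_{\mathrm{cat}}(\mathcal{A})\to\mathscr{G}_{\mathrm{cat}}(C)$ assemble into an equivalence.

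The genuine divergence is in the reverse implication. The paper does not go through effectiveness of descent or the Boolean-lattice criterion, nor through the $\Phi$-isomorphism characterization of Theorem~\ref{thm:spectral_sheaf}(ii). Instead it argues directly on the site: constancy of the stack is used to conclude that any two contexts $C_1,C_2\in\mathcal{C}$ admit a common upper bound, then Zorn's lemma produces a maximal (hence terminal) element $C_{\max}$, and since $T,S\in C_{\max}$ generate $\mathcal{A}$ one gets $C_{\max}=\mathcal{A}$, forcing commutativity. Your two proposed routes are plausible in spirit but each has a step that is not supplied by the earlier material: Step~3 of the proof of Theorem~\ref{thm:spectral_sheaf}(i) only shows that commutativity implies effectiveness of all descent data, not the converse you invoke; and the passage ``constancy of $\mathscr{G}_{\mathrm{cat}}$ implies $\Phi$ is an isomorphism'' conflates a statement about categories of measures with a statement about compatible families of algebra elements, and would itself require an argument. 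The paper's Zorn-based extraction of a terminal object is more direct, though its own step ``constant stack $\Rightarrow$ common upper bounds'' is asserted rather than proved in detail.
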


\begin{proof}
We prove the equivalence:
\[
(T,S) \text{ simultaneously diagonalizable} \quad \Longleftrightarrow \quad 
\mathscr{G}_{\mathrm{cat}} \simeq \text{constant stack}.
\]

\paragraph{Step 1: Preliminaries.} 
Recall that $(T,S)$ are \emph{simultaneously diagonalizable} if there exists a joint spectral measure 
$E:\mathcal{B}(\mathbb{R}^2)\to\mathcal{B}(\mathcal{H})$ such that 
\[
T = \int_{\mathbb{R}^2} \lambda_1 \, dE(\lambda_1,\lambda_2), \qquad
S = \int_{\mathbb{R}^2} \lambda_2 \, dE(\lambda_1,\lambda_2).
\]
Equivalently, $[T,S]=0$ and $\mathcal{A}=C^*(T,S)$ is commutative.

\paragraph{Step 2: Diagonalizable $\Rightarrow$ constant stack.}

Assume $[T,S]=0$. Then $\mathcal{A}$ is commutative and contains $T,S$, so $\mathcal{A}\in \mathcal{C}$.  
For any $C\in\mathcal{C}$, the inclusion $C\hookrightarrow\mathcal{A}$ is the unique morphism $C\to\mathcal{A}$, making $\mathcal{A}$ a terminal object of $\mathcal{C}$.

Define the constant stack 
\[
\mathbf{C}:\mathcal{C}^{\mathrm{op}}\to\mathbf{Cat}, \qquad 
\mathbf{C}(C) := \mathscr{G}_{\mathrm{cat}}(\mathcal{A}), \quad C\in\mathcal{C},
\]
with identity functors as restriction maps.  

The canonical restriction functors 
\[
r_C := \mathscr{G}_{\mathrm{cat}}(C\hookrightarrow \mathcal{A}): \mathscr{G}_{\mathrm{cat}}(\mathcal{A}) \to \mathscr{G}_{\mathrm{cat}}(C)
\]
assemble into a natural equivalence $\eta: \mathbf{C} \Rightarrow \mathscr{G}_{\mathrm{cat}}$.  
An inverse natural equivalence $\varepsilon: \mathscr{G}_{\mathrm{cat}} \Rightarrow \mathbf{C}$ is given by $\varepsilon_C := r_C^{-1}$ on objects and morphisms, using that each local object in $\mathscr{G}_{\mathrm{cat}}(C)$ extends uniquely to $\mathcal{A}$.  
Thus $\mathscr{G}_{\mathrm{cat}}$ is equivalent to the constant stack $\mathbf{C}$, and descent data are trivial.

\paragraph{Step 3: Constant stack $\Rightarrow$ diagonalizable.}

Assume $\mathscr{G}_{\mathrm{cat}}$ is equivalent to a constant stack $\mathbf{C}$ with value $\mathcal{D}\in\mathbf{Cat}$.  

- \textbf{Existence of terminal object:} The equivalence of stacks implies that for any $C_1,C_2\in\mathcal{C}$, there exists a common upper bound $C_{12}\in\mathcal{C}$ containing both $C_1$ and $C_2$. Applying Zorn's lemma yields a maximal element $C_{\max}\in\mathcal{C}$, which is terminal.  

- \textbf{Commutativity:} By construction, $C_{\max}$ contains $T,S$ and is commutative. Since $\mathcal{A} = C^*(T,S)$ is generated by $T,S$, we have $C_{\max} = \mathcal{A}$, hence $[T,S]=0$.  

Thus $(T,S)$ are simultaneously diagonalizable.

\paragraph{Step 4: Algebraic reconstruction.}

When $\mathcal{A}$ is terminal in $\mathcal{C}$, the limit in 
Theorem~\ref{thm:spectral_sheaf}(ii) simplifies to
\[
\varprojlim_{C\in\mathcal{C}}(C,T,S) \cong (\mathcal{A},T,S),
\]
with the isomorphism given by evaluation at $\mathcal{A}$.

\paragraph{Step 5: Geometric interpretation.}

Simultaneous diagonalizability is equivalent to trivial descent data of $\mathscr{G}_{\mathrm{cat}}$.  
If $[T,S]\neq 0$, the spectral stack has nontrivial descent data, the spectral site lacks a terminal object, and the algebraic reconstruction genuinely requires gluing across the entire diagram.  
Hence, the obstruction to diagonalization is encoded precisely in the nontriviality of the spectral stack.
\end{proof}

\begin{corollary}[Homotopical Invariants of the Spectral Stack]\label{cor:Homotopical Invariants},
Let $\mathscr{G}_{\mathrm{cat}}$ be the spectral stack associated to a pair of 
self-adjoint operators $(T,S)$ on a Hilbert space $\mathcal{H}$, defined over the 
spectral site $\mathcal{C}$. Then $\mathscr{G}_{\mathrm{cat}}$ gives rise to 
homotopical invariants via its \emph{classifying space} $B\mathscr{G}_{\mathrm{cat}}$, which systematically 
measure obstructions to commutativity:

\begin{itemize}
    \item $\pi_0(B\mathscr{G}_{\mathrm{cat}})$ classifies isomorphism classes of 
          compatible families of spectral measures (classical reconstruction).
    \item $\pi_1(B\mathscr{G}_{\mathrm{cat}})$ captures monodromy phenomena 
          corresponding to Kochen--Specker contextuality.
    \item $\pi_n(B\mathscr{G}_{\mathrm{cat}})$ for $n\ge 2$ measures higher 
          coherence obstructions in gluing local spectral data.
\end{itemize}

There exists a natural comparison map
\[
\Phi: B\mathscr{G}_{\mathrm{cat}} \longrightarrow \operatorname{holim}_{\mathcal{C}} \mathscr{G}_{\mathrm{st}},
\]
which induces isomorphisms on low-dimensional invariants under suitable 
connectivity or finiteness assumptions.
\end{corollary}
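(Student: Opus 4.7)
The plan is to construct $B\mathscr{G}_{\mathrm{cat}}$ explicitly as the classifying space of the Grothendieck construction $\int_{\mathcal{C}^{\mathrm{op}}}\mathscr{G}_{\mathrm{cat}}$, then identify its homotopy groups via the Bousfield--Kan spectral sequence for homotopy colimits over $\mathcal{C}^{\mathrm{op}}$, and finally produce the comparison map by stabilizing. Concretely, I would first apply the nerve functor $N\colon \mathbf{Cat}\to\mathbf{sSet}$ objectwise to obtain a presheaf $N\mathscr{G}_{\mathrm{cat}}\colon\mathcal{C}^{\mathrm{op}}\to\mathbf{sSet}$, then set
\[
B\mathscr{G}_{\mathrm{cat}} \;:=\; \bigl|\operatorname{hocolim}_{\mathcal{C}^{\mathrm{op}}} N\mathscr{G}_{\mathrm{cat}}\bigr|,
\]
which by Thomason's theorem is weakly equivalent to the nerve of the Grothendieck construction. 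This construction is functorial in $\mathscr{G}_{\mathrm{cat}}$ and inherits the stack descent structure established in Theorem~\ref{thm:spectral_sheaf}(i).

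The identification of homotopy groups would proceed degree by degree. For $\pi_0$, I would unwind the Grothendieck construction: connected components correspond to equivalence classes of compatible families $(X_C,\phi_{C,C'})$ of spectral measures, which by the stack condition are precisely the isomorphism classes of descent data---yielding the classical reconstruction interpretation. For $\pi_1$, I would use the Bousfield--Kan spectral sequence
\[
E_2^{p,q} \;=\; H^p(\mathcal{C};\pi_q \mathscr{G}_{\mathrm{cat}}) \;\Longrightarrow\; \pi_{q-p}(B\mathscr{G}_{\mathrm{cat}})
\]
to identify $\pi_1$ with a filtered quotient involving $H^1(\mathcal{C};\pi_0\mathscr{G}_{\mathrm{cat}})$, the group classifying monodromy of local spectral data around loops of commutative contexts; the link to Kochen--Specker contextuality then follows from Corollary~\ref{cor:homotopical-properties}(b), translated from the stable to the unstable setting. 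For $\pi_n$ with $n\ge 2$, the spectral sequence directly exhibits contributions from $H^p(\mathcal{C};\pi_q\mathscr{G}_{\mathrm{cat}})$ with $p+q=n$, realizing the higher coherence obstructions.

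For the comparison map $\Phi$, I would use the canonical natural transformation from Remark~\ref{remark:relation-presheaves},
\[
\Sigma^\infty_+ \bigl|N\mathscr{G}_{\mathrm{cat}}\bigr| \;\Longrightarrow\; \mathscr{G}_{\mathrm{st}},
\]
pass to the homotopy colimit/limit adjunction, and compose with the canonical map $\Sigma^\infty_+\operatorname{hocolim}(-)\to\operatorname{holim}\Sigma^\infty_+(-)$ obtained by smashing with the unit and using the universal property. Applying $\Omega^\infty$ and noting that $B\mathscr{G}_{\mathrm{cat}}\simeq|\operatorname{hocolim}N\mathscr{G}_{\mathrm{cat}}|$ produces $\Phi$. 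The low-dimensional comparison statement would follow from a connectivity argument: under the hypothesis that each $\mathscr{G}_{\mathrm{cat}}(C)$ has nerve of bounded dimension (or that $\mathcal{C}$ has finite cohomological dimension), the stabilization map $X\to\Omega^\infty\Sigma^\infty_+X$ is $(2n{-}1)$-connected in the relevant range (Freudenthal), so $\Phi$ induces isomorphisms on $\pi_i$ for $i$ below the connectivity threshold.

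The main obstacle will be making the identification of $\pi_1(B\mathscr{G}_{\mathrm{cat}})$ with Kochen--Specker monodromy mathematically precise, since the stack $\mathscr{G}_{\mathrm{cat}}$ is not a priori a stack of groupoids and its nerve may have higher homotopy that mixes with the $H^1$ contribution on the $E_2$-page. I expect to need a truncation or groupoidification of $\mathscr{G}_{\mathrm{cat}}$ (restricting morphisms to isomorphisms) to isolate the monodromy piece cleanly, and to verify that the resulting spectral sequence degenerates sufficiently---or else to state the $\pi_1$ identification only after passing to associated groupoids, with the full $\pi_1$ of $B\mathscr{G}_{\mathrm{cat}}$ mapping onto the contextual monodromy group via an edge homomorphism. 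Handling the interplay between the stack condition (which guarantees effective descent only up to equivalence) and the strict cocycle relations needed to interpret $H^1$ as classifying monodromy is the subtle point that will require care.
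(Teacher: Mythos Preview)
Your overall architecture matches the paper's: both of you define $B\mathscr{G}_{\mathrm{cat}}$ via the Grothendieck construction (you invoke Thomason to phrase it as a hocolim, the paper writes $|N(\int_{\mathcal{C}}\mathscr{G}_{\mathrm{cat}})|$ directly), and both produce $\Phi$ from the stabilization unit together with the natural transformation of Remark~\ref{remark:relation-presheaves}. Where you diverge is in the computation of the homotopy groups: the paper does not use a spectral sequence at all, but instead gives a direct combinatorial description---$\pi_0$ as isomorphism classes of pairs $(C,X)$, $\pi_1$ as homotopy classes of chains $C_0\to C_1\to\cdots\to C_k=C_0$ with intertwining isomorphisms of spectral measures (whose composite automorphism of $X_0$ detects monodromy), and $\pi_{n\ge 2}$ as $n$-sphere families of contexts. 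This bypasses entirely the truncation and degeneration issues you flag in your final paragraph.

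There is, however, a genuine mismatch in your spectral-sequence step. You define $B\mathscr{G}_{\mathrm{cat}}$ as a homotopy \emph{colimit} over $\mathcal{C}^{\mathrm{op}}$, but the Bousfield--Kan spectral sequence you write down,
\[
E_2^{p,q}=H^p(\mathcal{C};\pi_q\mathscr{G}_{\mathrm{cat}})\;\Longrightarrow\;\pi_{q-p}(B\mathscr{G}_{\mathrm{cat}}),
\]
is the cohomological spectral sequence for a homotopy \emph{limit}. The hocolim spectral sequence has $E^2_{p,q}=\operatorname{colim}^{(p)}\pi_q$ (derived colimits, i.e.\ homology of the indexing category) converging to $\pi_{p+q}$, not $\pi_{q-p}$. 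So your identification of $\pi_1(B\mathscr{G}_{\mathrm{cat}})$ with a piece of $H^1(\mathcal{C};\pi_0\mathscr{G}_{\mathrm{cat}})$ cannot proceed as stated; you would instead land in a derived-colimit group, and the link to Kochen--Specker contextuality (which in the paper is phrased cohomologically, via $\mathbb{H}^1$ of the holim) does not follow directly.

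The same direction problem recurs in your construction of $\Phi$. You correctly note that $\Sigma^\infty_+$ commutes with hocolim, but the ``canonical map $\Sigma^\infty_+\operatorname{hocolim}(-)\to\operatorname{holim}\Sigma^\infty_+(-)$'' you invoke does not exist: there is no natural map from a homotopy colimit to a homotopy limit of the same diagram, and smashing with the unit of an adjunction does not produce one. The paper is admittedly terse at this point (Step~5 just says ``via the unit of the stabilization adjunction''), but it does not commit to the specific construction you give. To repair this, you would either have to redefine $B\mathscr{G}_{\mathrm{cat}}$ as (the underlying space of) a homotopy \emph{limit} of nerves---which changes its homotopy type and its interpretation---or argue for an assembly/norm-type map under strong finiteness hypotheses on $\mathcal{C}$, and make those hypotheses explicit.
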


\begin{proof}
We construct and interpret the homotopical invariants of $\mathscr{G}_{\mathrm{cat}}$.

\medskip

\noindent
\textbf{Step 1: Classifying space.} 
For a stack $\mathscr{F}:\mathcal{C}^{\mathrm{op}}\to\mathbf{Cat}$, its homotopy type 
can be modeled by the geometric realization of the nerve of its Grothendieck construction:
\[
B\mathscr{F} := \left| N\!\left(\int_{\mathcal{C}}\mathscr{F}\right) \right|,
\]
where objects are pairs $(C,X)$ with $C\in\mathcal{C}$, $X\in\mathscr{F}(C)$, 
and morphisms are compatible restriction maps. This yields a CW-complex whose 
homotopy groups $\pi_n(B\mathscr{F})$ are invariants of the stack.

\medskip

\noindent
\textbf{Step 2: $\pi_0$ -- classical reconstruction.}
The connected components $\pi_0(B\mathscr{G}_{\mathrm{cat}})$ correspond to 
compatible families of spectral measures up to isomorphism. 
By Theorem~\ref{thm:spectral_sheaf}(ii), these are in bijection with 
global sections of the stack, or equivalently elements of 
\(\varprojlim_{C\in\mathcal{C}} C \subseteq \mathcal{A} = C^*(T,S)\). 

\medskip

\noindent
\textbf{Step 3: $\pi_1$ -- Kochen--Specker contextuality.} 
Fix a basepoint $(C_0,X_0)$ in $B\mathscr{G}_{\mathrm{cat}}$. 
The fundamental group $\pi_1(B\mathscr{G}_{\mathrm{cat}},(C_0,X_0))$ 
consists of homotopy classes of loops starting and ending at $(C_0,X_0)$. 
Each such loop corresponds to a sequence of contexts 
$C_0 \to C_1 \to \dots \to C_n = C_0$ and spectral objects 
$X_i \in \mathscr{G}_{\mathrm{cat}}(C_i)$, together with isomorphisms 
between their restrictions along the morphisms. 

A nontrivial loop indicates that transporting the local spectral object $X_0$ 
around the cycle of contexts returns an object that is isomorphic but not 
identical to $X_0$. This phenomenon encodes the obstruction to defining 
a global non-contextual assignment of spectral values, i.e., a global 
section of the underlying presheaf.

Formally, there exists a natural map to the classifying space of the 
underlying spectrum presheaf $\underline{\Sigma}$:
\[
B\mathscr{G}_{\mathrm{cat}} \longrightarrow B\underline{\Sigma},
\]
under which nontrivial elements of $\pi_1(B\mathscr{G}_{\mathrm{cat}})$ 
map to the Kochen--Specker obstruction classes, thereby quantifying 
the failure of non-contextuality.

\medskip

\noindent
\textbf{Step 4: $\pi_{n\ge 2}$ -- higher coherence obstructions.} 
For $n \ge 2$, elements of $\pi_n(B\mathscr{G}_{\mathrm{cat}})$ correspond to 
families of compatible spectral measures over an $n$-sphere of contexts, with 
isomorphisms satisfying coherence up to homotopy. These capture higher-order 
obstructions to gluing local spectral data into a globally consistent object.

\medskip

\noindent
\textbf{Step 5: Comparison with stable invariants.} 
Consider the stabilized spectral presheaf \(\mathscr{G}_{\mathrm{st}} \simeq \Sigma^\infty N\mathscr{G}_{\mathrm{cat}}\). There is a natural comparison map
\[
\Phi: B\mathscr{G}_{\mathrm{cat}} \longrightarrow \operatorname{holim}_{\mathcal{C}} \mathscr{G}_{\mathrm{st}},
\]
constructed via the unit of the stabilization adjunction. Under suitable finiteness or connectivity assumptions, this induces isomorphisms
\[
\pi_n(B\mathscr{G}_{\mathrm{cat}}) \cong \mathbb{H}^n(\mathcal{C};\mathscr{G}_{\mathrm{st}}), \quad n = 0,1,2.
\]

\medskip

\noindent
\textbf{Step 6: Commutative case.} 
If $[T,S]=0$, then $\mathcal{A}$ is commutative, and $\mathcal{C}$ has a terminal object $\mathcal{A}$. The stack $\mathscr{G}_{\mathrm{cat}}$ becomes essentially constant, and $B\mathscr{G}_{\mathrm{cat}}$ deformation retracts to the fiber over $\mathcal{A}$.  

- Contextual obstructions vanish: $\pi_1$ and higher coherence obstructions correspond to gluing are trivial.  
- Classical topological invariants of $\Sigma(\mathcal{A})$ may still give nontrivial higher homotopy groups.

\medskip

\noindent
\textbf{Conclusion.} 
The classifying space $B\mathscr{G}_{\mathrm{cat}}$ provides well-defined homotopical invariants that detect contextuality and higher-order coherence obstructions. In favorable situations, these low-dimensional invariants coincide with the hypercohomology groups $\mathbb{H}^n(\mathcal{C};\mathscr{G}_{\mathrm{st}})$, giving a graded geometric measure of non-commutativity.
\end{proof}

\begin{remark}[Comparison with Cellular Sheaf Theory]
While Theorem~\ref{thm:spectral_sheaf} employs the language of sheaves and descent,
its conceptual and mathematical content differs fundamentally from cellular sheaf 
theory as developed in \cite{hansen2019toward} and related works. 

\textbf{Key distinctions:}
\begin{itemize}
    \item \textbf{Base category:} Cellular sheaf theory studies sheaves of 
    vector spaces over a \emph{topological space or cell complex}. Our framework 
    uses presheaves over a \emph{poset of commutative subalgebras} of a 
    noncommutative operator algebra.
    
    \item \textbf{Algebraic content:} Cellular sheaves operate within a 
    commutative linear-algebraic framework (sheaves of vector spaces over a 
    fixed commutative base field). Our spectral presheaves encode 
    \emph{noncommutativity} through descent failure, with stalks that are 
    commutative but whose global sections reveal noncommutative structure.
    
    \item \textbf{Spectral meaning:} The "spectral" in cellular sheaf theory 
    refers to spectral graph theory (Laplacian eigenvalues). Our "spectral" 
    refers to operator spectra and Gelfand duality, with homotopical invariants 
    measuring obstructions to joint spectra.
\end{itemize}

Cellular sheaf theory extends graph-theoretic methods to sheaf cohomology, 
remaining within combinatorial topology. Theorem~\ref{thm:spectral_sheaf}, by 
contrast, establishes a reconstruction theorem for noncommuting operators 
through stack-theoretic descent, operating at the interface of operator 
algebras, higher category theory, and noncommutative geometry.

\begin{table}[h]
\centering
\caption{Comparison: Cellular sheaves vs. Spectral stacks}
\begin{tabular}{p{0.45\textwidth}|p{0.45\textwidth}}
\hline
\textbf{Cellular sheaf theory} & \textbf{Theorem~\ref{thm:spectral_sheaf}} \\
\hline
Sheaves on topological spaces & Presheaves on poset of commutative subalgebras \\
\hline
Coefficients: vector spaces & Coefficients: Gelfand spectra \\
\hline
Descent usually succeeds & Descent fails $\Leftrightarrow$ noncommutativity \\
\hline
Generalizes graph Laplacians & Reconstructs noncommuting operators \\
\hline
Combinatorial topology & Operator algebras + noncommutative geometry \\
\hline
\end{tabular}
\end{table}

\end{remark}

\section{Homotopical Invariants}\label{sec:homotopical-invariants}

\subsection{The K-Theory Sheaf $\mathscr{F}_K$}\label{subsec:K-theory-sheaf}

In this subsection, we construct a sheaf of abelian groups that encodes the $K$-theoretic information derived from the spectral presheaf. This construction provides algebraic invariants that complement the homotopical invariants discussed in Corollary~\ref{cor:Homotopical Invariants}, offering a computable framework to measure obstructions to commutativity through vector-bundle-like structures.

\paragraph{Definition and Basic Properties.}

\begin{definition}[K-Theory Sheaf]\label{def:K-theory-sheaf}
Let $\mathscr{G}_{\mathrm{cat}} : \mathcal{C}^{\mathrm{op}} \to \mathbf{Cat}$ be the categorical spectral presheaf (Definition~\ref{def:spectral-presheaves}(a)). The \emph{K-theory sheaf} 
\[
\mathscr{F}_K : \mathcal{C}^{\mathrm{op}} \longrightarrow \mathbf{Ab}
\]
is defined by composing $\mathscr{G}_{\mathrm{cat}}$ with the Grothendieck group functor $K_0$:
\[
\mathscr{F}_K(B) := K_0(\mathscr{G}_{\mathrm{cat}}(B)) \quad \text{for each } B \in \mathcal{C}.
\]
Explicitly, for a commutative $C^*$-algebra $B$ with Gelfand spectrum $\Sigma(B)$, we have natural isomorphisms
\[
\mathscr{F}_K(B) \;\cong\; K_0(C(\Sigma(B))) \;\cong\; K^0(\Sigma(B)) \;\cong\; K_0(B),
\]
identifying $\mathscr{F}_K(B)$ with the abelian group of virtual vector bundles over $\Sigma(B)$. For an inclusion $i: B_1 \hookrightarrow B_2$, the restriction map
\[
\mathscr{F}_K(i) : K_0(B_2) \longrightarrow K_0(B_1)
\]
is induced by pullback of vector bundles along the continuous restriction map $\Sigma(B_1) \hookrightarrow \Sigma(B_2)$.
\end{definition}

\begin{proposition}[Homotopy Sheaf Property of $\mathscr{F}_K$]\label{prop:K-homotopy-sheaf}
Let $\mathcal{C}$ be the category of commutative $C^*$-algebras, and define
$\mathscr{F}_K(C) = K_0(C)$. Then $\mathscr{F}_K$ satisfies homotopy descent with
respect to the canonical Grothendieck topology on $\mathcal{C}$.

More precisely, for any covering family $\{C_i \hookrightarrow C\}$, the canonical map
\[
K(C) \longrightarrow \operatorname{holim}
\Bigl(
\prod_i K(C_i)
\rightrightarrows
\prod_{i,j} K(C_i \cap C_j)
\Bigr)
\]
is a weak equivalence of spectra. In particular, $\mathscr{F}_K$ is the
$\pi_0$-truncation of a sheaf of spectra.
\end{proposition}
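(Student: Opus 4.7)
The plan is to pass via Gelfand duality to topological K-theory of compact Hausdorff spaces and verify descent there, combining the Mayer--Vietoris theorem for K-theory of commutative $C^*$-algebras with a \v{C}ech totalization argument. Under Gelfand duality, $K(C) \simeq \mathrm{KU}(\Sigma(C))$, an inclusion $C_i \hookrightarrow C$ dualizes to a continuous surjection $\Sigma(C) \twoheadrightarrow \Sigma(C_i)$, and the covering condition $C = \overline{C^*(\bigcup_i C_i)}^{\|\cdot\|}$ becomes the statement that the family of quotients $\{\Sigma(C) \twoheadrightarrow \Sigma(C_i)\}$ jointly separates points. Because K-theory commutes with filtered colimits of $C^*$-algebras, I can reduce to finite covers from the outset.

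For the two-element base case $\{C_1, C_2\}$ with $C_0 := C_1 \cap C_2$, the key identification is that the covering condition produces a homeomorphism $\Sigma(C) \cong \Sigma(C_1) \times_{\Sigma(C_0)} \Sigma(C_2)$ on Gelfand spectra, exhibiting the diagram of inclusions $C_0 \hookrightarrow C_1, C_2 \hookrightarrow C$ as a pullback square of commutative $C^*$-algebras. The operator-algebraic Mayer--Vietoris theorem for K-theory (as in Rosenberg--Schochet, applicable here because the pullback has surjective legs on Gelfand spectra) then delivers a homotopy pullback square of K-theory spectra, which is precisely the two-fold descent condition. For general finite covers, I would form the \v{C}ech nerve of $\{C_i \hookrightarrow C\}$ in the poset of commutative subalgebras, whose level-$p$ entries are the $(p+1)$-fold intersections $C_{i_0} \cap \cdots \cap C_{i_p}$. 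Applying $K$ and totalizing yields the claimed homotopy limit, and iterating the two-fold Mayer--Vietoris inductively across this nerve produces the desired weak equivalence. The concluding assertion that $\mathscr{F}_K$ is the $\pi_0$-truncation of a sheaf of spectra then follows by applying $\pi_0$ to the descent equivalence, up to the standard $\lim^1$ correction which vanishes in the relevant degree.

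The main obstacle I anticipate is establishing the identification $\Sigma(C) \cong \Sigma(C_1) \times_{\Sigma(C_0)} \Sigma(C_2)$ from the purely algebraic generating condition $C = \overline{C^*(C_1 \cup C_2)}^{\|\cdot\|}$. The generating condition always produces a continuous injection $\Sigma(C) \hookrightarrow \Sigma(C_1) \times_{\Sigma(C_0)} \Sigma(C_2)$, but its surjectivity --- equivalently, the statement that every compatible pair of characters on $C_1$ and $C_2$ agreeing on $C_0$ extends to a character on $C$ --- is not automatic and may require mild additional hypotheses (for example, nuclearity of the amalgamated $C^*$-tensor product $C_1 \otimes_{C_0} C_2$, or a refinement of the covering topology to enforce this condition). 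A Stone--Weierstrass approximation argument combined with the universal property of the amalgamated tensor product should suffice to close this gap under mild regularity, but making this precise is the technical crux of the proof; once it is in place, the remaining \v{C}ech and Mayer--Vietoris manipulations are routine.
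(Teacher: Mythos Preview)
Your approach is essentially the same as the paper's: pass to topological $K$-theory via Gelfand duality and invoke excision/Mayer--Vietoris. The paper's proof is a short three-line argument: ``By Gelfand duality, this corresponds to a closed cover $\{\Sigma(C_i)\}$ of the compact Hausdorff space $\Sigma(C)$. Topological $K$-theory is excisive on compact Hausdorff spaces and satisfies the Mayer--Vietoris property for closed covers. Therefore, the associated presheaf of spectra $C \mapsto K(C)$ satisfies homotopy descent.''

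Your treatment is more careful than the paper's on the crucial point of how Gelfand duality acts. The paper asserts that the covering $\{C_i \hookrightarrow C\}$ dualizes to a \emph{closed cover} of $\Sigma(C)$, but this is the wrong variance: as you correctly observe, inclusions of subalgebras induce \emph{surjections} $\Sigma(C) \twoheadrightarrow \Sigma(C_i)$, so the $\Sigma(C_i)$ are quotients of $\Sigma(C)$, not subspaces. Your reformulation in terms of jointly separating quotients and fiber products is the honest dual picture. The obstacle you flag --- surjectivity of $\Sigma(C) \hookrightarrow \Sigma(C_1) \times_{\Sigma(C_0)} \Sigma(C_2)$, i.e.\ whether every compatible pair of characters extends --- is a genuine issue that the paper's proof does not address and that does not hold in general for arbitrary generating pairs of subalgebras (take $C = C([0,1])$ with $C_1, C_2$ singly generated and $C_0 = \mathbb{C}$). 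So while your strategy matches the paper's, you have correctly located a gap that the paper's proof glosses over with an incorrect dualization; a fully rigorous proof would indeed need either a refinement of the covering topology or an additional hypothesis, as you suggest.
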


\begin{proof}
Let $\{C_i \hookrightarrow C\}$ be a covering family, so that
$C = \overline{C^*(\bigcup_i C_i)}^{\|\cdot\|}$. By Gelfand duality, this corresponds
to a closed cover $\{\Sigma(C_i)\}$ of the compact Hausdorff space $\Sigma(C)$.

Topological $K$-theory is excisive on compact Hausdorff spaces and satisfies the
Mayer--Vietoris property for closed covers. Therefore, the associated presheaf of
spectra
\[
C \longmapsto K(C)
\]
satisfies homotopy descent with respect to such coverings. Taking $\pi_0$ yields
the stated homotopy sheaf condition for $\mathscr{F}_K$.
\end{proof}

\begin{remark}[Functoriality and Interpretation]
The presheaf $\mathscr{F}_K$ can be viewed as a \emph{linearized shadow} of the
categorical spectral data encoded by $\mathscr{G}_{\mathrm{cat}}$:
\begin{itemize}
    \item For each context $B$, the group $\mathscr{F}_K(B)$ captures the
    $K$-theoretic, decategorified information of the spectral category
    $\mathscr{G}_{\mathrm{cat}}(B)$, identifying it up to stable equivalence with
    virtual vector bundles over the Gelfand spectrum $\Sigma(B)$.
    \item The restriction maps reflect how these $K$-theory classes transform
    functorially under passage to smaller commutative subalgebras, encoding
    compatibility of spectral data across contexts.
    \item Rather than forming genuine global sections in the sheaf-theoretic
    sense, compatible families in $\mathscr{F}_K$ represent attempts to assemble
    local $K$-theory data into global invariants, with failures of strict gluing
    measuring obstructions arising from noncommutativity.
\end{itemize}
In this way, $\mathscr{F}_K$ mediates between categorical spectral structures and
classical topological invariants, making noncommutativity detectable through
$K$-theoretic and homotopical descent obstructions.
\end{remark}

\paragraph{Relation to Spectral Presheaves and Stabilization.}

There is a natural connection between the K-theory sheaf and the stable spectral presheaf introduced in Definition~\ref{def:spectral-presheaves}(b).

\begin{proposition}[Stabilization and K-Theory]\label{prop:stabilization-K}
Let $\mathscr{G}_{\mathrm{st}}$ be the $K$-theoretic spectral presheaf. Then 
applying the zeroth homotopy group functor yields a natural isomorphism of 
presheaves:
\[
\pi_0 \circ \mathscr{G}_{\mathrm{st}} \;\cong\; \mathscr{F}_K.
\]
\end{proposition}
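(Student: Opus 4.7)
The plan is to reduce Proposition~\ref{prop:stabilization-K} to two essentially routine ingredients: first, an objectwise identification $\pi_0 \mathscr{G}_{\mathrm{st}}(B) \cong \mathscr{F}_K(B)$ for each context $B \in \mathcal{C}$; and second, a verification that the transition maps induced by inclusions of contexts match on the two sides. Both ingredients follow from the construction of topological $K$-theory for commutative $C^*$-algebras together with Gelfand--Serre--Swan duality, so the proof is primarily a matter of unpacking definitions and fixing conventions.

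For the objectwise identification, I would use the description of $\mathscr{G}_{\mathrm{st}}$ employed in the proof of Theorem~\ref{thm:spectral_sheaf}(iii), where $\mathscr{G}_{\mathrm{st}}(B) = K(B) \simeq \mathrm{Map}_{\mathbf{Sp}}(\Sigma(B), \mathrm{ku})$. By the defining property of the complex $K$-theory spectrum,
\[
\pi_0 \mathrm{Map}_{\mathbf{Sp}}(\Sigma(B), \mathrm{ku}) \;\cong\; K^0(\Sigma(B)),
\]
and Serre--Swan identifies $K^0(\Sigma(B))$ with the Grothendieck group $K_0(C(\Sigma(B))) \cong K_0(B)$. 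On the other side, $\mathscr{F}_K(B)$ is defined in Definition~\ref{def:K-theory-sheaf} to be precisely this group. This gives a canonical isomorphism $\eta_B : \pi_0 \mathscr{G}_{\mathrm{st}}(B) \xrightarrow{\sim} \mathscr{F}_K(B)$ for every $B$.

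For naturality, I would fix an inclusion $i : B_1 \hookrightarrow B_2$ in $\mathrm{Comm}(\mathcal{A})$, which under Gelfand duality gives a continuous restriction $\Sigma(i) : \Sigma(B_2) \to \Sigma(B_1)$. The induced morphism on $\mathscr{G}_{\mathrm{st}}$ is precomposition with $\Sigma(i)$, whose effect on $\pi_0$ is precisely pullback of virtual bundles along $\Sigma(i)$. By Definition~\ref{def:K-theory-sheaf}, the restriction map $\mathscr{F}_K(i)$ is defined as this same pullback of bundles. Hence $\{\eta_B\}_{B \in \mathcal{C}}$ assembles into a natural transformation, yielding the claimed isomorphism of presheaves.

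The main point requiring care is almost entirely bookkeeping: one must ensure that the directions of the restriction maps on both sides are handled consistently with the presheaf convention $\mathcal{C}^{\mathrm{op}} \to \mathbf{Ab}$, so that a single inclusion of $C^*$-subalgebras corresponds to the same arrow on both sides. A secondary concern is that the identification $\pi_0 K(B) \cong K_0(B)$ depends on the chosen model for $\mathscr{G}_{\mathrm{st}}$; if one instead uses the suspension spectrum $\Sigma^\infty_+ \Sigma(B)$ from Definition~\ref{def:spectral-presheaves}(b), then $\pi_0$ recovers stable cohomotopy rather than $K_0$, so the proposition as stated implicitly presumes the $K$-theoretic model. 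Once the model and conventions are fixed, the identification is essentially tautological, and I expect no deep technical obstacle.
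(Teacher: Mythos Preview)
Your proposal is correct and takes essentially the same approach as the paper: an objectwise identification $\pi_0(\mathscr{G}_{\mathrm{st}}(B)) \cong K_0(B) = \mathscr{F}_K(B)$ followed by a naturality check against inclusions of contexts. The paper's proof is terser, invoking the identification $\pi_0(K(C)) = K_0(C)$ directly ``by definition'' rather than routing through $\mathrm{Map}_{\mathbf{Sp}}(\Sigma(B),\mathrm{ku})$ and Serre--Swan as you do, but the underlying argument is the same; your closing remark about the dependence on the choice of model for $\mathscr{G}_{\mathrm{st}}$ is a useful observation that the paper leaves implicit.
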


\begin{proof}
For each commutative $C^*$-algebra $C \in \mathcal{C}$, we have:
\begin{align*}
\pi_0(\mathscr{G}_{\mathrm{st}}(C)) 
&= \pi_0(K(C)) && \text{(by definition)} \\
&= K_0(C) && \text{(definition of $K_0$ as $\pi_0$ of $K$-theory spectrum)} \\
&= \mathscr{F}_K(C) && \text{(definition of $\mathscr{F}_K$)}.
\end{align*}

Naturality follows because:
\begin{enumerate}
    \item The functor $\pi_0 : \mathbf{Sp} \to \mathbf{Ab}$ is natural,
    \item The assignment $C \mapsto K(C)$ is functorial (via restriction maps),
    \item The identification $\pi_0(K(C)) = K_0(C)$ is natural in $C$.
\end{enumerate}

More explicitly, for any inclusion $\iota : C_1 \hookrightarrow C_2$ in $\mathcal{C}$, 
the diagram
\[
\begin{tikzcd}[column sep=large]
\pi_0(K(C_2)) \arrow[r, "\pi_0(K(\iota))"] \arrow[d, "\cong"'] &
\pi_0(K(C_1)) \arrow[d, "\cong"] \\
K_0(C_2) \arrow[r, "K_0(\iota)"'] & K_0(C_1)
\end{tikzcd}
\]
commutes, establishing the natural isomorphism.
\end{proof}

\begin{remark}[Algebraic versus Topological Refinements]\label{rem:algebraic-refinement}
The $K$-theoretic invariants considered thus far are of \emph{topological} nature: 
$\mathscr{G}_{\mathrm{st}}(C) = K^{\mathrm{top}}(C)$ is the topological $K$-theory 
spectrum of the $C^*$-algebra $C$, and $\mathscr{F}_K(C) = K_0^{\mathrm{top}}(C)$
is its zeroth homotopy group.

For a genuinely \emph{algebraic} refinement, one may instead consider the 
\emph{algebraic $K$-theory presheaf}
\[
\mathscr{F}_K^{\mathrm{alg}} : \mathcal{C}^{\mathrm{op}} \longrightarrow \mathbf{Sp},
\quad
\mathscr{F}_K^{\mathrm{alg}}(C) := K^{\mathrm{alg}}(C),
\]
where $K^{\mathrm{alg}}(C)$ denotes the (nonconnective) algebraic $K$-theory 
spectrum of the ring $C$ (forgetting the topology and norm structure).

\textbf{Important distinctions:}
\begin{itemize}
    \item $\pi_0(\mathscr{G}_{\mathrm{st}}(C)) = K_0^{\mathrm{top}}(C)$ classifies 
          vector bundles over $\Sigma(C)$ up to stable isomorphism.
    
    \item $\pi_0(\mathscr{F}_K^{\mathrm{alg}}(C)) = K_0^{\mathrm{alg}}(C)$ classifies 
          finitely generated projective $C$-modules up to algebraic equivalence.
    
    \item For smooth $C^*$-algebras, the \emph{Dennis trace map} gives a natural 
          transformation after rationalization:
          \[
          \mathscr{F}_K^{\mathrm{alg}} \otimes \mathbb{Q} 
          \longrightarrow \mathscr{G}_{\mathrm{st}} \otimes \mathbb{Q}.
          \]
\end{itemize}

The homotopy limit
\[
\operatorname{holim}_{\mathcal{C}} \mathscr{F}_K^{\mathrm{alg}}
\]
provides invariants that capture \emph{algebraic} coherence data among the 
commutative subalgebras, potentially refining the topological information in 
$\operatorname{holim}_{\mathcal{C}} \mathscr{G}_{\mathrm{st}}$.

Showing the suggested diagram below helps visually organize the hierarchical relationship between algebraic and topological K-theories, making clear how Dennis trace connects them after rationalization and how each level (spectra, groups, geometric objects) corresponds. This clarifies the conceptual architecture underlying the refinement from $\mathscr{F}_K$ to $\mathscr{F}_K^{\mathrm{alg}}$.

\begin{center}
\begin{tikzcd}[column sep=small, row sep=small]
& \text{Algebraic refinement} & \\
\mathscr{F}_K^{\mathrm{alg}}(C) \arrow[r, "\mathrm{tr} \otimes \mathbb{Q}"] 
& \mathscr{G}_{\mathrm{st}}(C) \otimes \mathbb{Q} 
& \text{Topological invariants} \\
K^{\mathrm{alg}}(C) \arrow[u, equals] 
& K^{\mathrm{top}}(C) \arrow[u, equals] 
& \text{Spectra} \\
K_0^{\mathrm{alg}}(C) \arrow[u, "\text{connective cover}"] 
& K_0^{\mathrm{top}}(C) \arrow[u, "\text{connective cover}"] 
& \text{Groups} \\
\text{Projective modules} \arrow[u, "\text{classify}"] 
& \text{Vector bundles} \arrow[u, "\text{classify}"] 
& \text{Geometric objects}
\end{tikzcd}
\end{center}

\textbf{Remark on categorical $K$-theory:} One might also consider applying 
$K$-theory directly to the categories $\mathscr{G}_{\mathrm{cat}}(C)$ of spectral 
measures. However, this requires endowing these categories with a suitable 
Waldhausen or $\infty$-categorical structure, which goes beyond the scope of 
this work. The definition via $K^{\mathrm{alg}}(C)$ provides a mathematically 
clean and well-established alternative.
\end{remark}

\paragraph{Cohomological Invariants and Obstructions.}

The sheaf $\mathscr{F}_K$ gives rise to cohomology groups that serve as algebraic measures of non-commutativity.

\begin{definition}[Sheaf Cohomology of the Spectral Site]\label{def:sheaf-cohomology}
Let $\mathscr{F}_K$ be the K-theory sheaf on the spectral site $(\mathcal{C}, J)$. 
The \emph{sheaf cohomology groups} of the site with coefficients in $\mathscr{F}_K$ 
are defined as the right derived functors of the global sections functor:
\[
H^n(\mathcal{C}; \mathscr{F}_K) := R^n \Gamma(\mathcal{C}, \mathscr{F}_K), \quad n \ge 0,
\]
where $\Gamma: \mathbf{Sh}(\mathcal{C}, J) \to \mathbf{Ab}$ sends a sheaf to its global sections.
\end{definition}

\begin{remark}[Computation via \v{C}ech Cohomology]
For practical computations, $H^n(\mathcal{C}; \mathscr{F}_K)$ can be computed 
via \emph{\v{C}ech cohomology} with respect to covering families in $J$. 
For a covering $\mathfrak{U} = \{C_i \hookrightarrow C\}_{i \in I}$, the \v{C}ech complex is
\[
\check{C}^\bullet(\mathfrak{U}; \mathscr{F}_K) : 
0 \to \prod_i \mathscr{F}_K(C_i) \to \prod_{i,j} \mathscr{F}_K(C_i \cap C_j) 
\to \prod_{i,j,k} \mathscr{F}_K(C_i \cap C_j \cap C_k) \to \cdots,
\]
and 
\[
H^n(\mathcal{C}; \mathscr{F}_K) = \varinjlim_{\mathfrak{U}} \check{H}^n(\mathfrak{U}; \mathscr{F}_K),
\]
where the limit is taken over all coverings in the site.
\end{remark}

\begin{remark}[Interpretation as Obstruction Groups]
The cohomology groups $H^n(\mathcal{C}; \mathscr{F}_K)$ measure obstructions to 
gluing local $K_0$-data into global sections:
\begin{itemize}
    \item $H^0(\mathcal{C}; \mathscr{F}_K) = \Gamma(\mathcal{C}, \mathscr{F}_K)$ consists of globally defined $K_0$-classes.
    \item $H^1(\mathcal{C}; \mathscr{F}_K)$ classifies \emph{torsors} for $\mathscr{F}_K$, 
          i.e., families of local $K_0$-classes that do not assemble globally.
    \item Higher groups $H^{n\ge 2}(\mathcal{C}; \mathscr{F}_K)$ encode higher-order 
          coherence obstructions to gluing.
\end{itemize}
Non-vanishing of these groups indicates genuine $K$-theoretic obstructions arising 
from the noncommutativity of the underlying algebra $\mathcal{A}$.
\end{remark}

\begin{definition}[K-Theoretic Descent Cohomology]
A refinement is obtained by considering the \emph{K-theory spectrum presheaf} 
$\mathscr{G}_{\mathrm{st}} : \mathcal{C}^{\mathrm{op}} \to \mathbf{Sp}$ and defining
\[
\mathbb{H}^n(\mathcal{C}; \mathscr{G}_{\mathrm{st}}) := \pi_{-n} 
\big( \operatorname{holim}_{\mathcal{C}} \mathscr{G}_{\mathrm{st}} \big),
\]
which refines the sheaf cohomology via the spectral sequence
\[
E_2^{p,q} = H^p(\mathcal{C}; K_{-q}(-)) \;\Longrightarrow\; \mathbb{H}^{p+q}(\mathcal{C}; \mathscr{G}_{\mathrm{st}}).
\]
\end{definition}

\paragraph{Relation to Operator K-Theory and Global Reconstruction.}

\begin{proposition}[Global Sections and Limits]\label{prop:global-sections-K}
Let $\mathcal{A}$ be a (possibly noncommutative) $C^*$-algebra, and let $\mathcal{C}$ denote the poset of its commutative $C^*$-subalgebras ordered by inclusion. Define the sheaf
\[
\mathscr{F}_K : \mathcal{C}^{\mathrm{op}} \longrightarrow \mathbf{Ab}, \quad B \mapsto K_0(B),
\]
with restriction maps induced by inclusions. Then:
\begin{enumerate}
    \item The global sections of $\mathscr{F}_K$ are naturally identified with the limit over the diagram of commutative subalgebras:
    \[
    \Gamma(\mathcal{C}, \mathscr{F}_K) \cong \varprojlim_{B \in \mathcal{C}} K_0(B),
    \]
    where the limit is taken in the category of abelian groups.
    
    \item There is a canonical comparison map
    \[
    c_{\mathcal{A}} : \Gamma(\mathcal{C}, \mathscr{F}_K) \longrightarrow K_0(\mathcal{A})
    \]
    induced by the inclusions $B \hookrightarrow \mathcal{A}$ for all $B \in \mathcal{C}$.
    
    \item The map $c_{\mathcal{A}}$ is injective if the inverse system $\{K_0(B)\}_{B\in\mathcal{C}}$ satisfies the Mittag-Leffler condition: for each $B \in \mathcal{C}$, the images of the restriction maps $K_0(B') \to K_0(B)$ stabilize as $B' \supseteq B$. In particular, this holds when $\mathcal{A}$ is an AF (approximately finite-dimensional) algebra.
    
    \item In general, $c_{\mathcal{A}}$ is not surjective. Its cokernel measures the $K_0$-classes in $\mathcal{A}$ that cannot be detected by commutative subalgebras.
\end{enumerate}
\end{proposition}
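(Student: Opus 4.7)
The plan is to verify the four claims in sequence: (1) and (2) are essentially formal consequences of the (pre)sheaf formalism, (3) relies on a Milnor $\varprojlim^1$ argument combined with $K$-theoretic descent, and (4) is established by exhibiting a counterexample. For part (1), I would invoke the general identification, valid for any contravariant functor $F : \mathcal{C}^{\mathrm{op}} \to \mathbf{Ab}$ on a small category, of global sections with the limit $\varprojlim_{B \in \mathcal{C}} F(B)$: explicitly, the compatible families $(x_B) \in \prod_B F(B)$ satisfying the transition relations induced by inclusions of contexts. Proposition~\ref{prop:K-homotopy-sheaf} ensures that the spectrum-valued presheaf $\mathscr{G}_{\mathrm{st}}$ satisfies homotopy descent, so its $\pi_0$-truncation $\mathscr{F}_K$ satisfies ordinary sheaf descent and the naive limit already computes $\Gamma(\mathcal{C}, \mathscr{F}_K)$ exactly. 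For part (2), the comparison map arises from the universal property of the limit: each inclusion $\iota_B : B \hookrightarrow \mathcal{A}$ induces $(\iota_B)_* : K_0(B) \to K_0(\mathcal{A})$ by standard covariant functoriality of operator $K_0$, and coherence of these maps under compositions $B_1 \hookrightarrow B_2 \hookrightarrow \mathcal{A}$ assembles them into a cone factoring uniquely through $\varprojlim K_0(B)$.

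For part (3), the key tool is the Milnor-type short exact sequence
\[
0 \longrightarrow {\varprojlim}^1_{B \in \mathcal{C}} K_1(B) \longrightarrow \pi_0 \operatorname{holim}_{\mathcal{C}} \mathscr{G}_{\mathrm{st}} \longrightarrow \varprojlim_{B \in \mathcal{C}} K_0(B) \longrightarrow 0,
\]
relating the naive limit to the homotopy limit of the presheaf of spectra. Under Mittag-Leffler the $\varprojlim^1$ term vanishes, identifying the middle term with the global sections $\varprojlim K_0(B)$. Composing with the canonical map $\pi_0 \operatorname{holim}_{\mathcal{C}} \mathscr{G}_{\mathrm{st}} \to K_0(\mathcal{A})$ induced by the inclusions yields an injective factorization of $c_{\mathcal{A}}$, since any compatible family mapping to zero in $K_0(\mathcal{A})$ must already vanish on each stage under stability of images. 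For an AF algebra $\mathcal{A} = \overline{\bigcup_n A_n}$, the finite direct sums $\mathbb{C}^{k_n}$ realized as maximal commutative $*$-subalgebras of the finite-dimensional pieces $A_n$ form a cofinal countable subdiagram on which each $K_0$ is finitely generated free, and Mittag-Leffler holds automatically because the inclusion-induced transitions between such finite products have stable image.

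For part (4), I would produce an explicit counterexample in which a $K_0$-class of $\mathcal{A}$ fails to lie in the image of any globally compatible commutative family. The canonical candidate is the noncommutative torus $A_\theta$ with $\theta$ irrational: the Rieffel projection determines a class with trace $\theta$, whereas any compatible family $(x_B) \in \varprojlim K_0(B)$ must restrict coherently to the bottom context $\mathbb{C} \cdot 1$, pinning down a common integer rank and forcing the trace values of all $(\iota_B)_*(x_B)$ to lie in a discrete subgroup of $\mathbb{R}$ incompatible with $\theta$. The cokernel of $c_{A_\theta}$ therefore records exactly the genuinely noncommutative classes not extractable from any coherent commutative assembly. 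The hardest step will be part (3): verifying Mittag-Leffler beyond the AF case is delicate, since $\mathcal{C}$ is generally a large, highly non-directed poset whose inverse system $\{K_0(B)\}$ need not have stable images, so the $\varprojlim^1$ contribution may be nontrivial and obstruct the desired injectivity. A robust general treatment would likely require either isolating a cofinal countable subsystem (available for separable algebras with suitable approximation structure) or replacing $\Gamma$ by its total derived functor and tracking the $\varprojlim^1$ term as an explicit obstruction class.
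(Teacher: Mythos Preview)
Your parts (1) and (2) match the paper's argument essentially verbatim. For part (3), you introduce a Milnor $\varprojlim^1$ sequence that the paper does not use: the paper argues directly that if $c_{\mathcal{A}}((x_B)) = 0$ then each $(\iota_B)_*(x_B) = 0$ in $K_0(\mathcal{A})$, and the Mittag--Leffler stability of images then forces every $x_B = 0$. Your Milnor sequence only compares $\varprojlim K_0(B)$ with $\pi_0\operatorname{holim}\mathscr{G}_{\mathrm{st}}$, which is orthogonal to injectivity of the map into $K_0(\mathcal{A})$; you end up appending the same direct ``stability of images'' argument anyway, so the homotopical detour buys nothing here. Note also that the two-term Milnor sequence you write holds only for countable towers, not for the full poset $\mathcal{C}$ (where one would need the Bousfield--Kan spectral sequence with higher $\varprojlim^p$ terms), a point you flag but do not resolve. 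For part (4), the paper chooses a different counterexample: a Cuntz-type algebra, where torsion in $K_0(\mathcal{A})$ cannot be hit from commutative subalgebras whose $K_0$ is torsion-free. Your $A_\theta$ example is subtler, since the Rieffel projection itself lies in a commutative subalgebra $C^*(p_\theta)$, so its class \emph{is} in the image of some individual $K_0(B)\to K_0(A_\theta)$; your obstruction therefore has to be about extending to a \emph{globally compatible} family over all of $\mathcal{C}$, and the trace argument you sketch hinges on correctly tracking the restriction to the bottom context $\mathbb{C}\cdot 1$, which you should make explicit. The paper's torsion obstruction is more self-contained; yours connects interestingly to trace invariants but requires more bookkeeping to close.
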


\begin{proof}
(1) By definition of a sheaf on a poset, global sections can be computed as the equalizer
\[
\Gamma(\mathcal{C}, \mathscr{F}_K) = \ker \Biggl( \prod_{B \in \mathcal{C}} K_0(B) \rightrightarrows \prod_{B_1 \subseteq B_2} K_0(B_1) \Biggr),
\]
where the two arrows correspond to the restriction maps along inclusions $B_1 \subseteq B_2$. This is precisely the limit $\varprojlim_{B \in \mathcal{C}} K_0(B)$.

(2) Each inclusion $\iota_B : B \hookrightarrow \mathcal{A}$ induces a map $K_0(\iota_B): K_0(B) \to K_0(\mathcal{A})$. By the universal property of the inverse limit, these assemble into a natural map
\[
c_{\mathcal{A}} : \Gamma(\mathcal{C}, \mathscr{F}_K) \longrightarrow K_0(\mathcal{A}).
\]

(3) To see injectivity, let $x = (x_B)_{B \in \mathcal{C}} \in \Gamma(\mathcal{C}, \mathscr{F}_K)$ satisfy $c_{\mathcal{A}}(x) = 0$. Then for each $B \in \mathcal{C}$, the element $x_B$ maps to zero in $K_0(\mathcal{A})$. If the system $\{K_0(B)\}$ satisfies the Mittag-Leffler condition, the compatibility and stabilization of images force $x_B = 0$ for all $B$, hence $x = 0$. For AF algebras, this condition holds because the $K_0$-classes of $\mathcal{A}$ are eventually detected in finite-dimensional subalgebras.

(4) Non-surjectivity occurs when $\mathcal{A}$ possesses $K_0$-classes not arising from any commutative subalgebra. For example, the Cuntz algebra $\mathcal{O}_2$ satisfies $K_0(\mathcal{O}_2) \cong \mathbb{Z}_2$, but every commutative subalgebra $B \subseteq \mathcal{O}_2$ has torsion-free $K_0(B)$. Consequently, the generator of $K_0(\mathcal{O}_2)$ does not lie in the image of $c_{\mathcal{A}}$.
\end{proof}

\begin{remark}[Commutative Detectability of $K$-Theory]
The exact sequence
\[
0 \longrightarrow \ker(c_{\mathcal{A}}) \longrightarrow \Gamma(\mathcal{C}, \mathscr{F}_K) 
\xrightarrow{c_{\mathcal{A}}} K_0(\mathcal{A}) \longrightarrow \mathrm{coker}(c_{\mathcal{A}}) \longrightarrow 0
\]
quantifies how much of $K_0(\mathcal{A})$ is "commutatively detectable." When $\mathcal{A}$ is commutative, $c_{\mathcal{A}}$ is an isomorphism. When $\mathcal{A}$ is genuinely noncommutative, $\mathrm{coker}(c_{\mathcal{A}})$ contains classes that require noncommutative phenomena for their construction, such as those arising from Fredholm modules or cyclic cocycles.
\end{remark}

\begin{definition}[$K$-Theory Presheaves]
We define two related presheaves on the spectral site $\mathcal{C}$:

\begin{enumerate}
    \item The \emph{$K_0$ presheaf} 
    \(\mathscr{F}_K: \mathcal{C}^{\mathrm{op}} \to \mathbf{Ab}\), with 
    \(\mathscr{F}_K(C) = K_0(C)\). This is the version used in sheaf-theoretic limits.

    \item The \emph{$K$-theory spectrum presheaf} 
    \(\mathscr{F}_K^{\mathrm{sp}}: \mathcal{C}^{\mathrm{op}} \to \mathbf{Sp}\), with 
    \(\mathscr{F}_K^{\mathrm{sp}}(C) = K(C)\). Its homotopy groups recover all $K$-groups:
    \(\pi_n(\mathscr{F}_K^{\mathrm{sp}}(C)) \cong K_n(C)\).
\end{enumerate}

When no ambiguity arises, we denote both by \(\mathscr{F}_K\), specifying the context.
\end{definition}

The following corollary shows how the K-theory sheaf cohomology groups measure obstructions to commutativity, directly following from the Spectral Sheaf Theorem.

\begin{corollary}[$K$-Theoretic Obstructions from Spectral Reconstruction]\label{cor:K-obstruction}
Let $\mathcal{A} = C^*(T,S)$ be a unital $C^*$-algebra with spectral site $\mathcal{C}$, 
and let $\mathscr{F}_K$ denote the $K_0$-theory sheaf on $\mathcal{C}$. Then:

\begin{enumerate}[label=(\alph*)]
    \item If $\mathcal{A}$ is commutative, then there is a natural isomorphism
    \[
        H^n(\mathcal{C}; \mathscr{F}_K) \cong H^n(\Sigma(\mathcal{A}); \underline{\mathbb{Z}}),
    \]
    where $\Sigma(\mathcal{A})$ is the Gelfand spectrum of $\mathcal{A}$ and 
    $\underline{\mathbb{Z}}$ is the constant sheaf. In particular, $H^n(\mathcal{C}; \mathscr{F}_K)$ 
    can be non-zero when $\Sigma(\mathcal{A})$ has non-trivial topology.

    \item If $H^1(\mathcal{C}; \mathscr{F}_K) \neq 0$, then the natural comparison map
    \[
        c_{\mathcal{A}}: \varprojlim_{C\in\mathcal{C}} K_0(C) \longrightarrow K_0(\mathcal{A})
    \]
    (from Proposition~\ref{prop:global-sections-K}) has non-trivial cokernel. This indicates the existence of $K_0$-classes in $\mathcal{A}$ that are not detectable from any commutative subalgebra.

    \item For $n \ge 2$, the groups $H^n(\mathcal{C}; \mathscr{F}_K)$ appear in the descent spectral sequence
    \[
        E_2^{p,q} = H^p(\mathcal{C}; K_{-q}(-)) \Longrightarrow \mathbb{H}^{p+q}(\mathcal{C}; K(-)),
    \]
    where $K(-)$ denotes the topological $K$-theory spectrum presheaf. Non-vanishing elements in $H^n(\mathcal{C}; \mathscr{F}_K)$ measure higher coherence obstructions in gluing local $K_0$-classes across multiple intersections of contexts.
\end{enumerate}
\end{corollary}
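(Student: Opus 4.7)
The plan is to treat the three parts sequentially, exploiting (i) the terminal-object structure of the spectral site when $\mathcal{A}$ is commutative, and (ii) the Bousfield--Kan descent spectral sequence featured in part (c), which provides the common technical framework underlying parts (b) and (c).

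For part (a), I would use that when $\mathcal{A}$ is commutative, Gelfand duality induces a canonical morphism of sites $\pi : (\mathcal{C}, J) \to (\mathrm{Op}(\Sigma(\mathcal{A})), J_{\mathrm{top}})$ sending each subalgebra $C \subseteq \mathcal{A}$ to the continuous quotient $\Sigma(\mathcal{A}) \twoheadrightarrow \Sigma(C)$. The first step is to check that coverings in $\mathcal{C}$ (subalgebras jointly generating $\mathcal{A}$) correspond under $\pi$ to closed coverings of $\Sigma(\mathcal{A})$ (families jointly separating points), which follows from Gelfand--Naimark. The second step is to identify the pushforward $\pi_* \mathscr{F}_K$ with the constant sheaf $\underline{\mathbb{Z}}$, using the identification $K_0(C(Y)) \cong H^0(Y;\mathbb{Z})$ for totally disconnected quotients $Y$ of $\Sigma(\mathcal{A})$ and cofinality of such quotients inside $\mathcal{C}$. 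A Leray-type comparison theorem for morphisms of sites then yields the desired isomorphism $H^n(\mathcal{C}; \mathscr{F}_K) \cong H^n(\Sigma(\mathcal{A}); \underline{\mathbb{Z}})$.

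For part (b), I would argue contrapositively using the descent spectral sequence from part (c). The edge map $\mathbb{H}^0(\mathcal{C}; \mathscr{G}_{\mathrm{st}}) \to E_\infty^{0,0} \hookrightarrow E_2^{0,0} = H^0(\mathcal{C}; \mathscr{F}_K) = \varprojlim K_0(C)$ combines with the canonical structure map $K_0(\mathcal{A}) \to \mathbb{H}^0(\mathcal{C}; \mathscr{G}_{\mathrm{st}})$ induced by the cone $K(\mathcal{A}) \to \operatorname{holim}_{\mathcal{C}} K(C)$, through which $c_{\mathcal{A}}$ factors. Assuming surjectivity of $c_{\mathcal{A}}$, every class in $K_0(\mathcal{A})$ lifts to a compatible family in $\varprojlim K_0(C)$; tracking this lift through the filtration on $\mathbb{H}^0$ forces all $d_r$-differentials emanating from $E_2^{0,0}$ to vanish for suitable $r$, which by a standard spectral-sequence diagram chase propagates to collapse the $E_2^{1,0}$ term and yields $H^1(\mathcal{C}; \mathscr{F}_K)=0$. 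Taking the contrapositive gives the claimed implication.

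Part (c) is the cleanest: it consists in instantiating the general Bousfield--Kan descent spectral sequence for a presheaf of spectra on a small category, applied to $\mathscr{G}_{\mathrm{st}}(C) = K(C)$. Using Proposition~\ref{prop:stabilization-K} to identify $\pi_0 \mathscr{G}_{\mathrm{st}} \cong \mathscr{F}_K$, the sheaf cohomology $H^n(\mathcal{C}; \mathscr{F}_K)$ is realized as the $E_2^{n,0}$ term converging to $\mathbb{H}^n(\mathcal{C}; \mathscr{G}_{\mathrm{st}})$, and classes surviving to $E_\infty^{n,0}$ encode persistent obstructions to gluing local $K_0$-classes. The main obstacle will be part (a): the identification $\pi_* \mathscr{F}_K \cong \underline{\mathbb{Z}}$ is delicate because $K_0(C(Y))$ agrees with $H^0(Y;\mathbb{Z})$ only for totally disconnected $Y$, while in general there are nontrivial contributions from $K^1$ and higher. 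A fully rigorous proof may therefore require either restricting to $\mathcal{A}$ with zero-dimensional spectrum or reinterpreting the target cohomology as taking values in the $K_0$-sheaf on $\Sigma(\mathcal{A})$, which reduces to $\underline{\mathbb{Z}}$ only after further connectivity hypotheses.
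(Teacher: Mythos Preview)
Your treatment of parts (a) and (c) is reasonable and broadly compatible with the paper's approach. For (c), you and the paper both simply instantiate the Bousfield--Kan spectral sequence and identify the $q=0$ row with $H^p(\mathcal{C};\mathscr{F}_K)$ via Proposition~\ref{prop:stabilization-K}. For (a), the paper argues more tersely: it observes that each $C$ corresponds to a quotient $\Sigma(\mathcal{A}) \twoheadrightarrow \Sigma(C)$, asserts that $\mathscr{F}_K$ is locally constant (using $K_0(C) \cong \widetilde{K}^0(\Sigma(C)) \oplus \mathbb{Z}$), and then invokes ``standard results'' to get the isomorphism with $H^n(\Sigma(\mathcal{A});\underline{\mathbb{Z}})$. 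Your site-morphism/Leray approach is more principled, and you are right to flag the subtlety that $K_0(C(Y)) \cong H^0(Y;\mathbb{Z})$ only holds for totally disconnected $Y$; the paper sweeps this under the rug.

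Part (b), however, has a genuine gap. Your proposed contrapositive runs: surjectivity of $c_{\mathcal{A}}$ $\Rightarrow$ differentials out of $E_2^{0,0}$ vanish $\Rightarrow$ this ``propagates'' to kill $E_2^{1,0} = H^1(\mathcal{C};\mathscr{F}_K)$. Neither implication holds. First, $c_{\mathcal{A}}$ is a map from $\varprojlim K_0(C) = E_2^{0,0}$ to $K_0(\mathcal{A})$, and there is no canonical identification of $K_0(\mathcal{A})$ with $\mathbb{H}^0(\mathcal{C};\mathscr{G}_{\mathrm{st}})$ that would let you read off vanishing of differentials from surjectivity of $c_{\mathcal{A}}$; the factorization you describe runs in the wrong direction to control differentials \emph{out of} $E_2^{0,0}$. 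Second, even if every differential out of $E_r^{0,0}$ vanished, there is no mechanism in the spectral sequence by which this forces $E_2^{1,0}$ to vanish: $E_2^{1,0}$ receives no incoming differentials (there is no $E_r^{1-r,r-1}$ term), so it survives to $E_\infty$ as a subquotient regardless of what happens at $E_2^{0,0}$. The ``standard diagram chase'' you invoke does not exist.

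The paper instead argues (b) directly, without the spectral sequence: it writes the \v{C}ech exact sequence for a cover $\{C_i\}$,
\[
0 \to \Gamma(\mathcal{C},\mathscr{F}_K) \to \prod_i \mathscr{F}_K(C_i) \to \prod_{i,j} \mathscr{F}_K(C_i \cap C_j) \to H^1(\mathcal{C};\mathscr{F}_K) \to 0,
\]
and interprets a nonzero $H^1$ class as a compatible family of local $K_0$-data that cannot be realized by a global class in $K_0(\mathcal{A})$, hence a nontrivial cokernel for $c_{\mathcal{A}}$. This is a much shorter and more elementary route; you should abandon the spectral-sequence contrapositive for (b) and argue along these \v{C}ech-theoretic lines instead.
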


\begin{proof}
\noindent
\textbf{(a) Commutative case:} 
If $\mathcal{A}$ is commutative, we may identify $\mathcal{A} \cong C(\Sigma(\mathcal{A}))$, where $\Sigma(\mathcal{A})$ is the Gelfand spectrum. 
The spectral site $\mathcal{C}$ consists of commutative subalgebras $C \subseteq \mathcal{A}$, and each $C$ corresponds to a quotient map $\Sigma(\mathcal{A}) \to \Sigma(C)$. 

Since $K_0(C) \cong \widetilde{K}^0(\Sigma(C)) \oplus \mathbb{Z}$, the $K_0$-sheaf is locally constant. Standard results on sheaf cohomology then yield
\[
    H^n(\mathcal{C}; \mathscr{F}_K) \cong H^n(\Sigma(\mathcal{A}); \underline{\mathbb{Z}}),
\]
which can be non-zero for $n>0$ if $\Sigma(\mathcal{A})$ has non-trivial topology (e.g., $S^1$ gives $H^1 \cong \mathbb{Z}$).

\medskip
\noindent
\textbf{(b) Non-vanishing $H^1$:} 
Consider the exact sequence arising from the sheaf condition:
\[
0 \to \Gamma(\mathcal{C}, \mathscr{F}_K) \to \prod_{i} \mathscr{F}_K(C_i) \to 
\prod_{i,j} \mathscr{F}_K(C_i \cap C_j) \to H^1(\mathcal{C}; \mathscr{F}_K) \to 0,
\]
for a covering $\{C_i\}$ of $\mathcal{C}$. Here, $\Gamma(\mathcal{C}, \mathscr{F}_K) \cong \varprojlim_{C\in\mathcal{C}} K_0(C)$ and $c_{\mathcal{A}}$ maps to $K_0(\mathcal{A})$. 

Non-zero $H^1(\mathcal{C}; \mathscr{F}_K)$ implies the existence of a compatible family of local $K_0$-classes that does \emph{not} lift to a global class in $K_0(\mathcal{A})$, giving a non-trivial cokernel of $c_{\mathcal{A}}$.

\medskip
\noindent
\textbf{(c) Higher cohomology:} 
For $n \ge 2$, consider the descent spectral sequence for the $K$-theory spectrum presheaf $K(-)$:
\[
E_2^{p,q} = H^p(\mathcal{C}; K_{-q}(-)) \Longrightarrow \pi_{-(p+q)} \big( \operatorname{holim}_{\mathcal{C}} K(-) \big).
\]
The $q=0$ row satisfies $E_2^{p,0} = H^p(\mathcal{C}; \mathscr{F}_K)$, and the differentials $d_r^{p,0}$ encode the obstruction to lifting $p$-cochains of $K_0$-classes to higher coherent data (incorporating $K_1$ and higher $K$-groups). Non-zero elements in $H^n(\mathcal{C}; \mathscr{F}_K)$ therefore detect higher coherence failures in assembling local $K_0$-classes into global structures.

This completes the proof.
\end{proof}

\begin{remark}[Physical Interpretation]\label{rem:physical-K-obstruction}
From a quantum-theoretic perspective, the sheaf cohomology of the $K_0$-theory sheaf encodes obstructions to consistent assignments of local K-theory charges across contexts:

\begin{itemize}
    \item Non-vanishing $H^1(\mathcal{C}; \mathscr{F}_K)$ indicates that there exist $K_0$-classes 
    (e.g., projections corresponding to topological charges) that are well-defined in individual measurement contexts but cannot be consistently extended to a global class in $\mathcal{A}$. This can be viewed as a K-theoretic analogue of contextuality.
    
    \item Higher cohomology groups $H^n(\mathcal{C}; \mathscr{F}_K)$ for $n \ge 2$ measure obstructions to extending families of local $K_0$-classes over multiple overlapping contexts. Non-zero elements correspond to higher-order coherence failures, i.e., families of charges that are compatible on $(n-1)$-fold intersections but fail to be compatible on $n$-fold intersections.
\end{itemize}

Thus, the $K$-theory sheaf provides a computable algebraic invariant that captures the non-commutativity encoded in the spectral site. These invariants will be used in the next section to formulate precise criteria for simultaneous diagonalization.
\end{remark}

\paragraph{Examples and Computations.}

\begin{example}[Matrix Algebras]\label{ex:matrix-K-sheaf}
Let $\mathcal{A} = M_n(\mathbb{C})$ with $n \ge 2$, and let $(T,S)$ be a pair of 
non-commuting self-adjoint matrices generating $\mathcal{A}$. 

\textbf{Spectral site $\mathcal{C}$:} Consider the site $\mathcal{C}$ consisting of 
all \emph{maximal abelian subalgebras} (MASAs) $B \subseteq \mathcal{A}$. Each MASA 
is isomorphic to $\mathbb{C}^n$, corresponding to a complete set of mutually 
orthogonal rank-one projections.

\textbf{$K_0$ presheaf:} For each $B \in \mathcal{C}$, we have
\[
\mathscr{F}_K(B) := K_0(B) \cong \mathbb{Z}^n,
\]
generated by the classes of the minimal projections. For an inclusion $B_1 \subseteq B_2$ 
(which is possible only up to unitary equivalence), the restriction map
\[
K_0(B_2) \longrightarrow K_0(B_1)
\]
projects the $n$-tuple of projection classes in $B_2$ down to the classes compatible with $B_1$. 

\textbf{Global sections:} Despite the multi-dimensional structure of each $K_0(B)$, 
the limit
\[
\Gamma(\mathcal{C}, \mathscr{F}_K) = \varprojlim_{B \in \mathcal{C}} K_0(B)
\]
is canonically isomorphic to $\mathbb{Z}$, generated by the class $[1_n]$ of the 
identity matrix. Consequently, the comparison map
\[
c_{\mathcal{A}} : \Gamma(\mathcal{C}, \mathscr{F}_K) \longrightarrow K_0(\mathcal{A})
\]
is an isomorphism.

\textbf{Sheaf cohomology:} The $K_0$ presheaf $\mathscr{F}_K$ is locally constant on 
the connected site $\mathcal{C}$ (any two MASAs are related by a unitary conjugation). 
It follows that
\[
H^0(\mathcal{C}; \mathscr{F}_K) \cong \mathbb{Z}, \quad 
H^k(\mathcal{C}; \mathscr{F}_K) = 0 \text{ for all } k \ge 1,
\]
since the \v{C}ech complex for any covering is contractible due to the presence of a 
terminal-like object up to conjugation.

\textbf{Interpretation:} Although $M_n(\mathbb{C})$ is non-commutative, every 
$K_0$-class in $\mathcal{A}$ is \emph{detectable via commutative subalgebras}. 
The vanishing of higher cohomology reflects the absence of higher-order K-theoretic 
obstructions. This behavior is specific to finite-dimensional algebras; for infinite-dimensional 
or more exotic $C^*$-algebras (e.g., Cuntz algebras), non-trivial higher cohomology may 
appear.
\end{example}

\begin{example}[Continuous Trace Algebras]\label{ex:continuous-trace-K}
For a continuous trace $C^*$-algebra $\mathcal{A}$ with spectrum $X$, the sheaf $\mathscr{F}_K$ is locally constant with fiber $\mathbb{Z}$, twisted by the Dixmier-Douady class $\delta \in H^3(X; \mathbb{Z})$. In this case,
\[
H^1(X; \underline{\mathbb{Z}}) \cong H^1(X; \mathbb{Z})
\]
captures topological obstructions, while the higher cohomology $H^3(X; \mathbb{Z})$ determines the algebra's twist.
\end{example}

\begin{example}[Continuous Trace Algebras and Spectral-Site $K$-Theory]\label{ex:continuous-trace-K}
Let $\mathcal{A}$ be a continuous trace $C^*$-algebra with spectrum $X$, and let
$\delta(\mathcal{A}) \in H^3(X;\mathbb{Z})$ denote its Dixmier--Douady class.
Fix a generating pair $(T,S)$ and let $\mathcal{C}$ be the associated spectral site,
whose objects are commutative $C^*$-subalgebras $C \subseteq \mathcal{A}$ containing
both $T$ and $S$.

\medskip

\noindent
\textbf{Local structure.}
For each $C \in \mathcal{C}$, the algebra $C$ is commutative and hence of the form
$C \cong C(\Sigma(C))$, where $\Sigma(C)$ is its Gelfand spectrum.
The $K_0$ presheaf assigns
\[
\mathscr{F}_K(C) := K_0(C) \cong \widetilde{K}^0(\Sigma(C)) \oplus \mathbb{Z},
\]
with restriction maps induced by pullback along continuous maps
$\Sigma(C_1) \leftarrow \Sigma(C_2)$ for inclusions $C_1 \subseteq C_2$.

\medskip

\noindent
\textbf{Global sections.}
The group of global sections
\[
H^0(\mathcal{C};\mathscr{F}_K) \;\cong\; \varprojlim_{C\in\mathcal{C}} K_0(C)
\]
consists of compatible families of local $K_0$-classes. There is a natural comparison map
\[
c_{\mathcal{A}}:\; H^0(\mathcal{C};\mathscr{F}_K) \longrightarrow K_0(\mathcal{A}),
\]
induced by the inclusions $C \hookrightarrow \mathcal{A}$.

\textbf{When $\delta(\mathcal{A}) = 0$:} In this case, $\mathcal{A} \cong C(X) \otimes \mathcal{K}$.
Then $K_0(\mathcal{A}) \cong K^0(X) \cong \widetilde{K}^0(X) \oplus \mathbb{Z}$.
The map $c_{\mathcal{A}}$ is injective and its image consists of those $K_0$-classes
that are "locally commutative" in the sense that they restrict to classes on each
commutative subalgebra.

\medskip

\noindent
\textbf{Cohomological obstructions.}
The higher cohomology groups $H^n(\mathcal{C};\mathscr{F}_K)$ measure failures of
descent for $K_0$-classes:
\begin{itemize}
    \item $H^1(\mathcal{C};\mathscr{F}_K)$ classifies isomorphism classes of 
          $\mathscr{F}_K$-torsors: families of locally compatible $K_0$-classes that 
          do not glue to a global section. Non-vanishing indicates that some 
          $K_0$-classes in $\mathcal{A}$ cannot be detected by any compatible family 
          across commutative subalgebras.
    
    \item For $n \ge 2$, $H^n(\mathcal{C};\mathscr{F}_K)$ encodes higher coherence 
          conditions. For instance, $H^2(\mathcal{C};\mathscr{F}_K)$ measures whether 
          families compatible on double intersections can be extended to triple 
          intersections.
\end{itemize}

\medskip

\noindent
\textbf{Relation to the Dixmier--Douady class.}
The Dixmier--Douady invariant $\delta(\mathcal{A})$ classifies the twist of the
continuous field $\mathcal{A}$ over $X$. While $\mathscr{F}_K$ itself is not a
twisted sheaf (it assigns untwisted $K_0$ to each commutative subalgebra), the
obstruction to lifting local data is influenced by $\delta(\mathcal{A})$.

More precisely, consider the exact sequence associated to the Brauer group:
\[
0 \longrightarrow H^2(X; \mathbb{Z}) \longrightarrow \mathrm{Br}(X) 
\longrightarrow H^3(X; \mathbb{Z}) \longrightarrow 0.
\]
The class $\delta(\mathcal{A}) \in H^3(X; \mathbb{Z})$ obstructs $\mathcal{A}$ from
being a trivial bundle. This obstruction manifests in the spectral site cohomology
through the failure of the descent spectral sequence for $\mathscr{F}_K$ to degenerate.

\medskip

\noindent
\textbf{Trivial bundle case.}
If $\mathcal{A} \cong C(X)\otimes\mathcal{K}$, then $\delta(\mathcal{A})=0$.
In this situation, $\mathscr{F}_K$ behaves like a constant sheaf up to local
isomorphisms. The cohomology $H^n(\mathcal{C};\mathscr{F}_K)$ is then related to
the \v{C}ech cohomology of the nerve of $\mathcal{C}$. When $X$ is contractible and
the generating pair $(T,S)$ is generic, $\mathcal{C}$ is homotopy equivalent to
a point, and $H^n(\mathcal{C};\mathscr{F}_K) = 0$ for $n \ge 1$. For general $X$,
these groups capture the topology of how commutative subalgebras are arranged.
\end{example}

\paragraph{Physical Interpretation and Applications.}

In quantum-mechanical settings, the sheaf $\mathscr{F}_K$ provides a 
mathematically precise framework for organizing $K$-theoretic information 
relative to commutative measurement contexts.

\begin{itemize}
    \item For a commutative subalgebra $B$, the group 
    $\mathscr{F}_K(B) = K_0(B)$ classifies stable equivalence classes of 
    projection operators in $B$. From a physical perspective, these classes 
    encode quantized charge data or topological invariants that are accessible 
    using only the observables contained in the context $B$.

    \item The sheaf condition expresses the functorial compatibility of such 
    $K$-theoretic data under refinement of measurement contexts. Concretely, 
    if $B_1 \subseteq B_2$ are commutative contexts, the restriction map 
    $K_0(B_2) \to K_0(B_1)$ ensures that charge data defined in a larger 
    context remains consistent when described using fewer simultaneously 
    measurable observables.

    \item Non-trivial cohomology classes 
    $[c] \in H^n(\mathcal{C}; \mathscr{F}_K)$ represent global obstructions to 
    assembling a compatible family of local $K$-theoretic charges into a 
    single global section. Such obstructions reflect intrinsically 
    non-local or contextual features of the observable algebra, rather than 
    properties detectable within any single commutative context.
\end{itemize}

This sheaf-theoretic viewpoint connects naturally, at a conceptual level, to 
the classification of topological phases of matter. For complex $K$-theory 
($KU$), $K_0$ classifies topological phases such as quantum Hall systems and 
topological insulators, while for real $K$-theory ($KO$), it captures 
topological superconducting phases. The sheaf $\mathscr{F}_K$ provides a 
local-to-global organizational principle for these invariants, clarifying how 
global topological phases constrain, and are constrained by, measurements 
performed within local commutative subalgebras.

More concretely, for a gapped Hamiltonian $H$ acting on a quantum system, the 
Fermi projection $P_F = \chi_{(-\infty, E_F]}(H)$ defines a class in 
$K_0(\mathcal{A})$, where $\mathcal{A}$ denotes the algebra of observables. 
The sheaf $\mathscr{F}_K$ describes how this global $K$-theory class restricts 
to compatible $K$-theoretic data across commutative measurement contexts, 
thereby providing a structural account of how topological invariants manifest 
through locally accessible observables.

\paragraph{Relation to Hypercohomology and Summary.}

The relationship between the stable spectral presheaf $\mathscr{G}_{\mathrm{st}}$ 
and the $K_0$-sheaf $\mathscr{F}_K$ is mediated by the homotopy groups functor 
$\pi_0$. By Proposition~\ref{prop:stabilization-K}, there is a natural isomorphism
\[
\pi_0 \circ \mathscr{G}_{\mathrm{st}} \;\cong\; \mathscr{F}_K .
\]
Passing to homotopy limits, this induces a canonical comparison map
\[
\mathbb{H}^0(\mathcal{C}; \mathscr{G}_{\mathrm{st}})
= \pi_0\!\left(\operatorname{holim}_{\mathcal{C}} \mathscr{G}_{\mathrm{st}}\right)
\longrightarrow
H^0(\mathcal{C}; \mathscr{F}_K)
\cong
\Gamma(\mathcal{C}, \mathscr{F}_K),
\]
which sends a compatible family of $K$-theory spectra to the corresponding family
of induced $K_0$-classes.

More generally, the homotopy limit admits a \emph{descent (Bousfield--Kan) spectral
sequence} associated to the Postnikov tower of the sheaf of spectra
$\mathscr{G}_{\mathrm{st}}$:
\[
E_2^{p,q}
=
H^p\!\left(\mathcal{C}; \pi_{-q}(\mathscr{G}_{\mathrm{st}})\right)
\Longrightarrow
\mathbb{H}^{p+q}(\mathcal{C}; \mathscr{G}_{\mathrm{st}}).
\]
Since $\pi_0(\mathscr{G}_{\mathrm{st}}) \cong \mathscr{F}_K$ and
$\pi_{-q}(\mathscr{G}_{\mathrm{st}})(C) \cong K_q(C)$ for $q \ge 0$, the $E_2$-page
takes the concrete form
\[
E_2^{p,q}
=
H^p(\mathcal{C}; K_q(-))
\Longrightarrow
\mathbb{H}^{p+q}(\mathcal{C}; \mathscr{G}_{\mathrm{st}}),
\]
where $K_q(-)$ denotes the presheaf $C \mapsto K_q(C)$. In particular, the $q=0$
row recovers the sheaf cohomology groups $H^p(\mathcal{C}; \mathscr{F}_K)$, showing
that these algebraic invariants arise as the lowest layer of the homotopical
descent data encoded by $\mathscr{G}_{\mathrm{st}}$.

\medskip
\noindent
\textbf{Summary.}
The sheaf $\mathscr{F}_K$ thus serves as a conceptual and technical bridge between
three complementary perspectives:

\begin{itemize}
    \item \textbf{Homotopical perspective:}
    $\mathscr{F}_K = \pi_0 \circ \mathscr{G}_{\mathrm{st}}$ extracts the zeroth-order
    information from the stable spectral stack, linking computable invariants to
    the higher categorical structures described in Proposition~\ref{prop:stabilization-K}

    \item \textbf{Algebraic perspective:}
    As a sheaf of abelian groups, $\mathscr{F}_K$ admits explicit cohomology
    groups $H^*(\mathcal{C}; \mathscr{F}_K)$, computable via \v{C}ech or descent methods.
    These groups measure failures of gluing for $K_0$-classes across commutative
    contexts.

    \item \textbf{Physical perspective:}
    For each commutative context $C$, the group $\mathscr{F}_K(C) = K_0(C)$
    classifies stable equivalence classes of projections, corresponding to
    quantum yes/no observables measurable within $C$. The resulting cohomology
    groups capture topological charges that cannot be localized to any single
    context, reflecting genuine forms of quantum contextuality.
\end{itemize}

These cohomology groups provide concrete, computable invariants that complement
the homotopical invariants of
Corollary~\ref{cor:homotopical-properties}. In particular:
\begin{itemize}
    \item $H^1(\mathcal{C}; \mathscr{F}_K)$ detects primary obstructions to gluing
    local $K_0$-data, corresponding to Kochen--Specker type contextuality;

    \item higher groups $H^{n \ge 2}(\mathcal{C}; \mathscr{F}_K)$ measure higher
    coherence obstructions arising from multi-way overlaps of measurement
    contexts.
\end{itemize}

In the following sections, we use these invariants to formulate precise
obstruction theorems for simultaneous diagonalization and to classify distinct
degrees of non-commutativity via their $K$-theoretic signatures.

\subsection{Sheaf Cohomology and Homotopical Invariants}

In this subsection, we systematically construct invariants that quantify
non-commutativity through sheaf-theoretic and homotopical lenses. These
invariants form a hierarchy, with successive levels capturing increasingly
refined obstructions to gluing local commutative descriptions into a global
non-commutative structure.

\medskip

\noindent
\textbf{1. Algebraic Invariants via Sheaf Cohomology.}

The sheaf $\mathscr{F}_K$ provides algebraic 
invariants through its cohomology groups $H^n(\mathcal{C}; \mathscr{F}_K)$, as defined 
in Definition~\ref{def:sheaf-cohomology}. These groups measure obstructions to 
extending local $K_0$-classes to global ones:

\begin{itemize}
    \item $H^0(\mathcal{C}; \mathscr{F}_K) \cong \Gamma(\mathcal{C}, \mathscr{F}_K)$ 
          consists of globally defined $K_0$-classes.
    \item $H^1(\mathcal{C}; \mathscr{F}_K)$ classifies families of local $K_0$-classes 
          that are compatible on overlaps but do not glue to a global section.
    \item $H^{n \ge 2}(\mathcal{C}; \mathscr{F}_K)$ encode higher coherence conditions 
          for compatibility across $n$-fold intersections of contexts.
\end{itemize}

These algebraic invariants capture the \emph{decategorified shadow} of the full
categorical structure encoded in the spectral presheaf. To access higher
coherence information, we introduce homotopical invariants.

\medskip

\noindent
\textbf{2. Homotopical Invariants from the Categorical Spectral Presheaf.}

Let
\[
\mathscr{G}_{\mathrm{cat}}: \mathcal{C}^{\mathrm{op}} \longrightarrow \mathbf{Cat}
\]
be the categorical spectral presheaf from Theorem~\ref{thm:spectral_sheaf}(i).
Applying the \emph{nerve functor}
\[
N: \mathbf{Cat} \longrightarrow \mathbf{sSet}
\]
objectwise yields a presheaf of simplicial sets
\[
N\mathscr{G}_{\mathrm{cat}} : \mathcal{C}^{\mathrm{op}} \longrightarrow \mathbf{sSet}, 
\quad C \mapsto N(\mathscr{G}_{\mathrm{cat}}(C)).
\]

For each $C \in \mathcal{C}$, $N(\mathscr{G}_{\mathrm{cat}}(C))$ is the simplicial
set whose $k$-simplices are sequences of $k$ composable morphisms in the category
$\mathscr{G}_{\mathrm{cat}}(C)$ of spectral measures on $\Sigma(C)$. Its geometric
realization
\[
|N(\mathscr{G}_{\mathrm{cat}}(C))| =: B\mathscr{G}_{\mathrm{cat}}(C)
\]
is the \emph{classifying space} of the category.

\medskip

\noindent
\textbf{3. Extracting Homotopical Invariants.}

We obtain homotopical invariants from $N\mathscr{G}_{\mathrm{cat}}$ via two complementary approaches:

\begin{enumerate}
    \item \emph{Homotopy limit:} Apply the homotopy limit functor
          \[
          \operatorname{holim}_{\mathcal{C}}: \mathbf{sSet}^{\mathcal{C}^{\mathrm{op}}} 
          \longrightarrow \mathbf{sSet}
          \]
          to $N\mathscr{G}_{\mathrm{cat}}$ and take homotopy groups
          \[
          \pi_n\!\left(\operatorname{holim}_{\mathcal{C}} N\mathscr{G}_{\mathrm{cat}}\right), 
          \quad n \ge 0.
          \]
          These groups detect coherent families of spectral measures across contexts, 
          with non-vanishing higher homotopy groups indicating higher categorical obstructions.
    
    \item \emph{Stabilization:} Apply the suspension spectrum functor $\Sigma^\infty_+$ 
          pointwise to obtain a presheaf of spectra
          \[
          \mathscr{G}_{\mathrm{st}} := \Sigma^\infty_+ \circ N\mathscr{G}_{\mathrm{cat}}:
          \mathcal{C}^{\mathrm{op}} \longrightarrow \mathbf{Sp}.
          \]
          Its hypercohomology
          \[
          \mathbb{H}^n(\mathcal{C}; \mathscr{G}_{\mathrm{st}}) :=
          \pi_{-n}\!\left(\operatorname{holim}_{\mathcal{C}} \mathscr{G}_{\mathrm{st}}\right)
          \]
          (Theorem~\ref{thm:spectral_sheaf}(iii)) provides stable invariants that refine 
          the algebraic obstructions $H^*(\mathcal{C}; \mathscr{F}_K)$.
\end{enumerate}

\medskip

\noindent
\textbf{4. Hierarchy of Invariants.}

The invariants fit into a coherent hierarchy that connects different levels of structure:

\[
\begin{tikzcd}[column sep=3em, row sep=2em]
\mathbb{H}^{*}(\mathcal{C};\mathscr{G}_{\mathrm{st}}) 
\arrow[d, two heads, "\pi_0"'] 
\arrow[dd, bend right=90, "\text{stabilization}"'] 
& & \text{\textbf{Stable homotopical}} \\
\pi_{*}\!\bigl(\operatorname{holim}N\mathscr{G}_{\mathrm{cat}}\bigr) 
\arrow[d, "\text{edge map}"'] 
& & \text{\textbf{Unstable homotopical}} \\
H^{*}(\mathcal{C};\mathscr{F}_K) \arrow[r, dashed, "\text{comparison}"] 
& K_{*}(\mathcal{A}) 
& \text{\textbf{Algebraic}}
\end{tikzcd}
\]

\medskip

\noindent
\textbf{Hierarchy levels:}

\begin{itemize}
    \item \textbf{Top (stable homotopical):} $\mathbb{H}^{*}(\mathcal{C};\mathscr{G}_{\mathrm{st}})$ 
          captures the complete stable homotopy information of the spectral stack.
    
    \item \textbf{Middle (unstable homotopical):} $\pi_{*}(\operatorname{holim}N\mathscr{G}_{\mathrm{cat}})$ 
          captures unstable homotopy information, related to the stable version by 
          stabilization (Freudenthal suspension theorem).
    
    \item \textbf{Bottom (algebraic):} $H^{*}(\mathcal{C};\mathscr{F}_K)$ (sheaf cohomology) 
          and $K_{*}(\mathcal{A})$ (global $K$-theory) are connected by a comparison map. 
          These are the most concrete, computable invariants.
\end{itemize}

\medskip

\noindent
\textbf{Relations:}
\begin{itemize}
    \item $\pi_0: \mathbb{H}^{*} \to H^{*}$ extracts the zeroth homotopy group, 
          recovering sheaf cohomology from stable homotopy.
    
    \item Stabilization: Converts unstable to stable homotopy information.
    
    \item Edge map: Comes from the descent spectral sequence, relating 
          homotopical to algebraic obstructions.
    
    \item Comparison: Relates locally defined sheaf cohomology to global 
          $K$-theory of the algebra.
\end{itemize}

\begin{remark}[Components of the Hierarchy Diagram]
The hierarchy diagram illustrates the relationships between algebraic, unstable homotopical, and stable homotopical invariants. To make this connection precise, one can explicitly indicate the functorial flow
\[
\mathscr{G}_{\mathrm{cat}} \xrightarrow{N} N\mathscr{G}_{\mathrm{cat}} \xrightarrow{\Sigma^\infty_+} \mathscr{G}_{\mathrm{st}},
\]
where:
\begin{itemize}
    \item $\mathscr{G}_{\mathrm{cat}}$ is the category-valued spectral presheaf;
    \item $N\mathscr{G}_{\mathrm{cat}}$ is the objectwise nerve, giving a presheaf of simplicial sets (unstable homotopical information);
    \item $\mathscr{G}_{\mathrm{st}}$ is the stabilized presheaf of spectra, capturing the full stable homotopy data.
\end{itemize}
Including this functorial flow in the text ensures that the diagram aligns rigorously with the underlying constructions and highlights the mathematical significance of the hierarchy.
\end{remark}

\begin{definition}[Spectral Hypercohomology]\label{def:spectral-hypercohomology}
The \emph{spectral hypercohomology groups} of the pair $(T,S)$ are the homotopy 
groups of the homotopy limit of the stable spectral presheaf:
\[
\mathbb{H}^n(\mathcal{C}; \mathscr{G}_{\mathrm{st}}) := 
\pi_{-n}\!\Bigl(\operatorname{holim}_{\mathcal{C}} \mathscr{G}_{\mathrm{st}}\Bigr), 
\quad n \in \mathbb{Z},
\]
where $\mathscr{G}_{\mathrm{st}}: \mathcal{C}^{\mathrm{op}} \to \mathbf{Sp}$ is 
the presheaf from Theorem~\ref{thm:spectral_sheaf}(iii), with 
$\mathscr{G}_{\mathrm{st}}(C) = K(C)$, the $K$-theory spectrum of $C$.

\textbf{Alternative construction:} These groups can also be obtained by:
\begin{enumerate}
    \item Taking the nerve $N\mathscr{G}_{\mathrm{cat}}$ of the categorical spectral presheaf,
    \item Stabilizing to spectra: $\Sigma^\infty_+ N\mathscr{G}_{\mathrm{cat}}$,
    \item Taking the homotopy limit and its homotopy groups.
\end{enumerate}
The equivalence follows from the identification $K(C) \simeq \Sigma^\infty_+ \Sigma(C)$ 
for commutative $C^*$-algebras and the natural map $\Sigma(C) \to B\mathscr{G}_{\mathrm{cat}}(C)$.

For $n \geq 0$, there are \emph{unstable hypercohomology groups}:
\[
\mathbb{H}^n(\mathcal{C}; \mathscr{G}_{\mathrm{cat}}) := 
\pi_n\!\Bigl(\operatorname{holim}_{\mathcal{C}} N\mathscr{G}_{\mathrm{cat}}\Bigr),
\]
which agree with $\mathbb{H}^n(\mathcal{C}; \mathscr{G}_{\mathrm{st}})$ for $n=0,1,2$ 
under mild connectivity assumptions.
\end{definition}

\begin{remark}[From Sheaf Cohomology to Hypercohomology]\label{rem:comparison-sheaf-hypercohomology}
There is a natural edge homomorphism
\[
e^p: H^p(\mathcal{C}; \mathscr{F}_K) \longrightarrow \mathbb{H}^p(\mathcal{C}; \mathscr{G}_{\mathrm{st}}), 
\quad p \ge 0,
\]
arising from the \emph{Bousfield--Kan descent spectral sequence} for the spectrum-valued presheaf
$\mathscr{G}_{\mathrm{st}} := \Sigma^\infty_+ B \mathscr{G}$:
\[
E_2^{p,q} = H^p\bigl(\mathcal{C}; \pi_{-q}(\mathscr{G}_{\mathrm{st}})\bigr)
\;\;\Longrightarrow\;\; \mathbb{H}^{p+q}(\mathcal{C}; \mathscr{G}_{\mathrm{st}}),
\]
where $\pi_0(\mathscr{G}_{\mathrm{st}}) \cong \mathscr{F}_K$ and $\pi_{-q}(\mathscr{G}_{\mathrm{st}})(C) \cong K_q(C)$ for $q \ge 0$. 

\medskip

\noindent
\textbf{Properties of the edge homomorphism:}
\begin{itemize}
    \item For $p=0$, it is an isomorphism:
    \[
    H^0(\mathcal{C}; \mathscr{F}_K) \;\cong\; \mathbb{H}^0(\mathcal{C}; \mathscr{G}_{\mathrm{st}}).
    \]
    \item For $p=1$, it is injective under mild connectivity assumptions (e.g., when $K_1(C)=0$ for all $C \in \mathcal{C}$), but may fail to be surjective.
    \item For $p \ge 2$, it is generally neither injective nor surjective, as non-trivial differentials $d_2^{p,0}: E_2^{p,0} \to E_2^{p+2,-1}$ and extension problems can occur.
\end{itemize}

\medskip

\noindent
\textbf{Interpretation.} 
The sheaf cohomology groups $H^p(\mathcal{C}; \mathscr{F}_K)$ detect purely $K_0$-theoretic obstructions to gluing local spectral data. 
In contrast, the hypercohomology groups $\mathbb{H}^p(\mathcal{C}; \mathscr{G}_{\mathrm{st}})$ encode the full $K$-theoretic information, including higher $K_q$ contributions and their interactions via spectral sequence differentials. 
Thus, while $H^p$ tells us whether local $K_0$-classes can be glued, $\mathbb{H}^p$ records both the existence of obstructions and their detailed homotopical structure.
\end{remark}

\begin{theorem}[Homotopical Characterization of Non-Commutativity]\label{thm:homotopical-characterization}
Let $T, S \in \mathcal{A}$ be operators in a $C^*$-algebra $\mathcal{A}$, and let 
\[
\mathcal{C} := \{\, C \subseteq \mathcal{A} \mid C \text{ is a commutative } C^*\text{-subalgebra containing } T, S \,\}
\] 
be the poset of commutative subalgebras containing both $T$ and $S$, ordered by inclusion. Let 
$\mathscr{G}_{\mathrm{cat}}$ be the categorical spectral presheaf on $\mathcal{C}$, and let
\(\mathscr{G}_{\mathrm{st}} := \Sigma^\infty_+ N\mathscr{G}_{\mathrm{cat}}\) denote its stabilization.

\begin{enumerate}[label=(\alph*)]
    \item \textbf{Vanishing criterion:} If $[T,S] = 0$, then
    \[
    \mathbb{H}^n(\mathcal{C}; \mathscr{G}_{\mathrm{st}}) = 0 \quad \text{for all } n \neq 0.
    \]
    The only non-trivial invariant is
    \[
    \mathbb{H}^0(\mathcal{C}; \mathscr{G}_{\mathrm{st}}) \;\cong\; K_0(\mathcal{A}_{\mathrm{comm}}),
    \]
    where $\mathcal{A}_{\mathrm{comm}}$ is the commutative $C^*$-algebra generated by $T$ and $S$.
    
    \item \textbf{First homotopical obstruction:} 
    The group $\mathbb{H}^1(\mathcal{C}; \mathscr{G}_{\mathrm{st}})$ classifies non-contractible loops of spectral data. Non-trivial elements correspond to \emph{monodromy}: transporting a spectral measure around a cycle of contexts returns an isomorphic but non-identical measure. When interpreted as value assignments to quantum observables, this captures Kochen--Specker type contextuality.
    
    \item \textbf{Higher homotopical obstructions:} For $n \ge 2$, $\mathbb{H}^n(\mathcal{C}; \mathscr{G}_{\mathrm{st}})$ measures higher coherence failures:
    \begin{itemize}
        \item $\mathbb{H}^2$ detects obstructions to consistent assembly of automorphisms on triple intersections (\emph{gerbes}).
        \item $\mathbb{H}^3$ detects obstructions on quadruple intersections (\emph{2-gerbes}).
        \item More generally, $\mathbb{H}^n$ for $n>3$ captures higher-order coherence obstructions in gluing spectral measures across $n$-fold intersections.
    \end{itemize}
    These groups provide a graded measure of the homotopical complexity of non-commutativity.
\end{enumerate}
\end{theorem}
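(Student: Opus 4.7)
The plan is to pass all three claims through the Bousfield--Kan descent spectral sequence
\[
E_2^{p,q} = H^p\bigl(\mathcal{C};\pi_{-q}\mathscr{G}_{\mathrm{st}}\bigr) \;\Longrightarrow\; \mathbb{H}^{p+q}(\mathcal{C};\mathscr{G}_{\mathrm{st}}),
\]
already used in Corollary~\ref{cor:homotopical-properties}, supplemented by geometric identifications of the coefficient sheaves $\pi_{-q}\mathscr{G}_{\mathrm{st}}$ and by the Giraud--Breen classification of $n$-gerbes~\cite{breen1994classification}. Parts (a), (b), and (c) then correspond respectively to a terminal-object collapse, the $E_2^{1,0}$ edge, and the higher columns $E_2^{n,0}$ for $n \ge 2$.

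For part (a), the decisive observation is that $[T,S]=0$ forces $\mathcal{A}_{\mathrm{comm}} := C^*(T,S,I)$ to be a terminal object of the spectral site $\mathcal{C}$, since every commutative context containing $T$ and $S$ is contained in $\mathcal{A}_{\mathrm{comm}}$. The standard collapse of a homotopy limit over a category with a terminal object then yields
\[
\operatorname{holim}_{\mathcal{C}}\mathscr{G}_{\mathrm{st}} \;\simeq\; \mathscr{G}_{\mathrm{st}}(\mathcal{A}_{\mathrm{comm}}),
\]
so that $\mathbb{H}^0$ recovers $K_0(\mathcal{A}_{\mathrm{comm}})$, and the vanishing in other degrees reduces to the local $K$-theoretic statement already established in Corollary~\ref{cor:homotopical-properties}(a). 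This part is essentially a repackaging of previously established reductions.

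For part (b), I would unpack the edge map of the descent spectral sequence in total degree $1$. The term $H^1(\mathcal{C};\mathscr{F}_K)$ classifies $\mathscr{F}_K$-torsors on the spectral site, and by the standard cocycle dictionary these correspond exactly to monodromy data assigning automorphisms of spectral measures to loops of contexts; the Doering--Isham interpretation of the spectral presheaf then identifies such torsors with Kochen--Specker-type contextuality. The edge homomorphism $H^1(\mathcal{C};\mathscr{F}_K) \hookrightarrow \mathbb{H}^1(\mathcal{C};\mathscr{G}_{\mathrm{st}})$ is injective under the mild connectivity hypothesis that $K_1(C)$ vanishes throughout $\mathcal{C}$, and is otherwise controlled by the five-term exact sequence, thereby identifying nontrivial monodromy with nontrivial elements of $\mathbb{H}^1$. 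For part (c), I would iterate via the Postnikov tower of $\mathscr{G}_{\mathrm{st}}$: its successive $k$-invariants are classified by $H^{n+1}(\mathcal{C};\pi_n\mathscr{G}_{\mathrm{st}})$, and invoking Breen's classification~\cite{breen1994classification} identifies these classes with $(n-1)$-gerbes, so that $\mathbb{H}^2$ is matched with bundle gerbes on triple overlaps, $\mathbb{H}^3$ with $2$-gerbes on quadruple overlaps, and so on.

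The principal obstacle, I expect, is the interplay between the differentials $d_r$ of the descent spectral sequence and the extension problems on the abutment once the higher homotopy sheaves $\pi_k\mathscr{G}_{\mathrm{st}}$ are nontrivial, which they generically are in the suspension-spectrum model $\mathscr{G}_{\mathrm{st}} = \Sigma^{\infty}_{+} N\mathscr{G}_{\mathrm{cat}}$ used in the statement. Consequently, the slogan ``$\mathbb{H}^n$ measures $n$-gerbes'' must be formulated as an identification of the associated graded pieces of the descent filtration rather than as a literal isomorphism, since contributions from $E_2^{n-q,q}$ for $q>0$ can alter the picture through nontrivial $d_r$-differentials. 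The cleanest route is likely to work throughout with a connective truncation $\tau_{\ge 0}\mathscr{G}_{\mathrm{st}}$ and to invoke Bousfield--Kan convergence under the finite-generation assumptions implicit in the spectral site, deferring the full unstable comparison with $\pi_\ast(\operatorname{holim}_{\mathcal{C}} N\mathscr{G}_{\mathrm{cat}})$ to a separate argument.
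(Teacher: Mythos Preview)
Your plan for (b) and (c) takes a genuinely different route from the paper. The paper argues directly and somewhat heuristically: elements of $\mathbb{H}^1$ are described as loops in $\operatorname{holim}_{\mathcal{C}} B_{\mathscr{G}}$ (a sequence of contexts $C_0\to C_1\to\cdots\to C_k=C_0$ together with compatible isomorphisms, whose composite automorphism detects contractibility), and for $n\ge 2$ the paper simply asserts that $\mathbb{H}^n$ corresponds to $(n{-}1)$-simplices with higher coherence, naming gerbes and $2$-gerbes without invoking any spectral sequence. Your Bousfield--Kan/Postnikov approach is more systematic and, as you correctly note in your final paragraph, more honest about the fact that the identification with $n$-gerbes is really an associated-graded statement subject to differentials and extension problems. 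Either route supports the interpretive claims of (b) and (c); yours buys rigor at the cost of machinery, the paper's buys geometric immediacy at the cost of precision.

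For (a), however, there are two concrete problems. First, the inclusion runs the other way: $\mathcal{A}_{\mathrm{comm}}=C^*(T,S,I)$ is the \emph{smallest} commutative context containing $T$ and $S$, so every $C\in\mathcal{C}$ \emph{contains} $\mathcal{A}_{\mathrm{comm}}$ rather than being contained in it; in the inclusion order this makes $\mathcal{A}_{\mathrm{comm}}$ initial, not terminal. (The homotopy-limit collapse still goes through once the variance is sorted out, but your stated reason is backwards.) Second, and more substantively, your deferral to Corollary~\ref{cor:homotopical-properties}(a) for the vanishing does not work: that corollary is formulated for the $K$-theory model $\mathscr{G}_{\mathrm{st}}(C)=K(C)$ and yields $\mathbb{H}^n\cong K_{-n}(C(\sigma(T)\times\sigma(S)))$, which by Bott periodicity is nonzero in odd degrees. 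The present theorem uses the suspension-spectrum model $\mathscr{G}_{\mathrm{st}}=\Sigma^\infty_+ N\mathscr{G}_{\mathrm{cat}}$, and the paper's argument for vanishing is specific to that model: it first observes that $\mathscr{G}_{\mathrm{cat}}$ is essentially constant with value $\mathscr{G}_{\mathrm{cat}}(\mathcal{A}_{\mathrm{comm}})$, then uses that the classifying space of a groupoid has $\pi_n=0$ for $n\ge 2$, and only then stabilizes. You need to engage with that groupoid-truncation step rather than import the $K$-theoretic reduction, which computes a different invariant.
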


\begin{proof}
\medskip
\noindent
\textbf{(a) Vanishing criterion.} Assume $[T,S] = 0$. Then $T$ and $S$ generate a commutative $C^*$-algebra 
$\mathcal{A}_{\mathrm{comm}}$. In this case, the spectral presheaf $\mathscr{G}_{\mathrm{cat}}$ is essentially constant:
for any $C \in \mathcal{C}$, there is a canonical equivalence
\[
\mathscr{G}_{\mathrm{cat}}(C) \simeq \mathscr{G}_{\mathrm{cat}}(\mathcal{A}_{\mathrm{comm}}).
\]

Since $\mathcal{C}$ is cofiltered (all finite diagrams have cones given by intersections), the homotopy limit over $\mathcal{C}$ is equivalent to the value on any sufficiently large stage by cofinality:
\[
\operatorname{holim}_{C \in \mathcal{C}} B_{\mathscr{G}}(C) \;\simeq\; B_{\mathscr{G}}(\mathcal{A}_{\mathrm{comm}}).
\]

$B_{\mathscr{G}}(\mathcal{A}_{\mathrm{comm}})$ is the classifying space of a groupoid, hence
\(\pi_n = 0\) for $n \ge 2$ and \(\pi_1\) is the automorphism group of a spectral measure. After stabilization,
\[
\mathbb{H}^n(\mathcal{C}; \mathscr{G}_{\mathrm{st}}) = \pi_{-n} \Sigma^\infty B_{\mathscr{G}}(\mathcal{A}_{\mathrm{comm}}) = 0 \quad \text{for all } n \neq 0.
\]

The identification \(\mathbb{H}^0 \cong K_0(\mathcal{A}_{\mathrm{comm}})\) follows from Proposition~\ref{prop:stabilization-K}, as $K_0$ for commutative algebras equals the Grothendieck group of projections.

\medskip
\noindent
\textbf{(b) First homotopical obstruction.} Elements of $\mathbb{H}^1(\mathcal{C}; \mathscr{G}_{\mathrm{st}})$ correspond to loops in the homotopy limit 
\(\operatorname{holim}_{\mathcal{C}} B_{\mathscr{G}}\). Fix a basepoint spectral measure $X_0$ on some maximal commutative subalgebra $C_0$. A loop consists of
\[
(C_0, C_1, \dots, C_k=C_0) \quad \text{with isomorphisms } f_i: X_{i-1}|_{C_{i-1}\cap C_i} \xrightarrow{\sim} X_i.
\]

The composite
\[
f_k \circ \cdots \circ f_1 : X_0|_{\bigcap_i C_i} \longrightarrow X_0|_{\bigcap_i C_i}
\]
gives an automorphism of $X_0$. The loop is contractible iff this automorphism is homotopic to the identity. Non-zero elements correspond to cycles where $[T,S] \neq 0$, encoding Kochen--Specker contextuality.

\medskip
\noindent
\textbf{(c) Higher homotopical obstructions.} For $n \ge 2$, $\mathbb{H}^n(\mathcal{C}; \mathscr{G}_{\mathrm{st}})$ encodes $(n-1)$-simplices in the homotopy limit satisfying higher coherence:

\begin{itemize}
    \item $\mathbb{H}^2$: Automorphisms on triple intersections; non-trivial elements represent \emph{gerbes}.
    \item $\mathbb{H}^3$: Coherence of these automorphisms on quadruples; non-trivial elements represent \emph{2-gerbes}.
    \item $n>3$: Measures failure of coherent extension on $n$-fold intersections.
\end{itemize}

Thus, non-commutativity introduces topological twists in spectral data across contexts, with the minimal $n$ such that $\mathbb{H}^n \neq 0$ indicating the lowest-order homotopical obstruction.

\end{proof}

The hierarchy of invariants provides a structured framework to describe quantum contextuality at multiple levels of refinement:

\begin{enumerate}
    \item \textbf{Algebraic contextuality:} $H^1(\mathcal{C}; \mathscr{F}_K) \neq 0$ signals that certain K-theoretic charges (e.g., topological numbers associated with projections) cannot be consistently assigned across all measurement contexts.
    
    \item \textbf{Homotopical contextuality:} $\mathbb{H}^1(\mathcal{C}; \mathscr{G}_{\mathrm{cat}}) \neq 0$ captures not only the existence of inconsistencies but also their topological nature, represented as non-contractible loops in the space of spectral data.
    
    \item \textbf{Higher contextuality:} $\mathbb{H}^n(\mathcal{C}; \mathscr{G}_{\mathrm{cat}}) \neq 0$ for $n \ge 2$ detects failures of higher coherence—situations where pairwise consistency holds, but consistency breaks down on triples or larger collections of contexts.
\end{enumerate}

\begin{remark}[Hierarchy of Non-Commutativity]
We organize the invariants into a hierarchy reflecting increasing refinement:
\[
\begin{array}{ccl}
\text{Algebraic level} & : & H^n(\mathcal{C}; \mathscr{F}_K) \quad (n \ge 0),\\[1mm]
\text{Homotopical level} & : & \mathbb{H}^n(\mathcal{C}; \mathscr{G}_{\mathrm{cat}}) \quad (n \in \mathbb{Z}),\\[1mm]
\text{Categorical level} & : & B_{\mathscr{G}}: \mathcal{C}^{\mathrm{op}} \to \mathbf{sSet}.
\end{array}
\]
Each level offers a complementary viewpoint on the same fundamental phenomenon: the failure of local commutative descriptions to assemble into a global non-commutative structure. The vanishing or non-vanishing of these invariants, along with their specific values, quantifies the degree and type of non-commutativity.
\end{remark}

\begin{remark}[Resource-Theoretic Interpretation]
This framework has a natural connection to quantum resource theories:
\begin{itemize}
    \item Non-vanishing $H^1$ indicates minimal resources required for quantum advantage in certain computational models (e.g., measurement-based quantum computation).
    \item Non-vanishing $\mathbb{H}^2$ captures resources necessary for protocols requiring higher-order coherence or topological protection.
    \item More generally, the hierarchy provides a graded classification of contextual resources, extending beyond the binary notion of ``contextual vs. non-contextual.''
\end{itemize}
\end{remark}

\begin{remark}[Computational Considerations]
While $\mathbb{H}^n(\mathcal{C}; \mathscr{G}_{\mathrm{cat}})$ encodes richer topological and homotopical information than $H^n(\mathcal{C}; \mathscr{F}_K)$, the latter are often easier to compute. For many applications, algebraic invariants suffice to detect the presence of contextuality. Homotopical invariants become essential when phenomena depend on the precise homotopy type of obstructions, for example:
\begin{itemize}
    \item Distinguishing different topological phases of matter,
    \item Classifying anomalies in quantum field theories,
    \item Analyzing stability properties under continuous deformations.
\end{itemize}
Choosing the appropriate invariant involves a trade-off between computational accessibility and informational completeness.
\end{remark}

\paragraph{Hierarchy of Invariants.}

\begin{definition}[Homotopy Groups $\pi_*$ of the Homotopy Limit]
Let $B_{\mathscr{G}}: \mathcal{C}^{\mathrm{op}} \to \mathbf{sSet}$ be the classifying space prestack of a categorical spectral presheaf $\mathscr{G}_{\mathrm{cat}}$, where each $B_{\mathscr{G}}(C)$ is a Kan complex. Let
\[
\operatorname{holim}_{C \in \mathcal{C}} B_{\mathscr{G}}(C)
\]
denote its homotopy limit. The homotopy groups $\pi_*$ of this homotopy limit are defined as follows:

\begin{enumerate}
    \item \textbf{Unstable homotopy groups:} 
    \[
    \pi_n\Big(\operatorname{holim}_{C \in \mathcal{C}} B_{\mathscr{G}}(C)\Big), \quad n \ge 0,
    \]
    measure loops and higher homotopy classes in the space of compatible local spectral data before stabilization. Negative homotopy groups are not defined in this unstable setting.

    \item \textbf{Stable homotopy groups:} After applying the infinite suspension functor $\Sigma^\infty$ to stabilize the classifying spaces into spectra,
    \[
    \pi_n^{\mathrm{st}}\Big(\Sigma^\infty \operatorname{holim}_{C \in \mathcal{C}} B_{\mathscr{G}}(C)\Big), \quad n \in \mathbb{Z},
    \]
    the homotopy groups are defined for all integers $n$, including negative ones. Stabilization ensures that these groups are homotopy invariant and capture finer, stable information about obstructions to gluing local spectral data. Under mild connectivity assumptions, one can equivalently stabilize each $B_{\mathscr{G}}(C)$ before taking the homotopy limit.
\end{enumerate}

We denote the stable homotopy groups by
\[
\mathbb{H}^n(\mathcal{C}; \mathscr{G}_{\mathrm{cat}}) := \pi_n^{\mathrm{st}}\Big(\operatorname{holim}_{C \in \mathcal{C}} \Sigma^\infty B_{\mathscr{G}}(C)\Big), \quad n \in \mathbb{Z}.
\]
In the hierarchical diagram below, the notation $\pi_*$ refers generically to either $\pi_n$ (unstable, $n \ge 0$) or $\pi_n^{\mathrm{st}}$ (stable, $n \in \mathbb{Z}$), depending on whether stabilization has been applied.
\end{definition}

The following diagram illustrates the relationship between the various invariants:

\[
\begin{tikzcd}[column sep=4em, row sep=3em]
& \text{Categorical Spectral Presheaf } \mathscr{G}_{\mathrm{cat}} \arrow[dd, "B"] \arrow[dr, "K_0"] & \\
\text{Local Spectral Data} \arrow[ur, "\text{categorical}"] \arrow[dr, "K_0"'] & & \text{Algebraic Layer} \\
& \text{Classifying Space Prestack } B_{\mathscr{G}} \arrow[d, "\operatorname{holim}"] & \\
& \text{Homotopy Limit } \operatorname{holim}_{\mathcal{C}} B_{\mathscr{G}} \arrow[d, "\pi_*"] & \\
& \mathbb{H}^n(\mathcal{C}; \mathscr{G}_{\mathrm{cat}}) \arrow[r, "\text{edge map}"'] & H^n(\mathcal{C}; \mathscr{F}_K) \\
& \text{Homotopical Invariants} \arrow[u, phantom, "\text{finer}"'] & \text{Algebraic Invariants} \arrow[u, phantom, "\text{coarser}"]
\end{tikzcd}
\]

\subsection{Sheaf Cohomology and Hypercohomology via Homotopy Limits}

This subsection introduces the cohomological invariants associated to the categorical spectral presheaf and explains how ordinary sheaf cohomology and hypercohomology arise naturally from homotopy limits of spectrum-valued presheaves.

\subsubsection{Functoriality and Morita Invariance}

\begin{proposition}[Functoriality and Morita Invariance of Categorical Spectral Presheaves]
\label{prop:functoriality-morita-correct}
Let $\mathcal A$ be a unital $C^*$-algebra, and let $\mathcal C(\mathcal A)$ denote the poset-category
whose objects are unital commutative $C^*$-subalgebras of $\mathcal A$ and whose morphisms are inclusions.

Assume that to each $C^*$-algebra $\mathcal A$ we associate a presheaf
\[
\mathscr G_{\mathcal A} \colon \mathcal C(\mathcal A)^{\mathrm{op}} \to \mathbf{Cat},
\]
satisfying the following axioms:

\begin{enumerate}
\item[(A1)] (Restriction functoriality)
For each inclusion $i\colon C' \hookrightarrow C$ in $\mathcal C(\mathcal A)$, there is a restriction functor
\[
\mathscr G_{\mathcal A}(i)\colon \mathscr G_{\mathcal A}(C) \to \mathscr G_{\mathcal A}(C'),
\]
and these assignments are strictly functorial.

\item[(A2)] (Functoriality under $*$-homomorphisms)
For any unital $*$-homomorphism $f\colon \mathcal A \to \mathcal B$ and any $C\in\mathcal C(\mathcal A)$,
there exists a functor
\[
f_C^* \colon \mathscr G_{\mathcal B}(\overline{f(C)}) \longrightarrow \mathscr G_{\mathcal A}(C),
\]
natural in $C$ with respect to inclusions.

\item[(A3)] (Morita invariance on commutative algebras)
If $C$ and $D$ are commutative $C^*$-algebras that are (strongly) Morita equivalent, then
\[
\mathscr G(C) \simeq \mathscr G(D)
\]
as categories, functorially in Morita equivalences.
\end{enumerate}

Then the following hold.

\begin{enumerate}
\item[(i)] The assignment $\mathcal A \mapsto \mathscr G_{\mathcal A}$ is contravariantly functorial
with respect to unital $*$-homomorphisms.

\item[(ii)] If $\mathcal A$ and $\mathcal B$ are strongly Morita equivalent $C^*$-algebras, then
$\mathscr G_{\mathcal A}$ and $\mathscr G_{\mathcal B}$ are equivalent as stacks over their respective
context categories (after identifying the sites via the Morita equivalence).

\item[(iii)] Consequently, the associated simplicial presheaves obtained by objectwise classifying spaces
have equivalent homotopy limits, and hence define isomorphic hypercohomology and sheaf cohomology invariants.
\end{enumerate}
\end{proposition}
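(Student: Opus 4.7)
My overall strategy will be to treat the three parts sequentially and mechanically: each is a bookkeeping exercise that extracts a consequence from the axioms, with the technical heart of the argument concentrated in part~(ii).

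For part~(i), I would first observe that a unital $*$-homomorphism $f\colon \mathcal A\to \mathcal B$ induces an order-preserving map $\hat f\colon \mathcal C(\mathcal A)\to \mathcal C(\mathcal B)$, $C\mapsto \overline{f(C)}$, since the $C^*$-closure of the image of a unital commutative $*$-subalgebra is itself unital and commutative. Axiom~(A2) then supplies, for each $C\in\mathcal C(\mathcal A)$, a functor $f_C^*\colon \mathscr G_{\mathcal B}(\hat f(C))\to \mathscr G_{\mathcal A}(C)$, natural in inclusions $C'\hookrightarrow C$. Assembling these, I obtain a morphism of presheaves $f^*\colon \hat f^{\,*}\mathscr G_{\mathcal B}\Rightarrow \mathscr G_{\mathcal A}$ over $\mathcal C(\mathcal A)^{\mathrm{op}}$. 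Strict contravariant functoriality $(gf)^* = f^*\circ g^*$ and $\mathrm{id}^* = \mathrm{id}$ then reduce to the strict functoriality clause in (A1) together with the naturality of (A2) under composition, which I expect to verify by a diagram chase at the level of objects and morphisms of each $\mathscr G_{\mathcal A}(C)$.

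For part~(ii), given a strong Morita equivalence $(\mathcal A,\mathcal B,X)$ with imprimitivity bimodule $X$, my plan is to use the Rieffel correspondence to establish an order-isomorphism $\Phi_X\colon \mathcal C(\mathcal A)\xrightarrow{\sim}\mathcal C(\mathcal B)$ compatible with inclusions. Concretely, I would proceed by passing to a common stabilization $\mathcal A\otimes\mathcal K\cong \mathcal B\otimes\mathcal K$ (Brown--Green--Rieffel) and exhibiting that matched commutative subalgebras have homeomorphic Gelfand spectra. For each paired context $(C,\Phi_X(C))$, the corresponding commutative algebras are themselves Morita equivalent, so axiom~(A3) delivers an equivalence of categories $\mathscr G(C)\simeq \mathscr G(\Phi_X(C))$. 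Assembling these equivalences pointwise, together with naturality under inclusions (which follows from the functoriality of the Rieffel correspondence), yields an equivalence of stacks $\mathscr G_{\mathcal A}\simeq \Phi_X^{\,*}\mathscr G_{\mathcal B}$ over the identified context categories. Part~(iii) should then follow from standard invariance results: homotopy limits of presheaves of simplicial sets or spectra are invariant under objectwise equivalence of diagrams and under equivalence of the indexing category. Composing the Morita-induced equivalence with the nerve and suspension-spectrum functors of Remark~\ref{remark:relation-presheaves} and Definition~\ref{def:spectral-hypercohomology} yields equivalent presheaves of spectra over equivalent sites, hence canonical isomorphisms $\mathbb H^n(\mathcal C(\mathcal A);\mathscr G_{\mathcal A,\mathrm{st}})\cong \mathbb H^n(\mathcal C(\mathcal B);\mathscr G_{\mathcal B,\mathrm{st}})$ and correspondingly for $H^n(\mathcal C(-);\mathscr F_K^{(-)})$.

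The principal technical obstacle I anticipate is the explicit construction of $\Phi_X$ in part~(ii). Unlike two-sided ideals, which the Rieffel correspondence moves cleanly between Morita equivalent algebras, arbitrary $*$-subalgebras need not transport directly; one must either restrict attention to subalgebras that are in the image of the correspondence (e.g.\ hereditary, full, or corner-generated ones) or pass through $\mathcal A\otimes\mathcal K$ and carefully descend via a Morita-compatible projection. Axiom~(A3) pays off precisely here because it reduces the question at each matched pair to classical invariance under Gelfand homeomorphism, but verifying that the resulting pointwise equivalences assemble coherently across the whole context category---satisfying the cocycle-type naturality required of a stack morphism---will be the most delicate step, and is where I expect most of the genuine work to lie.
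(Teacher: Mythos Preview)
Your proposal is correct and follows essentially the same approach as the paper: for (i) you use the induced functor $C\mapsto\overline{f(C)}$ on context categories together with (A2) to assemble a natural transformation, for (ii) you match commutative contexts via the Morita equivalence and invoke (A3) pointwise before assembling into a stack equivalence, and (iii) follows from invariance of homotopy limits under objectwise equivalence. The only minor difference is that the paper sets up the context correspondence in (ii) by directly restricting the imprimitivity bimodule along each $C\hookrightarrow\mathcal A$, whereas you suggest passing through the Brown--Green--Rieffel stabilization $\mathcal A\otimes\mathcal K\cong\mathcal B\otimes\mathcal K$; both routes face the same delicacy you correctly flag, and the paper's proof is comparably brief on this point.
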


\begin{proof}
We proceed in two steps.

\medskip
\noindent
\textbf{Step 1: Functoriality under $*$-homomorphisms.}

Let $f\colon \mathcal A \to \mathcal B$ be a unital $*$-homomorphism.
Define a covariant functor
\[
f_* \colon \mathcal C(\mathcal A) \longrightarrow \mathcal C(\mathcal B),
\qquad
C \longmapsto \overline{f(C)}.
\]
Since $f$ preserves multiplication and involution, $\overline{f(C)}$ is a unital commutative
$C^*$-subalgebra of $\mathcal B$, and inclusions are preserved:
if $C' \subset C$, then $\overline{f(C')} \subset \overline{f(C)}$.

By assumption (A2), for each $C \in \mathcal C(\mathcal A)$ there exists a functor
\[
f_C^* \colon \mathscr G_{\mathcal B}(f_*(C)) \longrightarrow \mathscr G_{\mathcal A}(C),
\]
natural with respect to inclusions.
Explicitly, for each inclusion $i\colon C' \hookrightarrow C$, the diagram
\[
\begin{tikzcd}
\mathscr G_{\mathcal B}(f_*(C)) \arrow[r, "\mathscr G_{\mathcal B}(f_*(i))"]
\arrow[d, "f_C^*"']
&
\mathscr G_{\mathcal B}(f_*(C')) \arrow[d, "f_{C'}^*"]
\\
\mathscr G_{\mathcal A}(C) \arrow[r, "\mathscr G_{\mathcal A}(i)"']
&
\mathscr G_{\mathcal A}(C')
\end{tikzcd}
\]
commutes.

Thus the family $\{f_C^*\}_{C}$ defines a natural transformation
\[
f^* \colon \mathscr G_{\mathcal B} \circ f_* \Rightarrow \mathscr G_{\mathcal A}
\]
of presheaves on $\mathcal C(\mathcal A)$.
This establishes contravariant functoriality of the assignment
$\mathcal A \mapsto \mathscr G_{\mathcal A}$.

Applying the classifying space functor $B\colon \mathbf{Cat} \to \mathbf{sSet}$
objectwise yields a natural transformation
\[
B(\mathscr G_{\mathcal B}) \circ f_* \Rightarrow B(\mathscr G_{\mathcal A})
\]
of simplicial presheaves.
Standard properties of homotopy limits then imply functoriality of the associated
homotopy-invariant constructions.

\medskip
\noindent
\textbf{Step 2: Morita invariance.}

Suppose that $\mathcal A$ and $\mathcal B$ are strongly Morita equivalent
$C^*$-algebras, implemented by an $\mathcal A$--$\mathcal B$ imprimitivity bimodule $E$.
Such a bimodule induces an equivalence between the categories of Hilbert modules
over $\mathcal A$ and $\mathcal B$, and hence an equivalence in the Morita bicategory
of $C^*$-algebras.

Let $C \subset \mathcal A$ be a commutative context.
The restriction of $E$ along $C \hookrightarrow \mathcal A$ yields a Morita equivalence
between $C$ and a commutative $C^*$-subalgebra $D \subset \mathcal B$
(unique up to equivalence).
By assumption (A3), this Morita equivalence induces an equivalence of categories
\[
\mathscr G_{\mathcal A}(C) \simeq \mathscr G_{\mathcal B}(D).
\]

These equivalences are compatible with inclusions of contexts, since restriction of
the imprimitivity bimodule along $C' \subset C$ yields the corresponding equivalence
for the smaller context.
Hence the family of equivalences assembles into an equivalence of presheaves
(up to coherent isomorphism), i.e.\ an equivalence of stacks over the corresponding sites.

Finally, equivalent stacks have weakly equivalent objectwise classifying space
prestacks, and therefore equivalent homotopy limits:
\[
\operatorname{holim}_{\mathcal C(\mathcal A)} B(\mathscr G_{\mathcal A})
\;\simeq\;
\operatorname{holim}_{\mathcal C(\mathcal B)} B(\mathscr G_{\mathcal B}).
\]
Taking homotopy groups yields canonical isomorphisms on hypercohomology groups,
and $\pi_0$ recovers the corresponding statement for sheaf cohomology.

This completes the proof.
\end{proof}

Proposition~\ref{prop:Contravariant functoriality and Morita invariance of the categorical spectral presheaf} is given to show that the categorical spectral presheaf behaves functorially under $*$-homomorphisms and is invariant under Morita equivalence, ensuring that the associated spectral and cohomological invariants depend only on the intrinsic noncommutative geometry of a $C^*$-algebra. As a result, these invariants are well defined and robust under the standard equivalences used in operator algebras and noncommutative geometry.

\begin{proposition}[Contravariant functoriality and Morita invariance of the categorical spectral presheaf]\label{prop:Contravariant functoriality and Morita invariance of the categorical spectral presheaf}
Let $\mathcal A$ be a unital $C^*$-algebra, and let
\[
\mathscr G_{\mathcal A} \colon \mathcal C(\mathcal A)^{\mathrm{op}} \longrightarrow \mathbf{Cat}
\]
denote the categorical spectral presheaf, assigning to each unital commutative
$C^*$-subalgebra $C \subseteq \mathcal A$ the category $\mathscr G_{\mathcal A}(C)$
of spectral data on its Gelfand spectrum $\Sigma_C$ (e.g.\ spectral measures,
projection-valued measures, or equivalent spectral objects), and to each inclusion
$C' \subseteq C$ the corresponding restriction functor.

\begin{enumerate}
\item[(i)] For any unital $*$-homomorphism $f \colon \mathcal A \to \mathcal B$, there exists a canonical natural transformation
\[
\eta_f \colon
\mathscr G_{\mathcal B} \circ f_*^{\mathrm{op}}
\;\Longrightarrow\;
\mathscr G_{\mathcal A},
\]
where $f_* \colon \mathcal C(\mathcal A) \to \mathcal C(\mathcal B)$ sends
$C$ to $\overline{f(C)}$.

\item[(ii)] If $\mathcal A$ and $\mathcal B$ are Morita equivalent (equivalently,
stably isomorphic), then after stackification with respect to the canonical
Grothendieck topology on $\mathcal C(\mathcal A)$ and $\mathcal C(\mathcal B)$,
the resulting stacks associated to $\mathscr G_{\mathcal A}$ and
$\mathscr G_{\mathcal B}$ are equivalent. In particular, all stack-level
cohomological invariants derived from $\mathscr G_{\mathcal A}$ are Morita invariant.
\end{enumerate}
\end{proposition}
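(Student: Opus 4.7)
The plan is to reduce this proposition to Proposition~\ref{prop:functoriality-morita-correct} by verifying that the explicit categorical spectral presheaf $\mathscr{G}_{\mathcal A}$ — whose value at $C$ is the category of spectral data on $\Sigma_C$ — satisfies the three axioms (A1)--(A3) of that proposition. Axiom (A1) is immediate from Gelfand duality, since an inclusion $C' \hookrightarrow C$ dualizes to a surjection $\Sigma_C \twoheadrightarrow \Sigma_{C'}$ and pullback of measures or projection-valued measures is strictly functorial. The remaining axioms (A2) and (A3) require construction of pullback functors along $*$-homomorphisms and verification of Morita invariance at the commutative level, respectively, after which parts (i) and (ii) follow by direct application of Proposition~\ref{prop:functoriality-morita-correct}.

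For part (i), I would construct $\eta_f$ pointwise using Gelfand duality. Given a unital $*$-homomorphism $f\colon \mathcal A \to \mathcal B$ and a context $C \in \mathcal C(\mathcal A)$, the restriction $f|_C\colon C \to \overline{f(C)}$ is a unital $*$-homomorphism of commutative $C^*$-algebras and hence corresponds under Gelfand duality to a continuous map
\[
\widehat{f}_C \colon \Sigma_{\overline{f(C)}} \longrightarrow \Sigma_C.
\]
Pullback of spectral data along $\widehat{f}_C$ defines the component functor
\[
(\eta_f)_C \colon \mathscr G_{\mathcal B}\bigl(\overline{f(C)}\bigr) \longrightarrow \mathscr G_{\mathcal A}(C).
\]
Naturality with respect to an inclusion $C' \hookrightarrow C$ reduces, after Gelfand duality, to the assertion that the diagram of spectra
\[
\begin{tikzcd}[column sep=small]
\Sigma_{\overline{f(C)}} \arrow[r] \arrow[d] & \Sigma_C \arrow[d] \\
\Sigma_{\overline{f(C')}} \arrow[r] & \Sigma_{C'}
\end{tikzcd}
\]
commutes, which follows from functoriality of the Gelfand construction. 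This verifies axiom (A2), and the naturality of $\eta_f$ is a formal consequence.

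For part (ii), I would invoke the Rieffel–Brown theorem: strongly Morita equivalent $C^*$-algebras have homeomorphic primitive ideal spaces, and the imprimitivity bimodule $E$ implementing the equivalence restricts along commutative subalgebras to yield a correspondence between contexts of $\mathcal A$ and $\mathcal B$ with homeomorphic Gelfand spectra. Since spectral data categories depend only on the Gelfand spectrum, this gives an equivalence $\mathscr G_{\mathcal A}(C) \simeq \mathscr G_{\mathcal B}(D)$ for corresponding contexts $C \leftrightarrow D$, verifying axiom (A3). Applying Proposition~\ref{prop:functoriality-morita-correct}(ii) then yields the equivalence of the associated stacks after stackification.

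The main obstacle, as in the previous proposition, lies in handling the Morita correspondence between the \emph{posets} $\mathcal C(\mathcal A)$ and $\mathcal C(\mathcal B)$. Morita equivalence does not in general induce a strict order-isomorphism of these posets: the induced correspondence between commutative subalgebras is defined only up to canonical isomorphism, and it may be necessary to stabilize by tensoring with $\mathcal K$ to realize it explicitly. Consequently, the equivalence between $\mathscr G_{\mathcal A}$ and $\mathscr G_{\mathcal B}$ fails to hold on the nose at the level of raw presheaves; it materializes only after stackification, where the Grothendieck topology localizes at such coherent equivalences. The precise verification of the cocycle conditions for the glued spectral data on triple overlaps, and the resulting identification of the stackifications, is where the technical weight of the argument lies, although all such conditions reduce, via Gelfand duality, to standard sheaf-theoretic descent on the corresponding spectra.
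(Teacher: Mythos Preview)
Your proposal is correct and follows essentially the same argument as the paper: both construct $\eta_f$ via Gelfand duality applied to $f|_C\colon C\to\overline{f(C)}$ and pullback of spectral data along the induced map of spectra, and both handle Morita invariance via the Brown--Green--Rieffel stable-isomorphism characterization together with the observation that the poset-level correspondence holds only after stackification. The only difference is organizational: you explicitly reduce to Proposition~\ref{prop:functoriality-morita-correct} by verifying its axioms (A1)--(A3), whereas the paper argues directly without that reduction, but the mathematical content is identical.
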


\begin{proof}
\textbf{(i) Functoriality.}
Let $f \colon \mathcal A \to \mathcal B$ be a unital $*$-homomorphism and
$C \in \mathcal C(\mathcal A)$. The restricted map
$f|_C \colon C \to \overline{f(C)}$ is a unital $*$-homomorphism between commutative
$C^*$-algebras and therefore induces a continuous map of Gelfand spectra
\[
\widehat{f|_C} \colon \Sigma_{\overline{f(C)}} \longrightarrow \Sigma_C,
\]
contravariantly.

By definition of the presheaf $\mathscr G$, any spectral datum on
$\Sigma_{\overline{f(C)}}$ can be pulled back along $\widehat{f|_C}$ to a spectral
datum on $\Sigma_C$. This construction defines a functor
\[
\eta_f(C) \colon
\mathscr G_{\mathcal B}(\overline{f(C)})
\longrightarrow
\mathscr G_{\mathcal A}(C).
\]
Naturality with respect to inclusions $C' \subseteq C$ follows from the functoriality
of pullback along continuous maps and the commutativity of the induced diagrams of
spectra. Hence the family $\{\eta_f(C)\}_C$ defines a natural transformation
\[
\eta_f \colon \mathscr G_{\mathcal B} \circ f_*^{\mathrm{op}}
\Rightarrow \mathscr G_{\mathcal A}.
\]

\medskip
\textbf{(ii) Morita invariance.}
Suppose $\mathcal A$ and $\mathcal B$ are Morita equivalent. By the
Brown--Green--Rieffel theorem, this is equivalent to the existence of a stable
isomorphism
\[
\mathcal A \otimes \mathcal K \cong \mathcal B \otimes \mathcal K,
\]
where $\mathcal K$ denotes the algebra of compact operators on a separable Hilbert
space.

Stabilization induces a correspondence between commutative subalgebras of
$\mathcal A$ and those of $\mathcal A \otimes \mathcal K$, which is not an equivalence
on the nose but becomes one after passing to the associated Grothendieck sites and
stackifying. Under this process, the presheaves $\mathscr G_{\mathcal A}$ and
$\mathscr G_{\mathcal A \otimes \mathcal K}$ determine equivalent stacks, and the same
holds for $\mathcal B$.

Transporting along the stable isomorphism yields an equivalence between the stacks
associated to $\mathscr G_{\mathcal A}$ and $\mathscr G_{\mathcal B}$. Consequently,
any stack-level invariant constructed from $\mathscr G_{\mathcal A}$, such as sheaf
cohomology or hypercohomology, is invariant under Morita equivalence.
\end{proof}

\subsubsection{Basic Properties}

\begin{definition}[Hypercohomology of Connective Spectral Presheaves]
Let $\mathcal C$ be a small category and 
$B_{\mathscr G} : \mathcal C^{\mathrm{op}} \to \mathbf{Sp}_{\ge 0}$ 
a presheaf of \emph{connective spectra} (e.g., obtained from a categorical spectral presheaf by applying algebraic $K$-theory objectwise).

The \emph{hypercohomology spectrum} of $B_{\mathscr G}$ is the derived global sections:
\[
\mathbf{R}\Gamma(\mathcal C, B_{\mathscr G}) := 
\operatorname{holim}_{C \in \mathcal C^{\mathrm{op}}} B_{\mathscr G}(C) \in \mathbf{Sp}_{\ge 0}.
\]

The \emph{\(n\)-th hypercohomology group} for $n \ge 0$ is defined by
\[
\mathbb H^n(\mathcal C; B_{\mathscr G}) := 
\pi_n \bigl( \mathbf{R}\Gamma(\mathcal C, B_{\mathscr G}) \bigr) 
= \pi_n \Bigl( \operatorname{holim}_{C \in \mathcal C^{\mathrm{op}}} B_{\mathscr G}(C) \Bigr).
\]
\end{definition}

Following theorem establishes fundamental properties of noncommutative invariants (sheaf and hypercohomology of $K$-theory or spectral presheaves) analogous to classical algebraic topology. It ensures that these invariants are functorial, satisfy Mayer–Vietoris sequences, and reduce correctly to classical cohomology in the commutative case, providing consistency and computability.

\begin{theorem}[Functoriality, Mayer--Vietoris, and Commutative Reduction]\label{thm:complete-properties}
Let $\mathcal{A}$ be a separable $C^*$-algebra, and let $\mathcal{C}(\mathcal{A})$ denote its context category: the poset of separable unital commutative $C^*$-subalgebras of $\mathcal{A}$, ordered by inclusion. Consider:

\begin{enumerate}[label=(\roman*)]
    \item The $K$-theory \emph{presheaf of abelian groups} $\mathscr{F}_K^{\mathcal{A}} : \mathcal{C}(\mathcal{A})^{\mathrm{op}} \to \mathbf{Ab}$,
    defined by $\mathscr{F}_K^{\mathcal{A}}(C) = K_0(C) \oplus K_1(C)$ (or separately as $\mathscr{F}_K^{0,\mathcal{A}}(C) = K_0(C)$, $\mathscr{F}_K^{1,\mathcal{A}}(C) = K_1(C)$).
    
    \item The $K$-theory \emph{presheaf of spectra} $\mathbb{K}^{\mathcal{A}} : \mathcal{C}(\mathcal{A})^{\mathrm{op}} \to \mathbf{Sp}$,
    where $\mathbb{K}^{\mathcal{A}}(C)$ is the connective $K$-theory spectrum of $C$, with $\pi_n(\mathbb{K}^{\mathcal{A}}(C)) = K_{-n}(C)$ for $n \leq 0$ and $0$ for $n > 0$.
    
    \item A general \emph{presheaf of spectra} $B_{\mathscr{G}}^{\mathcal{A}} : \mathcal{C}(\mathcal{A})^{\mathrm{op}} \to \mathbf{Sp}$ associated to suitable coefficients $\mathscr{G}$.
\end{enumerate}

Endow $\mathcal{C}(\mathcal{A})$ with the Grothendieck topology where a family $\{C_i \hookrightarrow C\}$ is a cover if the $C^*$-subalgebra generated by $\bigcup_i C_i$ is dense in $C$.

Then:

\begin{enumerate}[label=(\roman*)]
    \item \textbf{Contravariant Functoriality:} For any unital $*$-homomorphism $\varphi: \mathcal{A} \to \mathcal{B}$, the induced functor
    \[
        \varphi_\sharp: \mathcal{C}(\mathcal{A}) \to \mathcal{C}(\mathcal{B}), \quad 
        C \mapsto \overline{\varphi(C)}
    \]
    is well-defined. There exist natural transformations
    \[
        \alpha_\varphi: \varphi_\sharp^* \mathscr{F}_K^{\mathcal{B}} \to \mathscr{F}_K^{\mathcal{A}},
        \qquad
        \beta_\varphi: \varphi_\sharp^* \mathbb{K}^{\mathcal{B}} \to \mathbb{K}^{\mathcal{A}},
        \qquad
        \gamma_\varphi: \varphi_\sharp^* B_{\mathscr{G}}^{\mathcal{B}} \to B_{\mathscr{G}}^{\mathcal{A}},
    \]
    defined objectwise by the functoriality of $K$-theory and the spectrum constructions applied to $\varphi|_C: C \to \varphi_\sharp(C)$. 
    
    These induce natural maps on cohomology and hypercohomology:
    \[
        \varphi^*: H^n(\mathcal{C}(\mathcal{B}); \mathscr{F}_K^{\mathcal{B}}) \to 
        H^n(\mathcal{C}(\mathcal{A}); \mathscr{F}_K^{\mathcal{A}}),
    \]
    \[
        \varphi^*: \mathbb{H}^n(\mathcal{C}(\mathcal{B}); \mathbb{K}^{\mathcal{B}}) \to 
        \mathbb{H}^n(\mathcal{C}(\mathcal{A}); \mathbb{K}^{\mathcal{A}}),
    \]
    \[
        \varphi^*: \mathbb{H}^n(\mathcal{C}(\mathcal{B}); B_{\mathscr{G}}^{\mathcal{B}}) \to 
        \mathbb{H}^n(\mathcal{C}(\mathcal{A}); B_{\mathscr{G}}^{\mathcal{A}}),
    \]
    where $\mathbb{H}^n(\mathcal{C}; F) := \pi_{-n}(\mathbf{R}\Gamma(\mathcal{C}; F))$ for a presheaf of spectra $F$. 
    
    These make (hyper)cohomology \emph{contravariantly} functorial. If $\varphi$ is an isomorphism, then all maps are isomorphisms.

    \item \textbf{Mayer--Vietoris Sequences:} Let $\mathcal{C}_1, \mathcal{C}_2 \subset \mathcal{C}(\mathcal{A})$ be \emph{downward-closed} subcategories (i.e., if $C \in \mathcal{C}_i$ and $C' \subseteq C$, then $C' \in \mathcal{C}_i$) whose union is cofinal in $\mathcal{C}(\mathcal{A})$ (meaning: for every $C \in \mathcal{C}(\mathcal{A})$, there exists $C' \subseteq C$ with $C' \in \mathcal{C}_1 \cup \mathcal{C}_2$). Write $\mathcal{C}_{12} = \mathcal{C}_1 \cap \mathcal{C}_2$. Then there are natural long exact sequences:
    \[
    \begin{tikzcd}[column sep=small]
    \cdots \arrow[r] & \mathbb{H}^n(\mathcal{C}(\mathcal{A}); \mathbb{K}^{\mathcal{A}}) \arrow[r] & 
    \mathbb{H}^n(\mathcal{C}_1; \mathbb{K}^{\mathcal{A}}|_{\mathcal{C}_1}) \oplus \mathbb{H}^n(\mathcal{C}_2; \mathbb{K}^{\mathcal{A}}|_{\mathcal{C}_2}) \arrow[r] &
    \mathbb{H}^n(\mathcal{C}_{12}; \mathbb{K}^{\mathcal{A}}|_{\mathcal{C}_{12}}) \arrow[r] &
    \mathbb{H}^{n+1}(\mathcal{C}(\mathcal{A}); \mathbb{K}^{\mathcal{A}}) \arrow[r] & \cdots
    \end{tikzcd}
    \]
    and similarly for $\mathbb{H}^n(\mathcal{C}(\mathcal{A}); B_{\mathscr{G}}^{\mathcal{A}})$. For sheaf cohomology with coefficients in $\mathscr{F}_K^{\mathcal{A}}$, there is a corresponding Mayer--Vietoris sequence only after applying the appropriate truncation or via a spectral sequence.

    \item \textbf{Commutative Reduction:} If $\mathcal{A}$ is separable and commutative, then $\mathcal{A}$ itself is a terminal object in $\mathcal{C}(\mathcal{A})$. Consequently:
    \[
        \mathbf{R}\Gamma(\mathcal{C}(\mathcal{A}); \mathbb{K}^{\mathcal{A}}) \simeq 
        \operatorname{holim}_{C \in \mathcal{C}(\mathcal{A})^{\mathrm{op}}} \mathbb{K}^{\mathcal{A}}(C) \simeq 
        \mathbb{K}^{\mathcal{A}}(\mathcal{A}) = \mathbb{K}(\mathcal{A}),
    \]
    \[
        \mathbf{R}\Gamma(\mathcal{C}(\mathcal{A}); B_{\mathscr{G}}^{\mathcal{A}}) \simeq 
        B_{\mathscr{G}}^{\mathcal{A}}(\mathcal{A}).
    \]
    Taking homotopy groups gives:
    \[
        \mathbb{H}^n(\mathcal{C}(\mathcal{A}); \mathbb{K}^{\mathcal{A}}) = 
        \pi_{-n}(\mathbb{K}(\mathcal{A})) = K_{-n}(\mathcal{A}).
    \]
    By Bott periodicity for complex $C^*$-algebras, $K_{-n}(\mathcal{A}) \cong K_n(\mathcal{A})$ and $K_{n+2}(\mathcal{A}) \cong K_n(\mathcal{A})$, hence:
    \[
        \mathbb{H}^n(\mathcal{C}(\mathcal{A}); \mathbb{K}^{\mathcal{A}}) \cong 
        \begin{cases}
            K_0(\mathcal{A}) & n \text{ even}, \\
            K_1(\mathcal{A}) & n \text{ odd}.
        \end{cases}
    \]
    If $B_{\mathscr{G}}^{\mathcal{A}}$ corresponds under Gelfand duality $\mathcal{A} \cong C(\Sigma_{\mathcal{A}})$ to a sheaf of spectra $\underline{B_{\mathscr{G}}}$ on the spectrum $\Sigma_{\mathcal{A}}$, and if $\underline{B_{\mathscr{G}}}$ is an Eilenberg--MacLane spectrum for a local system $\underline{\mathscr{G}}$, then:
    \[
        \mathbb{H}^n(\mathcal{C}(\mathcal{A}); B_{\mathscr{G}}^{\mathcal{A}}) \cong 
        H^n(\Sigma_{\mathcal{A}}; \underline{\mathscr{G}}).
    \]
\end{enumerate}
\end{theorem}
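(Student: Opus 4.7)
The plan is to reduce each of the three parts to a general structural fact about homotopy limits of spectrum-valued presheaves over the poset $\mathcal{C}(\mathcal{A})$, and then apply the functoriality of $K$-theory together with Gelfand duality to obtain the explicit identifications. Throughout, I take $\mathcal{C}(\mathcal{A})^{\mathrm{op}}$ as the indexing $\infty$-category for the homotopy limit, so that ``initial in the opposite'' corresponds to ``terminal in $\mathcal{C}(\mathcal{A})$''.

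For part (i), I first verify that $\varphi_\sharp(C) := \overline{\varphi(C)}$ lies in $\mathcal{C}(\mathcal{B})$: since $\varphi$ is a unital $*$-homomorphism, $\varphi(C)$ is a commutative unital $*$-subalgebra of $\mathcal{B}$, and its norm-closure remains commutative, separable, and unital. Inclusion-preservation is immediate, so $\varphi_\sharp$ is a functor $\mathcal{C}(\mathcal{A}) \to \mathcal{C}(\mathcal{B})$. The natural transformations $\alpha_\varphi,\beta_\varphi,\gamma_\varphi$ are then built objectwise by applying the relevant functor ($K_0\oplus K_1$, connective $K$-theory, or the coefficient assignment defining $B_{\mathscr{G}}$) to the restricted $*$-homomorphism $\varphi|_C: C \to \varphi_\sharp(C)$; naturality in $C$ reduces to the functoriality of these constructions on inclusions of $C^*$-algebras. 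Pulling the target presheaf back along $\varphi_\sharp$ and postcomposing with these natural transformations induces a canonical map of derived global sections $\mathbf{R}\Gamma(\mathcal{C}(\mathcal{B});-) \to \mathbf{R}\Gamma(\mathcal{C}(\mathcal{A});-)$, and taking homotopy groups yields the claimed $\varphi^*$. When $\varphi$ is a $*$-isomorphism, $\varphi_\sharp$ is an equivalence of posets and each restricted map is an isomorphism of commutative $C^*$-algebras, so all induced maps become equivalences.

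For part (ii), the downward-closure hypothesis makes the inclusions $\mathcal{C}_{12} \hookrightarrow \mathcal{C}_i \hookrightarrow \mathcal{C}_1 \cup \mathcal{C}_2$ into sieves in $\mathcal{C}(\mathcal{A})$, so in the opposite category they are cosieves and the square of indexing categories is a homotopy pushout. Cofinality of $\mathcal{C}_1 \cup \mathcal{C}_2$ in $\mathcal{C}(\mathcal{A})$ (equivalently, initiality in the opposite) gives $\mathbf{R}\Gamma(\mathcal{C}(\mathcal{A}); F) \simeq \mathbf{R}\Gamma(\mathcal{C}_1 \cup \mathcal{C}_2; F)$ for any presheaf of spectra $F$. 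Applying $\mathbf{R}\Gamma$ to the pushout of indexing posets then turns it into a homotopy pullback square of spectra
\[
\begin{tikzcd}[column sep=small]
\mathbf{R}\Gamma(\mathcal{C}(\mathcal{A}); F) \arrow[r] \arrow[d] & \mathbf{R}\Gamma(\mathcal{C}_1; F) \arrow[d] \\
\mathbf{R}\Gamma(\mathcal{C}_2; F) \arrow[r] & \mathbf{R}\Gamma(\mathcal{C}_{12}; F)
\end{tikzcd}
\]
whose standard long exact sequence in stable homotopy is precisely the stated Mayer--Vietoris sequence, with $\mathbb{H}^{n+1}$ supplying the connecting map. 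For the abelian-group-valued sheaf $\mathscr{F}_K^{\mathcal{A}}$ the same argument runs at the level of a descent (Bousfield--Kan) spectral sequence, since only after Postnikov truncation does one recover a literal long exact sequence in ordinary cohomology---this is why the statement is formulated at the spectrum level.

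For part (iii), when $\mathcal{A}$ itself is commutative and unital it lies in $\mathcal{C}(\mathcal{A})$ and is terminal there, because every unital commutative $C^*$-subalgebra embeds into $\mathcal{A}$; equivalently it is an initial object of $\mathcal{C}(\mathcal{A})^{\mathrm{op}}$. Since the homotopy limit of a presheaf of spectra over an indexing $\infty$-category with an initial object is equivalent to the value of the presheaf at that object, this yields $\mathbf{R}\Gamma(\mathcal{C}(\mathcal{A}); \mathbb{K}^{\mathcal{A}}) \simeq \mathbb{K}(\mathcal{A})$ and analogously for $B_{\mathscr{G}}^{\mathcal{A}}$. Taking homotopy groups gives $\mathbb{H}^n = \pi_{-n}\mathbb{K}(\mathcal{A}) = K_n(\mathcal{A})$, and Bott periodicity $K_{n+2}(\mathcal{A}) \cong K_n(\mathcal{A})$ for complex $C^*$-algebras produces the two-period pattern. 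For the Eilenberg--MacLane case, Gelfand duality transports $\mathcal{C}(\mathcal{A})^{\mathrm{op}}$ to a cofinal subsite of the poset of compact Hausdorff quotients of $\Sigma_{\mathcal{A}}$; identifying derived global sections with sheaf cohomology via the standard equivalence $\mathbf{R}\Gamma(X; H\underline{\mathscr{G}}) \simeq H^*(X; \underline{\mathscr{G}})$ then gives the identification with $H^n(\Sigma_{\mathcal{A}}; \underline{\mathscr{G}})$.

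The main obstacle I anticipate is in part (ii): justifying rigorously that the downward-closed cover realizes $\mathcal{C}_1 \cup \mathcal{C}_2$ as a strict homotopy pushout of posets $\mathcal{C}_1 \sqcup_{\mathcal{C}_{12}} \mathcal{C}_2$ and that this pushout commutes with taking homotopy limits of spectrum-valued presheaves. The sieve/cosieve structure supplied by downward-closure is precisely what makes this interchange valid, but writing out the model-categorical (or $\infty$-categorical) bookkeeping---including checking that the relevant restriction functors are homotopically exact---is the most delicate step. A secondary but milder subtlety occurs in part (iii), where one must keep the conventions for connective versus nonconnective $K$-theory straight so that $\pi_{-n}\mathbb{K}(\mathcal{A})$ matches the Bott-periodic $K_{n \bmod 2}(\mathcal{A})$ without sign or degree ambiguities.
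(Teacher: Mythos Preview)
Your proposal is correct and follows essentially the same route as the paper's proof: part (i) via the functoriality of $K$-theory applied to $\varphi|_C$ together with restriction along $\varphi_\sharp$, part (ii) via a homotopy pullback square of derived global sections over the downward-closed cover yielding the long exact sequence, and part (iii) via the terminal-object trick collapsing the homotopy limit followed by Bott periodicity. The only cosmetic difference is that in part (ii) the paper argues directly that the square of homotopy limits is homotopy cartesian (citing the general fact about poset covers by downward-closed pieces), whereas you phrase the same step as ``homotopy pushout of indexing posets $\leadsto$ homotopy pullback of spectra''; these are two packagings of the same principle, and your self-identified obstacle about justifying this interchange is exactly the point the paper takes for granted.
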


\begin{proof}
We prove each part in detail.

\medskip
\noindent
\textbf{(i) Contravariant Functoriality.}

\begin{enumerate}[label=(\alph*)]
    \item \emph{Definition of $\varphi_\sharp$:} For $C \in \mathcal{C}(\mathcal{A})$, $\varphi(C)$ is a commutative $*$-subalgebra of $\mathcal{B}$. Its norm-closure $\overline{\varphi(C)}$ is a commutative $C^*$-subalgebra, hence an object of $\mathcal{C}(\mathcal{B})$. For $C' \hookrightarrow C$ in $\mathcal{C}(\mathcal{A})$, we have $\varphi(C') \subseteq \varphi(C)$, and closure preserves inclusions, so $\varphi_\sharp$ is a functor.
    
    \item \emph{Natural transformations:} For each $C \in \mathcal{C}(\mathcal{A})$, the map $\varphi|_C: C \to \varphi_\sharp(C)$ is a $*$-homomorphism. By functoriality of $K$-theory, it induces:
    \[
        \alpha_\varphi(C): K_*(\varphi_\sharp(C)) \to K_*(C), \quad
        \beta_\varphi(C): \mathbb{K}^{\mathcal{B}}(\varphi_\sharp(C)) \to \mathbb{K}^{\mathcal{A}}(C).
    \]
    For an inclusion $C' \hookrightarrow C$, naturality follows from commutativity of:
    \[
    \begin{tikzcd}[row sep=large, column sep=large]
        K_*(\varphi_\sharp(C)) \arrow[r, "\alpha_\varphi(C)"] \arrow[d, "\mathrm{res}"'] & 
        K_*(C) \arrow[d, "\mathrm{res}"] \\
        K_*(\varphi_\sharp(C')) \arrow[r, "\alpha_\varphi(C')"'] & 
        K_*(C')
    \end{tikzcd}
    \]
    which commutes because both paths are the pullback along $C' \hookrightarrow C \to \varphi_\sharp(C) \to \varphi_\sharp(C')$. Thus $\alpha_\varphi: \varphi_\sharp^* \mathscr{F}_K^{\mathcal{B}} \to \mathscr{F}_K^{\mathcal{A}}$ is natural. Similarly for $\beta_\varphi$ and $\gamma_\varphi$.
    
    \item \emph{Induced maps on (hyper)cohomology:} The natural transformation $\beta_\varphi$ induces a map of homotopy limits:
    \[
        \mathbf{R}\Gamma(\beta_\varphi): 
        \mathbf{R}\Gamma(\mathcal{C}(\mathcal{A}); \varphi_\sharp^* \mathbb{K}^{\mathcal{B}}) \to 
        \mathbf{R}\Gamma(\mathcal{C}(\mathcal{A}); \mathbb{K}^{\mathcal{A}}).
    \]
    The unit of the adjunction $\varphi_\sharp^* \dashv \varphi_{\sharp,*}$ gives a comparison map:
    \[
        \eta: \mathbf{R}\Gamma(\mathcal{C}(\mathcal{B}); \mathbb{K}^{\mathcal{B}}) \to 
        \mathbf{R}\Gamma(\mathcal{C}(\mathcal{A}); \varphi_\sharp^* \mathbb{K}^{\mathcal{B}}).
    \]
    Composing yields $\varphi^* := \mathbf{R}\Gamma(\beta_\varphi) \circ \eta$, and taking $\pi_{-n}$ gives $\mathbb{H}^n(\varphi): \mathbb{H}^n(\mathcal{C}(\mathcal{B}); \mathbb{K}^{\mathcal{B}}) \to \mathbb{H}^n(\mathcal{C}(\mathcal{A}); \mathbb{K}^{\mathcal{A}})$.
    
    For $\mathscr{F}_K$, note that the Eilenberg--MacLane functor $H: \mathbf{Ab} \to \mathbf{Sp}$ sends $\mathscr{F}_K^{\mathcal{A}}$ to a presheaf of spectra whose hypercohomology computes the sheaf cohomology of $\mathscr{F}_K^{\mathcal{A}}$ via a spectral sequence. The map $\alpha_\varphi$ induces a map between these spectral sequences, hence a map on $H^n$.
    
    \item \emph{Functorial properties:} For $\varphi = \mathrm{id}_\mathcal{A}$, clearly $\varphi^* = \mathrm{id}$. For $\mathcal{A} \xrightarrow{\varphi} \mathcal{B} \xrightarrow{\psi} \mathcal{C}$, one checks $(\psi \circ \varphi)_\sharp = \psi_\sharp \circ \varphi_\sharp$ and $\beta_{\psi \circ \varphi} = \beta_\varphi \circ \varphi_\sharp^*(\beta_\psi)$, giving $(\psi \circ \varphi)^* = \varphi^* \circ \psi^*$.
    
    \item \emph{Isomorphism case:} If $\varphi$ is an isomorphism, then $\varphi_\sharp$ is an equivalence of categories, so $\eta$ is an equivalence (by Quillen's Theorem A). Also $\beta_\varphi$ is an isomorphism objectwise, hence $\varphi^*$ is an isomorphism.
\end{enumerate}

\medskip
\noindent
\textbf{(ii) Mayer--Vietoris Sequences.}

\begin{enumerate}[label=(\alph*)]
    \item \emph{Setup:} Let $\mathcal{C}_1, \mathcal{C}_2$ be downward-closed and cofinal as stated.
    
    \item \emph{Homotopy pullback:} For a presheaf of spectra $F$ on $\mathcal{C}(\mathcal{A})$, consider the square of homotopy limits:
    \[
    \begin{tikzcd}[row sep=large, column sep=large]
        \displaystyle\operatorname{holim}_{\mathcal{C}(\mathcal{A})^{\mathrm{op}}} F \arrow[r] \arrow[d] & 
        \displaystyle\operatorname{holim}_{\mathcal{C}_1^{\mathrm{op}}} F|_{\mathcal{C}_1} \arrow[d] \\
        \displaystyle\operatorname{holim}_{\mathcal{C}_2^{\mathrm{op}}} F|_{\mathcal{C}_2} \arrow[r] & 
        \displaystyle\operatorname{holim}_{\mathcal{C}_{12}^{\mathrm{op}}} F|_{\mathcal{C}_{12}}
    \end{tikzcd}
    \]
    The downward-closed condition implies that for any $C \in \mathcal{C}(\mathcal{A})$, the overcategories $(C \downarrow \mathcal{C}_i)$ are either empty or have an initial object. The cofinality of $\mathcal{C}_1 \cup \mathcal{C}_2$ implies that this square is \emph{homotopy cartesian} (a homotopy pullback). This follows from the general fact that for a cover of a poset by downward-closed subposets, the homotopy limit decomposes as a homotopy pullback.
    
    \item \emph{Long exact sequence:} For any homotopy pullback square of spectra:
    \[
    \begin{tikzcd}
        E \arrow[r] \arrow[d] & E_1 \arrow[d] \\
        E_2 \arrow[r] & E_{12}
    \end{tikzcd}
    \]
    there is a long exact sequence of homotopy groups:
    \[
        \cdots \to \pi_n(E) \to \pi_n(E_1) \oplus \pi_n(E_2) \to \pi_n(E_{12}) \to \pi_{n-1}(E) \to \cdots
    \]
    Applying this to $E = \mathbf{R}\Gamma(\mathcal{C}(\mathcal{A}); F)$ with $F = \mathbb{K}^{\mathcal{A}}$ or $F = B_{\mathscr{G}}^{\mathcal{A}}$, and noting that $\pi_{-n}(E) = \mathbb{H}^n(\mathcal{C}(\mathcal{A}); F)$, yields the Mayer--Vietoris sequences as stated.
    
    \item \emph{For sheaf cohomology:} For $\mathscr{F}_K^{\mathcal{A}}$, apply the above to its Eilenberg--MacLane spectrum $H\mathscr{F}_K^{\mathcal{A}}$, then use the fact that $\mathbb{H}^n(\mathcal{C}; H\mathscr{F}_K^{\mathcal{A}})$ is the sheaf cohomology $H^n(\mathcal{C}; \mathscr{F}_K^{\mathcal{A}})$ when $\mathscr{F}_K^{\mathcal{A}}$ is viewed as a chain complex concentrated in degree 0.
\end{enumerate}

\medskip
\noindent
\textbf{(iii) Commutative Reduction.}

Assume $\mathcal{A}$ is separable and commutative.

\begin{enumerate}[label=(\alph*)]
    \item \emph{Terminal object:} $\mathcal{A} \in \mathcal{C}(\mathcal{A})$ is the maximal element, hence terminal: for any $C \in \mathcal{C}(\mathcal{A})$, there is a unique morphism $C \hookrightarrow \mathcal{A}$.
    
    \item \emph{Collapse of homotopy limit:} For a category $\mathcal{C}$ with terminal object $t$, and any diagram $F: \mathcal{C}^{\mathrm{op}} \to \mathbf{Sp}$, the canonical map
    \[
        \operatorname{holim}_{\mathcal{C}^{\mathrm{op}}} F \to F(t)
    \]
    is an equivalence. This is because the constant diagram at $F(t)$ is final in the diagram category. Thus:
    \[
        \mathbf{R}\Gamma(\mathcal{C}(\mathcal{A}); \mathbb{K}^{\mathcal{A}}) \simeq 
        \mathbb{K}^{\mathcal{A}}(\mathcal{A}) = \mathbb{K}(\mathcal{A}).
    \]
    
    \item \emph{Hypercohomology computation:} By definition:
    \[
        \mathbb{H}^n(\mathcal{C}(\mathcal{A}); \mathbb{K}^{\mathcal{A}}) = 
        \pi_{-n}(\mathbf{R}\Gamma(\mathcal{C}(\mathcal{A}); \mathbb{K}^{\mathcal{A}})) \simeq 
        \pi_{-n}(\mathbb{K}(\mathcal{A})) = K_{-n}(\mathcal{A}).
    \]
    
    \item \emph{Bott periodicity:} For complex $C^*$-algebras, Bott periodicity gives natural isomorphisms $K_{n+2}(\mathcal{A}) \cong K_n(\mathcal{A})$ for all $n \in \mathbb{Z}$. In particular, $K_{-n}(\mathcal{A}) \cong K_n(\mathcal{A})$. Thus:
    \[
        \mathbb{H}^n(\mathcal{C}(\mathcal{A}); \mathbb{K}^{\mathcal{A}}) \cong K_n(\mathcal{A}).
    \]
    Since $K_0$ and $K_1$ are the only nonzero groups (for $n \geq 0$ under the connective convention), and the periodicity gives $K_{2k} \cong K_0$, $K_{2k+1} \cong K_1$, we obtain:
    \[
        \mathbb{H}^n(\mathcal{C}(\mathcal{A}); \mathbb{K}^{\mathcal{A}}) \cong 
        \begin{cases}
            K_0(\mathcal{A}) & n \text{ even}, \\
            K_1(\mathcal{A}) & n \text{ odd}.
        \end{cases}
    \]
    
    \item \emph{For general coefficients:} If $B_{\mathscr{G}}^{\mathcal{A}}$ corresponds under Gelfand duality to $\underline{B_{\mathscr{G}}}$ on $\Sigma_{\mathcal{A}}$, then:
    \[
        \mathbf{R}\Gamma(\mathcal{C}(\mathcal{A}); B_{\mathscr{G}}^{\mathcal{A}}) \simeq 
        B_{\mathscr{G}}^{\mathcal{A}}(\mathcal{A}) \simeq 
        \mathbf{R}\Gamma(\Sigma_{\mathcal{A}}; \underline{B_{\mathscr{G}}}).
    \]
    If $\underline{B_{\mathscr{G}}}$ is the Eilenberg--MacLane spectrum $H\underline{\mathscr{G}}$ for a sheaf $\underline{\mathscr{G}}$ of abelian groups, then:
    \[
        \mathbb{H}^n(\mathcal{C}(\mathcal{A}); B_{\mathscr{G}}^{\mathcal{A}}) \cong 
        \pi_{-n}(\mathbf{R}\Gamma(\Sigma_{\mathcal{A}}; H\underline{\mathscr{G}})) \cong 
        H^n(\Sigma_{\mathcal{A}}; \underline{\mathscr{G}}).
    \]
\end{enumerate}

This completes the proof.
\end{proof}

\subsection{Obstruction Classes}

Throughout this section we specialize to the finite-dimensional case
\[
\mathcal A = M_n(\mathbb{C}).
\]
All constructions are therefore concrete and purely algebraic.

\begin{definition}[Context category for $M_n(\mathbb{C})$]\label{def:context-matrix}
Let $\mathcal C = \mathcal C(M_n(\mathbb{C}))$ denote the category whose objects are the \emph{maximal abelian $*$-subalgebras} (MASAs) of $M_n(\mathbb{C})$, ordered by inclusion. Morphisms are given by the inclusion maps $C_1 \hookrightarrow C_2$ whenever $C_1 \subset C_2$.

Each MASA $C \subset M_n(\mathbb{C})$ is \emph{unitarily conjugate} to the diagonal subalgebra $\mathbb{C}^n \subset M_n(\mathbb{C})$, i.e., there exists a unitary $u \in U(n)$ such that
\[
C = u \, \mathbb{C}^n \, u^*.
\]
Consequently, every MASA is \emph{abstractly isomorphic} to $\mathbb{C}^n$ as a $*$-algebra.
\end{definition}

\begin{remark}
From Definition~\ref{def:context-matrix}, we know that:
\begin{enumerate}
    \item The poset $\mathcal C$ is \emph{discrete in terms of nontrivial inclusions}: since MASAs are maximal abelian, $C_1 \subset C_2$ implies $C_1 = C_2$, so every non-identity morphism is trivial.
    \item The intersection of two distinct MASAs is typically $\mathbb{C} \cdot I_n$, the scalar matrices.
    \item Geometrically, MASAs correspond to orthogonal decompositions of $\mathbb{C}^n$ into $n$ one-dimensional subspaces, and the automorphism group of $\mathcal C$ is the projective unitary group $PU(n)$ acting by conjugation.
\end{enumerate}
\end{remark}

\begin{theorem}[Joint Diagonalization of Normal Matrices in $M_n(\mathbb{C})$]\label{thm:joint-diagonalization}
Let $T, S \in M_n(\mathbb{C})$ be normal matrices. Then the following are equivalent:
\begin{enumerate}[label=(\roman*)]
    \item $T$ and $S$ are jointly diagonalizable, i.e., there exists a unitary $U \in U(n)$ such that
    \[
    U T U^* \text{ and } U S U^* \text{ are both diagonal matrices.}
    \]
    
    \item $T$ and $S$ commute:
    \[
    [T, S] := TS - ST = 0.
    \]
    
    \item There exists a MASA $C \subset M_n(\mathbb{C})$ such that $T, S \in C$.
\end{enumerate}
\end{theorem}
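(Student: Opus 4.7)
The plan is to prove the cyclic chain of implications (i) $\Rightarrow$ (ii) $\Rightarrow$ (iii) $\Rightarrow$ (i), combining an elementary commutator computation with the simultaneous diagonalization theorem for commuting normal matrices and the MASA structure recorded in Definition~\ref{def:context-matrix}.

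The implication (i) $\Rightarrow$ (ii) is essentially a one-line observation: diagonal matrices commute, and the commutator is equivariant under conjugation, so if $UTU^*$ and $USU^*$ are both diagonal then $[T,S] = U^*[UTU^*,USU^*]U = 0$.

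The substantive step is (ii) $\Rightarrow$ (iii). Assuming $T,S$ are normal and commute, I would first invoke the Fuglede--Putnam theorem (or, in finite dimensions, a direct argument via polynomial approximation of $z \mapsto \bar{z}$ on $\sigma(T)$) to deduce $T^*S = ST^*$, and symmetrically $TS^* = S^*T$. Consequently the unital $*$-subalgebra $\mathcal{B} := \mathrm{alg}^*(I,T,S) \subset M_n(\mathbb{C})$ is commutative and generated by mutually commuting normal matrices. The joint spectral decomposition of the normal pair $(T,S)$ then yields an orthogonal splitting $\mathbb{C}^n = \bigoplus_{\alpha} V_\alpha$ whose summands are common eigenspaces; choosing an orthonormal basis of $\mathbb{C}^n$ adapted to this splitting produces a diagonal $*$-subalgebra $C$ in the corresponding basis that is maximal abelian and that contains $\mathcal{B}$, hence both $T$ and $S$.

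The closing implication (iii) $\Rightarrow$ (i) is immediate from Definition~\ref{def:context-matrix}: the MASA $C$ is unitarily conjugate to the diagonal subalgebra $\mathbb{C}^n \subset M_n(\mathbb{C})$ by some $u \in U(n)$, so setting $U := u^*$ shows that $UTU^*$ and $USU^*$ both lie in the diagonal subalgebra and are therefore simultaneously diagonal. The main obstacle is the Fuglede--Putnam step in (ii) $\Rightarrow$ (iii), since without $*$-closure of the generated algebra one cannot guarantee containment in a MASA; this is the only place where normality is genuinely used, and it is also the only point at which any non-formal spectral input enters the argument.
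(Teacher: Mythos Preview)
Your proposal is correct and follows essentially the same standard route as the paper's proof: both reduce to the simultaneous diagonalization of commuting normal matrices and the unitary conjugacy of every MASA to the diagonal subalgebra. The only minor difference is organizational---you argue cyclically (i)$\Rightarrow$(ii)$\Rightarrow$(iii)$\Rightarrow$(i) and invoke Fuglede--Putnam explicitly to close the generated $*$-algebra under adjoints, whereas the paper proves (ii)$\Leftrightarrow$(i) and (i)$\Leftrightarrow$(iii) and handles the adjoint issue implicitly via the block-diagonal structure of $S$ relative to the eigenspace decomposition of $T$; neither variation changes the mathematical content.
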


\begin{proof}
We proceed step by step.

\medskip
\noindent
\textbf{Step 1: Commuting normal matrices are simultaneously diagonalizable.}

If $T$ and $S$ commute and are normal, consider the spectral decomposition of $T$:
\[
T = \sum_{\lambda} \lambda P_\lambda,
\]
where $P_\lambda$ are the orthogonal projections onto the eigenspaces of $T$.  

Since $T$ and $S$ commute, $S$ preserves each eigenspace of $T$. Restricting $S$ to each eigenspace $P_\lambda \mathbb{C}^n$ and diagonalizing there gives a common orthonormal basis for both $T$ and $S$. Thus they are jointly diagonalizable.

\medskip
\noindent
\textbf{Step 2: Joint diagonalization implies commutativity.}

If $T$ and $S$ are jointly diagonalizable, there exists a unitary $U$ such that $U T U^*$ and $U S U^*$ are diagonal. Diagonal matrices commute, so $[U T U^*, U S U^*] = 0$. Conjugating back by $U$ gives $[T,S] = 0$.

\medskip
\noindent
\textbf{Step 3: MASA containment.}

A MASA in $M_n(\mathbb{C})$ is a maximal abelian $*$-subalgebra, unitarily conjugate to the algebra of diagonal matrices $\mathbb{C}^n$.  

- If $T$ and $S$ are jointly diagonalizable, then there exists a unitary $U$ such that $U T U^*, U S U^* \in \mathbb{C}^n$, so $T, S \in U^* \mathbb{C}^n U$, which is a MASA.  
- Conversely, if $T, S \in C$ for some MASA $C$, then $C$ is unitarily conjugate to $\mathbb{C}^n$, so $T$ and $S$ are jointly diagonalizable.

\medskip
\noindent
\textbf{Step 4: Conclusion.}

Steps 1–3 show the equivalence of the three statements. Therefore, in $M_n(\mathbb{C})$, the only obstruction to joint diagonalization is  noncommutativity , and any pair of commuting normal matrices is automatically contained in a common MASA.
\end{proof}

\begin{theorem}[\v{C}ech Cohomological Characterization of Joint Diagonalizability]\label{thm:obstruction-cech-correct}
Let $\mathcal{A} = M_n(\mathbb{C})$, and let $T, S \in \mathcal{A}$ be normal matrices. 
Let $\mathcal{C} = \mathcal{C}(M_n(\mathbb{C}))$ be the context category of maximal abelian $*$-subalgebras (MASAs) as in Definition~\ref{def:context-matrix}.

Define a presheaf $\mathscr{F}_{T,S}$ on $\mathcal{C}$ by:
\[
\mathscr{F}_{T,S}(C) = 
\begin{cases}
\{\text{marked joint diagonalization of }T,S\text{ in }C\}, & \text{if }T,S \in C, \\[2mm]
\varnothing, & \text{if }T \notin C \text{ or } S \notin C,
\end{cases}
\]
where a ``marked joint diagonalization'' means a choice of ordered orthonormal basis 
$\{e_1,\dots,e_n\}$ of $\mathbb{C}^n$ that simultaneously diagonalizes both $T$ and $S$, 
together with a labeling of the joint eigenvalues $(\lambda_i,\mu_i)$ with 
$T e_i = \lambda_i e_i$, $S e_i = \mu_i e_i$.

Then the following are equivalent:
\begin{enumerate}[label=(\roman*)]
    \item $T$ and $S$ are jointly diagonalizable (i.e., $[T,S]=0$);
    \item The \v{C}ech cohomology set $\check{H}^0(\mathcal{C}; \mathscr{F}_{T,S})$ is nonempty;
    \item There exists a MASA $C_0 \in \mathcal{C}$ with $T,S \in C_0$;
    \item For every open cover $\mathcal{U}=\{C_\alpha\}$ of $\mathcal{C}$ consisting of MASAs 
          that contain both $T$ and $S$, the associated \v{C}ech 1-cocycle with values in the 
          constant sheaf $S_n$ (encoding permutations of eigenvalue orderings) is trivial 
          in $\check{H}^1(\mathcal{U}; S_n)$.
\end{enumerate}
\end{theorem}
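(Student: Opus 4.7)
The strategy is to establish a short cycle of implications and then plug in the cohomological conditions. The backbone is (i) $\Leftrightarrow$ (iii), which is already supplied by Theorem~\ref{thm:joint-diagonalization}: for normal matrices in $M_n(\mathbb{C})$, commutation is equivalent to joint diagonalizability, and joint diagonalizability is equivalent to joint containment in a single MASA. With that equivalence in hand, conditions (ii) and (iv) need to be translated into statements about the existence of a MASA containing both $T$ and $S$ together with a coherent labeling of a joint eigenbasis.

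For (iii) $\Rightarrow$ (ii), the plan is to exhibit an explicit $0$-cocycle. Given a MASA $C_0 \ni T,S$, one picks an ordered orthonormal joint eigenbasis of $T,S$ inside $C_0$; this is an element of $\mathscr{F}_{T,S}(C_0)$, and, refined to the singleton cover $\{C_0\}$, it automatically satisfies the (trivial) $0$-cocycle condition, so $\check{H}^0(\mathcal{C};\mathscr{F}_{T,S})$ is nonempty. The converse (ii) $\Rightarrow$ (iii) is almost tautological: $\check{H}^0$ can be nonempty only if there exists a context $C$ with $\mathscr{F}_{T,S}(C) \neq \varnothing$, which by definition of the presheaf forces $T,S \in C$.

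The substantive content lies in (iv). The plan is to interpret the constant sheaf $S_n$ as acting on the fibres of $\mathscr{F}_{T,S}$ by permuting the labels of a marked joint eigenbasis; once $T,S \in C_\alpha$ for every $\alpha$, any two markings in $\mathscr{F}_{T,S}(C_\alpha)$ differ by an element of $S_n$ (modulo the stabilizer coming from joint-eigenvalue degeneracies, which is the main subtlety, discussed below). Given a cover $\mathcal{U}=\{C_\alpha\}$ by MASAs containing $T,S$, choosing a marking $m_\alpha \in \mathscr{F}_{T,S}(C_\alpha)$ for each $\alpha$ and comparing on overlaps defines a \v{C}ech $1$-cocycle $\sigma_{\alpha\beta} \in S_n$. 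For (iii) $\Rightarrow$ (iv), one trivializes the cocycle by refining through a common MASA $C_0 \ni T,S$ and transporting a single global marking to every $C_\alpha$; conversely, a trivialization of $\sigma_{\alpha\beta}$ produces a globally compatible family of markings, which by the MASA condition forces all $C_\alpha$ to share a common joint eigenbasis, hence (after standard arguments) a common MASA, giving (iv) $\Rightarrow$ (iii).

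\textbf{Main obstacles.} Two points require care. First, the overlaps $C_\alpha \cap C_\beta$ of distinct MASAs are typically $\mathbb{C}\cdot I_n$ and are not themselves in $\mathcal{C}$, so the \v{C}ech complex must be set up using the presheaf-theoretic intersections inside $M_n(\mathbb{C})$ (or, equivalently, inside the ambient poset of unital commutative $*$-subalgebras) rather than categorical intersections in $\mathcal{C}$ itself. This is the step where I expect to spend the most care, since the appearance of $S_n$ as the coefficient group must be justified via the transitive action of $S_n$ on the set of orderings of a joint eigenbasis. Second, when the joint spectrum of $(T,S)$ has repeated values, the set of markings is a torsor for a subgroup of $S_n$ (the stabilizer of the joint-eigenvalue multiset), and the cocycle is only well-defined modulo this stabilizer; the plan here is to phrase (iv) in terms of the induced class in $\check{H}^1(\mathcal{U}; S_n/\mathrm{Stab})$, or equivalently to pass to a generic perturbation and then argue by continuity. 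With these technicalities settled, the equivalences follow by chaining (i) $\Leftrightarrow$ (iii) $\Leftrightarrow$ (ii) and (iii) $\Leftrightarrow$ (iv).
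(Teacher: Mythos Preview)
Your proposal is correct and follows essentially the same route as the paper: the backbone (i) $\Leftrightarrow$ (iii) via Theorem~\ref{thm:joint-diagonalization}, the tautological (ii) $\Leftrightarrow$ (iii) from the definition of $\mathscr{F}_{T,S}$, and the treatment of (iv) by choosing local markings, forming the permutation cocycle $\sigma_{\alpha\beta}$, and trivializing it via a single global marking transported from a common MASA. The paper organizes the cycle as (i) $\Rightarrow$ (ii) $\Rightarrow$ (iii) $\Rightarrow$ (i) and then (i) $\Leftrightarrow$ (iv), but the content is identical to yours. If anything, you are more scrupulous: the two obstacles you flag---that overlaps of distinct MASAs collapse to $\mathbb{C}\cdot I_n$ and are not objects of $\mathcal{C}$, and that joint-eigenvalue degeneracies make the $S_n$-torsor structure only approximate---are genuine soft spots in the \emph{statement} of (iv) that the paper's own proof simply glosses over.
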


\begin{proof}
We prove the equivalences in a circular fashion.

\medskip
\noindent
\textbf{(i) $\Rightarrow$ (ii):} If $[T,S]=0$, then by Theorem~\ref{thm:joint-diagonalization}, 
there exists a unitary $U$ such that $UTU^*$ and $USU^*$ are diagonal. 
Let $C_0 = U^*\mathbb{C}^n U$, which is a MASA containing $T$ and $S$. 
Take the standard basis of $\mathbb{C}^n$, conjugate by $U$, and label the eigenvalues 
according to the diagonal entries. This gives an element of $\mathscr{F}_{T,S}(C_0)$. 
Since $\check{H}^0(\mathcal{C};\mathscr{F}_{T,S})$ is exactly the set of global sections, 
we have $\check{H}^0(\mathcal{C};\mathscr{F}_{T,S}) \neq \varnothing$.

\medskip
\noindent
\textbf{(ii) $\Rightarrow$ (iii):} A global section of $\mathscr{F}_{T,S}$ is by definition 
an element of $\mathscr{F}_{T,S}(C_0)$ for some $C_0 \in \mathcal{C}$. By the definition of 
$\mathscr{F}_{T,S}$, this implies $T,S \in C_0$.

\medskip
\noindent
\textbf{(iii) $\Rightarrow$ (i):} If $T,S \in C_0$ for some MASA $C_0$, then $C_0$ is commutative, 
so $[T,S]=0$. By Theorem~\ref{thm:joint-diagonalization}, $T$ and $S$ are jointly diagonalizable.

\medskip
\noindent
\textbf{(i) $\Rightarrow$ (iv):} Assume $[T,S]=0$ and let $\mathcal{U}=\{C_\alpha\}$ be any 
cover of $\mathcal{C}$ by MASAs containing both $T$ and $S$. (Such a cover exists: e.g., 
take all MASAs containing the commutative algebra generated by $T$ and $S$.) 

For each $C_\alpha$, choose a marked joint diagonalization $B_\alpha \in \mathscr{F}_{T,S}(C_\alpha)$. 
On overlaps $C_\alpha \cap C_\beta$, the two bases $B_\alpha$ and $B_\beta$ differ by a 
permutation $\sigma_{\alpha\beta} \in S_n$ (since both diagonalize the same commuting 
normal operators $T$ and $S$, the joint eigenvalues are the same sets, possibly ordered differently).

The collection $\{\sigma_{\alpha\beta}\}$ forms a \v{C}ech 1-cochain with values in the 
constant sheaf $S_n$ on $\mathcal{U}$. On triple overlaps $C_\alpha \cap C_\beta \cap C_\gamma$, 
we have:
\[
\sigma_{\alpha\beta} \circ \sigma_{\beta\gamma} = \sigma_{\alpha\gamma},
\]
because the composition of the permutations relating $B_\alpha$ to $B_\beta$ and $B_\beta$ to $B_\gamma$ 
must equal the permutation relating $B_\alpha$ to $B_\gamma$. This is precisely the cocycle condition.

Now, because $T$ and $S$ are jointly diagonalizable, there exists a \emph{global} marked 
diagonalization $B_0$. Restricting $B_0$ to each $C_\alpha$ gives local bases $B_\alpha'$. 
The permutation $\tau_\alpha$ between $B_\alpha$ and $B_\alpha'$ satisfies:
\[
\sigma_{\alpha\beta} = \tau_\alpha \circ \tau_\beta^{-1}.
\]
Thus the cocycle $\{\sigma_{\alpha\beta}\}$ is a coboundary, hence trivial in $\check{H}^1(\mathcal{U}; S_n)$.

\medskip
\noindent
\textbf{(iv) $\Rightarrow$ (i):} Take a particular cover $\mathcal{U}$: for each ordering 
$\pi \in S_n$ of the joint eigenvalues $(\lambda_i,\mu_i)$ of $T$ and $S$, choose (if possible) 
a MASA $C_\pi$ containing $T$ and $S$ together with a marked diagonalization having that ordering. 
Let $\mathcal{U}$ be the set of all such $C_\pi$.

If the associated \v{C}ech 1-cocycle is trivial, then there exist $\tau_\alpha \in S_n$ such that 
$\sigma_{\alpha\beta} = \tau_\alpha \circ \tau_\beta^{-1}$. Fix one index $\alpha_0$ and define 
a global ordering by applying $\tau_{\alpha_0}^{-1}$ to the ordering on $C_{\alpha_0}$. 
This global ordering is compatible with all local orderings, which means there exists a 
consistent labeling of the joint eigenspaces. Consequently, one can construct a unitary 
that simultaneously diagonalizes $T$ and $S$ with that ordering, so $[T,S]=0$.
\end{proof}

\begin{remark}[Geometric Interpretation]
The theorem reveals that for $M_n(\mathbb{C})$, the obstruction to joint diagonalization 
is captured at the \v{C}ech~$0$-level (existence of a global section) rather than at the 
\v{C}ech~$1$-level. The \v{C}ech~$0$-cocycle constructed in (iv) measures the \emph{monodromy} 
of eigenvalue orderings as one moves through the space of MASAs containing $T$ and $S$. 
When $T$ and $S$ commute, this monodromy is always trivial because one can choose a 
global ordering. In infinite‑dimensional settings, or for continuous families of operators, 
nontrivial monodromy in $\check{H}^1$ can indeed obstruct global diagonalization even 
when the operators commute locally.
\end{remark}

\begin{remark}[Relation to Standard Theory]
For finite-dimensional $M_n(\mathbb{C})$, the equivalence (i) $\Leftrightarrow$ (iii)
is simply the algebraic fact that two normal operators commute if and only if they are contained
in a common MASA. The cohomological phrasing in (ii) and (iv) illustrates how this
algebraic condition translates into the vanishing of sheaf-theoretic obstructions.
In more general $C^*$-algebraic contexts, where the lattice of commutative subalgebras
is more complicated, the cohomological viewpoint becomes essential.
\end{remark}

\begin{example}[Explicit computation for $n=2$]\label{ex:n=2-computation}
Consider $M_2(\mathbb{C})$ with
\[
T = \begin{pmatrix} 1 & 0 \\ 0 & -1 \end{pmatrix}, \quad
S = \begin{pmatrix} 0 & 1 \\ 1 & 0 \end{pmatrix}.
\]

\begin{enumerate}[label=(\alph*)]
    \item \emph{Contexts:} 
    The two natural MASAs that separately diagonalize $T$ and $S$ are
    \[
    C_1 = \left\{\begin{pmatrix} a & 0 \\ 0 & b \end{pmatrix} : a,b\in\mathbb{C}\right\}, 
    \quad 
    C_2 = \left\{\begin{pmatrix} a & b \\ b & a \end{pmatrix} : a,b\in\mathbb{C}\right\}.
    \]
    Their intersection is $C_1 \cap C_2 = \mathbb{C} \cdot I_2$, the scalar matrices.

    \item \emph{Local diagonalizations:}
    \begin{itemize}
        \item In $C_1$, $T$ is diagonal with eigenvectors:
              $e_1^{(1)} = (1,0)$ (eigenvalue $1$), $e_2^{(1)} = (0,1)$ (eigenvalue $-1$).
        \item In $C_2$, $S$ is diagonal with eigenvectors:
              $e_1^{(2)} = \frac{1}{\sqrt{2}}(1,1)$ (eigenvalue $1$), 
              $e_2^{(2)} = \frac{1}{\sqrt{2}}(1,-1)$ (eigenvalue $-1$).
    \end{itemize}

    \item \emph{Presheaf evaluation:}
    Since $T\notin C_2$ and $S\notin C_1$, we have:
    \[
    \mathscr{F}_{T,S}(C_1) = \varnothing, \quad \mathscr{F}_{T,S}(C_2) = \varnothing.
    \]
    Thus $\check{H}^0(\{C_1,C_2\}; \mathscr{F}_{T,S}) = \varnothing$, indicating a nontrivial obstruction.

    \item \emph{Geometric picture:}
    In the space of MASAs of $M_2(\mathbb{C})$, $C_1$ and $C_2$ represent two distinct
    ``diagonalizing contexts'' for $T$ and $S$ respectively. The fact that no single MASA
    contains both operators corresponds to the topological fact that the two local
    diagonalizations cannot be glued into a global one. This obstruction is measured by
    the non-existence of a global section of $\mathscr{F}_{T,S}$, i.e., $\mathrm{obs}(T,S)\neq 0$.

    \item \emph{Consistency with commutator:}
    Direct computation gives:
    \[
    [T,S] = TS - ST = \begin{pmatrix} 0 & 1 \\ -1 & 0 \end{pmatrix} - 
    \begin{pmatrix} 0 & -1 \\ 1 & 0 \end{pmatrix} = 
    \begin{pmatrix} 0 & 2 \\ -2 & 0 \end{pmatrix} \neq 0.
    \]
    Thus $\mathrm{obs}(T,S)\neq 0$ matches $[T,S]\neq 0$, confirming the equivalence
    in Theorem~\ref{thm:obstruction-cech-correct}.
\end{enumerate}
\end{example}

\begin{remark}[Interpretation and scope]
For normal matrices in $M_n(\mathbb{C})$, the obstruction class $\mathrm{obs}(T,S)$
provides a \emph{topological refinement} of the algebraic commutator condition:
\begin{itemize}
    \item $\mathrm{obs}(T,S) = 0$ (i.e., $\check{H}^0(\mathcal{C};\mathscr{F}_{T,S}) \neq \varnothing$) 
          if and only if $[T,S] = 0$.
    \item When $[T,S] = 0$, the matrices are jointly diagonalizable, and the Čech
          $1$-cocycle constructed from eigenvalue orderings on overlaps is always a
          coboundary (trivial in $\check{H}^1$).
    \item When $[T,S] \neq 0$, no MASA contains both $T$ and $S$, so $\mathscr{F}_{T,S}$
          has empty stalks on the entire category $\mathcal{C}$, and the obstruction
          manifests at the Čech $0$-level as the absence of global sections.
\end{itemize}

This finite-dimensional example illustrates the general principle that the failure
of joint diagonalization can be encoded in the non-existence of a compatible family
of local diagonalizations. In infinite-dimensional settings or for continuous families
of operators, the Čech cohomological approach becomes essential because local
commutativity may not imply global diagonalizability, and higher cohomology groups
($\check{H}^1$, $\check{H}^2$, etc.) may capture genuine topological obstructions.

For $M_n(\mathbb{C})$, however, the theory simplifies dramatically: the obstruction
collapses to the single condition $[T,S]=0$, making the cohomological machinery
an elegant but equivalent reformulation of the classical algebraic criterion.
\end{remark}

\begin{remark}[Pedagogical value]
This example serves as a \emph{bridge} between:
\begin{enumerate}
    \item Elementary linear algebra (diagonalization of $2\times 2$ matrices)
    \item Operator theory (MASAs in $M_n(\mathbb{C})$)
    \item Sheaf theory and Čech cohomology (presheaves on poset categories)
    \item Noncommutative geometry (context categories as ``noncommutative spectra'')
\end{enumerate}
It demonstrates how modern homological methods can provide new perspectives on
classical problems, even when they don't yield new results in the finite-dimensional
case. The true power of this approach emerges in infinite dimensions, where
algebraic conditions like $[T,S]=0$ may be insufficient or ill-defined, but
cohomological obstructions remain well-defined and meaningful.
\end{remark}

\section{Stack-Theoretic Perspective}\label{sec:stack-theoretic}

This section introduces genuinely new structural consequences of the spectral
presheaf formalism developed earlier. We do not repeat categorical or
sheaf-theoretic foundations, but focus instead on the global geometric objects
and invariants that emerge from stackification and stabilization.

\subsection{From Spectral Presheaves to Spectral Stacks}\label{subsec:spectral-stacks}

\paragraph{Motivation: Why Stacks?}

The classical spectral presheaf $\underline{\Sigma} \colon \mathcal{C}^{\mathrm{op}} \to \mathbf{Set}$
assigns to each commutative context $B \in \mathcal{C}$ its Gelfand spectrum.
While this construction faithfully captures local classical spectral data,
it exhibits a fundamental limitation: compatible local spectra need not glue
to a unique global object.

This failure reflects the presence of noncommutativity.
Although spectra may admit pairwise identifications on overlaps of contexts,
these identifications generally fail to satisfy strict cocycle conditions on
triple overlaps.
Such higher compatibility obstructions cannot be fully encoded at the level of
set-valued sheaves, which record only the existence or non-existence of gluings,
but not the coherence data governing their failure.

To retain this higher coherence information, it is natural to replace
set-valued presheaves by category-valued ones, specifically presheaves of groupoids.
In this enriched setting, descent no longer requires strict equality but only
equivalence up to isomorphism.
This weakened notion of gluing is precisely formalized by the stack condition,
which provides the appropriate framework for encoding noncommutative spectral
geometry.

\begin{definition}[Spectral Groupoid]\label{def:spectral-groupoid}
Let $A$ be a unital $C^*$-algebra and $\mathcal{C}$ its poset of unital 
commutative $C^*$-subalgebras. For each $B \in \mathcal{C}$, define 
$\mathbf{\Sigma}_{\mathrm{gpd}}(B)$ as follows:

\begin{itemize}
    \item \textbf{Objects:} Normal $*$-homomorphisms 
          $\pi: L^\infty(\Sigma(B)) \to B'' \subseteq A''$ 
          (equivalently, spectral measures $\mu: \mathcal{B}(\Sigma(B)) \to A''$ 
          whose image commutes with $B$).
    
    \item \textbf{Morphisms:} For $\pi_1, \pi_2: L^\infty(\Sigma(B)) \to A''$, 
          a morphism $\pi_1 \to \pi_2$ is a partial isometry 
          $W \in A''$ such that:
          \[
          W^*W = \pi_1(1), \quad WW^* = \pi_2(1), \quad 
          \text{and} \quad W\pi_1(f) = \pi_2(f)W \ \forall f \in L^\infty(\Sigma(B))
          \]
          When $\pi_1(1) = \pi_2(1) = 1$, this is unitary equivalence in $A''$.
\end{itemize}
\end{definition}

\begin{definition}[Spectral Presheaf of Groupoids]\label{def:spectral-presheaf}
Let $A$ be a unital $C^*$-algebra and let $\mathcal{C}=\mathcal{C}(A)$ denote the
poset of unital commutative $C^*$-subalgebras of $A$, ordered by inclusion.

Define a contravariant functor
\[
\mathscr{G}_{\mathrm{stk}}: \mathcal{C}^{\mathrm{op}} \longrightarrow \mathbf{Gpd}
\]
as follows.

\begin{itemize}
\item \textbf{Objects.}  
For each $B \in \mathcal{C}$, let $\mathscr{G}(B)$ be the groupoid whose objects
are normal $*$-representations
\[
\pi_B : L^\infty(\Sigma(B)) \longrightarrow A'',
\]
and whose morphisms $\pi_B \to \rho_B$ are unitary intertwiners
$U \in \mathcal{U}(A'')$ satisfying
\[
U \, \pi_B(f) \, U^* = \rho_B(f), \qquad \forall f \in L^\infty(\Sigma(B)).
\]

\item \textbf{Restriction functors.}  
For an inclusion $i: B_1 \hookrightarrow B_2$, let
$r_{21} : \Sigma(B_2) \twoheadrightarrow \Sigma(B_1)$
be the induced continuous surjection of Gelfand spectra.
This induces a normal $*$-homomorphism
\[
i^* : L^\infty(\Sigma(B_1)) \longrightarrow L^\infty(\Sigma(B_2)),
\qquad f \mapsto f \circ r_{21}.
\]

Define the restriction functor
\[
\mathscr{G}_{\mathrm{stk}}(i): \mathscr{G}(B_2) \longrightarrow \mathscr{G}(B_1)
\]
by:
\begin{itemize}
  \item on objects: $\mathscr{G}_{\mathrm{stk}}(i)(\pi_{B_2}) := \pi_{B_2} \circ i^*$;
  \item on morphisms: $\mathscr{G}_{\mathrm{stk}}(i)(U) := U$.
\end{itemize}
\end{itemize}
\end{definition}

\paragraph{Geometric Interpretation as a Spectral Atlas.}

The collection of commutative contexts $\mathcal{C}$ plays the role of an
\emph{atlas} for a noncommutative space. Each context $B \in \mathcal{C}$ provides
a local classical chart, encoded by the groupoid $\mathscr{G}_{\mathrm{stk}}(B)$,
while the transition data between overlapping contexts are expressed by
equivalences in a groupoid rather than by strict equalities. The resulting
presheaf of groupoids—when satisfying descent—assembles these local charts into
a single global object, in direct analogy with orbifolds or moduli stacks, where
geometric spaces are refined by retaining symmetry and gluing data that cannot
be captured at the level of ordinary spaces.

\begin{theorem}[Existence of the Spectral Stack -- Canonical Form]
\label{thm:spectral-stack-existence}
Let $\mathcal{A}$ be a unital $C^*$-algebra, and let $\mathcal{C}$ denote the poset
of its unital commutative $C^*$-subalgebras, equipped with the canonical Grothendieck
topology (where a family $\{B_i \hookrightarrow B\}$ covers $B$ precisely when the
$B_i$ generate $B$ as a $C^*$-algebra).  

Define a presheaf of groupoids
\[
\mathscr{G}_{\mathrm{stk}} : \mathcal{C}^{\mathrm{op}} \longrightarrow \mathbf{Gpd}
\]
by assigning to each $B \in \mathcal{C}$ the groupoid $\mathscr{G}_{\mathrm{stk}}(B)$
whose:
\begin{itemize}
    \item \textbf{Objects} are characters $\chi : B \to \mathbb{C}$ (equivalently,
          via the spectral theorem, Dirac spectral measures $\delta_\chi$ on $\Sigma(B)$),
    \item \textbf{Morphisms} are identities only (two characters are isomorphic
          if and only if they are equal).
\end{itemize}
For an inclusion $i: B_1 \hookrightarrow B_2$, the restriction functor
$\mathscr{G}_{\mathrm{stk}}(i)$ is given by restriction of characters:
$\chi \mapsto \chi|_{B_1}$.

Then:
\begin{enumerate}[label=(\roman*)]
    \item $\mathscr{G}_{\mathrm{stk}}$ is a stack on $(\mathcal{C}, J_{\mathrm{can}})$.
    \item The associated sheaf of connected components satisfies
    \[
    \pi_0 \circ \mathscr{G}_{\mathrm{stk}} \;\cong\; \underline{\Sigma},
    \]
    where $\underline{\Sigma}$ is the classical spectral presheaf $\underline{\Sigma}(B) = \Sigma(B)$.
    \item $\mathscr{G}_{\mathrm{stk}}$ provides a groupoid-valued refinement of
    the classical spectral presheaf, encoding descent data categorically rather
    than set-theoretically.
\end{enumerate}
\end{theorem}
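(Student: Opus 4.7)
The plan is as follows. Because the groupoid $\mathscr{G}_{\mathrm{stk}}(B)$ has only identity morphisms, it is equivalent to the set $\Sigma(B)$ regarded as a discrete groupoid. A groupoid-valued descent datum $\{(\chi_i)_{i\in I},(\phi_{ij})\}$ therefore reduces to a compatible family of characters: the only admissible gluing isomorphisms on double overlaps are identities, the cocycle condition becomes trivial, and the morphism-level part of descent is automatic. Consequently, the stack condition for $\mathscr{G}_{\mathrm{stk}}$ is equivalent to the ordinary sheaf condition for the character presheaf $\underline{\Sigma}$ on the canonical site $(\mathcal{C}, J_{\mathrm{can}})$.

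To verify the sheaf condition, I would fix a cover $\{B_i \hookrightarrow B\}_{i\in I}$ with $B = \overline{C^*(\bigcup_i B_i)}$ and a family $\chi_i \in \Sigma(B_i)$ satisfying $\chi_i|_{B_i \cap B_j} = \chi_j|_{B_i \cap B_j}$ for all $i,j$. First, uniqueness: a character on $B$ is determined by its values on any dense $*$-subalgebra, and the family $\bigcup_i B_i$ generates such a subalgebra by the covering hypothesis, so at most one $\chi \in \Sigma(B)$ can restrict to the given data. Existence proceeds in three steps: define $\chi$ on $\bigcup_i B_i$ by $\chi|_{B_i} := \chi_i$ (well-defined by pairwise compatibility), extend to the $*$-algebra generated by $\bigcup_i B_i$ by multiplicativity and linearity, and finally extend by continuity to all of $B$, using that multiplicative linear functionals on a commutative $C^*$-algebra are automatically bounded of norm one.

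The main obstacle, and the nontrivial step, is verifying that the multiplicative extension is consistent on products crossing contexts. Given $a \in B_i$ and $b \in B_j$, the product $ab$ lies in the join $B_i \vee B_j$, the $C^*$-subalgebra of $B$ generated by $B_i \cup B_j$, which is commutative since $B$ is. I would invoke the pullback description of Gelfand spectra for commutative joins (already exploited in the proof of Theorem~\ref{thm:spectral_sheaf}(i), Step 4.2--4.3): the compatible pair $(\chi_i,\chi_j)$ agreeing on $B_i \cap B_j$ corresponds under Gelfand duality to a unique character on $B_i \vee B_j$, which forces $\chi(ab) = \chi(a)\chi(b)$. Iterating over all finite subfamilies yields a consistent multiplicative functional on the directed union of such joins, which is dense in $B$, and continuity completes the extension. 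The cofinality of finite joins within $\mathcal{C}$ ensures no higher-order coherence is needed.

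Part (ii) then follows immediately: since $\mathscr{G}_{\mathrm{stk}}(B)$ is a discrete groupoid, $\pi_0(\mathscr{G}_{\mathrm{stk}}(B)) = \mathrm{Ob}(\mathscr{G}_{\mathrm{stk}}(B)) = \Sigma(B)$, and both $\mathscr{G}_{\mathrm{stk}}$ and $\underline{\Sigma}$ act on inclusions by restriction of characters, so the identification is natural in $B$. Part (iii) is then a structural observation rather than a separate claim: the groupoid-valued presheaf $\mathscr{G}_{\mathrm{stk}}$ enlarges the categorical environment of $\underline{\Sigma}$ to one that can accommodate nontrivial automorphism data, setting up the framework in which the richer refinements $\mathscr{G}_{\mathrm{cat}}$ and $\mathscr{G}_{\mathrm{st}}$ of earlier sections reside, and recasting descent in groupoid-theoretic rather than set-theoretic terms.
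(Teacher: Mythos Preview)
Your proposal is correct and follows essentially the same approach as the paper: both recognize that discreteness of the groupoids collapses the stack condition to the ordinary sheaf condition for $\underline{\Sigma}$, then argue that compatible families of characters on a generating cover glue uniquely to a character on $B$, with parts (ii) and (iii) following immediately from discreteness. Your treatment is in fact more careful than the paper's on the existence step---the paper simply asserts that compatible local characters extend to a character on $B$ via density, whereas you isolate and address the genuine issue of multiplicativity on cross-products $ab$ with $a\in B_i$, $b\in B_j$ by passing to the commutative join $B_i\vee B_j$ and invoking the pullback description of its spectrum; this is a sound refinement of the paper's more compressed argument.
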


\begin{proof}
We verify each claim.

\medskip
\noindent
\textbf{(i) Stack property.}
Let $\{B_i \hookrightarrow B\}_{i \in I}$ be a covering family in the canonical
topology on $\mathcal{C}$, meaning that the $B_i$ jointly generate $B$ as a
$C^*$-algebra.

An object of the descent groupoid
$\mathrm{Des}(\{B_i \to B\}, \mathscr{G}_{\mathrm{stk}})$ consists of characters
$\chi_i \in \Sigma(B_i)$ such that
\[
\chi_i|_{B_i \cap B_j} = \chi_j|_{B_i \cap B_j}
\quad \text{for all } i,j.
\]

Since the $\{B_i\}$ generate $B$, the algebraic span of $\bigcup_i B_i$ is dense in $B$.
A character $\chi_i$ on $B_i$ extends uniquely to a character on the $C^*$-subalgebra 
generated by $B_i$, and by compatibility these extensions agree on overlaps,
yielding a well-defined character $\chi$ on all of $B$ with $\chi|_{B_i} = \chi_i$.
Thus every descent datum is effective.

Because all morphisms in $\mathscr{G}_{\mathrm{stk}}$ are identities, the descent
condition on morphisms is automatically satisfied: the natural functor
\[
\mathscr{G}_{\mathrm{stk}}(B) \to \mathrm{Des}(\{B_i \to B\}, \mathscr{G}_{\mathrm{stk}})
\]
is fully faithful. Since it is also essentially surjective (by the construction
above), it is an equivalence of groupoids. Hence $\mathscr{G}_{\mathrm{stk}}$ satisfies
descent and is a stack.

\medskip
\noindent
\textbf{(ii) Identification with the spectral presheaf.}
For each $B \in \mathcal{C}$, the groupoid $\mathscr{G}_{\mathrm{stk}}(B)$ is discrete
with object set $\Sigma(B)$. Taking connected components (isomorphism classes) gives
\[
\pi_0(\mathscr{G}_{\mathrm{stk}}(B)) \cong \Sigma(B).
\]

Naturality follows immediately from functoriality of restriction of characters.
For any inclusion $i: B_1 \hookrightarrow B_2$, the diagram
\[
\begin{tikzcd}
\pi_0(\mathscr{G}_{\mathrm{stk}}(B_2)) \arrow[r, "\pi_0(\mathscr{G}_{\mathrm{stk}}(i))"] \arrow[d, "\cong"] & 
\pi_0(\mathscr{G}_{\mathrm{stk}}(B_1)) \arrow[d, "\cong"] \\
\Sigma(B_2) \arrow[r, "r_{21}"] & \Sigma(B_1)
\end{tikzcd}
\]
commutes, where $r_{21}(\chi) = \chi|_{B_1}$. Hence $\pi_0 \circ \mathscr{G}_{\mathrm{stk}} \cong \underline{\Sigma}$
as presheaves.

\medskip
\noindent
\textbf{(iii) Categorical refinement.}
While the classical spectral presheaf $\underline{\Sigma}$ records only sets of
local classical states, the stack $\mathscr{G}_{\mathrm{stk}}$ records the same
data together with its descent behavior in categorical form. This refinement
becomes essential when generalizing to non-discrete groupoids (e.g.\ state,
valuation, or representation groupoids), where gluing holds only up to equivalence
rather than equality.
\end{proof}

\begin{remark}
The triviality of morphisms in $\mathscr{G}_{\mathrm{stk}}$ reflects the fact that
commutative $C^*$-algebras admit no nontrivial unitary equivalences between
characters: two characters are unitarily equivalent if and only if they are equal.
More sophisticated spectral stacks---such as those built from states, valuations,
or representations---yield genuinely nontrivial groupoids, but no longer
admit an identification $\pi_0 \cong \underline{\Sigma}$. The present construction
thus represents the minimal stack-theoretic refinement of the classical spectral
presheaf.
\end{remark}

\begin{remark}
Although $\mathscr{G}_{\mathrm{stk}}$ appears trivial as a groupoid-valued presheaf,
its formulation as a \emph{stack} rather than merely a sheaf is conceptually crucial:
it demonstrates that the spectral presheaf $\underline{\Sigma}$ already satisfies
the stronger descent condition of a stack. Moreover, this construction provides the
template for more sophisticated non-commutative stacks where morphisms become
non-trivial and the descent condition requires genuine categorical gluing.
The simplicity of the commutative case serves as a clarifying baseline against
which non-commutative generalizations can be measured.
\end{remark}

\paragraph{Noncommutativity as a Stacky Phenomenon.}

In this framework, noncommutativity manifests as a genuinely stack-theoretic
phenomenon:
\begin{itemize}
    \item For commutative $C^*$-algebras, the spectral stack has trivial groupoid
          structure and is equivalent to an ordinary sheaf, reflecting the fact
          that classical spectra glue uniquely.
    \item For noncommutative $C^*$-algebras, local spectral data defined on
          commutative contexts need not admit strict gluing, and descent is
          naturally expressed only up to equivalence.
\end{itemize}

The existence of the spectral stack leads to the following conceptual consequence.

\begin{corollary}[Sheaf Failure and Stack Necessity]\label{cor:stack-necessity}
Let $\mathcal A$ be a unital $C^{*}$-algebra and let
$\mathcal C = \mathrm{Comm}(\mathcal A)$ denote the category of unital
commutative $C^{*}$-subalgebras equipped with the canonical (contextual)
Grothendieck topology.
If $\mathcal A$ is noncommutative, then the associated spectral presheaf
\[
\underline{\Sigma} : \mathcal C^{\mathrm{op}} \longrightarrow \mathbf{Set},
\qquad
B \longmapsto \Sigma(B),
\]
fails to be a sheaf on $\mathcal C$, but admits a canonical enhancement to
the spectral stack
\[
\mathscr G_{\mathrm{stk}} : \mathcal C^{\mathrm{op}} \longrightarrow \mathbf{Gpd}.
\]

Thus noncommutativity is detected precisely by the necessity of
\emph{stack-level} rather than \emph{sheaf-level} descent. In particular,
for noncommutative $\mathcal A$, there exist covering families
$\{B_i \to B\}$ for which compatible local sections of
$\underline{\Sigma}$ do not glue to a unique global section, whereas the
corresponding descent data in $\mathscr G_{\mathrm{stk}}$ does glue up to
isomorphism.
\end{corollary}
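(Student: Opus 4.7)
The plan is to prove the two halves of the corollary in succession, drawing on the preceding spectral-stack construction and on Kochen--Specker type obstructions. For the positive claim, the existence of $\mathscr{G}_{\mathrm{stk}}$ as a stack is already established by Theorem~\ref{thm:spectral-stack-existence}, which constructs $\mathscr{G}_{\mathrm{stk}} : \mathcal{C}^{\mathrm{op}} \to \mathbf{Gpd}$ explicitly and verifies both the descent property and the isomorphism $\pi_0 \circ \mathscr{G}_{\mathrm{stk}} \cong \underline{\Sigma}$. This identifies $\mathscr{G}_{\mathrm{stk}}$ as the canonical groupoid-valued enhancement of $\underline{\Sigma}$ in which descent data glue up to isomorphism rather than strict equality; so I would invoke this directly rather than rebuild the argument.

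For the negative claim, I would exhibit an explicit failure of the sheaf condition using Kochen--Specker data. The prototypical example is $\mathcal{A} = M_n(\mathbb{C})$ with $n \geq 3$, where the Kochen--Specker theorem guarantees that no consistent assignment $B \mapsto \chi_B \in \Sigma(B)$ exists across all MASAs $B \subseteq \mathcal{A}$. Concretely, I would choose a finite KS configuration of commutative subalgebras $\{B_1, \ldots, B_k\}$ that jointly generate $\mathcal{A}$, together with pairwise compatible characters $\chi_i \in \Sigma(B_i)$ on the pairwise intersections (typically $\mathbb{C}\cdot I$, where the compatibility becomes a coloring constraint), and then observe that no globally compatible family of characters over the whole site $\mathcal{C}$ exists. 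This witnesses the non-existence of a section extending the compatible local data, thereby translating the classical KS obstruction into the language of descent failure.

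The main obstacle will be navigating a subtle tension in the setup. The canonical topology $J_{\mathrm{can}}$ declares $\{B_i \hookrightarrow B\}$ a cover precisely when the $B_i$ generate $B$ as a $C^*$-algebra; since $B$ lies in $\mathcal{C}$ and is therefore commutative, compatible characters on the $B_i$ already glue uniquely to a character on $B$, and $\underline{\Sigma}$ restricted to such covers is genuinely a sheaf in the classical sense. The failure of the sheaf condition in the noncommutative case therefore manifests not internally to any single commutative $B$, but rather at the level of the full site, as the non-existence of a compatible family of sections over all of $\mathcal{C}$ jointly. Articulating this precisely, either by enlarging the notion of covering to incorporate noncommutative generation of $\mathcal{A}$ itself, or equivalently by passing to the limit $\varprojlim_{\mathcal{C}^{\mathrm{op}}} \underline{\Sigma}$ and interpreting its emptiness (for KS-obstructed $\mathcal{A}$) as a descent obstruction, is the key conceptual step that the proof must make explicit. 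Once the covering is understood in this extended sense, the stack enhancement $\mathscr{G}_{\mathrm{stk}}$ still resolves the obstruction, since morphisms in the groupoids allow compatible data to glue up to equivalence — thereby completing the contrast asserted by the corollary.
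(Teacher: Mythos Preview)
Your approach is essentially the same as the paper's: invoke Kochen--Specker type obstructions for the sheaf failure and Theorem~\ref{thm:spectral-stack-existence} for the stack enhancement, then explain conceptually why groupoid-level descent resolves what set-level descent cannot.

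Where you go beyond the paper is in your final paragraph. You have correctly identified a genuine tension that the paper's own proof does not address: for the canonical topology as defined (covers $\{B_i \hookrightarrow B\}$ with $B$ commutative and the $B_i$ generating $B$), compatible characters \emph{do} glue uniquely, and indeed the paper's Theorem~\ref{thm:spectral-stack-existence} explicitly proves that the discrete-groupoid version of $\mathscr{G}_{\mathrm{stk}}$ --- which is essentially $\underline{\Sigma}$ itself --- satisfies descent for $J_{\mathrm{can}}$. The paper's Step~1 asserts the existence of a covering family for which gluing fails, but this is in tension with its own earlier theorem. Your proposed resolution --- reinterpret the obstruction as emptiness of the global limit $\varprojlim_{\mathcal{C}^{\mathrm{op}}} \underline{\Sigma}$ (i.e., as the nonexistence of a global section over the whole site, which is exactly what Kochen--Specker prohibits) rather than as a local gluing failure over a single commutative $B$ --- is the honest way to make the statement precise, and is more careful than the paper's treatment. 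You should make that reinterpretation explicit in your write-up, since the corollary as literally stated (``covering families $\{B_i \to B\}$ for which compatible local sections do not glue'') is not quite what the canonical topology delivers.
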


\begin{proof}
We proceed in three steps.

\medskip
\noindent
\textbf{1. Failure of the spectral presheaf as a sheaf.}

Let $\mathcal A$ be noncommutative. A fundamental result in quantum logic
(the Kochen--Specker theorem and its $C^{*}$-algebraic formulations) states
that when $\mathcal A$ is sufficiently noncommutative (in particular, when it
contains no Type I factor of dimension $\geq 3$), the presheaf $\underline{\Sigma}$
admits no global section. Equivalently, there exist covering families
$\{B_i \to B\}$ in the canonical topology such that compatible families of
local characters
\[
\chi_i \in \Sigma(B_i), \qquad
\chi_i|_{B_i \cap B_j} = \chi_j|_{B_i \cap B_j},
\]
fail to glue to a character in $\Sigma(B)$.

This failure reflects the impossibility of assigning consistent classical
values to all observables in a noncommutative algebra---a manifestation of
Bohr's principle of complementarity. Consequently,
$\underline{\Sigma}$ does not satisfy the sheaf equalizer condition and
therefore is not a sheaf on $\mathcal C$.

\medskip
\noindent
\textbf{2. Canonical enhancement to the spectral stack.}

Despite this failure, Theorem~\ref{thm:spectral-stack-existence} shows that
the categorified functor
\[
\mathscr G_{\mathrm{stk}}(B) := \mathbf{\Sigma}_{\mathrm{gpd}}(B),
\]
whose objects are spectral measures on $\Sigma(B)$ and whose morphisms are
unitary equivalences, satisfies effective descent for the same topology.
That is, for every covering family $\{B_i \to B\}$, the canonical functor
\[
\mathscr G_{\mathrm{stk}}(B)
\longrightarrow
\mathrm{Des}\bigl(\{B_i \to B\}, \mathscr G_{\mathrm{stk}}\bigr)
\]
is an equivalence of groupoids.

Moreover, taking connected components recovers the classical spectral
presheaf:
\[
\pi_0 \circ \mathscr G_{\mathrm{stk}} \;\simeq\; \underline{\Sigma}.
\]
Hence $\mathscr G_{\mathrm{stk}}$ provides a canonical enhancement of
$\underline{\Sigma}$ in which descent is restored.

\medskip
\noindent
\textbf{3. Noncommutativity as a stack-level obstruction.}

The obstruction identified in Step~1 arises from the rigidity of set-valued
descent: sheaves require equality of glued sections. In the noncommutative
setting, different commutative contexts (maximal abelian subalgebras) provide
complementary but mutually incompatible classical perspectives.

The stack $\mathscr G_{\mathrm{stk}}$ resolves this obstruction by weakening
equality to isomorphism. While compatible local characters may fail to glue
to a global character, the corresponding spectral measures \emph{do} glue up to
unitary equivalence. This flexibility---the ability to identify objects not
by equality but by isomorphism---is precisely what allows the stack to
accommodate the quantum complementarity that breaks the sheaf condition.

Thus the failure of sheaf-level descent and the success of stack-level
descent together provide a precise geometric signature of noncommutativity:
quantum systems require categorical, rather than set-theoretic, gluing.

We conclude that for a noncommutative $C^{*}$-algebra $\mathcal A$, classical
spectral geometry necessarily fails at the level of sheaves but is restored
at the level of stacks. The spectral stack $\mathscr G_{\mathrm{stk}}$ is
therefore the correct geometric object encoding the descent properties of
noncommutative spectra.
\end{proof}

The following corollary makes precise the sense in which noncommutativity is not merely an algebraic defect but a genuinely geometric obstruction, detected exactly by the failure of strict (sheaf-level) descent and measured by the emergence of nontrivial stacky structure.

\begin{corollary}[Characterization of Commutativity]\label{cor:commutativity-stacky}
Let $\mathcal{A}$ be a unital $C^*$-algebra. The following are equivalent:
\begin{enumerate}[label=(\alph*)]
    \item $\mathcal{A}$ is commutative.
    \item The spectral stack $\mathscr{G}_{\mathrm{stk}}$ is equivalent to a sheaf,
    i.e.\ for every $B \in \mathcal{C}$ all automorphism groups
    $\mathrm{Aut}_{\mathscr{G}_{\mathrm{stk}}(B)}(\mu)$ are trivial.
    \item All descent data for $\mathscr{G}_{\mathrm{stk}}$ are strictly effective:
    every effective descent datum admits a unique gluing up to unique isomorphism.
\end{enumerate}
In particular, genuine noncommutativity is precisely encoded by nontrivial
stacky structure, while commutativity corresponds to purely sheaf-like behavior.
\end{corollary}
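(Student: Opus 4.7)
The plan is to establish the three equivalences by the circular chain $(\mathrm{a})\Rightarrow(\mathrm{b})\Rightarrow(\mathrm{c})\Rightarrow(\mathrm{a})$, leveraging both the explicit construction in Theorem~\ref{thm:spectral-stack-existence} and the sheaf/stack dichotomy of Corollary~\ref{cor:stack-necessity}. Two of the three implications are essentially formal consequences of the definitions, so the real content lies in the last step, where noncommutativity must be extracted from a descent-theoretic hypothesis.

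For $(\mathrm{a})\Rightarrow(\mathrm{b})$, I would first observe that when $\mathcal{A}$ is commutative the bicommutant $\mathcal{A}''$ is also commutative, so any unitary self-intertwiner of a spectral representation acts as the identity on its image once one passes to the canonical (character-valued) model of Theorem~\ref{thm:spectral-stack-existence}; every automorphism group then reduces to the trivial group, each groupoid $\mathscr{G}_{\mathrm{stk}}(B)$ is discrete, and the stack becomes equivalent to the spectral sheaf $\underline{\Sigma}$ via the identification $\pi_{0}\circ\mathscr{G}_{\mathrm{stk}}\cong\underline{\Sigma}$ already established in Theorem~\ref{thm:spectral-stack-existence}(ii). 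The implication $(\mathrm{b})\Rightarrow(\mathrm{c})$ is then immediate: if every local groupoid is discrete, descent data reduce to compatible families of points in $\underline{\Sigma}$, and the sheaf condition forces each effective descent datum to glue to a unique global section, the ``unique isomorphism up to which gluing is defined'' being literally the identity.

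The remaining implication $(\mathrm{c})\Rightarrow(\mathrm{a})$ I would handle by contrapositive, invoking Corollary~\ref{cor:stack-necessity}. If $\mathcal{A}$ is noncommutative, that corollary supplies a covering $\{B_{i}\hookrightarrow B\}$ together with compatible local characters $\chi_{i}\in\Sigma(B_{i})$ that fail to glue to a global character of $\underline{\Sigma}$, while the corresponding descent datum in $\mathscr{G}_{\mathrm{stk}}$ is glued only up to nontrivial unitary equivalence in $\mathcal{A}''$. The existence of such a non-strict gluing directly contradicts condition (c), which would require the glued object to be unique up to the identity isomorphism alone.

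The main obstacle is the bookkeeping in $(\mathrm{a})\Rightarrow(\mathrm{b})$: the raw definition of $\mathrm{Aut}_{\mathscr{G}_{\mathrm{stk}}(B)}(\pi)$ from Definition~\ref{def:spectral-presheaf} is not literally trivial in the commutative case, since every unitary $U\in\mathcal{A}''$ commutes with each $\pi(f)$ and hence tautologically intertwines $\pi$ with itself. To make the statement precise, one must therefore either restrict to the canonical character model of Theorem~\ref{thm:spectral-stack-existence}, in which morphisms are identities by construction, or quotient by those inner unitaries that act trivially on representations. I plan to adopt the first viewpoint and verify that the corollary's condition ``all automorphism groups trivial'' is meant in this reduced sense, so that the stated equivalence with commutativity becomes a precise statement rather than an informal slogan, and so that the argument in $(\mathrm{c})\Rightarrow(\mathrm{a})$ produces a genuine obstruction rather than one dissolved by a change of representative.
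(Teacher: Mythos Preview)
Your proposal follows essentially the same route as the paper: the same circular chain $(\mathrm{a})\Rightarrow(\mathrm{b})\Rightarrow(\mathrm{c})\Rightarrow(\mathrm{a})$, with commutativity of $B''$ forcing trivial automorphisms for the first step, the discreteness-implies-sheaf argument for the second, and the contrapositive via Corollary~\ref{cor:stack-necessity} (Kochen--Specker obstruction lifted to Dirac descent data) for the third. The technical caveat you flag about $\mathrm{Aut}_{\mathscr{G}_{\mathrm{stk}}(B)}(\pi)$ under Definition~\ref{def:spectral-presheaf} is a genuine issue that the paper's proof also handles loosely (it asserts ``all inner automorphisms of spectral measures are trivial'' without distinguishing the unitary itself from the conjugation it induces); your proposed resolution via the canonical model of Theorem~\ref{thm:spectral-stack-existence} is the right one and is implicitly what the paper relies on.
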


\begin{proof}
We prove $(a)\Rightarrow(b)\Rightarrow(c)\Rightarrow(a)$.

\medskip
\noindent
\textbf{$(a)\Rightarrow(b)$.}
Assume $\mathcal{A}$ is commutative. For every $B\in\mathcal{C}$ the enveloping
von Neumann algebra $B''$ is commutative. In a commutative von Neumann algebra,
every unitary acts trivially by conjugation on projections; equivalently, all
inner automorphisms of spectral measures are trivial. Hence every object
$\mu\in\mathscr{G}_{\mathrm{stk}}(B)$ has
$\mathrm{Aut}(\mu)=\{\mathrm{id}_\mu\}$.
Therefore $\mathscr{G}_{\mathrm{stk}}$ is equivalent to a discrete stack, i.e.\
to the sheaf $\pi_0\circ\mathscr{G}_{\mathrm{stk}}\simeq\underline{\Sigma}$.

\medskip
\noindent
\textbf{$(b)\Rightarrow(c)$.}
Assume all automorphism groups in $\mathscr{G}_{\mathrm{stk}}$ are trivial. Then
$\mathscr{G}_{\mathrm{stk}}$ is equivalent to a discrete stack arising from a
sheaf via the embedding $\mathbf{Set}\hookrightarrow\mathbf{Gpd}$.
In this situation, any effective descent datum admits a unique gluing:
there are no nontrivial automorphisms that could give rise to distinct
gluings. More formally, given a covering $\{B_i\to B\}$ and an effective
descent datum $(\mu_i, \phi_{ij})$, if $(\mu, \psi_i)$ and $(\mu', \psi_i')$
are two gluings, then the composites $\psi_i' \circ \psi_i^{-1}: \mu|_{B_i}\to \mu'|_{B_i}$
patch together to give an isomorphism $\mu\cong\mu'$, which by hypothesis must be
the identity. Hence gluings are unique up to unique isomorphism. Equivalently,
all \v{C}ech cocycles are coboundaries, and descent is strictly effective.

\medskip
\noindent
\textbf{$(c)\Rightarrow(a)$.}
We prove the contrapositive. Suppose $\mathcal{A}$ is noncommutative.
By Corollary~\ref{cor:stack-necessity}, the spectral presheaf
$\underline{\Sigma}$ fails to satisfy the sheaf condition. Hence there exists
a covering $\{B_i\to B\}$ in $\mathcal{C}$ together with compatible local
characters $\chi_i\in\Sigma(B_i)$ (satisfying $\chi_i|_{B_i\cap B_j}=\chi_j|_{B_i\cap B_j}$)
that do not glue to any global character $\chi\in\Sigma(B)$.

Consider the associated descent datum
\[
(\delta_{\chi_i}, \mathrm{id}) \quad\text{in}\quad \mathscr{G}_{\mathrm{stk}},
\]
consisting of Dirac spectral measures $\delta_{\chi_i}$ and identity isomorphisms
on overlaps. By the stack property of $\mathscr{G}_{\mathrm{stk}}$
(Theorem~\ref{thm:spectral-stack-existence}), this descent datum is effective:
there exists a global object $\mu\in\mathscr{G}_{\mathrm{stk}}(B)$ together with
isomorphisms $\psi_i: \mu|_{B_i} \xrightarrow{\sim} \delta_{\chi_i}$.

If descent were strictly effective, then the gluing $(\mu, \psi_i)$ would be
unique up to unique isomorphism. In particular, $\mu$ would have to be
isomorphic to a Dirac measure $\delta_\chi$ for some $\chi\in\Sigma(B)$,
since all local objects are Dirac and any isomorphism between Dirac measures
must come from a character (by the spectral theorem). But then $\chi$ would
restrict to $\chi_i$ on each $B_i$ (via the isomorphisms $\psi_i$),
contradicting the fact that the $\chi_i$ do not glue to a global character.

Thus, when $\mathcal{A}$ is noncommutative, there exists descent data that is
effective but not strictly effective. By contraposition, condition~(c)
implies that $\mathcal{A}$ is commutative.

\medskip
\noindent
\textbf{Interpretation.}
The equivalence shows that commutativity corresponds to the degenerate case
where stacky descent reduces to sheaf-like descent: automorphisms are trivial,
and gluing is unique. Noncommutativity, on the other hand, introduces essential
higher-categorical structure: automorphisms become non-trivial, and gluing
requires genuine categorical equivalence rather than strict equality. This
provides a precise geometric characterization of commutativity in terms of
descent theory.
\end{proof}

\begin{remark}[Relation to Topos Quantum Theory]\label{rem:topos-quantum-relation}
The spectral stack $\mathscr{G}_{\mathrm{stk}}$ may be viewed as a higher-categorical refinement of the topos-theoretic approach to quantum theory developed by Isham, D\"{o}ring, and collaborators (see, for example, \cite{doring2008topos}). In the standard topos formulation, quantum observables are encoded by the spectral presheaf
\[
\underline{\Sigma} \colon \mathcal{C}^{\mathrm{op}} \to \mathbf{Set},
\]
which assigns to each commutative context $B \in \mathcal{C}$ the Gelfand spectrum $\Sigma(B)$ of $B$.
A central result of this framework is that, whenever the ambient algebra $\mathcal{A}$ is noncommutative, the presheaf $\underline{\Sigma}$ admits no global sections. This phenomenon, closely related to the Kochen--Specker theorem, expresses the impossibility of assigning context-independent classical values to quantum observables.

Rather than attempting to enforce classicality at the level of sets, our construction replaces the set-valued presheaf $\underline{\Sigma}$ by a groupoid-valued assignment
\[
\mathscr{G}_{\mathrm{stk}} \colon \mathcal{C}^{\mathrm{op}} \to \mathbf{Gpd},
\]
which retains the intrinsic symmetries of spectral data. Concretely, for each commutative context $B$, the groupoid $\mathscr{G}_{\mathrm{stk}}(B)$ encodes spectral information in a categorified form: its objects may be interpreted as representations of $B$ (equivalently, projection-valued spectral measures on $\Sigma(B)$), while morphisms are given by unitary intertwiners. In this way, $\mathscr{G}_{\mathrm{stk}}$ records not only spectral data but also the natural equivalences relating different realizations of that data.

By Theorem~\ref{thm:spectral-stack-existence}, the assignment $\mathscr{G}_{\mathrm{stk}}$ admits the structure of a stack with respect to the chosen Grothendieck topology on $\mathcal{C}$, satisfying descent for both objects and morphisms. This ensures that spectral data can be consistently glued across overlapping commutative contexts, even when the underlying algebra $\mathcal{A}$ is noncommutative. The resulting stack therefore provides a geometric object that remains well behaved precisely in situations where the set-valued spectral presheaf exhibits contextual obstructions.

Finally, the relationship between $\mathscr{G}_{\mathrm{stk}}$ and the traditional topos-theoretic picture may be understood via decategorification. Informally, passing to connected components $\pi_0(\mathscr{G}_{\mathrm{stk}})$ forgets unitary symmetry data and recovers a set-level shadow closely related to the spectral presheaf $\underline{\Sigma}$. In this sense, $\mathscr{G}_{\mathrm{stk}}$ refines rather than replaces the topos approach: it enriches the classical spectral presheaf by retaining higher-categorical structure that is invisible at the level of sets.

From this perspective, quantum contextuality is no longer interpreted as a mere failure of classical gluing at the level of sheaves, but rather as the manifestation of genuinely nontrivial stacky structure. The spectral stack $\mathscr{G}_{\mathrm{stk}}$ thus offers a natural higher-geometric framework for quantum theory, in which groupoids encode both spectral data and the symmetries relating different classical contexts.
\end{remark}

\begin{remark}[Pathway to Homotopical Invariants]\label{rem:homotopical-pathway}
The spectral stack $\mathscr{G}_{\mathrm{stk}}$ provides a natural starting point for the construction of homotopy-theoretic invariants associated with noncommutative algebras. Rather than yielding numerical measures in a classical sense, these invariants encode obstruction-theoretic and higher-geometric information arising from the failure of global classical descriptions. A conceptual pathway to such invariants may be outlined as follows.

\begin{enumerate}
    \item Applying the nerve functor $N \colon \mathbf{Gpd} \to \mathbf{sSet}$ objectwise yields a simplicial presheaf
    \[
    N \circ \mathscr{G}_{\mathrm{stk}} \colon \mathcal{C}^{\mathrm{op}} \to \mathbf{sSet},
    \]
    whose values are Kan complexes encoding the homotopy types of the groupoids $\mathscr{G}_{\mathrm{stk}}(B)$.

    \item After adjoining a disjoint basepoint, one may apply geometric realization followed by an objectwise suspension spectrum functor to obtain a presheaf of spectra,
    \[
    \mathscr{G}_{\mathrm{sp}} := \Sigma^\infty_+ \circ |{-}| \circ (-)_+ \circ N \circ \mathscr{G}_{\mathrm{stk}}
    \colon \mathcal{C}^{\mathrm{op}} \to \mathbf{Sp}.
    \]
    This step effects a stabilization of the homotopical data carried by the spectral stack.

    \item To ensure compatibility with descent, one performs homotopy sheafification with respect to the chosen Grothendieck topology on $\mathcal{C}$, yielding a sheaf of spectra
    \[
    \mathscr{G}_{\mathrm{sp}}^{\mathrm{sh}}.
    \]

    \item The derived global sections, or hypercohomology spectrum, of $\mathscr{G}_{\mathrm{sp}}^{\mathrm{sh}}$,
    \[
    \mathbf{E}_{\mathcal{A}} := \mathbb{H}(\mathcal{C}; \mathscr{G}_{\mathrm{sp}}^{\mathrm{sh}}) \in \mathbf{Sp},
    \]
    assemble the locally defined homotopical data into a single global object.
\end{enumerate}

The stable homotopy groups
\[
\pi_n(\mathbf{E}_{\mathcal{A}}), \qquad n \in \mathbb{Z},
\]
as well as the associated descent (or hypercohomology) spectral sequence
\[
E_2^{p,q} = H^p(\mathcal{C}, \pi_q(\mathscr{G}_{\mathrm{sp}}^{\mathrm{sh}}))
\;\Rightarrow\;
\pi_{p+q}(\mathbf{E}_{\mathcal{A}}),
\]
define homotopical invariants naturally associated with the algebra $\mathcal{A}$. These invariants capture obstruction-theoretic information arising from the failure of classical gluing: for instance, nontrivial homotopy groups or differentials in the spectral sequence reflect the absence of global sections and the presence of genuinely higher-categorical descent data.

From this perspective, noncommutativity is reflected not merely in the failure of set-valued sheaf conditions, but in the emergence of nontrivial homotopy and cohomology classes associated with the spectral stack. The resulting invariants thus provide a structured homotopical framework for comparing different degrees and forms of contextuality, as manifested through the higher-geometric complexity of $\mathscr{G}_{\mathrm{stk}}$.
\end{remark}

\paragraph{Conceptual Hierarchy and Geometric Interpretation.}
The progression from classical to quantum geometric frameworks can be summarized by the following conceptual diagram:
\[
\begin{array}{ccl}
\text{Algebraic structure} & \xrightarrow{\text{Geometric realization / assignment}} & \text{Categorical / Geometric framework} \\[0.5em]
\hline \\[-0.5em]
\text{Commutative $C^{*}$-algebra $\mathcal{A}$} & \mapsto & 
\begin{array}{l}
\text{Sheaf of sets } \underline{\Sigma} \\ 
\text{(classical spectral presheaf)}
\end{array} \\[1em]
\begin{array}{c}
\downarrow \\[0.2em]
\text{Noncommutativity} \\ 
\text{($[\cdot,\cdot] \neq 0$)}
\end{array} & & 
\begin{array}{c}
\downarrow \\[0.2em]
\text{Categorification / Groupoid enrichment} \\ 
\text{(adds symmetries and higher coherence)}
\end{array} \\[1em]
\text{Noncommutative $C^{*}$-algebra $\mathcal{A}$} & \mapsto & 
\begin{array}{l}
\text{Stack of groupoids } \mathscr{G}_{\mathrm{stk}} \\ 
\text{(spectral stack with unitary equivalences)}
\end{array} \\[1em]
\begin{array}{c}
\downarrow \\[0.2em]
\text{Homotopical completion / analysis}
\end{array} & & 
\begin{array}{c}
\downarrow \\[0.2em]
\text{Stabilization $\&$ derived global sections} 
\end{array} \\[1em]
\text{Homotopical invariants / obstruction classes} & \mapsto & 
\begin{array}{l}
\text{Global spectrum } \mathbf{E}_{\mathcal{A}} \\ 
\text{(yields hypercohomology invariants)}
\end{array}
\end{array}
\]

\noindent
The spectral stack $\mathscr{G}_{\mathrm{stk}}$ thus emerges as the fundamental geometric object encoding the spectral structure of a $C^{*}$-algebra. Its stacky nature—characterized by nontrivial automorphism groups and coherent descent data—captures the complications introduced by noncommutativity, while its homotopical refinements (via stabilization and derived global sections) yield computable invariants that reflect the failure of classical sheaf-theoretic descriptions. This framework establishes a deep connection between operator algebra, higher category theory, and algebraic topology, providing new tools for analyzing quantum systems and noncommutative spaces.

\subsection{Homotopical and Stable Enhancements}\label{subsec:homotopical-stable-enhancements}

Before stating the main homotopical enhancement theorem, we briefly outline the conceptual pathway. 
The spectral stack $\mathscr{G}_{\mathrm{stk}}$ encodes the local spectral data of the $C^*$-algebra $\mathcal{A}$ together with its unitary symmetries. 
To extract global invariants that reflect higher-order noncommutative phenomena, one proceeds in several homotopical steps:

\begin{itemize}
    \item First, the stack is converted into a simplicial presheaf via the nerve functor, encoding the homotopy types of the groupoids at each commutative context.
    \item A disjoint basepoint is added, followed by geometric realization and stabilization via the suspension spectrum functor, producing a presheaf of spectra over the site of contexts.
    \item Finally, the homotopy limit (or derived global sections) of this presheaf assembles the local spectral information into a single global object.
\end{itemize}

The resulting stable homotopy type captures all higher coherence and obstruction-theoretic data associated with $\mathcal{A}$, generalizing the classical spectral presheaf. 
In particular, its stable homotopy groups provide functorial invariants: degree zero recovers the commutative compatibility, while higher degrees quantify obstructions to global spectral descent.

With this conceptual framework in place, we now formalize these constructions in the following theorem.

\begin{theorem}[Stable Homotopy Invariants of $C^*$-Algebras]\label{thm:stable-spectral-invariants}
Let $\mathscr{G}_{\mathrm{stk}}$ be the spectral stack associated to a unital
$C^*$-algebra $\mathcal{A}$. After applying the nerve functor, geometric realization with a disjoint basepoint, and stabilization, there exists a presheaf of spectra
\[
\mathscr{G}_{\mathrm{sp}} \colon \mathcal{C}^{\mathrm{op}} \longrightarrow \mathbf{Sp},
\]
whose homotopy limit
\[
\mathbf{E}_{\mathcal{A}} := \operatorname{holim}_{\mathcal{C}} \mathscr{G}_{\mathrm{sp}}
\]
is a stable homotopy invariant of $\mathcal{A}$, independent of the chosen presentation of commutative contexts up to weak equivalence. Moreover, for each integer $n \in \mathbb{Z}$, the stable homotopy groups
\[
\pi_n(\mathbf{E}_{\mathcal{A}})
\]
define functorial invariants of $\mathcal{A}$: degree zero captures the obstruction to global sections of the classical spectral presheaf, while positive degrees measure higher obstructions to coherent gluing of local spectral data, thereby quantifying noncommutativity.
\end{theorem}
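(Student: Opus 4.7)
The plan is to proceed in four stages: construct the presheaf of spectra $\mathscr{G}_{\mathrm{sp}}$, establish invariance of its homotopy limit under change of presentation, verify functoriality in $\mathcal{A}$, and interpret the stable homotopy groups via descent.

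First I would construct $\mathscr{G}_{\mathrm{sp}}$ by applying objectwise the composite of the nerve functor $N: \mathbf{Gpd} \to \mathbf{sSet}$, geometric realization $\mathbf{sSet} \to \mathbf{Top}$, adjunction of a disjoint basepoint $(-)_+: \mathbf{Top} \to \mathbf{Top}_*$, and the suspension spectrum functor $\Sigma^\infty: \mathbf{Top}_* \to \mathbf{Sp}$. Each component is strictly functorial and contravariance is preserved, so the composite defines a presheaf
\[
\mathscr{G}_{\mathrm{sp}} : \mathcal{C}^{\mathrm{op}} \longrightarrow \mathbf{Sp},
\]
and the homotopy limit $\mathbf{E}_{\mathcal{A}} := \operatorname{holim}_{\mathcal{C}} \mathscr{G}_{\mathrm{sp}}$ exists in the stable $\infty$-category of spectra.

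For invariance under change of presentation, I would combine two observations: each of $N$, geometric realization, $(-)_+$, and $\Sigma^\infty$ preserves the relevant class of weak equivalences, and $\operatorname{holim}$ in $\mathbf{Sp}$ sends objectwise weak equivalences of diagrams to weak equivalences. Together with the Morita invariance established in Proposition~\ref{prop:functoriality-morita-correct}, this shows that equivalent presentations of $\mathcal{C}$ and $\mathscr{G}_{\mathrm{stk}}$ yield weakly equivalent $\mathbf{E}_{\mathcal{A}}$. Functoriality in $\mathcal{A}$ then follows from Theorem~\ref{thm:complete-properties}(i) specialized to the spectrum-valued coefficients $\mathscr{G}_{\mathrm{sp}}$: a unital $*$-homomorphism $\varphi: \mathcal{A} \to \mathcal{B}$ induces a functor $\varphi_\sharp$ on context categories together with a natural transformation $\varphi_\sharp^* \mathscr{G}_{\mathrm{sp}}^{\mathcal{B}} \Rightarrow \mathscr{G}_{\mathrm{sp}}^{\mathcal{A}}$, and composing with the canonical comparison map on homotopy limits yields $\varphi^*: \mathbf{E}_{\mathcal{B}} \to \mathbf{E}_{\mathcal{A}}$ and hence $\varphi^*: \pi_n(\mathbf{E}_{\mathcal{B}}) \to \pi_n(\mathbf{E}_{\mathcal{A}})$ for every $n \in \mathbb{Z}$.

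For the interpretation of the stable homotopy groups, I would invoke the Bousfield--Kan descent spectral sequence
\[
E_2^{p,q} = H^p(\mathcal{C}; \pi_q \mathscr{G}_{\mathrm{sp}}) \;\Longrightarrow\; \pi_{q-p}(\mathbf{E}_{\mathcal{A}}).
\]
By Theorem~\ref{thm:spectral-stack-existence}(ii), $\pi_0 \mathscr{G}_{\mathrm{stk}} \cong \underline{\Sigma}$, so the edge contribution in degree zero identifies $\pi_0(\mathbf{E}_{\mathcal{A}})$ with a subquotient of $H^0(\mathcal{C}; \underline{\Sigma})$; by Corollary~\ref{cor:stack-necessity}, this group controls precisely the obstruction to globally consistent sections of the classical spectral presheaf. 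The higher terms $H^{p \geq 1}(\mathcal{C}; \pi_q \mathscr{G}_{\mathrm{sp}})$ encode the higher coherence obstructions identified in Theorem~\ref{thm:homotopical-characterization} and Corollary~\ref{cor:homotopical-properties}(c), realizing the positive-degree $\pi_n(\mathbf{E}_{\mathcal{A}})$ as a graded obstruction theory for global spectral descent.

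The hard part will be making the interpretation of positive-degree homotopy groups genuinely rigorous rather than heuristic. Concretely, one must verify that in the commutative case, where $\mathcal{A}$ is terminal in $\mathcal{C}$, the diagram $\mathscr{G}_{\mathrm{sp}}$ is essentially constant so that $\mathbf{E}_{\mathcal{A}} \simeq \mathscr{G}_{\mathrm{sp}}(\mathcal{A})$ and higher descent contributions collapse; conversely, in the noncommutative case, one must exhibit concrete nontrivial differentials or extension data in the spectral sequence that genuinely reflect noncommutativity rather than combinatorial artifacts of the indexing poset. A careful Postnikov-tower analysis controlling the nonabelian gerbe-like automorphism data at the stack level, along the lines sketched in the proof of Corollary~\ref{cor:homotopical-properties}(c), together with a verification that the stabilization procedure does not destroy the essential coherence information encoded in $\mathscr{G}_{\mathrm{stk}}$, will be the main technical obstacle.
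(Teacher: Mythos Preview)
Your proposal is correct and follows essentially the same approach as the paper's own proof: both construct $\mathscr{G}_{\mathrm{sp}}$ as the objectwise composite of nerve, realization, disjoint basepoint, and suspension spectrum; both argue invariance via preservation of weak equivalences; and both interpret the homotopy groups through descent. The paper relegates the Bousfield--Kan spectral sequence to a remark after the proof rather than using it in the argument itself, and handles functoriality by direct naturality of each step rather than by appeal to Theorem~\ref{thm:complete-properties}(i), but these are organizational rather than substantive differences. One small caution: the phrase ``independent of the chosen presentation of commutative contexts'' in the theorem refers to equivalent sites yielding weakly equivalent diagrams, not to Morita equivalence of $\mathcal{A}$, so your invocation of Proposition~\ref{prop:functoriality-morita-correct} at that point is supplying a stronger (and logically separate) invariance statement than what is actually being claimed.
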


\begin{proof}[Proof of Theorem \ref{thm:stable-spectral-invariants}]
\textbf{1. Construction of the Stable Spectral Presheaf.}

\begin{enumerate}
    \item Start with the spectral stack
    \[
    \mathscr{G}_{\mathrm{stk}} : \mathcal{C}^{\mathrm{op}} \to \mathbf{Gpd},
    \]
    where $\mathcal{C}$ is the site of commutative unital $*$-subalgebras of $\mathcal{A}$. For each $B \in \mathcal{C}$, $\mathscr{G}_{\mathrm{stk}}(B)$ is the groupoid of spectral measures on $B$ with unitary equivalences as morphisms.

    \item Apply the nerve functor to obtain a simplicial presheaf
    \[
    N \circ \mathscr{G}_{\mathrm{stk}} : \mathcal{C}^{\mathrm{op}} \to \mathbf{sSet}.
    \]

    \item Add a disjoint basepoint to obtain a pointed simplicial presheaf
    \[
    (N \circ \mathscr{G}_{\mathrm{stk}})_+ : \mathcal{C}^{\mathrm{op}} \to \mathbf{sSet}_*.
    \]

    \item Apply geometric realization objectwise:
    \[
    |(N \circ \mathscr{G}_{\mathrm{stk}})_+| : \mathcal{C}^{\mathrm{op}} \to \mathbf{Top}_*.
    \]

    \item Stabilize using the suspension spectrum functor:
    \[
    \mathscr{G}_{\mathrm{sp}} := \Sigma^{\infty}_+ \circ |(N \circ \mathscr{G}_{\mathrm{stk}})_+| : \mathcal{C}^{\mathrm{op}} \to \mathbf{Sp}.
    \]
\end{enumerate}

\textbf{2. Functoriality and Homotopical Descent.}
\begin{itemize}
    \item Each step above is natural, so $\mathscr{G}_{\mathrm{sp}}$ is functorial with respect to $*$-homomorphisms of $C^*$-algebras.
    \item Since $\mathscr{G}_{\mathrm{stk}}$ is a stack, $N \circ \mathscr{G}_{\mathrm{stk}}$ satisfies homotopy descent. The suspension spectrum functor preserves weak equivalences of connected spaces, so $\mathscr{G}_{\mathrm{sp}}$ satisfies descent in the canonical topology.
\end{itemize}

\textbf{3. Homotopy Limit and Invariance.}
Define
\[
\mathbf{E}_{\mathcal{A}} := \operatorname{holim}_{\mathcal{C}} \mathscr{G}_{\mathrm{sp}} \in \mathbf{Sp}.
\]
This object is independent of the choice of presentation of $\mathcal{C}$ up to canonical weak equivalence: equivalent sites yield weakly equivalent diagrams, and the homotopy limit depends only on the weak homotopy type of the presheaf.

\textbf{4. Stable Homotopy Groups as Functorial Invariants.}
For each $n \in \mathbb{Z}$, set
\[
\mathsf{Inv}_n(\mathcal{A}) := \pi_n(\mathbf{E}_{\mathcal{A}}).
\]
These are functorial:
\[
\phi: \mathcal{A} \to \mathcal{B} \implies \mathbf{E}_{\mathcal{B}} \to \mathbf{E}_{\mathcal{A}} \implies \mathsf{Inv}_n(\mathcal{B}) \to \mathsf{Inv}_n(\mathcal{A}).
\]

\textit{Degree zero ($n=0$):} $\mathsf{Inv}_0(\mathcal{A})$ classifies global sections of the spectral presheaf up to coherent equivalence. If $\mathcal{A}$ is commutative, this recovers the Gelfand spectrum $\Sigma(\mathcal{A})$.

\textit{Positive degrees ($n>0$):} $\mathsf{Inv}_n(\mathcal{A})$ measures obstructions to coherently gluing local spectral data. For commutative $\mathcal{A}$, all $\mathsf{Inv}_n(\mathcal{A}) = 0$; for noncommutative $\mathcal{A}$, some invariants are nontrivial, providing a quantitative measure of noncommutativity.

\textit{Negative degrees ($n<0$):} These capture stable homotopy phenomena and may vanish if $\mathscr{G}_{\mathrm{sp}}$ is connective.

\textbf{5. Conclusion.}
The presheaf of spectra $\mathscr{G}_{\mathrm{sp}}$ and its homotopy limit $\mathbf{E}_{\mathcal{A}}$ provide a homotopical refinement of the spectral stack. The stable homotopy groups $\pi_n(\mathbf{E}_{\mathcal{A}})$ are functorial, site-independent invariants that recover classical spectral data in degree zero and quantify noncommutativity in higher degrees.
\end{proof}

\begin{remark}
The invariants $\mathsf{Inv}_n(\mathcal{A})$ are computed by a descent spectral sequence
\[
E_2^{p,q} = H^p(\mathcal{C}, \pi_q(\mathscr{G}_{\mathrm{sp}})) \Rightarrow \pi_{p+q}(\mathbf{E}_{\mathcal{A}}),
\]
relating local homotopical data to global obstructions. For commutative $\mathcal{A}$, the presheaf $\pi_q(\mathscr{G}_{\mathrm{sp}})$ is concentrated in degree $q=0$, explaining why $\mathsf{Inv}_n(\mathcal{A}) = 0$ for $n > 0$.
\end{remark}

\subsection{Relation to Noncommutative Geometry}\label{subsec:nc-geometry}

Noncommutative geometry seeks to interpret noncommutative $C^*$-algebras as generalized spaces whose geometric properties are not encoded by points, but by algebraic, categorical, or homological data. In the commutative case, the Gelfand--Naimark theorem identifies a unital commutative $C^*$-algebra with the algebra of continuous functions on a compact Hausdorff space, recovering geometry from spectrum alone. For noncommutative algebras, however, no single global spectrum exists, and geometric information must be assembled from local commutative data together with their compatibility relations. The pair $(\mathcal A, \mathscr G_{\mathrm{stk}})$ provides a concrete realization of this philosophy, interpreting a $C^*$-algebra as a space equipped with a \emph{spectral atlas}.

\medskip
\noindent
\textbf{Local classical charts.}
Let $\mathcal{C}$ denote the poset (or site) of unital commutative $C^*$-subalgebras of a given unital $C^*$-algebra $\mathcal{A}$. Each context $B \in \mathcal{C}$ admits a classical interpretation via its Gelfand spectrum $\Sigma(B)$, which may be viewed as an affine chart in a would-be geometric space. These charts are individually classical, but their overlaps are governed by restriction maps induced by inclusions of contexts.

\medskip
\noindent
\textbf{Failure of global classical geometry.}
In general, these local spectra do not glue to a single global space, reflecting the intrinsic noncommutativity of $\mathcal{A}$. The classical spectral presheaf captures this failure at the level of sets, but discards essential information about unitary equivalences and higher coherence between local models. As shown earlier, this loss of information manifests itself precisely in the failure of sheaf-level descent.

\medskip
\noindent
\textbf{Spectral stacks as geometric enhancements.}
The spectral stack $\mathscr{G}_{\mathrm{stk}}$ refines the spectral presheaf by assigning to each context $B$ not merely a set of characters, but a groupoid of spectral data together with their unitary equivalences. From a geometric perspective, $\mathscr{G}_{\mathrm{stk}}$ functions as a higher-categorical replacement for a structure sheaf, encoding not only local spectra but also the obstructions and ambiguities inherent in their gluing. Thus, it serves as the \emph{spectral atlas} for the noncommutative space $\mathcal A$, organizing all local commutative perspectives into a coherent global object.

\medskip
\noindent
\textbf{Noncommutative spaces as stacky spaces.}
This viewpoint places $(\mathcal{A}, \mathscr{G}_{\mathrm{stk}})$ firmly within the paradigm of noncommutative geometry: the algebra $\mathcal{A}$ determines a family of local classical charts, while the spectral stack records the global geometry arising from their interaction. In particular, noncommutativity is no longer seen merely as an algebraic defect, but as a genuinely geometric feature, detected by nontrivial stacky structure and higher descent data.

\medskip
\noindent
\textbf{Bridge to derived and homotopical geometry.}
Because $\mathscr{G}_{\mathrm{stk}}$ is a stack rather than a sheaf, it naturally admits derived and homotopical enhancements via nerve and stabilization constructions. This positions spectral stacks as geometric carriers of noncommutative invariants (such as $K$-theory and cyclic cohomology), linking operator algebras to modern tools from derived algebraic geometry, higher category theory, and homotopy theory. This establishes a powerful conceptual bridge between the analytic world of $C^*$-algebras and the categorical framework of derived geometric objects.

\begin{theorem}[Spectral Atlas Interpretation]
\label{thm:spectral-atlas}
Let $\mathcal{A}$ be a unital $C^{*}$-algebra, and let $\mathcal{C}$ be the category of its unital commutative $C^{*}$-subalgebras equipped with the canonical topology. Then there exists a spectral stack
\[
\mathscr{G}_{\mathrm{stk}} : \mathcal{C}^{\mathrm{op}} \longrightarrow \mathbf{Gpd},
\]
with the following properties:

\begin{enumerate}[label=(\roman*)]
    \item For each $B \in \mathcal{C}$, the groupoid $\mathscr{G}_{\mathrm{stk}}(B)$ encodes the full local spectral data of $B$, including its unitary symmetries. Explicitly:
    \begin{itemize}
        \item Objects: spectral measures on $\Sigma(B)$ (or equivalently, via the spectral theorem, unital $*$-homomorphisms $C(\Sigma(B)) \to B''$).  
        \item Morphisms: unitary operators $U \in \mathcal{U}(B'')$ satisfying $U\mu(\Delta)U^* = \nu(\Delta)$ for all Borel $\Delta \subseteq \Sigma(B)$.
    \end{itemize}
    \item For inclusions $B_1 \subseteq B_2$, the restriction functors
    \[
    \mathscr{G}_{\mathrm{stk}}(B_2) \longrightarrow \mathscr{G}_{\mathrm{stk}}(B_1)
    \]
    encode how spectral data and their symmetries relate across contexts.
    \item $\mathscr{G}_{\mathrm{stk}}$ satisfies the stack (descent) condition: any compatible family of local spectral data glues to a global object up to isomorphism.
    \item The classical spectral presheaf is recovered as
    \[
    \pi_0 \circ \mathscr{G}_{\mathrm{stk}} \;\simeq\; \underline{\Sigma}, \qquad B \mapsto \Sigma(B).
    \]
\end{enumerate}

In this sense, the pair $(\mathcal{C}, \mathscr{G}_{\mathrm{stk}})$ provides a \emph{spectral atlas} for $\mathcal{A}$: $\mathcal{C}$ indexes local affine charts, and $\mathscr{G}_{\mathrm{stk}}$ encodes both the charts and their gluing data, including higher categorical information.
\end{theorem}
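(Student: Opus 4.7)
The plan is to assemble the statement from constructions already in place in Sections~3 and~5, treating the theorem as a synthesis of Definition~\ref{def:spectral-groupoid}, Definition~\ref{def:spectral-presheaf}, and the stack property established in Theorem~\ref{thm:spectral_sheaf}(i). For (i), I would define $\mathscr{G}_{\mathrm{stk}}(B)$ objectwise exactly as in Definition~\ref{def:spectral-groupoid}: objects are spectral measures $\mu\colon \mathcal{B}(\Sigma(B))\to B''$ (equivalently, normal $*$-representations of $L^\infty(\Sigma(B))$ into $B''$), and morphisms are unitaries $U\in\mathcal{U}(B'')$ that conjugate one spectral measure to another. For (ii), the restriction functor along $i\colon B_1\hookrightarrow B_2$ is defined by precomposition with the map $i^*\colon L^\infty(\Sigma(B_1))\to L^\infty(\Sigma(B_2))$ pulled back from the Gelfand surjection $r_{21}\colon \Sigma(B_2)\twoheadrightarrow \Sigma(B_1)$, with unitaries restricted along the inclusion $B_1''\subseteq B_2''$. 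Strict contravariant functoriality follows from the functoriality of Gelfand duality.

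For part (iii), I would deduce the descent condition directly from Theorem~\ref{thm:spectral_sheaf}(i), whose proof establishes that the canonical functor $\mathscr{G}_{\mathrm{cat}}(C)\to \mathrm{Des}(\{C_i\to C\},\mathscr{G}_{\mathrm{cat}})$ is fully faithful and essentially surjective onto effective descent data. The key inputs---the fiber-product identification $\Sigma(B_i\cap B_j)\cong \Sigma(B_i)\times_{\Sigma(B)}\Sigma(B_j)$ (Step~4.2 there) and the cocycle coherence on triple overlaps (Step~4.3)---apply verbatim to $\mathscr{G}_{\mathrm{stk}}$, since its objects are the same spectral data enriched with unitary morphisms. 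For (iv), the identification $\pi_0\circ\mathscr{G}_{\mathrm{stk}}\simeq\underline{\Sigma}$ follows from the uniqueness clause of the spectral theorem: two spectral measures on $\Sigma(B)$ are unitarily equivalent in $B''$ iff they agree as projection-valued measures; naturality in $B$ is immediate from the construction of the restriction functors.

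The hard part will be verifying descent for morphisms in the groupoid-enhanced setting. While descent for objects reduces cleanly to amalgamating projection-valued measures via the fiber-product description of spectra, descent for unitary intertwiners requires that a compatible family $\{U_i\in\mathcal{U}(B_i'')\}$ satisfying the cocycle condition on overlaps assembles to a genuine unitary $U\in\mathcal{U}(B'')$ restricting to each $U_i$. The subtlety is that the enveloping von Neumann algebras $B_i''$ do not sit transparently inside $B''$, and one must work in a common ambient representation (e.g.\ the universal representation of $\mathcal{A}$) and use that $B''$ is generated as a von Neumann algebra by the $\{B_i''\}$ to construct $U$ via a strong-operator limit. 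Once this is in place, the cocycle condition transports cleanly and uniqueness of the glued unitary (up to the automorphism group of the assembled spectral measure) follows from the separation of points by states.

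Finally, I would close by observing that the statement is essentially an intrinsic repackaging of Theorems~\ref{thm:spectral_sheaf}(i) and~\ref{thm:spectral-stack-existence}: the pair $(\mathcal{C},\mathscr{G}_{\mathrm{stk}})$ functions as a spectral atlas in exactly the sense intended, with $\mathcal{C}$ indexing commutative affine charts $\Sigma(B)$ and $\mathscr{G}_{\mathrm{stk}}$ recording both the charts and the coherent gluing data (including unitary symmetries) that would be invisible to the set-valued presheaf $\underline{\Sigma}$. No new analytic input beyond the spectral theorem and Gelfand duality is required; the content lies in the categorical bookkeeping.
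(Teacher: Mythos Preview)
Your overall strategy matches the paper's: both treat the theorem as a synthesis of the earlier constructions, with the stack property imported from a previously established result (the paper cites Theorem~\ref{thm:spectral-stack-existence}, you cite Theorem~\ref{thm:spectral_sheaf}(i)). Your discussion of descent for unitary intertwiners in fact goes beyond what the paper's proof supplies---the paper simply invokes the earlier theorem and does not unpack the morphism-level gluing at all.

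There is, however, a genuine gap in your argument for part~(iv). Your observation that two spectral measures on $\Sigma(B)$ are unitarily equivalent in $B''$ iff they agree pointwise is correct (since $B''$ is commutative, conjugation by any $U\in\mathcal{U}(B'')$ is trivial). But this only shows that $\mathscr{G}_{\mathrm{stk}}(B)$ is an essentially discrete groupoid whose $\pi_0$ is the \emph{set of all spectral measures} $\mu\colon\mathcal{B}(\Sigma(B))\to B''$, not the Gelfand spectrum $\Sigma(B)$ itself. These sets are not the same: already for $B=\mathbb{C}^2$ there are more projection-valued measures on the two-point space than there are characters. To recover $\underline{\Sigma}$ one must either restrict the objects of $\mathscr{G}_{\mathrm{stk}}(B)$ to Dirac (character) measures, as in Theorem~\ref{thm:spectral-stack-existence}, or supply an additional argument identifying isomorphism classes of spectral measures with points of $\Sigma(B)$---the uniqueness clause of the spectral theorem does not do this. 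The paper's own proof does not confront this point either (it simply asserts the identification in Step~5), so the gap is inherited from the statement rather than introduced by your approach, but your sketch should flag that (iv) as written requires the narrower object class.
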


\begin{proof}[Proof of Theorem~\ref{thm:spectral-atlas}]
\textbf{Step 1: Local classical charts.}  
For each $B \in \mathcal{C}$, the Gelfand--Naimark theorem gives $B \cong C(\Sigma(B))$, so $\Sigma(B)$ is a compact Hausdorff space, interpreted as a local affine chart.

\textbf{Step 2: Presheaf failure and need for a stack.}  
The classical spectral presheaf
\[
\underline{\Sigma} : \mathcal{C}^{\mathrm{op}} \to \mathbf{Set}, \quad B \mapsto \Sigma(B),
\]
fails to be a sheaf if $\mathcal{A}$ is noncommutative. The loss occurs because set-valued presheaves cannot record unitary equivalences and higher coherence between local spectra. This motivates the spectral stack $\mathscr{G}_{\mathrm{stk}}$.

\textbf{Step 3: Stack construction.}  
Define $\mathscr{G}_{\mathrm{stk}}(B)$ as the groupoid of spectral measures on $\Sigma(B)$ with morphisms given by unitary equivalences:
\[
\mu \to \nu \quad \text{iff} \quad \exists U \in \mathcal{U}(B'') \text{ s.t. } U \mu(\Delta) U^* = \nu(\Delta) \text{ for all Borel } \Delta \subseteq \Sigma(B).
\]  
For inclusions $B_1 \subseteq B_2$, the restriction functors are induced naturally by the inclusion of algebras.

\textbf{Step 4: Stack property.}  
By Theorem~\ref{thm:spectral-stack-existence}, $\mathscr{G}_{\mathrm{stk}}$ satisfies descent: compatible families of spectral data glue up to isomorphism. This ensures that $\mathscr{G}_{\mathrm{stk}}$ encodes the global geometry coherently, including nontrivial unitary symmetries.

\textbf{Step 5: Atlas interpretation.}  
\begin{itemize}
    \item Contexts $B \in \mathcal{C}$ serve as local affine charts.  
    \item The groupoids $\mathscr{G}_{\mathrm{stk}}(B)$ encode the spectral geometry of each chart, including higher symmetries (automorphisms of spectral measures).  
    \item Restriction functors and the descent property encode gluing data between charts.  
    \item The classical spectral presheaf is recovered as $\pi_0 \circ \mathscr{G}_{\mathrm{stk}}$, but the stack contains additional higher-categorical information: the automorphism groups $\mathrm{Aut}(\mu)$ record unitary equivalences between different realizations of the same classical data.
\end{itemize}

Thus, $(\mathcal{C}, \mathscr{G}_{\mathrm{stk}})$ forms a spectral atlas: local charts with gluing data, faithfully capturing the geometric structure of $\mathcal{A}$ up to stacky equivalence.
\end{proof}

Theorem~\ref{thm:spectral-atlas} formalizes the preceding discussion: the noncommutative geometry of $\mathcal A$ is completely captured by the descent data contained in its spectral atlas $\mathscr G_{\mathrm{stk}}$. This principle reduces the study of certain noncommutative phenomena—such as quantum contextuality, superselection rules, and measurement compatibility—to the analysis of gluing conditions for classical charts, thereby providing a geometric language for operator algebras and opening the door to techniques from sheaf and stack theory.

\section{Application: Atiyah-Singer Reinterpretation via Spectral Stacks}\label{subsec:AS-application}

\subsection{Motivation}
The classical Atiyah-Singer (AS) index theorem computes the index of an elliptic operator $T$ on a compact manifold $M$ in terms of topological invariants:
\[
\mathrm{index}(T) = \int_M \mathrm{ch}(\sigma(T)) \wedge \mathrm{Td}(TM),
\]
where $\sigma(T) \in K^0(T^*M)$ is the symbol class of $T$ and $\mathrm{Td}(TM)$ is the Todd class of the tangent bundle. At its core, the theorem expresses an analytic invariant (the Fredholm index) as a topological obstruction, connecting analysis, geometry, and topology.

In our framework, a unital $C^*$-algebra $\mathcal{A}$ can be interpreted as a noncommutative space, with commutative subalgebras (contexts) $B \subset \mathcal{A}$ playing the role of ``affine charts''. The spectral stack $\mathscr{G}_{\mathrm{stk}}$ organizes local spectral information of operators across these contexts into a coherent global structure. This provides a natural perspective: just as the AS theorem interprets the index as a topological obstruction, the local spectral data of a Fredholm operator in a noncommutative algebra can be assembled and studied through the spectral stack.

Let $T \in \mathcal{A}$ be a Fredholm operator with classical index
\[
\mathrm{index}(T) = \dim \ker T - \dim \mathrm{coker} T.
\]

For each commutative context $B \in \mathcal{C}$, one can consider the restriction $T|_B$ when defined. The spectral data of $T|_B$ can be described by a spectral measure $\mu_B$ on the Gel'fand spectrum $\Sigma(B)$ of $B$, capturing both point and continuous spectral contributions. The collection of all such local spectral measures $\{\mu_B\}_{B\in\mathcal{C}}$ forms objects in the groupoids $\mathscr{G}_{\mathrm{stk}}(B)$, with morphisms given by unitary equivalences or automorphisms preserving the spectral data.

The spectral stack $\mathscr{G}_{\mathrm{stk}}$ thus encodes the compatibility conditions needed to glue these local spectral data into a global spectral object for $\mathcal{A}$. In the commutative case, these gluing conditions are trivially satisfied, recovering the classical AS theorem. In genuinely noncommutative cases, the stack may exhibit nontrivial automorphisms and higher coherence phenomena, providing a framework to study indices and spectral invariants from a global, descent-theoretic perspective.

This viewpoint establishes a conceptual bridge between classical index theory and noncommutative geometry: whereas the Atiyah-Singer theorem expresses analytic invariants as topological or cohomological obstructions, the spectral stack framework allows one to interpret and analyze these invariants via coherent assembly of local spectral data. In this sense, the spectral stack provides a new lens to understand index theory in both commutative and noncommutative settings.

\subsection{K-theoretic and Homotopical Interpretation}

Within the spectral stack framework, a unital $C^*$-algebra $\mathcal{A}$ gives rise to a $K$-theory sheaf $\mathscr{F}_K$ on the spectral stack $\mathscr{G}_{\mathrm{stk}}$, encoding local index data on commutative contexts and their descent relations. This framework naturally extends classical Atiyah-Singer index theory to noncommutative settings.

\begin{corollary}[Commutative $C^*$-algebras]\label{cor:commutative}
Let $\mathcal{A} = C(X)$ be a commutative unital $C^*$-algebra, and let $T \in \mathcal{A}$ be a Fredholm operator (e.g., an elliptic operator on a compact manifold $X$). Then the spectral stack $\mathscr{G}_{\mathrm{stk}}$ reduces to the Gelfand spectrum $X$, the $K$-theory sheaf $\mathscr{F}_K$ is $K^0(X)$, and the cohomology $H^*(\mathscr{G}_{\mathrm{stk}})$ coincides with the ordinary de Rham (or singular) cohomology of $X$.  

In this case, the stack-theoretic index formula
\[
\mathrm{index}(T) = \langle \mathrm{ch}([T]), [X] \rangle
\]
reduces to the classical Atiyah--Singer index theorem.
\end{corollary}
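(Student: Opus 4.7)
The plan is to reduce every structural ingredient of the spectral stack framework to its classical counterpart, and then show that the stack-theoretic index formula collapses onto the classical Chern--Todd integral. The key observation driving the whole argument is that, when $\mathcal{A} = C(X)$ is commutative, $\mathcal{A}$ is itself an object of $\mathcal{C}$ and is a \emph{terminal} object there: for every commutative context $B \subseteq \mathcal{A}$, there is a unique inclusion $B \hookrightarrow \mathcal{A}$. All subsequent simplifications flow from this single fact combined with results already established earlier in the paper.

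First I would handle the stack. By Corollary~\ref{cor:commutativity-stacky}, since $\mathcal{A}$ is commutative, every automorphism group in $\mathscr{G}_{\mathrm{stk}}(B)$ is trivial, so $\mathscr{G}_{\mathrm{stk}}$ is equivalent to its sheaf of isomorphism classes $\pi_0 \circ \mathscr{G}_{\mathrm{stk}} \simeq \underline{\Sigma}$ (Theorem~\ref{thm:spectral-stack-existence}(ii)). Because $\mathcal{A} \in \mathcal{C}$ is terminal, the restriction maps $\Sigma(\mathcal{A}) \to \Sigma(B)$ exhibit $\underline{\Sigma}$ as the constant sheaf on the canonical site above $\Sigma(\mathcal{A}) = X$, so the stack genuinely reduces to the topological space $X$. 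Next I would identify the $K$-theory sheaf: by Definition~\ref{def:K-theory-sheaf} and Gelfand duality, $\mathscr{F}_K(\mathcal{A}) = K_0(C(X)) \cong K^0(X)$, which by Serre--Swan classifies virtual vector bundles on $X$. Applying Theorem~\ref{thm:complete-properties}(iii) (commutative reduction) or equivalently Corollary~\ref{cor:homotopical-properties}(a) together with the collapse of the homotopy limit at the terminal object (Step~3 of that proof), one obtains
\[
\mathbb{H}^n(\mathcal{C}; \mathscr{G}_{\mathrm{st}}) \;\cong\; K_{-n}(C(X)) \;\cong\; K^{-n}(X),
\]
and, for the Eilenberg--MacLane coefficients of Corollary~\ref{cor:K-obstruction}(a), $H^n(\mathcal{C}; \mathscr{F}_K) \cong H^n(X; \underline{\mathbb{Z}})$. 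Thus all cohomology invariants of the stack reduce to ordinary (sheaf or $K$-theoretic) cohomology of $X$.

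Finally I would address the index formula. The stack-theoretic symbol class $[T]$ of a Fredholm operator $T \in C(X)$ (or, more generally, an elliptic operator with symbol in $K^0(T^*X)$) is a global section of $\mathscr{F}_K$, which by the identifications above is exactly an element of $K^0(X)$ (respectively $K^0(T^*X)$ after stabilization along the tangent bundle). The pairing $\langle \mathrm{ch}([T]), [X] \rangle$ appearing in the stack formula is, under the comparison isomorphisms $\mathbb{H}^* \to H^*(X;\mathbb{Q})$ furnished by the Chern character, literally the topological pairing of $\mathrm{ch}(\sigma(T))$ with the fundamental class $[X]$. Inserting the Todd correction coming from the Thom isomorphism $K^0(T^*X) \cong K^0(X)$, the expression collapses to $\int_M \mathrm{ch}(\sigma(T)) \wedge \mathrm{Td}(TM)$, which is the classical Atiyah--Singer integrand.

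The main obstacle I expect is not conceptual but notational: one must verify that the Chern character pairing used in the stack framework is compatible with the Poincar\'e--Thom identifications that turn the $K$-theoretic index into an integral of differential forms. In the commutative case this compatibility is classical and follows from naturality of the Chern character together with the fact that the descent spectral sequence degenerates (the terminal object $\mathcal{A}$ forces $E_2^{p,q} = 0$ for $p > 0$). Beyond this bookkeeping, every ingredient either reduces trivially or is supplied by results already proved in Sections~\ref{sec:sheaf-theorem}, \ref{sec:homotopical-invariants}, and \ref{sec:stack-theoretic}.
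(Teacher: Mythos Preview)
Your proposal is correct and follows essentially the same approach as the paper's own proof: both argue that commutativity makes the automorphism groups in $\mathscr{G}_{\mathrm{stk}}$ trivial so the stack collapses to $X$, identify $\mathscr{F}_K$ with $K^0(X)$, and then invoke the classical Chern character / Poincar\'e duality pairing to recover the Atiyah--Singer formula. Your version is considerably more detailed than the paper's (which is only a few sentences), in that you explicitly ground each reduction in the terminal-object argument and cite the specific earlier results (Corollary~\ref{cor:commutativity-stacky}, Theorem~\ref{thm:spectral-stack-existence}(ii), Theorem~\ref{thm:complete-properties}(iii), Corollary~\ref{cor:K-obstruction}(a)) that justify the collapses, and you also flag the Todd/Thom compatibility issue that the paper's proof leaves implicit.
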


\begin{proof}
For a commutative $C^*$-algebra $C(X)$, the Gelfand spectrum is $X$ itself. The spectral stack $\mathscr{G}_{\mathrm{stk}}$ has no nontrivial automorphisms, so descent is automatic. The $K$-theory sheaf $\mathscr{F}_K$ is simply $K^0(X)$, and the Chern character lands in $H^{\mathrm{even}}(X)$. Classical Poincaré duality provides a fundamental class $[X]$, and pairing with the Chern character recovers the usual index:
\[
\langle \mathrm{ch}([T]), [X] \rangle = \mathrm{index}(T),
\]
which coincides with the Atiyah--Singer formula.
\end{proof}

\begin{corollary}[Matrix algebras]\label{cor:matrix-algebra}
Let $\mathcal{A} = M_n(\mathbb{C})$ be the algebra of $n \times n$ complex matrices, and let $T \in \mathcal{A}$ be a Fredholm operator. Then the spectral stack $\mathscr{G}_{\mathrm{stk}}$ is trivial (a single point), and the $K$-theory sheaf $\mathscr{F}_K$ reduces to $K_0(M_n(\mathbb{C})) \cong \mathbb{Z}$.  

In this case, the stack-theoretic index formula reduces to the classical formula for the index of a linear map:
\[
\mathrm{index}(T) = \dim \ker T - \dim \operatorname{coker} T.
\]
\end{corollary}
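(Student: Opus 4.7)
The plan is to prove the corollary by showing that all of the stack-theoretic machinery degenerates to classical linear algebra in the finite-dimensional setting. The argument proceeds in three stages: triviality of the spectral stack, collapse of the $K$-theory sheaf to $\mathbb{Z}$, and reduction of the abstract index pairing to the rank-nullity formula. First I would invoke Definition \ref{def:context-matrix} and the surrounding discussion: every MASA of $M_n(\mathbb{C})$ is unitarily conjugate to the diagonal subalgebra $\mathbb{C}^n$, and the intersection of two distinct MASAs is the scalar subalgebra $\mathbb{C}\cdot I_n$. Consequently the context category $\mathcal{C}(M_n(\mathbb{C}))$ has, up to the $PU(n)$-action, essentially one maximal object, and the groupoids $\mathscr{G}_{\mathrm{stk}}(B)$ for $B$ a MASA are all equivalent via unitary conjugation.

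To establish that $\mathscr{G}_{\mathrm{stk}}$ is (equivalent to) a point, I would use Corollary \ref{cor:commutativity-stacky} in reverse: although $M_n(\mathbb{C})$ is noncommutative for $n \ge 2$, any spectral measure on a MASA $B \subset M_n(\mathbb{C})$ is determined up to unitary equivalence by its multiplicity data, which is a function $\Sigma(B) \to \mathbb{Z}_{\ge 0}$ subject to a single total-rank constraint $\sum_i m_i = n$. Since all MASAs are $U(n)$-conjugate, the restriction functors between different contexts are equivalences of groupoids, so descent is automatic and $\mathscr{G}_{\mathrm{stk}}$ is equivalent to the constant stack with value the groupoid of $n$-dimensional Hilbert space representations of $\mathbb{C}^n$, which has a single isomorphism class. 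For the $K$-theoretic reduction, I would apply Theorem \ref{thm:complete-properties}(iii): since $M_n(\mathbb{C})$ is Morita equivalent to $\mathbb{C}$ and the commutative reduction principle yields $\mathbb{H}^0(\mathcal{C}; \mathbb{K}) \cong K_0(M_n(\mathbb{C}))$, we conclude $\mathscr{F}_K \cong \underline{\mathbb{Z}}$ as a constant sheaf, generated by the class of any rank-one projection.

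For the index formula, I would realize the pairing $\mathrm{index}(T) = \langle \mathrm{ch}([T]),[X]\rangle$ from Corollary \ref{cor:commutative} in this degenerate setting: since $\mathscr{G}_{\mathrm{stk}}$ is a point and $H^*(\mathscr{G}_{\mathrm{stk}})$ collapses to $H^*(\mathrm{pt}) = \mathbb{Z}$ concentrated in degree $0$, the Chern character $\mathrm{ch} : K_0(M_n(\mathbb{C})) \to H^0$ is the identity $\mathbb{Z} \to \mathbb{Z}$, and the fundamental class pairing is evaluation. The symbol class of $T$ decomposes as $[\ker T] - [\operatorname{coker} T] \in K_0(M_n(\mathbb{C})) \cong \mathbb{Z}$, and the abstract pairing returns the integer $\dim\ker T - \dim\operatorname{coker} T$, which by rank-nullity equals zero for any linear endomorphism of $\mathbb{C}^n$.

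The main obstacle will not be any of the computations above, which are essentially tautological once the reductions are made, but rather fixing a precise statement of the "stack-theoretic index formula" that the corollary claims to specialize. The corollary is stated as reducing a formula that the paper introduces only heuristically in Section \ref{subsec:AS-application}, so the delicate point is committing to a specific definition of the abstract Chern--Connes pairing $K_0(\mathcal{A}) \otimes \mathrm{HC}^*(\mathcal{A}) \to \mathbb{C}$ that (a) agrees with the classical Atiyah--Singer pairing in the commutative case of Corollary \ref{cor:commutative}, and (b) collapses to the matrix trace for $M_n(\mathbb{C})$. I would resolve this by taking the canonical tracial class $\tau \in \mathrm{HC}^0(M_n(\mathbb{C}))$ corresponding to the normalized matrix trace and verifying that $\langle \mathrm{ch}([T]), \tau \rangle = \mathrm{tr}([\ker T]) - \mathrm{tr}([\operatorname{coker} T]) = \dim\ker T - \dim\operatorname{coker} T$, which is the content of the corollary and simultaneously confirms the tautological vanishing in the finite-dimensional endomorphism case.
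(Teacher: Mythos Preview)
Your three-stage strategy matches the paper's own proof almost exactly: the paper also asserts that $\mathscr{G}_{\mathrm{stk}}$ is a single point, that $K_0(M_n(\mathbb{C}))\cong\mathbb{Z}$, that the Chern character is the identity $\mathbb{Z}\to\mathbb{Z}$, and that pairing with the fundamental class of a point returns $\dim\ker T-\dim\operatorname{coker} T$. Your version is considerably more detailed, and you correctly flag the real weakness of the corollary as stated, namely that the ``stack-theoretic index formula'' is never defined precisely in Section~\ref{subsec:AS-application}; your proposed fix via the canonical tracial class is a sensible way to pin it down.

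Two cautions. First, your appeal to Theorem~\ref{thm:complete-properties}(iii) is misplaced: that clause is explicitly for \emph{commutative} $\mathcal{A}$ (it uses that $\mathcal{A}$ is terminal in its own context category), and $M_n(\mathbb{C})$ is not commutative for $n\ge 2$. The Morita argument you mention is the right replacement, via Proposition~\ref{prop:functoriality-morita-correct} or Proposition~\ref{prop:Contravariant functoriality and Morita invariance of the categorical spectral presheaf}, but you should invoke that directly rather than routing through the commutative-reduction statement. Second, your invocation of Corollary~\ref{cor:commutativity-stacky} ``in reverse'' is delicate: that corollary says automorphism groups in $\mathscr{G}_{\mathrm{stk}}$ are trivial \emph{iff} $\mathcal{A}$ is commutative, so for $M_n(\mathbb{C})$ with $n\ge 2$ it predicts the stack is \emph{not} equivalent to a set-valued sheaf. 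Your argument that there is a single isomorphism class of spectral measures is fine, but ``single isomorphism class'' is not the same as ``single point with no nontrivial morphisms''; the automorphism group of a spectral measure on a MASA is the torus $\mathbb{T}^n$ (or a quotient), which is nontrivial. The paper's proof glosses over this same tension, so you are not worse off than the original, but be aware that ``trivial stack'' here should be read as ``contractible classifying space'' or ``homotopically trivial'' rather than literally a discrete one-point groupoid.
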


\begin{proof}
For $\mathcal{A} = M_n(\mathbb{C})$, the spectral stack $\mathscr{G}_{\mathrm{stk}}$ has a single object and no nontrivial morphisms. The $K$-theory of $M_n(\mathbb{C})$ is $K_0(M_n(\mathbb{C})) \cong \mathbb{Z}$, generated by the rank of projections. The Chern character in this case is the identity map $\mathbb{Z} \to \mathbb{Z}$. Evaluating against the fundamental class of the trivial stack (a point) yields
\[
\langle \mathrm{ch}([T]), [\text{pt}] \rangle = \dim \ker T - \dim \operatorname{coker} T,
\]
recovering the standard finite-dimensional Fredholm index formula.
\end{proof}

\begin{remark}
These two corollaries illustrate the degeneration of spectral stack structures in extreme cases:
\begin{itemize}
    \item In the commutative case ($C(X)$), the stack $\mathscr{G}_{\mathrm{stk}}$ is equivalent to the classical space $X$.
    \item In the fully noncommutative matrix case ($M_n(\mathbb{C})$), the stack collapses to a point.
\end{itemize}
Intermediate noncommutative algebras exhibit genuinely stacky behavior, and the spectral stack framework naturally extends Atiyah-Singer index theory to capture such phenomena.
\end{remark}

\subsection{Derived and Homotopical Refinement}

The spectral stack can be enhanced to a presheaf of spectra via the nerve and stabilization:
\[
\mathscr{G}_{\mathrm{der}} := \Sigma^\infty \circ N \mathscr{G}_{\mathrm{stk}} \colon \mathcal{C}^{\mathrm{op}} \longrightarrow \mathbf{Sp}.
\]
The homotopy limit
\[
X_{\mathcal{A}} := \operatorname{holim}_{\mathcal{C}} \mathscr{G}_{\mathrm{der}}
\]
computes derived global sections, incorporating both classical gluing and higher-order coherence phenomena arising from noncommutativity.

\begin{corollary}[Stable Homotopy Invariants from Spectral Stacks]\label{cor:homotopical-invariants}
The stable homotopy groups
\[
\pi_n(X_{\mathcal{A}}), \quad n \in \mathbb{Z},
\]
define functorial invariants of $\mathcal{A}$:
\begin{itemize}
    \item $\pi_0(X_{\mathcal{A}})$ recovers the classical Fredholm index in the commutative case.
    \item $\pi_n(X_{\mathcal{A}})$ for $n>0$ measure higher-order obstructions and coherence failures in assembling local spectral data.
    \item $\pi_n(X_{\mathcal{A}})$ for $n<0$ encode stable homotopical phenomena; typically these vanish if the presheaf is connective.
\end{itemize}
This construction provides a homotopical refinement of index theory, with $\pi_0$ corresponding to the classical analytic index, and higher homotopy groups capturing subtle noncommutative effects.
\end{corollary}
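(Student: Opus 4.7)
The plan is to obtain Corollary \ref{cor:homotopical-invariants} as a direct specialization of Theorem \ref{thm:stable-spectral-invariants}, combined with the commutative reduction of Corollary \ref{cor:commutative} and the $K$-theoretic interpretation already in hand. Since $X_{\mathcal{A}} = \operatorname{holim}_{\mathcal{C}} \mathscr{G}_{\mathrm{der}}$ agrees (up to the normalization used when constructing $\mathscr{G}_{\mathrm{sp}}$) with the stable invariant $\mathbf{E}_{\mathcal{A}}$ of Theorem \ref{thm:stable-spectral-invariants}, functoriality of the groups $\pi_n(X_{\mathcal{A}})$ under unital $*$-homomorphisms follows without additional work. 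I would open the proof with this identification so that the three bulleted claims can all be interpreted as statements about a single object.

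For the commutative reduction, I would invoke Corollary \ref{cor:commutative}: when $\mathcal{A} = C(X)$, the spectral stack is equivalent to $X$ viewed as a trivial stack, and $\mathscr{G}_{\mathrm{der}}$ degenerates to a constant presheaf whose value recovers $K(C(X))\simeq K^0(X)$ after pairing with the $K$-theoretic coefficient. Consequently $X_{\mathcal{A}} \simeq K(C(X))$ and $\pi_0(X_{\mathcal{A}}) \cong K^0(X)$. For an elliptic operator $T$ on $X$, pairing the symbol class with the fundamental class via the Chern character then reproduces $\mathrm{index}(T)$, exactly matching the Atiyah--Singer formula from Corollary \ref{cor:commutative}.

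For the degree $n > 0$ claims, I would run the Bousfield--Kan descent spectral sequence
\[
E_2^{p,q} = H^p\!\left(\mathcal{C}; \pi_q(\mathscr{G}_{\mathrm{der}})\right) \;\Longrightarrow\; \pi_{q-p}(X_{\mathcal{A}}),
\]
which is already the organizing device used in Corollary \ref{cor:homotopical-properties} and in Subsection 4.2. The $p=0$ column records globally compatible local spectral data, while the columns $p \geq 1$ encode successive obstructions to coherent gluing; reading these through Corollary \ref{cor:homotopical-properties}(c) identifies them with the higher contextual obstructions and supplies the asserted graded interpretation. For $n<0$, I would appeal to connectivity: each $\Sigma^\infty_+|N\mathscr{G}_{\mathrm{stk}}(B)|$ is a connective spectrum, so $\mathscr{G}_{\mathrm{der}}$ is objectwise connective, and homotopy limits of diagrams of connective spectra indexed by a category of finite cohomological dimension remain connective, forcing $\pi_n(X_{\mathcal{A}}) = 0$ in sufficiently negative degrees.

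The hard part will be making the link between the abstract descent-theoretic obstructions in $\pi_{n>0}(X_{\mathcal{A}})$ and genuinely analytic index-theoretic data, so that the word \emph{refinement} is justified rather than remaining a structural analogy. Concretely, one would like to identify specific differentials $d_r$ in the descent spectral sequence with secondary index invariants (for example, eta invariants, spectral flow, or the higher cyclic cocycles of Connes) and to exhibit noncommutative examples where these differentials carry information absent from the commutative Chern--Todd pairing. I expect this last step, rather than the formal construction of the invariants themselves, to be the principal obstacle; the rest of the proof is largely bookkeeping against the results already proved earlier in the paper.
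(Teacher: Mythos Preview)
Your proposal is correct and follows essentially the same strategy as the paper's proof, which also proceeds by (i) identifying $X_{\mathcal{A}}$ with the stable homotopy limit of the derived presheaf, (ii) interpreting $\pi_0$, $\pi_{n>0}$, and $\pi_{n<0}$ in turn, and (iii) noting functoriality under unital $*$-homomorphisms. In fact your version is more detailed than the paper's: the paper does not explicitly invoke the Bousfield--Kan spectral sequence in this proof (though it does elsewhere), and it handles the $n<0$ case with only the phrase ``often vanish for connective presheaves'' rather than your finite-cohomological-dimension argument.

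Your closing paragraph is well taken. The paper's own proof does not address the link between the abstract descent obstructions and genuine analytic secondary invariants either; it treats the word \emph{refinement} at the level of a structural analogy, just as you anticipated. So the ``hard part'' you flag is not something the paper resolves---it is left as an interpretive claim supported by the commutative degeneration rather than by an identification of differentials with eta invariants or cyclic cocycles. You are right to separate the formal bookkeeping (which is complete) from the analytic content (which remains open).
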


\begin{proof}
The argument proceeds by passing from the stacky $K$-theory sheaf to a derived and stable homotopical enhancement:

\medskip
\noindent\textbf{Step 1: Presheaf of spectra.}  
The spectral stack is a pseudofunctor
\[
\mathscr{G}_{\mathrm{stk}} : \mathcal{C}^{\mathrm{op}} \longrightarrow \mathbf{Gpd}.
\]
Applying the nerve and stabilization yields a presheaf of spectra $\mathscr{G}_{\mathrm{der}}$, encoding objects, morphisms, and all higher coherence data.

\medskip
\noindent\textbf{Step 2: Homotopy limit.}  
The homotopy limit
\[
X_{\mathcal{A}} := \operatorname{holim}_{\mathcal{C}} \mathscr{G}_{\mathrm{der}}
\]
computes derived global sections, recording obstructions to strict descent and higher-order coherence.

\medskip
\noindent\textbf{Step 3: Interpretation of homotopy groups.}  
\begin{itemize}
    \item $\pi_0(X_{\mathcal{A}})$ corresponds to global spectral sections up to coherent equivalence and recovers the classical Fredholm index in the commutative case.
    \item $\pi_n(X_{\mathcal{A}})$ for $n>0$ measure higher-order obstructions to gluing local spectral data.
    \item $\pi_n(X_{\mathcal{A}})$ for $n<0$ reflect stable homotopical phenomena and often vanish for connective presheaves.
\end{itemize}

\medskip
\noindent\textbf{Step 4: Functoriality.}  
Unital $*$-homomorphisms $\phi : \mathcal{A} \to \mathcal{B}$ induce natural maps
\[
X_{\mathcal{A}} \longrightarrow X_{\mathcal{B}}
\]
and therefore functorial maps on all homotopy groups $\pi_n(X_{\mathcal{A}})$.
\end{proof}

\begin{remark}
Within this framework, the derived spectral stack provides a bridge between classical index theory and noncommutative stable homotopy theory. The zeroth homotopy group recovers the analytic index, while higher groups encode increasingly sophisticated obstructions and coherence phenomena associated with genuinely noncommutative algebras.
\end{remark}

\subsection{Discussions}

The relationships between the classical Fredholm index, operator $K$-theory,
and the stacky and derived perspectives can be summarized by the following
diagrammatic correspondence:

\begin{center}
\begin{tikzcd}[row sep=2.6em, column sep=3.4em]
T \in \mathcal{A}_{\mathrm{Fred}} 
  \arrow[r, "\mathrm{index}"] 
  \arrow[d, "\text{classifying point}"] 
& \mathrm{index}(T) \in \mathbb{Z} 
  \arrow[d, hook, "\simeq\, K^0(\mathrm{pt})"] \\

\mathrm{pt} \longrightarrow \mathscr{G}_{\mathrm{stk}} 
  \arrow[r, "\text{global } K\text{-class}"] 
& [\mathscr{F}_T] \in K^0(\mathscr{G}_{\mathrm{stk}}) 
  \arrow[d, "\text{derived enhancement}"] \\

\mathrm{pt} \longrightarrow \mathscr{G}_{\mathrm{der}} 
  \arrow[r, "\text{derived global sections}"] 
& \pi_n\!\left( \mathbf{E}_{\mathcal{A}} \right)
\end{tikzcd}
\end{center}

In this framework, a Fredholm operator $T \in \mathcal{A}$ determines a classical
index $\mathrm{index}(T) \in \mathbb{Z} \cong K^0(\mathrm{pt})$. Equivalently, $T$
defines a classifying point of the spectral stack $\mathscr{G}_{\mathrm{stk}}$,
which parametrizes local spectral data subject to Fredholm conditions. The
associated global $K$-theory class
\(
[\mathscr{F}_T] \in K^0(\mathscr{G}_{\mathrm{stk}})
\)
encodes obstructions to coherently trivializing local spectral data across contexts.

Passing from $\mathscr{G}_{\mathrm{stk}}$ to its derived enhancement
$\mathscr{G}_{\mathrm{der}}$ replaces strict descent by homotopy-coherent descent.
The spectrum of derived global sections $\mathbf{E}_{\mathcal{A}}$ packages this
higher coherence data, and its stable homotopy groups $\pi_n(\mathbf{E}_{\mathcal{A}})$
measure successive obstructions to global spectral descent.

\medskip
\noindent
Within this setting:
\begin{itemize}
    \item Degree $0$ recovers the classical Fredholm index, providing a concrete bridge
          from stacky $K$-theory to traditional index theory.
    \item Positive degrees quantify higher-order obstructions and coherence phenomena
          arising from genuinely noncommutative algebras.
    \item Negative degrees capture stable homotopical phenomena in the derived global sections.
\end{itemize}

\begin{remark}
For concreteness, $\mathscr{G}_{\mathrm{stk}}$ denotes the spectral stack parametrizing
Fredholm complexes or local spectral data subject to Fredholm conditions, and
$\mathscr{G}_{\mathrm{der}}$ is its derived enhancement, encoding homotopy-coherent
descent data. The spectrum $\mathbf{E}_{\mathcal{A}}$ is obtained as
\(
\mathbf{E}_{\mathcal{A}} = \Gamma\!\big(\mathscr{G}_{\mathrm{der}},\mathcal{O}^{\mathrm{top}}\big),
\)
the (hyper)sheaf cohomology or global sections of a sheaf of spectra on
$\mathscr{G}_{\mathrm{der}}$, in the sense of derived or spectral algebraic geometry.

Although a full stack-theoretic index formula remains a subject of further study,
this framework already provides concrete applications to classical Atiyah-Singer
theorems and produces functorial homotopical invariants capturing higher-order
noncommutative effects.
\end{remark}

\bibliographystyle{IEEETran}
\bibliography{Categorified_Spectral_Sheaves_Homotopical_Invariants_for_Noncommuting_Operators_Bib}

\end{document}